\numberwithin{equation}{section}
\newcommand{\Z}{\mathbb Z}   
\newcommand{\Q}{\mathbb Q}   
\newcommand{\F}{\mathbb F}   
\newcommand{\A}{\mathbb A}   
\newcommand{\G}{\mathbb G}   
\newcommand{\eey}{\mathscr{Y}}
\newcommand{\eem}{\mathscr{M}}
\newcommand{\eex}{\mathscr{X}}
\newcommand{\eee}{\mathscr{E}}
\renewcommand\l{\ell}
\newcommand{\yzt}{\mathscr{Y}_0(2)}
\newcommand{\mc}{\mathcal}
\newcommand{\id}{\operatorname{id}} 
\newcommand{\im}{\operatorname{im}} 
\newcommand{\PGL}{\operatorname{PGL}}
\newcommand{\GL}{\operatorname{GL}}
\newcommand{\spec}{\operatorname{Spec}} 
\newcommand{\rSpec}{\mathbf{Spec}} 
\newcommand{\sm}{\setminus}
\newcommand{\wt}{\widetilde}
\newcommand{\zmod}[1]{\mathbb Z/#1\mathbb Z}
\newcommand{\dirlim}{\varinjlim\limits}
\newcommand{\etpi}{\pi^{\mathrm{\acute{e}t}}}
\newcommand{\ctsHom}{\hom_{\mathrm{cts}}}
\newcommand{\units}[1]{#1^{\times}}
\newcommand{\punits}[1]{\units{\parens{#1}}}
\newcommand{\inv}[2][1]{#2^{-#1}}
\newcommand{\pinv}[2][1]{\inv[#1]{\parens{#2}}}
\newcommand{\pprimary}[1]{\leftindex_p{#1}}
\newcommand{\twoprimary}[1]{\leftindex_2{#1}}
\newcommand{\push}[1]{#1_*}
\newcommand{\pull}[1]{#1^*}
\newcommand{\define}[1]{\textit{#1}} 
\renewcommand{\bar}{\overline} 
\newcommand{\by}{\times}
\newcommand{\msB}{\mathscr B}
\newcommand{\msA}{\mathscr A}
\newcommand{\msO}{\mathscr O}
\newcommand{\msL}{\mathscr L}
\newcommand{\msM}{\mathscr M}
\newcommand{\mfp}{\mathfrak p}
\newcommand{\ul}{\underline}
\newcommand{\del}{\partial}
\newcommand{\actson}{\curvearrowright}
\newcommand{\opname}{\operatorname}
\DeclareMathOperator{\Ind}{Ind}
\DeclareMathOperator{\Br}{Br}
\DeclareMathOperator{\Tr}{Tr}
\DeclareMathOperator{\Aut}{Aut}
\DeclareMathOperator{\ram}{ram}
\DeclareMathOperator{\Nm}{Nm}
\newcommand{\mapdesc}[5]{
	\begin{matrix}
		\ifblank{#1}{}{\displaystyle#1:}&\displaystyle#2&\longrightarrow&\displaystyle#3\\
		&\displaystyle#4&\longmapsto&\displaystyle #5
	\end{matrix}
}
\newcommand{\set}[1]{\left\{#1\right\}} 
\newcommand{\tand}{\quad\text{and}\quad} 
\newcommand{\twhere}{\text{where }}
\newcommand{\twith}{\text{with }}
\renewcommand\t\text 
\newcommand\ttt\texttt
\newcommand{\parens}[1]{\!\left(#1\right)}
\newcommand{\p}[1]{\!\left(#1\right)} 
\newcommand{\pfrac}[2]{\parens{\frac{#1}{#2}}}
\newcommand{\brackets}[1]{\left\{#1\right\}}
\newcommand{\bracks}[1]{\brackets{#1}}
\renewcommand{\b}{\bracks} 
\newcommand{\sqbracks}[1]{\!\left[#1\right]} 
\newcommand{\sq}{\sqbracks}
\newcommand{\angles}[1]{\left\langle#1\right\rangle}
\newcommand{\dparens}[1]{\!\left(\!\left(#1\right)\!\right)}
\newcommand\dps\dparens 
\newcommand{\twocasesit}[3]{
	\begin{cases}
		\hfill\displaystyle #1\hfill&\text{if\, }\displaystyle #2\\
		\hfill\displaystyle #3\hfill&\text{otherwise.}
	\end{cases}
}
\newcommand{\Twocases}[4]{
	\begin{cases}
		\hfill\displaystyle #1\hfill&\text{if }\displaystyle #2\\
		\hfill\displaystyle #3\hfill&\text{if }\displaystyle #4
	\end{cases}
}
\newcommand{\Twocasesit}[4]{
	\begin{cases}
		\hfill\displaystyle #1\hfill&\text{if\, }\displaystyle #2\\
		\hfill\displaystyle #3\hfill&\text{if\, }\displaystyle #4
	\end{cases}
}
\newcommand{\Threecases}[6]{
	\begin{cases}
		\hfill\displaystyle #1\hfill&\text{if }\displaystyle #2\\
		\hfill\displaystyle #3\hfill&\text{if }\displaystyle #4\\
		\hfill\displaystyle #5\hfill&\text{if }\displaystyle #6
	\end{cases}
}
\newcommand{\Threecasesit}[6]{
	\begin{cases}
		\hfill\displaystyle #1\hfill&\text{if\, }\displaystyle #2\\
		\hfill\displaystyle #3\hfill&\text{if\, }\displaystyle #4\\
		\hfill\displaystyle #5\hfill&\text{if\, }\displaystyle #6
	\end{cases}
}
\newcommand{\commsquare}[8]{
	\begin{tikzcd}[ampersand replacement=\&]
	{\displaystyle #1}\ar[r, "{#2}"]\ar[d, "{#4}" left]\&{\displaystyle #3}\ar[d, "{#5}" right]\\
	{\displaystyle #6}\ar[r, "{#7}" above]\&{\displaystyle #8}
	\end{tikzcd}
}
\newcommand{\compdiag}[6]{ 
    \begin{tikzcd}[ampersand replacement=\&]
        {\displaystyle #1}\ar[r, "{#2}" below]\ar[rr, bend left, "{#6}" above]\&{\displaystyle #3}\ar[r, "{#4}" below]\&{\displaystyle #5}
    \end{tikzcd}
}
\def\lowsimiso{\vbox to 0pt{\vss\hbox{$\scriptstyle\sim$}\vskip-2.4pt}}
\def\lowsimisoo{\vbox to 0pt{\vss\hbox{$\sim$}\vskip-1.6pt}}
\newcommand{\mapstoo}{\longmapsto}
\newcommand{\xto}{\xrightarrow}
\newcommand{\too}{\longrightarrow}
\newcommand{\xtoo}{\xlongrightarrow}
\newcommand{\iso}{\xto\lowsimiso}
\newcommand{\isoo}{\xtoo\lowsimisoo}
\newcommand{\into}{\hookrightarrow}
\newcommand{\onto}{\twoheadrightarrow}
\newcommand{\squigto}{\rightsquigarrow}
\newcommand*{\da@rightarrow}{\mathchar"0\hexnumber@\symAMSa 4B }
\newcommand*{\da@leftarrow}{\mathchar"0\hexnumber@\symAMSa 4C }
\newcommand*{\xdashrightarrow}[2][]{%
  \mathrel{%
    \mathpalette{\da@xarrow{#1}{#2}{}\da@rightarrow{\,}{}}{}%
  }%
}
\newcommand{\xdashleftarrow}[2][]{%
  \mathrel{%
    \mathpalette{\da@xarrow{#1}{#2}\da@leftarrow{}{}{\,}}{}%
  }%
}
\newcommand*{\da@xarrow}[7]{%
  \sbox0{$\ifx#7\scriptstyle\scriptscriptstyle\else\scriptstyle\fi#5#1#6\m@th$}%
  \sbox2{$\ifx#7\scriptstyle\scriptscriptstyle\else\scriptstyle\fi#5#2#6\m@th$}%
  \sbox4{$#7\dabar@\m@th$}%
  \dimen@=\wd0 %
  \ifdim\wd2 >\dimen@
    \dimen@=\wd2 %
  \fi
  \count@=2 %
  \def\da@bars{\dabar@\dabar@}%
  \@whiledim\count@\wd4<\dimen@\do{%
    \advance\count@\@ne
    \expandafter\def\expandafter\da@bars\expandafter{%
      \da@bars
      \dabar@ 
    }%
  }%
  \mathrel{#3}%
  \mathrel{%
    \mathop{\da@bars}\limits
    \ifx\\#1\\%
    \else
      _{\copy0}%
    \fi
    \ifx\\#2\\%
    \else
      ^{\copy2}%
    \fi
  }%
  \mathrel{#4}%
}
\newcommand\xdashto\xdashrightarrow
\newcommand\xdashfrom\xdashleftarrow
\providecommand{\leftsquigarrow}{%
  \mathrel{\mathpalette\reflect@squig\relax}%
}
\newcommand{\reflect@squig}[2]{%
  \reflectbox{$\m@th#1\rightsquigarrow$}%
}
\renewcommand{\epsilon}{\varepsilon}
\renewcommand{\phi}{\varphi}
\renewcommand{\geq}{\geqslant}
\renewcommand{\id}{\operatorname{id}}
\renewcommand{\hom}{\operatorname{Hom}}
\tikzset{>=stealth}
\DeclareMathOperator{\Pic}{Pic} 
\DeclareMathOperator{\coker}{coker} 
\DeclareMathOperator{\Spec}{Spec} 
\DeclareMathOperator{\Gal}{Gal}
\DeclareMathOperator{\ch}{char} 
\DeclareMathOperator{\Char}{char}
\DeclareMathOperator{\pl}{pl}
\DeclareMathOperator{\End}{End}
\newcommand{\az}{\A^1_k\backslash\{0\}}
\newcommand{\tors}{\mathrm{tors}}
\newcommand{\op}{\mathrm{op}}
\newcommand{\fppf}{\mathrm{fppf}}
\newtheorem{theorem}{Theorem}[section]
\newtheorem{lemma}[theorem]{Lemma}
\newtheorem{corollary}[theorem]{Corollary}
\newtheorem{proposition}[theorem]{Proposition}
\theoremstyle{definition}
\newtheorem{definition}[theorem]{Definition}
\newtheorem{notation}[theorem]{Notation}
\newtheorem{assumption}[theorem]{Assumption}
\newtheorem{recall}[theorem]{Recall}
\newtheorem{setup}[theorem]{Setup}
\theoremstyle{remark}
\newtheorem{remark}[theorem]{Remark}
\newtheorem{fact}[theorem]{Fact}
  \DeclareSymbolFont{AMSb}{U}{msb}{m}{n}
  \DeclareSymbolFontAlphabet{\mathbb}{AMSb}}
\DeclareMathAlphabet{\mathbx}{U}{BOONDOX-ds}{m}{n}
\SetMathAlphabet{\mathbx}{bold}{U}{BOONDOX-ds}{b}{n}
\DeclareMathAlphabet{\mathbbx} {U}{BOONDOX-ds}{b}{n}
\SetMathAlphabet{\mathcal}{bold}{U}{dutchcal}{b}{n}
\DeclareMathAlphabet{\mathbcal}{U}{dutchcal}{b}{n} 
\setlist[enumerate,1]{label={\rm(\arabic*)}, ref={\rm\arabic*}}
\newcommand{\addappendix}{%
  \phantomsection 
  \setcounter{section}{0}
  \section*{\appendixname}
  \addcontentsline{toc}{section}{\appendixname}
  \setcounter{section}{1}
  \setcounter{theorem}{0}
}
\newcommand{\supth}[1]{\ensuremath{#1^{\mathrm{th}}}}
\newcommand{\supst}[1]{\ensuremath{#1^{\mathrm{st}}}}
\newcommand{\intoo}{\lhook\joinrel\longrightarrow}
\title{The Brauer group of $\boldsymbol{\eey_0(2)}$}
\author{Niven Achenjang}
\address{Department of Mathematics, Harvard University, Cambridge, MA 02138, USA}
\email{achenjang@math.harvard.edu}
\author{Deewang Bhamidipati}
\address{Department of Mathematics and Statistics, Carleton College, Northfield, MN 55057, USA}
\email{bdeewang@carleton.edu}
\author{Aashraya Jha}
\address{Department of Mathematics and Statistics, Boston University, Boston, MA 02215, USA}
\email{aashjha@bu.edu}
\author{Caleb Ji}
\address{Department of Mathematics, University of New South Wales, Sydney, NSW 2032, Australia}
\email{caleb.ji@unsw.edu.au}
\author{Rose Lopez}
\address{Department of Mathematics, University of California Berkeley, Berkeley, CA 94720, USA}
\email{roselopez@berkeley.edu}
\begin{document}



\maketitle

\begin{prelims}

\DisplayAbstractInEnglish

\bigskip

\DisplayKeyWords

\medskip

\DisplayMSCclass

\end{prelims}


\newpage

\setcounter{tocdepth}{1}

\tableofcontents


\section{Introduction}
The Brauer group of a field $k$ classifies the central simple algebras over $k$.  In \cite{Gr1,Gr2,Gr3}, Grothendieck generalized the definition of Brauer groups to schemes and established them as important cohomological invariants in algebraic geometry. The study of cohomological invariants of schemes has since been extended to those of stacks, beginning with Mumford's calculation $\Pic((\eem_{1,1})_k)\cong \Z/12$, where $k$ is a field with characteristic not 2 or 3; see \cite{mumford-picard}.  This calculation has been extended by Fulton and Olsson \cite{fulton-olsson} to the case of an arbitrary reduced base, while Niles \cite{Niles} performed a similar calculation for the covers $\eey_0(2)$ and $\eey_0(3)$.  More recently, there has been some interest in computing Brauer groups of algebraic stacks.  For example, Lieblich \cite{lieblich2011period} computed the Brauer group of $\Br(B\mu_n)$ over a field and applied it to the period-index problem, and Santens \cite{santens2023brauermanin} computed the Brauer group of certain stacky curves in his work on the Brauer--Manin obstruction for them.

A fundamental Deligne--Mumford stack is the classifying space of elliptic curves. In their paper \textit{The Brauer group of the moduli stack of elliptic curves}, \cite{AM}, Antieau and Meier compute the Brauer group of $\eem_{1,1}$, the moduli stack of elliptic curves, over various bases of arithmetic interest. Their main result is that over $\Spec(\Z)$, we have $\Br(\eem_{1,1})=0$. In addition to this, they were also able to compute the Brauer group of $\eem_{1,1}$ over $\Q$, over finite fields of characteristic not $2$, and over algebraically closed fields of characteristic not $2$. These latter computations were supplemented by Shin \cite{Shin}, who computed $\Br(\eem_{1,1,k})$ whenever $k$ is a finite or algebraically closed field of characteristic $2$, using the coarse space map. The notes  \cite{meier-cs} further explore using the coarse space map for computing Brauer groups. The approach initiated in Meier's notes is developed further in a follow-up to this paper \cite{achenjang-brauer} by the first author. Additionally, in \cite{dilorenzo2022cohomological}, the authors compute $\Br(\eem_{1,1,k})$ over any field $k$. We are not aware of any other papers computing Brauer groups of stacky modular curves.

In our paper, we study the Brauer group of $\eey_0(2)$, the moduli stack (over $\Z[\frac{1}{2}]$) of elliptic curves equipped with a cyclic subgroup of order $2$. 
As we define and recall in Section~\ref{sec: the brauer group}, this is equivalent to computing a closely related group known as the cohomological Brauer group.  
By combining ideas of both \cite{AM} and \cite{Shin}, we are able to compute this group as $\eey_0(2)$ is taken over various arithmetically interesting bases.

\begin{theorem}\label{main}\leavevmode
  \begin{enumerate}
        \item\label{main-1} $\Br(\eey_0(2))= \leftindex_2{\Br(\eey_0(2))}\cong\Q_2/\Z_2\oplus(\zmod2)^{\oplus4}\oplus\zmod4 .$

        This follows from Proposition~\ref{prop:p-tors-over-Z[1/2]-i} and Corollary~\ref{cor:2-tors-over-Z[1/2]}.
        \item\label{main-2} Let $\mathbf{P}$ denote the set of all primes.  Then \[\Br(\eey_0(2)_\Q)=\Br(\Q)\oplus H^1(\Q,\Q/\Z)\oplus \Z/2\Z ~~\oplus \bigoplus_{\substack{p\in \mathbf{P}\cup \{-1\}, \\ p\equiv 3\pmod 4}}\Z/2 ~~\oplus \bigoplus_{\substack{p\in \mathbf{P},\\ p\not\equiv 3\pmod 4}}\Z/4.\]

        This follows from Corollary~\ref{cor: p-tors-over-Q} and Proposition~\ref{prop: 2-torsion of Y02}.
        \item\label{main-3} $\Br(\eey_0(2)_{\bar k})=\zmod2$ if\, $\bar k$ is algebraically closed of characteristic not $2$.
          
        This is Theorem~\ref{thm:alg-closed-comp}.
        \item\label{main-4} $\Br(\eey_0(2)_k)=\Br(k)\oplus H^1(k,\Q/\Z)\oplus H^1(k,\zmod4)\oplus\zmod2$ if $k$ is any perfect field of characteristic not $2$.

        This is a special case of Theorem~\ref{thm:Br-over-field}.
    \end{enumerate}
\end{theorem}
\begin{remark}
    In \cite[Above Theorem 1.1]{AM}, the authors remark that their descriptions of $\Br(\eey(1)_S)$ (for $S=\Q,\Z[1/2],\Z$, or a finite or algebraically closed field of characteristic not $2$) vary greatly depending on the base, and so express skepticism at the possibility of writing a uniform description for $\Br(\eey(1)_S)$. However, in light of the computation of \cite{dilorenzo2022cohomological} that $\Br(\eey(1)_k)\cong\Br(\A^1_k)\oplus H^1(k,\zmod{12})$ for any field $k$ of characteristic not $2$, one can retroactively check that \cite{AM} similarly proved that $\Br(\eey(1)_S)\cong\Br(S)\oplus H^1(S,\zmod{12})$ for all bases $S$ appearing in their Theorem 1.1. 
    
    Since our approach in this paper is largely based off of \cite{AM}, our main result, Theorem~\ref{main}, also initially appears to show that $\Br(\eey_0(2)_S)$ depends subtly on the base $S$. However, given the statement of this theorem, one can check that it says
    \[\Br(\eey_0(2)_S)\cong\Br(S)\oplus H^1(S,\Q/\Z)\oplus H^1(S,\zmod4)\oplus\zmod2\cong\Br(Y_0(2)_S)\oplus H^1(S,\zmod4)\oplus\zmod2,\]
    where $S$ is any of the bases appearing in Theorem~\ref{main} and $Y_0(2)_S=\A^1_S\sm\{0\}$ is the coarse space of $\eey_0(2)$ (see Corollary~\ref{cor:Y0(2)-cms-general-base}). Note that $\Br(S)\oplus H^1(S,\Q/\Z)\cong\Br(Y_0(2)_S)$ when $S$ is a perfect field (see Remark~\ref{rem:Br-coarse-field}) or when $S=\Z[1/2]$ (see \cite[Corollary 2.6]{AMS}).
\end{remark}
\begin{remark}
    We remark that, as a consequence of Tsen's theorem, the Brauer group of a curve (that is an ordinary scheme) over an algebraically closed field is always trivial. 
    Theorem~\ref{main}\eqref{main-3} shows that this fails for some \textit{tame} stacky curves. The main results of \cite{AM,Shin} show that this can fail for wild stacky curves ($\Br(\eey(1)_{\bar\F_2})\neq0$, for example), but that $\Br(\eey(1)_{\bar k})=0$ in tame characteristics, even though $\eey(1)$ is stacky everywhere. It would be interesting to better understand what aspects of the geometry of a (tame) stacky curve lead to it failing to satisfying Tsen's theorem as well as to understand where Brauer classes of stacky curves over algebraically closed fields, when they exist, come from. In the case of $\Br(\eey_0(2)_{\bar k})$, we have Proposition~\ref{prop:Z/2Z-not-quat} showing that its nontrivial element is not representable by a global quaternion algebra over $\eey_0(2)_{\bar k}$.
\end{remark}

In order to prove Theorem~\ref{main}, the basic idea, following Antieau and Meier, is as follows. We take a series of Galois covers of $\eey_0(2)$ which reduce the computation of its Brauer group to computing the Brauer group of some scheme $X$ and then computing various Hochschild--Serre spectral sequences associated to the covers. Specifically, for computing the Brauer group of $\eem_{1,1}$, Antieau and Meier considered the $S_3$-cover $\eey(2)\to\eem_{1,1}$ and the $C_2$-cover $X:=\A^1\sm\{0,1\}\to\eey(2)$, the latter given by the Legendre family of elliptic curves, which will be described in Section~\ref{section: setup_of_spaces}. By understanding the $S_3$-module (resp.\ $C_2$-module) structure on the $\G_m$-cohomology of $\eey(2)$ (resp.\ $X$), they were able to compute the relevant spectral sequences and so determine $\Br(\eem_{1,1})$. In our work, we replace the $S_3$-cover $\eey(2)\to\eem_{1,1}$ with the $C_2$-cover $\eey(2)\to\eey_0(2)$. The relevant sequence of covers is pictured below:
\[\begin{tikzcd}
    X\ar[d, equals]\ar[r, "C_2"]&\eey(2)\ar[d, equals]\ar[r, "C_2"]\ar[rr, "S_3", bend left]&\eey_0(2)\ar[r]&\eem_{1,1}\\
    \A^1_{\Z[\frac{1}{2}]}\sm\{0,1\}&BC_{2,X}
    .
\end{tikzcd}\]

Consider a base scheme $S$ (\textit{e.g.}~$S=\Z[\frac{1}{2}]$, $\Q$, \textit{etc.}). We compute $\Br(\eey_0(2)_S)=\bigoplus_p\pprimary{\Br(\eey_0(2)_S)}$ one prime at a time. For each prime $p$, the $C_2$-cover $\eey(2)\to\eey_0(2)$ induces a $p$-local Hochschild--Serre spectral sequence
\begin{equation}\label{ss:p-local HS}
    E_2^{i,j}=H^i\p{C_2,H^j\p{\eey(2)_S,\G_m}}_{(p)}\implies H^{i+j}\p{(\eey_0(2)_S,\G_m}_{(p)},
\end{equation}
where $M_{(p)}:=M\otimes_\Z\Z_{(p)}$ for an abelian group $M$.

For a prime $p$, let $\pprimary{M}$ denote the $p$-primary torsion inside an abelian group $M$. Whenever $p\nmid2$, one has $E_2^{i,j}=0$, unless $i=0$. Thus, the spectral sequence immediately degenerates, and computing the $p$-primary torsion in $\Br(\eey_0(2)_S)$ reduces to computing the $p$-primary torsion in $\Br(\eey(2)_S)^{C_2}$. When $p$ is invertible on~$S$, we compute this in Theorem~\ref{thm:p-tors-away-from-p}, obtaining the following. 

\begin{proposition}[First half of Theorem~\ref{thm:p-tors-away-from-p}]
    Let $p\geq 3$, and let $S$ be a regular noetherian scheme on which $2p$ is invertible. Then we have a non-canonically split exact sequence
    $$0\too \pprimary{\Br'(S)}\too \pprimary{\Br'(\eey_0(2))}\too H^1\p{S,\Q_p/\Z_p}\too 0.$$
\end{proposition}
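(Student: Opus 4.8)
The plan is to apply the reduction indicated after Theorem~\ref{main} twice, replacing $\yzt$ first by $\yt$ and then by the coarse space $X:=\A^1_{\Z[1/2]}\sm\{0,1\}=\P^1_{\Z[1/2]}\sm\{0,1,\infty\}$ of $\yt$, and then to compute $\Br'(X_S)$ with its residual $C_2$-action by a localization argument. For the first reduction: since $p\geq3$, the group $C_2$ has no higher cohomology with coefficients in any $\Z_{(p)}$-module, so the $p$-local Hochschild--Serre spectral sequence \eqref{ss:p-local HS} of the cover $\yt\to\yzt$ degenerates and gives $H^n(\yzt_S,\G_m)_{(p)}\cong\bigl(H^n(\yt_S,\G_m)_{(p)}\bigr)^{C_2}$ for all $n$; taking $n=2$ and $p$-primary torsion, ${}_p\Br'(\yzt_S)\cong\bigl({}_p\Br'(\yt_S)\bigr)^{C_2}$. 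Using $\yt_S\cong BC_{2,X_S}$, the canonical atlas $X_S\to\yt_S$ is a $C_2$-torsor on which the inertial $C_2$ acts trivially, so the Hochschild--Serre spectral sequence of the atlas degenerates $p$-locally too; I would check that the resulting isomorphism $H^n(\yt_S,\G_m)_{(p)}\cong H^n(X_S,\G_m)_{(p)}$ is equivariant for the deck group $C_2$ of $\yt\to\yzt$, which acts on $X_S=\P^1_S\sm\{0,1,\infty\}$ through the sub-$C_2$ of $S_3=\PGL_2(\F_2)$ determined by the chosen order-$2$ subgroup --- that is, by an involution $\sigma$ interchanging two of the three missing points $0,1,\infty$ and fixing the third. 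This leaves ${}_p\Br'(\yzt_S)\cong\bigl({}_p\Br'(X_S)\bigr)^{C_2}$.

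Next I would compute $\Br'(X_S)$. Let $\{D,D'\}$ be the pair of missing points that $\sigma$ interchanges and $D''$ the one it fixes. Since $\PGL_2(\Z)$ acts transitively on $\{0,1,\infty\}$, an automorphism of $\P^1_S$ carries $D''$ to $\{\infty\}$, so $\P^1_S\sm D''\cong\A^1_S$ and hence $\Br'(\P^1_S\sm D'')\cong\Br'(\A^1_S)\cong\Br'(S)$ by $\A^1$-homotopy invariance of the (cohomological) Brauer group over the regular noetherian $S$. Applying Gabber's absolute purity to the regular codimension-one closed subscheme $D\sqcup D'\cong S\sqcup S$ of $\P^1_S\sm D''$ (on which $2p$ is invertible) and passing to the colimit over $\mu_n$-coefficients ($n$ invertible on $S$), together with the Kummer sequence, yields a localization exact sequence
\[
0\too\Br'(S)\too\Br'(X_S)\xrightarrow{(\partial_D,\partial_{D'})}\bigl(H^1(S,\Q/\Z)'\bigr)^{\oplus2}\too0,
\]
where $\partial_D,\partial_{D'}$ are the residues and $(\cdot)'$ is the prime-to-residue-characteristic part. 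Left exactness is the Gysin statement that a class on $X_S$ unramified along $D,D'$ extends to $\P^1_S\sm D''\cong\A^1_S$; right exactness I would see from the cyclic classes $\chi\cup\delta(f),\chi\cup\delta(g)\in\Br'(X_S)$ --- with $\delta$ the Kummer coboundary, $\chi$ pulled back from $S$, and $f,g$ units on $X_S$ with simple zeros along $D$, resp.\ $D'$ --- whose residue vectors are $(\chi,0)$ and $(0,\chi)$. Finally, $\sigma$ has a second fixed point besides $D''$, and that point lies in $X_S$ precisely when $2$ is invertible on $S$ (it is $\lambda=\tfrac12$, $-1$, or $2$ according to the transposition); the corresponding $S$-section splits $\Br'(S)\hookrightarrow\Br'(X_S)$ $\sigma$-equivariantly.

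Taking $C_2$-invariants then finishes it: $\sigma$ fixes $S$ and the chosen section, so it acts on $\Br'(X_S)\cong\Br'(S)\oplus\bigl(H^1(S,\Q/\Z)'\bigr)^{\oplus2}$ trivially on $\Br'(S)$ and, on the other two summands, by $(a,b)\mapsto(\epsilon b,\epsilon a)$ for some sign $\epsilon$ (as $\sigma$ interchanges $D,D'$ and $\sigma^2=\id$), whose fixed part is a single $H^1(S,\Q/\Z)'$. Hence $\Br'(X_S)^{C_2}\cong\Br'(S)\oplus H^1(S,\Q/\Z)'$, and since $(-)^{C_2}$ commutes with passage to $p$-primary torsion and ${}_p\bigl(H^1(S,\Q/\Z)'\bigr)=H^1(S,\Q_p/\Z_p)$ (as $p$ is invertible on $S$), combining with the first step gives ${}_p\Br'(\yzt_S)\cong{}_p\Br'(S)\oplus H^1(S,\Q_p/\Z_p)$, i.e.\ the asserted exact sequence, split by the invariant part of the chosen section and non-canonical because the complement to ${}_p\Br'(S)$ depends on it.

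The main obstacle, as I see it, is the input to the first step: correctly identifying the residual $C_2$-action on the coarse space as the appropriate transposition of punctures, and confirming that \emph{both} spectral-sequence degenerations are compatible with the relevant --- and genuinely different --- $C_2$-actions, namely the deck group of $\yt/\yzt$ versus the inertial $\{\pm1\}$ of the Legendre curve. The remaining delicate point is running the $\Br'(X_S)$ computation, in particular the surjectivity of the residue maps, over an arbitrary regular noetherian base rather than over a field; this rests on absolute purity together with the existence of the $\sigma$-fixed section, for which the hypothesis $2\in\units{\msO_S}$ is exactly what is needed.
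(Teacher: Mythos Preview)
Your proposal is correct and follows essentially the same route as the paper: reduce ${}_p\Br'(\yzt_S)$ to ${}_p\Br'(X_S)^{C_2}$ via two $p$-locally degenerate Hochschild--Serre spectral sequences, compute ${}_p\Br'(X_S)$ by a purity/localization sequence over the regular base (the paper quotes \cite[Lemmas~5.10,~5.11]{AM} for this, packaging the residue module as $\wt\rho|_{C_2}\otimes H^1(S,\Q_p/\Z_p)$), and then take $C_2$-invariants. Your choice to delete the \emph{fixed} puncture first, so that the two remaining residues are visibly swapped by $\sigma$, is a clean way to see the $C_2$-action that the paper instead tracks through the representation $\wt\rho|_{C_2}$; and your splitting via the $\sigma$-fixed $S$-point $t=-1$ of $X_S$ is the same device the paper uses elsewhere (Proposition~\ref{prop:2-torsion C2 sequence}), whereas for this particular statement the paper splits directly via a $\Z[1/2]$-point of $\yzt$ (Remark~\ref{rem:Z[1/2]-point}).
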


Whenever $p$ is not invertible on $S$, we set $S'=S\by_\Z\Z[1/p]$ and study the $p$-primary torsion $\Br(\eey_0(2)_S)$ by viewing it as a subgroup of $\Br(\eey_0(2)_{S'})$. Following a strategy used by \cite{AM} in the case of $3$-primary torsion, over $S=\Z[\frac{1}{2}]$, we are able to show that for all primes $p\geq 3$, no nonzero class in $\pprimary{\Br(\eey_0(2)}_{\Z[\frac{1}{2p}]})$ extends over all of $\eey_0(2)$; thus, we compute that $\pprimary{\Br(\eey_0(2))}=0$ over $\Z[\frac{1}{2}]$ for all primes $p\geq3$ (see Corollary~\ref{cor:p-tors-over-Z[1/2]-ii}).

For 2-primary torsion, we still use the spectral sequence (\ref{ss:p-local HS}). However, it is no longer given by a single column on the $E_2$-page, so more work is required. In order to compute the differentials in this spectral sequence, as well as to compute the resulting description of $\leftindex_2{\Br}(\eey_0(2)_S)$, we use the following two tricks:
\begin{itemize}
\item The spectral sequence (\ref{ss:p-local HS}) has a natural morphism to the analogous spectral sequence for the $S_3$-cover $\eey(2)\to\eem_{1,1,\Z[\frac{1}{2}]}$ computed in \cite[Section 9]{AM}. By exploiting this morphism, we are able to deduce descriptions of most differentials in (\ref{ss:p-local HS}) from descriptions of the analogous differentials in \cite[Section 9]{AM}.
    \item For what remains, we choose an arbitrary geometric point $\bar s\into S$ and compare the spectral sequence (\ref{ss:p-local HS}) over $S$ to the same spectral sequence over $\bar s$. This allows us to reduce the computation of the remaining differentials over a general base $S$ to the computation over an algebraically closed field. In this case, we use the fact that we can independently compute $\Br(\eey_0(2)_{\bar k})$ via the method of Shin \cite{Shin} (see Section~\ref{sect:coarse-space}) in order to retroactively understand these differentials, first over $\bar s$ and then over $S$.
\end{itemize}

Finally, when computing $\Br(\eey_0(2)_{\bar k})$, we will need to know that $\Pic(\eey_0(2)_{\bar k})\simeq\zmod4$. Whenever $\Char k\neq3$, this was shown by Niles \cite{Niles}. In order to handle the case $\Char k=3$ as well, we prove the following general result in the appendix.

\begin{theorem}
    Let $S$ be a $\Z[\frac{1}{2}]$-scheme, and let $Y_0(2)$ be the coarse space of\, $\eey_0(2)$. Then, $\Pic(\eey_0(2)_S)\cong\zmod4\oplus\Pic(Y_0(2)_S)$. 
\end{theorem}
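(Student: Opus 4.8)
The plan is to deduce the splitting from two standard facts — the Picard group of a gerbe and the Picard group of a root stack — which become available after putting $\eey_0(2)$ in its Weierstrass form. Over $\Z[1/2]$, an elliptic curve with a chosen cyclic order-$2$ subgroup is $y^2=x(x^2+ax+b)$ with the subgroup generated by the origin; the remaining automorphisms rescale $(a,b)\mapsto(u^2a,u^4b)$, and since $b$ is a unit one may normalize $b=1$. This yields, for every $\Z[1/2]$-scheme $S$, an isomorphism $\eey_0(2)_S\cong[U'_S/\mu_4]$ with $U'_S=\Spec\mathcal O_S[a,\tfrac{1}{a^2-4}]$ and $\mu_4$ acting through the squaring map $\mu_4\to\mu_2$ by $a\mapsto-a$. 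The kernel $\{\pm1\}$ of this action is the generic automorphism group (it is $\pm1$ on every elliptic curve) and acts trivially, so $g\colon\eey_0(2)_S\to\mathcal Z_S:=[U'_S/(\mu_4/\mu_2)]$ is a $\mu_2$-gerbe onto the rigidification $\mathcal Z_S$. Taking $\mu_4$-invariants (set $t:=a^2$) identifies $Y_0(2)_S=\Spec\mathcal O_S[t,\tfrac{1}{t-4}]$ and realizes $\mathcal Z_S\cong\sqrt[2]{D/Y_0(2)_S}$, the square-root stack along the effective Cartier divisor $D=V(t)$, which has $\mathcal O(D)\cong\mathcal O$ (trivialized by $t$). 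So the coarse-space map factors as $\pi\colon\eey_0(2)_S\xrightarrow{g}\mathcal Z_S\xrightarrow{p}Y_0(2)_S$; I assume $S$ connected (otherwise one gets $H^0(S,\underline{\zmod4})$ in place of $\zmod4$). Note this is uniform over $\Z[1/2]$, in particular in characteristic $3$.

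First I would record the two inputs. Since $\mathcal O(D)$ is trivial, the Picard group of the root stack $\mathcal Z_S$ sits in a split exact sequence $0\to\Pic(Y_0(2)_S)\xrightarrow{p^*}\Pic(\mathcal Z_S)\to\zmod2\to0$, the $\zmod2$ generated by the tautological root $\mathcal T$ (with $\mathcal T^{\otimes2}\cong\mathcal O(D)\cong\mathcal O$, restricting to the nontrivial character of the residual $\mu_2$ over $D$). On the gerbe side $g_*\G_m=\G_m$ and $R^1g_*\G_m$ is the character sheaf $\underline{\zmod2}$ of the band (using that $2$ is invertible), so the five-term Leray sequence reads $0\to\Pic(\mathcal Z_S)\xrightarrow{g^*}\Pic(\eey_0(2)_S)\xrightarrow{\mathrm{wt}}\zmod2\xrightarrow{d_2}\Br'(\mathcal Z_S)$, with $\mathrm{wt}$ the weight of the $\mu_2$-inertia action. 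To see $\mathrm{wt}$ is onto and to understand the extension, I would exhibit a weight-$1$ line bundle explicitly: let $\mathcal L$ be $\mathcal O_{U'_S}$ with the $\mu_4$-linearization by the standard character $\chi\colon\mu_4\into\G_m$. As $\chi$ is nontrivial on $\{\pm1\}$, we get $\mathrm{wt}(\mathcal L)=1$; and $\mathcal L^{\otimes2}$ has linearization $\chi^2$, trivial on $\{\pm1\}$, so $\mathcal L^{\otimes2}=g^*\mathcal N$ for a unique $\mathcal N\in\Pic(\mathcal Z_S)$, while $\mathcal L^{\otimes4}=g^*(\mathcal N^{\otimes2})$ is trivial, forcing $\mathcal N^{\otimes2}\cong\mathcal O$.

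The crux — and the step I expect to be the main obstacle — is identifying $\mathcal N$: I claim $\mathcal N$ maps to the generator of the $\zmod2$-summand of $\Pic(\mathcal Z_S)$, i.e. $\mathcal N\equiv\mathcal T$ modulo $p^*\Pic(Y_0(2)_S)$. Indeed $\mathcal N$ is the descent of $\mathcal L^{\otimes2}$, whose $\mu_4$-inertia over $D$ acts by $\chi^2\colon\zeta\mapsto\zeta^2$, and this descends to the sign character of $\mu_4/\mu_2\cong\zmod2$ — the same restriction as $\mathcal T$ over $D$. Hence $\mathcal N$ is a nontrivial $2$-torsion class that is not pulled back from $Y_0(2)_S$, which is exactly what glues the two copies of $\zmod2$ into a single $\zmod4$ instead of $\zmod2\oplus\zmod2$: it makes $\mathcal L$ have order exactly $4$ with $2\mathcal L=g^*\mathcal N$. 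Assembling, $g^*\Pic(\mathcal Z_S)=\pi^*\Pic(Y_0(2)_S)\oplus\langle g^*\mathcal N\rangle=\pi^*\Pic(Y_0(2)_S)\oplus\langle2\mathcal L\rangle$, so $\Pic(\eey_0(2)_S)=\langle\mathcal L\rangle+\pi^*\Pic(Y_0(2)_S)$; and $\langle\mathcal L\rangle\cap\pi^*\Pic(Y_0(2)_S)=0$ because any $k\mathcal L$ in the intersection has $\mathrm{wt}(k\mathcal L)=0$, hence $k$ even, hence $k\mathcal L\in\langle g^*\mathcal N\rangle$, which meets $\pi^*\Pic(Y_0(2)_S)$ only in $0$ (as $\mathcal N\notin p^*\Pic(Y_0(2)_S)$ and $g^*$ is injective; here $\pi^*$ is injective since $\pi_*\mathcal O=\mathcal O$). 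Therefore $\Pic(\eey_0(2)_S)\cong\zmod4\oplus\Pic(Y_0(2)_S)$, the $\zmod4$ generated by (the pullback of) the class $\mathcal L$, which is already defined over $\eey_0(2)_{\Z[1/2]}$.

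Besides the identification of $\mathcal N$, the points requiring care are routine but not purely formal: the Weierstrass presentation over $\Z[1/2]$ and the identifications $Y_0(2)_S=\Spec\mathcal O_S[t,\tfrac{1}{t-4}]$, $\mathcal Z_S\cong\sqrt[2]{D/Y_0(2)_S}$; and the root-stack Picard sequence over a possibly non-reduced, non-Noetherian base $S$, which only uses that $D$ is an effective Cartier divisor and follows by fppf descent.
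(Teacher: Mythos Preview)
Your argument is correct and gives a valid proof, but it takes a genuinely different route from the paper's. The paper proceeds by restricting line bundles to the residual $B\mu_4$ at the single point with extra automorphisms (the curve $y^2=x^3-x$ with $P=(0,0)$); this produces a surjection $\Pic(\eey_0(2)_S)\to\zmod4$, split by the Hodge bundle $\lambda$, and identifies the kernel with $\pull c\Pic(Y_0(2)_S)$ via the criterion (Proposition~\ref{prop:tame-line-bundles}) that a line bundle on a tame stack descends to the coarse space precisely when the inertia acts trivially on every geometric fibre. Your approach instead exploits a global quotient presentation $\eey_0(2)_S\cong[U'_S/\mu_4]$ to factor the coarse-space map as a $\mu_2$-gerbe followed by a $\mu_2$-root stack, and then assembles the $\zmod4$ from two successive $\zmod2$'s via Leray and the root-stack Picard sequence; the crux is checking that these two $\zmod2$'s are nontrivially extended, which you do by tracking the inertial weight of $\mathcal L^{\otimes2}$ at $D$. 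Your generator $\mathcal L$ is (up to a sign in the character) exactly the Hodge bundle, so the two descriptions agree. The paper's method is shorter and names the generator canonically; yours is more self-contained (it avoids the tame-descent criterion), makes the gerbe-over-root-stack geometry of $\eey_0(2)$ explicit, and transports readily to other modular curves with similar structure. One small point of exposition: your normalization ``$b=1$'' should be read at the level of stacks (via the $\mu_4$-torsor $c^4=b$ over the Weierstrass space), not literally --- a reader may stumble there. The presentation $\eey_0(2)\cong[\spec\Z[1/2][a,b,1/(b(a^2-4b))]/\G_m]$ you start from is standard but probably deserves a reference or a sentence of justification.
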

\begin{remark}
    Recent work \cite{lopez2023picard} of one of the authors gives another approach to computing $\Pic(\eey_0(2)_k)$ for any field $k$ of characteristic not $2$.
\end{remark}

In summary, in logical order, the steps taken to prove Theorem~\ref{main} are: 
\begin{itemize}
    \item Compute $\Pic(\eey_0(2)_k)$ for algebraically closed fields $k$. See the appendix.
    \item Use this to compute $\Br(\eey_0(2)_k)$. See Section~\ref{sect:coarse-space}.
    \item Use that computation to compute the differentials in the spectral sequence (\ref{ss:p-local HS}) over $k$. See Lemma~\ref{lem:d202-diff-comp}.
    \item For more general bases $S$, compute the differentials in the spectral sequence (\ref{ss:p-local HS}) for $p=2$ by comparison with the case of an algebraically closed field and with the work in \cite{AM}. Use this to compute $\twoprimary{\Br}(\eey_0(2)_S)$, at least for $S=\spec(\Q)$ and $S=\spec(\Z[1/2])$. See Section~\ref{sect:2-prim-torsion}, especially Proposition~\ref{prop: 2-torsion of Y02}.
    \item Compute the spectral sequence (\ref{ss:p-local HS}), and so also $\pprimary{\Br}(\eey_0(2)_S)$, for $p\neq2$ assuming $p$ is not invertible in $S$. See Section~\ref{sect:p>=3 torsion, p invertible}.
    \item Compute the image of $\pprimary{\Br}(\eey_0(2)_{\Z[1/2]})\into\pprimary{\Br}(\eey_0(2)_{\Z[1/2p]})$ for each $p\neq2$. See Section~\ref{sect:p-prim-torsion-over-Z1/2}.
    \item Finally, to compute $\Br(\eey_0(2)_k)$ for perfect fields $k$, we use the Hochschild--Serre spectral sequence relative to the Galois cover $k^s/k$. See Section~\ref{sect:Br-field}.
\end{itemize}

\subsection*{Acknowledgements} The authors thank the organizers and leadership in the 2023 AMS Mathematics Research Communities, Explicit Computations with Stacks, for providing an environment encouraging research in which the authors met and formed a collaboration. The authors thank this MRC and its leadership and participants for providing interesting research problems, meaningful guidance, and helpful discussions. We thank Bjorn Poonen, Martin Olsson, John Voight, and Will Sawin in particular for helpful discussions. 
We also thank the anonymous referee for their helpful comments, and in particular for suggesting one of our proofs of Proposition~\ref{prop:section-no-extend}.

\medskip

\section{The Brauer group}
\label{sec: the brauer group}

We first define what we mean by Brauer groups in this paper. 

\begin{definition}\label{defn: C Brauer group}
  If $X$ is a quasi-compact and quasi-separated Deligne--Mumford stack, the \define{cohomological Brauer group} of $X$ is defined to be $\Br'(X)\coloneqq H^2(X,\G_m)_{\tors}$, the torsion subgroup of the \'etale cohomology group $H^2(X,\G_m)$.    
\end{definition}

\begin{definition}\label{defn: Azuyama algebras}
    An \define{Azumaya algebra} over a Deligne--Mumford stack $X$ is a sheaf of quasi-coherent
$\msO_X$\nobreakdash-algebras $\msA$ such that $\msA$ is \'etale-locally on $X$ isomorphic to $M_n(\msO_X)$, the sheaf of $n\times n$ matrices over $\msO_X$, for some  $n\geq 1$.
A pair of Azumaya algebras $\msA$ and $\msB$ are \define{Brauer equivalent} if there are vector bundles $E$ and $F$ on $X$ such that $\msA\otimes_{\msO_X} \End(E)\iso  \msB \otimes_{\msO_X} \End(F).$ 
\end{definition}

\begin{definition}\label{defn: Br}
The \define{Brauer group} $\Br(X)$ of a Deligne--Mumford
stack $X$ is the group of isomorphism classes of Azumaya algebras under tensor product modulo Brauer equivalence with inverses given by $-[\msA]=[\msA^{\op}]$.
\end{definition}

\begin{remark}
  There is a natural inclusion $\alpha_X\colon \Br(X)\to\Br'(X)$ given as follows. To ease the description, assume that $X$ is connected. An Azumaya algebra corresponds to a $\PGL_n$-torsor, and its image under $\alpha_X$ is given by the image of the torsor under the differential $H^1(X,\PGL_n)\to H^2(X, \G_m)$ coming from the sequence $0\to \G_m\to \GL_n \to \PGL_n\to 0$. 
\end{remark}

\begin{lemma}\label{lem:coh-Br=Az-Br}
    Let $S$ be a quasi-compact $\Z[\frac{1}{2}]$-scheme with an ample line bundle. Then, $\Br(\eey_0(2)_S)=\Br'(\eey_0(2)_S)$.
\end{lemma}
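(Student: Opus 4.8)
The plan is to realize $\yzt_S$ as a quotient stack whose presenting scheme is affine over $S$, and then to invoke the comparison $\Br=\Br'$ that holds for quotient stacks with the resolution property over a base admitting an ample line bundle --- the stacky form of Gabber's theorem (in de Jong's formulation: $\Br=\Br'$ for a quasi-compact separated scheme with an ample line bundle). First I would record the Weierstrass presentation of $\yzt$: over $\Z[1/2]$, writing an elliptic curve with a marked $2$-torsion point as $y^2=x^3+a_2x^2+a_4x$ with the marked point placed at the origin gives
\[
    \yzt\;\cong\;\left[\,\Spec\!\left(\Z[1/2][a_2,a_4]\!\left[\tfrac{1}{a_4(a_2^2-4a_4)}\right]\right)\ \middle/\ \G_m\,\right],
\]
with $\G_m$ acting with weights $2$ and $4$ (cf.\ \cite{Niles}). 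Base changing along $S\to\Spec\Z[1/2]$ yields $\yzt_S\cong[U_S/\G_m]$ with $U_S\to S$ affine; in particular $\yzt_S$ is a Deligne--Mumford stack with affine diagonal and quasi-projective coarse space $Y_0(2)_S$, and since $U_S$ is affine over $S$ and $S$ admits an ample line bundle, so does $U_S$ (pulled back from $S$).

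Given the presentation $\yzt_S=[U_S/\G_m]$, the stack is a quotient stack and hence has the resolution property: over the affine $U_S$, every $\G_m$-equivariant quasicoherent sheaf is a graded module over a graded ring, hence a quotient of a sum of characters $\msO(n)$, so every coherent sheaf on $\yzt_S$ is a quotient of a vector bundle; equivalently $\yzt_S$ is of the form $[T/\GL_n]$ with $T$ quasi-affine, by the theorem of Totaro and Gross. I would then apply to $\yzt_S$ --- which has affine diagonal and the resolution property --- the extension of de Jong's theorem to such stacks, precisely as Antieau and Meier do for $\eem_{1,1}$ over bases with an ample line bundle in \cite{AM} (one may equivalently invoke the $\Br=\Br'$ statement of Edidin--Hassett--Kresch--Vistoli for a quotient stack with quasi-projective coarse space). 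The conclusion is that the natural and always injective map $\Br(\yzt_S)\to\Br'(\yzt_S)$ is also surjective, i.e.\ $\Br(\yzt_S)=\Br'(\yzt_S)$.

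The geometry of $\yzt$ is not where the difficulty lies; the work is bookkeeping. I expect the main obstacle to be pinning down a clean reference for the form of Gabber's theorem valid for algebraic stacks with affine diagonal and the resolution property. If one prefers to avoid it, an alternative is to pass to the finite étale representable cover of $\yzt_S$ with total space $(\A^1_S\sm\{0,1\})$ obtained by composing the $C_2$-covers of the tower in the introduction --- and its Galois closure if necessary, whose Galois group is a $2$-group and so has order invertible on $S$ --- apply de Jong to this affine-over-$S$ scheme, and descend torsion classes; this is tractable precisely because the covering group has invertible order. A further, routine point is verifying the Weierstrass presentation and its compatibility with base change to $S$: there one uses $2\in\msO_S^\times$ both to complete the square and to see that marking a $2$-torsion point rigidifies the $x$-translations, so that only the $\G_m$-scaling survives.
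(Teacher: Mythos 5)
Your primary route has a genuine gap at exactly the point you flag: there is no off-the-shelf theorem asserting $\Br=\Br'$ for an algebraic stack with affine diagonal and the resolution property, and the references you reach for do not supply one. The Weierstrass presentation $\eey_0(2)\cong[\Spec(\Z[1/2][a_2,a_4][\tfrac{1}{a_4(a_2^2-4a_4)}])/\G_m]$ is correct, and it does give the resolution property; but de Jong's formulation of Gabber's theorem is for quasi-compact \emph{separated schemes} with an ample line bundle, and the Edidin--Hassett--Kresch--Vistoli results characterize when a single class $\alpha\in\Br'(X)$ lies in $\Br(X)$ via the quotient-stack property of the associated gerbe \emph{over a scheme} $X$ -- they do not assert $\Br=\Br'$ for a quotient stack with quasi-projective coarse space. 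So the load-bearing step of your main argument is an unproven (or at least unciteable) assertion, not bookkeeping.

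Your fallback is much closer to what the paper actually does, but the mechanism you describe for it is wrong. The composite $X_S=\A^1_S\sm\{0,1\}\to\eey(2)_S\to\eey_0(2)_S$ is indeed finite \'etale of degree $4$ from an affine $S$-scheme, and the correct way to descend is Gabber-style finite flat descent of the \emph{property} $\Br=\Br'$: this is \cite[Corollary 2.6]{Shin}, which says that if $f:X\to Y$ is finitely presented, finite, flat and surjective and $\alpha_X:\Br(X)\to\Br'(X)$ is an isomorphism, then so is $\alpha_Y$ -- with no hypothesis on the degree. Your phrase ``descend torsion classes \ldots because the covering group has invertible order'' instead suggests a corestriction/transfer argument, which only recovers $\alpha$ from $n\alpha$ when $n=\deg f$ is coprime to the order of $\alpha$; here $n$ is a power of $2$ while essentially all of $\Br'(\eey_0(2)_S)$ is $2$-primary, so that argument proves nothing about the interesting classes (invertibility of $2$ on $S$ is irrelevant to it). The paper's proof is precisely the descent route: it applies \cite[Corollary 2.6]{Shin} to the finite \'etale cover $\eey(4)_S\to\eey_0(2)_S$, using \cite[Proposition 2.5(v)]{AM} to see that $\alpha_{\eey(4)_S}$ is an isomorphism when $S$ is quasi-compact with an ample line bundle. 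If you replace your transfer heuristic with that descent statement (applied to either $\eey(4)_S$ or $X_S$), your fallback becomes a complete proof; your primary route does not close without a new theorem.
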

\begin{proof}
    For any quasi-compact algebraic stack $X$, we let $\alpha_X\colon\Br(X)\to\Br'(X)$ denote the natural inclusion. Corollary 2.6 of \cite{Shin} states that if $f\colon X\to Y$ is a finitely presented, finite, flat, surjective morphism of algebraic stacks and $\alpha_X$ is an isomorphism, then $\alpha_Y$ is an isomorphism as well. We wish to apply this to the finite \'etale cover $\eey(4)_S\to\eey_0(2)_S$. Note that $\eey(4)_S$ is quasi-projective, and so admits a relatively ample line bundle over $S$. Hence, if $S$ admits an ample line bundle, \cite[\href{https://stacks.math.columbia.edu/tag/0892}{Tag 0892}(1)]{stacks-project} shows that $\eey_0(2)_S$ does as well. Now, \cite[Proposition 2.5(v)]{AM} shows that $\alpha_{\eey(4)_S}$ is an isomorphism.
\end{proof}

In particular, in Theorem~\ref{main}, we compute $\Br(X)$ by computing $\Br'(X)$. For more details about the Brauer group in general, see \cite{Gr1,Gr2,Gr3}.

\medskip

\section{Setup}
\subsection{The spaces $\boldsymbol{\eem_{1,1},\eey_0(2)}$, and $\boldsymbol{\eey(2)}$} \label{section: setup_of_spaces}

In this section, we describe the moduli spaces of elliptic curves $\eem_{1,1}[\frac{1}{2}]$, $\eey(2)$, and $\eey_0(2)$, where the space $\eem_{1,1}[\frac{1}{2}]$ is the moduli space of elliptic curves over $\Z[\frac{1}{2}]$, the space $\eey(2)$ is the moduli space of elliptic curves with full level 2 structure, and the space $\eey_0(2)$ is the moduli space of elliptic curves equipped with a cyclic subgroup of order $2$. We will use the parameter space of Legendre curves to describe $\eey(2)$. 

\begin{definition}
    Let $E$ be an elliptic curve over a base scheme $S$. A \define{full level $2$ structure} on $E/S$ is an isomorphism $\ul{(\zmod2)^2}_S\iso E[2]$ from the constant group scheme over $S$ to $2$-torsion in $E$.
\end{definition}

\begin{definition}\label{def: 2 torsion}
    Let $E$ be an elliptic curve over a base scheme $S$. The elliptic curve $E/S$ is said to be \define{equipped with a cyclic subgroup of order $2$} if there is a map $\ul{(\zmod2)}_S\into E[2]$ from the constant group scheme over $S$ to $2$-torsion in $E$.
\end{definition} 

\begin{remark}
    In the literature, $\eey_0(N)$ is the moduli of elliptic curves with a fixed order $N$ subgroup, and $\eey_1(N)$ is the moduli of elliptic curves with a fixed point of order $N$. Because a subgroup of order 2 has a unique generator by an element of order 2, the spaces $\eey_0(2)$ and $\eey_1(2)$ are the same. To simplify notation, we will denote a fixed subgroup of order 2 by its unique generator. 
\end{remark}

\begin{definition}
    A \define{Legendre curve} with parameter $t$ over a scheme $S$ is an elliptic curve $E_t$ over $S$ with Weierstrass equation $y^2=x(x-1)(x-t)$. This equation has discriminant $\Delta=16t^2(t-1)^2$, which is a unit if and only if $2$, $t$, $t-1$ are all invertible on $S$.
\end{definition}

Let $$X=\A^1_{\Z[\frac12]}\setminus\{0,1\}=\spec\p{\Z\sq{\frac12,t^{\pm1},\pinv{t-1}}}.$$ 
The universal Legendre curve
$$E:y^2=x(x-1)(x-t)$$
over $X$ has three points of order $2$, which are $(0,0)$, $(1,0)$, and $(t,0)$. Fixing $\{(0,0),(1,0)\}$ as an ordered basis for its 2-torsion defines a full level $2$ structure on $E$, and so induces a map $\pi:X\to\eey(2)$.

\begin{proposition}\label{prop:Y(2)-cms}
    The above constructed $\pi\colon X\to\eey(2)$ is a $C_2$-torsor with trivial action on $X$, so $\eey(2)\simeq BC_{2,X}$. Furthermore, the coarse space map $c\colon \eey(2)\to X$ is given by sending an elliptic curve $y^2=(x-e_1)(x-e_2)(x-e_3)$ to $t=\frac{e_3-e_1}{e_2-e_1}\in X$.
\end{proposition}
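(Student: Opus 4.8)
The plan is to present $\pi\colon X\to\eey(2)$ as a representable, finite étale, surjective morphism for which $X\times_{\eey(2)}X\cong X\times C_2$ with the two projections equal; fppf descent then identifies $\eey(2)$ with $BC_{2,X}$, and tracing the construction pins down the coarse space map. First I would record the invariant $\lambda$: given an elliptic curve with full level $2$ structure $(E,\phi)$ over a $\Z[1/2]$-scheme $T$, write $E$ in Weierstrass form Zariski-locally on $T$; the level structure labels the three nonzero $2$-torsion points as $(e_1,0),(e_2,0),(e_3,0)$ with $\phi(1,0)\leftrightarrow(e_1,0)$, $\phi(0,1)\leftrightarrow(e_2,0)$, $\phi(1,1)\leftrightarrow(e_3,0)$, and each $e_i-e_j$ is a unit since the discriminant is. The quantity $t\coloneqq(e_3-e_1)/(e_2-e_1)$ is invariant under the coordinate changes $x\mapsto u^2x+r$ (which act affine-linearly on the $e_i$), so it glues to a point $\lambda(E,\phi)\in X(T)$, defining a morphism $\lambda\colon\eey(2)\to X$; note $\lambda\circ\pi=\id_X$ because $E_t\colon y^2=x(x-1)(x-t)$ has $(e_1,e_2,e_3)=(0,1,t)$.

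The key step is that $\pi$ is representable, finite étale, and surjective. Viewing $\pi$ as classifying the universal Legendre curve with standard level structure, for $T\to\eey(2)$ the fibre product $X\times_{\eey(2)}T$ is the scheme of pairs $(t,\psi)$ with $\psi\colon(E_t)_T\iso(E,\phi)_T$ respecting level structures; it is $\operatorname{Isom}$ over $X\times T$, hence finite over $T$. It suffices to produce such a pair étale-locally on $T$: translate the Weierstrass model to move $e_1$ to $0$; adjoin $v=\sqrt{e_2-e_1}$, a finite étale degree-$2$ cover of $T$ since $2$ and $e_2-e_1$ are units; then $(x,y)\mapsto(v^2x,v^3y)$ brings $E$ to $y^2=x(x-1)(x-t)$ with $t=\lambda(E,\phi)$, and the resulting isomorphism carries $\phi$ to the standard structure. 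Since ``finite étale'' descends along the fppf cover $T'\to T$ just built, $X\times_{\eey(2)}T\to T$ is finite étale; as $T$ was arbitrary, $\pi$ is representable finite étale, and it is surjective.

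Finally I would compute $X\times_{\eey(2)}X$ and descend. Any level-structure-preserving isomorphism $E_t\iso E_{t'}$ is a Weierstrass transformation, and the normalization $a_1=a_3=0$ on both sides forces it to have the form $(x,y)\mapsto(u^2x+r,u^3y)$ (using that $2$ is invertible); imposing $(0,0)\mapsto(0,0)$ and $(1,0)\mapsto(1,0)$ forces $r=0$, $u^2=1$, and $t=t'$. Thus $\operatorname{Isom}((E_t,\mathrm{std}),(E_{t'},\mathrm{std}))$ is empty unless $t=t'$, in which case it is $\{\pm1\}\cong C_2$; equivalently $X\times_{\eey(2)}X\cong X\times C_2$, concentrated on the diagonal, with both projections to $X$ equal to $(t,\epsilon)\mapsto t$. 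This is exactly the descent datum of a $C_2$-torsor with trivial $C_2$-action, so fppf descent gives $\eey(2)\simeq[X/C_2]_{\mathrm{triv}}=X\times BC_2=BC_{2,X}$. The coarse space of $BC_{2,X}$ is $X$ with coarse map the projection, which under this equivalence is $\lambda$, i.e.\ the map sending $y^2=(x-e_1)(x-e_2)(x-e_3)$ to $t=(e_3-e_1)/(e_2-e_1)$, consistent with $\lambda\circ\pi=\id_X$.

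I expect the main obstacle to be the étale-local triviality step — in particular, checking that after adjoining $\sqrt{e_2-e_1}$ the isomorphism of Weierstrass models genuinely respects the level structures — together with correctly reading off the \emph{trivial} $C_2$-action from the fibre-product computation; the remaining Weierstrass bookkeeping in the $\operatorname{Isom}$ computation is routine but must track carefully the roles of the invertibility of $2$ and of the $a_1=a_3=0$ normalization.
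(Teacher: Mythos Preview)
Your argument is correct. The paper's own proof is simply a citation to \cite[Proposition~4.5 and Corollary~4.6]{AM}; your write-up reconstructs that argument in full, with the \'etale-local Legendre normalization (adjoining $\sqrt{e_2-e_1}$) and the $\operatorname{Isom}$ computation forcing $t=t'$ and $u=\pm1$ being precisely the two ingredients used there. The one cosmetic issue is ordering: in your second paragraph, producing a section of $X\times_{\eey(2)}T'\to T'$ after an \'etale cover establishes surjectivity but not \'etaleness on its own---\'etaleness follows once you know (from your third-paragraph computation, which works over any base) that the automorphism scheme of a Legendre curve with its standard level-$2$ structure is the constant group $\{\pm1\}$, so that $X\times_{\eey(2)}T'\cong T'\times C_2$. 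Invoking that $\operatorname{Aut}$ computation before the descent step, or simply performing the $\operatorname{Isom}$ calculation first, would make the logic airtight.
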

\begin{proof}
    This is \cite[Proposition 4.5 and Corollary 4.6]{AM}.
\end{proof}

\begin{remark}
    The existence of a full level 2 structure on an elliptic curve $E$ will always allow us to write $E$ in the form $y^2=(x-e_1)(x-e_2)(x-e_3)$, with 2-torsion points $(e_1,0)$, $(e_2,0)$, $(e_3,0)$ and full level 2 structure on~$E$ given by the isomorphism 
    $$\varphi\colon \ul{(\Z/2\Z)^2_S} \too E[2], \quad (1,0)\longmapsto (e_1,0), \ (0,1)\longmapsto (e_2, 0).$$
Equivalently, we specify the full level 2 structure by the ordered basis $\{(e_1,0), (e_2,0)\}$. 

The isomorphism from $E: y^2=(x-e_1)(x-e_2)(x-e_3)$, defined over a ring $R$ in which $e_2 - e_1$ is a square, to $E_t: y'^2= x'(x'-1)(x'-t)$, where $t=\frac{e_3-e_1}{e_2-e_1}$, is given by the change of coordinates $(x,y)\mapsto \p{\frac{x-e_1}{e_2-e_1}, \frac{y}{(e_2-e_1)^{3/2}}}$,
which maps the 2-torsion points $(e_1,0)\mapsto (0,0)$, $(e_2,0)\mapsto (1,0)$, $(e_3,0)\mapsto (t,0)$. 
\end{remark}

We have an action of $S_3$ on $\eey(2)$ given as follows. Consider the elliptic curve 
$$E: y^2=(x-e_1)(x-e_2)(x-e_3)$$ 
with full level 2 structure on $E$ given by the ordered basis $\{(e_1,0), (e_2,0)\}$. Then we have an action of $S_3$ on the set $\{e_1, e_2, e_3\}$, where $\sigma=(123)$ sends $\{e_1, e_2, e_3\}$ to $\{e_2, e_3, e_1\}$ and $\tau=(23)$ sends $\{e_1, e_2, e_3\}$ to $\{e_1, e_3, e_2\}$. In particular, $\tau$ acts on $\eey(2)$ by fixing $E$ and sending the basis $\{(e_1,0), (e_2,0)\}$ for $E[2]$ to the basis $\{(e_1, 0), (e_3,0)\}$.  Then this action of $S_3$ on $\eey(2)$ induces an action of $S_3$ on $X$ where $\eta\in S_3$ acts on $t\in X$ by sending $t=t(e_1, e_2, e_3)=\frac{e_3-e_1}{e_2-e_1}$ to $\eta(t)=t(\eta(e_1, e_2, e_3))$. 

\begin{remark}\label{rem:cms-C2-act}
    In particular, $\tau$ sends a curve $y^2=(x-e_1)(x-e_2)(x-e_3)$ to $y^2=(x-e_1)(x-e_3)(x-e_2)$ and so sends
    \[t=\frac{e_3-e_1}{e_2-e_1}\mapstoo\frac{e_2-e_1}{e_3-e_1}=\inv t\]
    on the coarse space $X$.
\end{remark}

Consider the following maps $\eey(2)\to\eey_0(2)$ and $\eey(2)\to \eem_{1,1}$. Let $\eey(2)\to\eey_0(2)$ be the map sending
$$\p{E,\phi\colon\ul{(\zmod2)^2}_S\to E[2]}\longmapsto\p{E,\phi(1,0)},$$
and let $\eey(2)\to \eem_{1,1}$ be the map sending
$$\p{E,\phi\colon\ul{(\zmod2)^2}_S\to E[2]}\longmapsto E.$$ 

Clearly, the $S_3$-action defined above acts trivially over $\eem_{1,1}$, and the restriction to the $C_2$-action acts trivially over $\eey_0(2)$.

\begin{proposition}\label{prop:C2-Gal}
    The map $\eey(2)\to\eey_0(2)$ is a $C_2$-Galois cover, i.e., is a $C_2$-torsor.
\end{proposition}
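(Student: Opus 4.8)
The plan is to verify directly that the morphism $f\colon\eey(2)\to\eey_0(2)$ is a $C_2$-torsor; this suffices, because for a finite constant group a $G$-torsor is the same thing as a $G$-Galois cover. For the $C_2$-action I take $C_2=\angles{\tau}\subseteq S_3$. As noted above, $\tau$ fixes the first vector of an ordered basis $\{(e_1,0),(e_2,0)\}$ of $E[2]$, sending this basis to $\{(e_1,0),(e_3,0)\}$; in particular it fixes $\phi(1,0)$, so $f\circ\tau=f$ and $C_2$ acts on $\eey(2)$ over $\eey_0(2)$. It then remains to check that this action is free with quotient $\eey_0(2)$: equivalently, that $f$ is finite \'etale and that $C_2\times\eey(2)\to\eey(2)\times_{\eey_0(2)}\eey(2)$, $(g,x)\mapsto(gx,x)$, is an isomorphism.

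I would first reduce to a statement about schemes. The morphism $f$ is representable: an automorphism of $(E,\phi)$ mapping to the identity of $(E,\phi(1,0))$ fixes all of $E[2]$ and already equals $\id_E$ in $\eey_0(2)$, hence is the identity in $\eey(2)$. Since being a $C_2$-torsor may be checked fppf-locally on the target, it is enough to show that for every scheme $T$ and every morphism $T\to\eey_0(2)$ — i.e.\ every elliptic curve $E/T$ equipped with an order-$2$ subgroup generated by a section $P\in E[2](T)$ of exact order $2$ — the base change $\mathscr{F}_{E,P}:=\eey(2)\times_{\eey_0(2)}T$ is a $C_2$-torsor over $T$. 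By representability and the definition of $\eey(2)$, $\mathscr{F}_{E,P}$ is the $T$-scheme representing the functor $T'\mapsto\{\text{full level-2 structures }\phi\text{ on }E_{T'}\text{ with }\phi(1,0)=P_{T'}\}$.

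Next I would identify this functor explicitly. Such a $\phi$ is the same datum as a second basis vector $Q:=\phi(0,1)\in E_{T'}[2](T')$ making $\{P,Q\}$ a basis of $E_{T'}[2]$, equivalently making $Q$ map to a generator of the order-$2$ quotient group scheme $G:=E[2]/\angles{P}$. Since $2$ is invertible on $T$, $G$ is finite \'etale of order $2$, the complement $G\sm\{0\}$ of its zero section is open and closed in $G$, and the structure map restricts to an isomorphism $G\sm\{0\}\iso T$. Writing $q\colon E[2]\to G$ for the quotient (a torsor under $\angles{P}$), one gets $\mathscr{F}_{E,P}=q^{-1}(G\sm\{0\})=E[2]\sm\angles{P}$, which is finite \'etale of degree $2$ over $G\sm\{0\}\cong T$, carries the free translation action of $\angles{P}\cong C_2$, and has quotient $G\sm\{0\}\cong T$; hence $\mathscr{F}_{E,P}\to T$ is a $C_2$-torsor. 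Finally I would match this action with the one induced by $\tau$: the three points $(e_i,0)$ sum to $O$ in $E$ and are $2$-torsion, so $(e_3,0)=(e_1,0)+(e_2,0)=P+Q$ and $\tau$ sends $Q\mapsto Q+P$, which is exactly translation by $P$. This yields the isomorphism $C_2\times\mathscr{F}_{E,P}\iso\mathscr{F}_{E,P}\times_T\mathscr{F}_{E,P}$, compatibly in $T$, so $f$ is a $C_2$-torsor.

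Nothing here is deep, and I expect the only real subtlety to be working over an arbitrary base $T$, which may be non-reduced and have residue fields that are not separably closed: the condition "$Q$ does not lie in $\angles{P}$" has to be phrased in families, which is why I pass to the quotient group scheme $G=E[2]/\angles{P}$ and its non-identity locus rather than argue pointwise, and there is a small bookkeeping check that the $C_2$-action induced by $\tau$ really is translation by $P$. The representability observation at the outset is what turns the relevant fibre products into schemes — equivalently, it records that the automorphisms of $\eey(2)$ already appear on $\eey_0(2)$ — so that the moduli-functor identification is legitimate.
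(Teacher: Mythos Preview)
Your proof is correct and takes a somewhat different route from the paper's. The paper argues by trivializing: given $(E,P)$ over an affine $\spec R$, it passes to an \'etale extension $R\to R'$ over which $E[2]$ becomes constant, chooses a full level-$2$ structure $\phi$ with $\phi(1,0)=P$, and then observes that the two choices of second basis vector exhibit $\eey(2)\times_{\eey_0(2)}\spec R'\cong(C_2)_{R'}$ as a trivial torsor. You instead identify the fibre $\mathscr F_{E,P}$ intrinsically as the open and closed subscheme $E[2]\sm\angles P\subset E[2]$, which is automatically an $\angles P$-torsor over $G\sm\{0\}\cong T$ because $E[2]\to G=E[2]/\angles P$ is one; no auxiliary \'etale cover is needed to witness the torsor structure. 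Your check that $\tau$ acts by $Q\mapsto Q+P$ (using that the three nonzero $2$-torsion points sum to zero) matches this with the given $C_2$-action and is exactly the identification $\bar\phi(0,1)=\phi(1,0)+\phi(0,1)$ that the paper writes down at the start of its proof. The paper's argument is shorter because trivializing lets one avoid parsing the fibre-product functor carefully, while yours is a bit more structural and makes the torsor visible over the original base $T$ rather than only after an \'etale cover; both are equally valid, and your representability remark and passage through the quotient $G$ are the right way to handle the family issues you flag.
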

\begin{proof}
    Note that $C_2$ acts on $\eey(2)$ via $(E,\phi)\mapsto(E,\bar\phi)$, where
    \[\bar\phi(1,0)=\phi(1,0)\tand\bar\phi(0,1)=\phi(1,0)+\phi(0,1).\]
    In order to show that $\eey(2)\to\eey_0(2)$ is a $C_2$-Galois cover, it suffices to show that for any $\spec(R)\to\eey_0(2)$, there exists an \'etale cover $\spec(R')\to\spec(R)$ such that the base change $\eey(2)\by_{\eey_0(2)}\spec(R')$ is a trivial $C_2$-torsor over $\spec(R')$.
    
    Fix any $\Z[\frac{1}{2}]$-algebra $R$ as well as an $R$-point $(E/R,P)\in\eey_0(2)(R)$. Then, \cite[Proof of Lemma~4.3]{AM} shows that there exists some \'etale extension $R\into R'$ above which $E$ gains a full level 2 structure $\phi\colon\ul{(\zmod2)^2}_{R'}\to E[2]_{R'}$. We may modify $\phi$ by an automorphism in order to assume that we have $\phi(1,0)=P_{R'}\in E[2]_{R'}$. Then $\phi$ and $\bar\phi$ are the only two level structures with this property since the automorphism group of $(\Z/2\Z)^2$ fixing $(1,0)$ is $C_2$. Thus, we get an isomorphism
    \[(C_2)_{R'}=C_2\by\spec(R')\too\eey(2)\by_{\eey_0(2)}\spec(R')\]
    sending $0\mapsto(E,\phi)$ and $1\mapsto(E,\bar\phi)$, proving the claim.
\end{proof}

We will exploit Proposition~\ref{prop:C2-Gal} in order to compute the coarse space of $\eey_0(2)$. In brief, Proposition~\ref{prop:Y(2)-cms} shows that $\A^1\sm\{0,1\}$ is the coarse space of $\eey(2)$, so it inherits an action of $C_2$. The coarse space of $\eey_0(2)$ will be the geometric quotient of $\A^1\sm\{0,1\}$ by this $C_2$-action.

\begin{lemma}\label{lem:C2-inv-comp}
    Let $C_2$ act on $\A^1\sm\{0,1\}=\spec(\Z[t][\inv t,\pinv{t-1}])$ via $t\mapsto1/t$. Then, $(\A^1\sm\{0,1\})/C_2\simeq\A^1\sm\{0\}=\spec(\Z[s][1/s])$ via the map
    \[\frac t{(t-1)^2}\colon\A^1\sm\{0,1\}\too\A^1\sm\{0\}.\]
\end{lemma}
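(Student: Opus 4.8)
The claim is equivalent to a computation of a ring of invariants. Write $A=\Z[t,t^{-1},(t-1)^{-1}]$, let $\sigma$ generate $C_2$ acting by $\sigma(t)=t^{-1}$ (this is well defined on $A$ since $\sigma((t-1)^{-1})=-t(t-1)^{-1}\in A$), and set $s\coloneqq t(t-1)^{-2}\in A$ and $B\coloneqq\Z[s,s^{-1}]$. As $t$ and $t-1$ are units of $A$, so is $s$; and $\sigma(s)=t^{-1}\cdot t^2(t-1)^{-2}=s$, so $B\subseteq A^{C_2}$. Since the geometric quotient of an affine scheme by a finite group is the spectrum of the ring of invariants, the lemma reduces to showing $A^{C_2}=B$: then $\Spec A^{C_2}=\Spec\Z[s,s^{-1}]=\A^1\sm\{0\}$, with quotient map $\Spec A\to\Spec B$ induced by the inclusion $B\hookrightarrow A$, namely $s\mapsto t(t-1)^{-2}=\tfrac{t}{(t-1)^2}$.

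To prove $A^{C_2}=B$ I would argue by normality. First, $A$ is integral — in fact module-finite — over $B$: the defining relation $s(t-1)^2=t$ rearranges to the monic equation $t^2-(2+s^{-1})t+1=0$ over $B$, and moreover one checks directly that $t^{-1}=(2+s^{-1})-t\in B[t]$ and $(t-1)^{-1}=st-s-1\in B[t]$ (the latter since $(st-s-1)(t-1)=s(t-1)^2-(t-1)=t-(t-1)=1$), so $A=B[t]$ is spanned over $B$ by $\{1,t\}$. Second, $B=\Z[s,s^{-1}]$ is a normal domain, being a localization of $\Z[s]$, which is normal because $\Z$ is. Third, at the level of fraction fields $\Frac(A)=\Q(t)$ and $\Frac(B)=\Q(s)$, and since $s$ is a degree-$2$ rational function of $t$ we get $[\Q(t):\Q(s)]=2$; as $\sigma$ has order $2$ and fixes $s$, it generates $\Gal(\Q(t)/\Q(s))$, so $\Q(t)^{C_2}=\Q(s)$. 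Putting these together: $A^{C_2}\subseteq A$ is integral over $B$ and contained in $\Frac(A)^{C_2}=\Q(s)=\Frac(B)$, so, $B$ being integrally closed in its fraction field, $A^{C_2}\subseteq B$; with the reverse inclusion noted above, $A^{C_2}=B$.

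I do not expect a genuine obstacle here. The only computational content is spotting the identities expressing $t^{-1}$ and $(t-1)^{-1}$ as $B$-polynomials in $t$, after which everything follows from the standard fact that an integral extension of a normal domain $B$ with fixed field equal to $\Frac(B)$ has ring of invariants exactly $B$. Two small points deserve a moment's care: the argument runs over the base $\Z$ rather than a field, but this causes no trouble since $\Z$ — hence $\Z[s]$ and its localization $B$ — is normal and the relevant fraction fields are $\Q(t)$ and $\Q(s)$; and the $C_2$-action on $\A^1\sm\{0,1\}$ is not free (it fixes the point $t=-1$ in characteristics $\neq 2$), so the quotient map is ramified there, which is harmless for identifying the geometric quotient with $\Spec A^{C_2}$.
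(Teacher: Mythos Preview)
Your proof is correct and follows essentially the same strategy as the paper's: both arguments show the extension is finite of degree $2$ and then use that the smaller ring is integrally closed to conclude it equals the ring of $C_2$-invariants. Your version is a bit more explicit (writing $t^{-1}$ and $(t-1)^{-1}$ as $B$-polynomials in $t$ and passing through fraction fields), while the paper argues directly by degree count, but the content is the same.
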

\begin{proof}
    This map is $C_2$-invariant, so factors, on the level of coordinate rings, as
    \[A:=\Z[s][1/s]\intoo\p{\Z[t][\inv t,\pinv{t-1}]}^{C_2}\intoo\Z[t][\inv t,\pinv{t-1}]=:B,\]
    where the first map is given by $s\mapsto t/(t-1)^2$. 
    We would like to show that the first map is an isomorphism. We note that $A\to B$ is integral (since $t\in B$ is a root of the monic $(x-1)^2-x/s\in A[x]$) and hence finite of degree $2$. The map $B^{C_2}\into B$ is also finite of degree $2$, so $A\to B^{C_2}$ is finite of degree $1$. As $A$ is integrally closed, this forces $A\to B^{C_2}$ to be an isomorphism, as desired.
\end{proof}

\begin{lemma}\label{lem:cms-construct}
    There is a map $\eey_0(2)\to\A^1_{\Z[\frac{1}{2}]}\sm\{0\}$ fitting into a commutative diagram
    \[\commsquare{\eey(2)}c{\A^1_{\Z[\frac{1}{2}]}\sm\{0,1\}}{}{t/(t-1)^2}{\eey_0(2)}{}{\A^1_{\Z[\frac{1}{2}]}\sm\{0\}\rlap{,}}\]
    where the top morphism sends $y^2=(x-e_1)(x-e_2)(x-e_3)$ to $\frac{e_3-e_1}{e_2-e_1}$.
\end{lemma}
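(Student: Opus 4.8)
The plan is to obtain the bottom arrow by descending a morphism along the $C_2$-torsor $\eey(2)\to\eey_0(2)$ of Proposition \ref{prop:C2-Gal}. By that proposition $\eey_0(2)\simeq[\eey(2)/C_2]$, so — since $\A^1_{\Z[1/2]}\sm\{0\}$ is a scheme — a morphism $\eey_0(2)\to\A^1_{\Z[1/2]}\sm\{0\}$ is the same datum as a $C_2$-invariant morphism $\eey(2)\to\A^1_{\Z[1/2]}\sm\{0\}$ (this is just fppf descent along the torsor). Thus it suffices to exhibit such an invariant morphism whose composite with $\eey(2)\to\eey_0(2)$ realizes the top-and-right path of the square, and then read off what it does on Weierstrass equations.

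First I would pin down the $C_2$-action. The action on $\eey(2)$ in Proposition \ref{prop:C2-Gal} sends $(E,\phi)$ to $(E,\bar\phi)$ with $\bar\phi(1,0)=\phi(1,0)$ and $\bar\phi(0,1)=\phi(1,0)+\phi(0,1)$. Writing $E$ as $y^2=(x-e_1)(x-e_2)(x-e_3)$ with $\phi$ given by the ordered basis $\{(e_1,0),(e_2,0)\}$, the relation $(e_1,0)+(e_2,0)+(e_3,0)=0$ in $E[2]$ shows $\phi(1,0)+\phi(0,1)=(e_3,0)$, so $\bar\phi$ corresponds to the ordered basis $\{(e_1,0),(e_3,0)\}$. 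Hence this $C_2$-action is precisely the restriction to $\langle\tau\rangle\subset S_3$ (with $\tau=(23)$) of the $S_3$-action on $\eey(2)$, and by Remark \ref{rem:cms-C2-act} the induced action on the coarse space $X=\A^1_{\Z[1/2]}\sm\{0,1\}$ is $t\mapsto 1/t$; in particular the coarse space map $c:\eey(2)\to X$ of Proposition \ref{prop:Y(2)-cms} is $C_2$-equivariant for this action.

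Now Lemma \ref{lem:C2-inv-comp} says exactly that $t/(t-1)^2:\A^1_{\Z[1/2]}\sm\{0,1\}\to\A^1_{\Z[1/2]}\sm\{0\}$ is invariant under $t\mapsto 1/t$, so the composite $\eey(2)\xto{c}\A^1_{\Z[1/2]}\sm\{0,1\}\xto{t/(t-1)^2}\A^1_{\Z[1/2]}\sm\{0\}$ is $C_2$-invariant and therefore descends to a morphism $\eey_0(2)\to\A^1_{\Z[1/2]}\sm\{0\}$ making the square commute; and the top arrow is $c$, which by Proposition \ref{prop:Y(2)-cms} sends $y^2=(x-e_1)(x-e_2)(x-e_3)$ to $t=(e_3-e_1)/(e_2-e_1)$, as claimed. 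The only point requiring any care is the bookkeeping of the previous paragraph — matching the $C_2$-action of Proposition \ref{prop:C2-Gal} with $\langle\tau\rangle\subset S_3$ so that Remark \ref{rem:cms-C2-act} applies — together with the standard fact that morphisms from $[\eey(2)/C_2]$ to a scheme are $C_2$-invariant morphisms out of $\eey(2)$; beyond that the lemma is a direct consequence of results already established, so I do not anticipate a genuine obstacle.
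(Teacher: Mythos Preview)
Your proof is correct and follows essentially the same approach as the paper: identify $\eey_0(2)\simeq[\eey(2)/C_2]$ via Proposition~\ref{prop:C2-Gal}, observe that maps to a scheme are $C_2$-invariant maps out of $\eey(2)$, and then use the $C_2$-equivariance of $c$ together with Lemma~\ref{lem:C2-inv-comp} to produce the invariant map. Your added paragraph explicitly matching the $C_2$-action of Proposition~\ref{prop:C2-Gal} with $\langle\tau\rangle\subset S_3$ is a point the paper glosses over, so your version is if anything slightly more careful.
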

\begin{proof}
    As a consequence of Lemma~\ref{prop:C2-Gal}, we have $\eey_0(2)\simeq[\eey(2)/C_2]$, so a morphism $\eey_0(2)\to\A^1\sm\{0\}$ is equivalently a $C_2$-invariant map $\eey(2)\to\A^1\sm\{0\}$. Because $\A^1\sm\{0\}$ is a scheme, such a map factors uniquely through the coarse space morphism $c$, and this is equivalently a $C_2$-invariant map $\A^1\sm\{0,1\}\to\A^1\sm\{0\}$, as $c\colon\eey(2)\to\A^1\sm\{0,1\}$ is $C_2$-equivariant. Lemma~\ref{lem:C2-inv-comp} shows $t/(t-1)^2$ is $C_2$-invariant, and the claim follows. 
\end{proof}

\begin{proposition}\label{prop:y0(2)-cms}
    The map $\eey_0(2)\to\A^1_{\Z[\frac{1}{2}]}\sm\{0\}$ of Lemma~\ref{lem:cms-construct} is a coarse space map.
\end{proposition}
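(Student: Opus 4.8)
The plan is to exploit the identification $\eey_0(2)\simeq[\eey(2)/C_2]$ furnished by Proposition \ref{prop:C2-Gal}. Since $\eey(2)$ has coarse space $X:=\A^1_{\Z[1/2]}\sm\{0,1\}$ (Proposition \ref{prop:Y(2)-cms}), on which the residual $C_2$-action is $t\mapsto 1/t$ (Remark \ref{rem:cms-C2-act}) with quotient $Y:=\A^1_{\Z[1/2]}\sm\{0\}$ (Lemma \ref{lem:C2-inv-comp}), one expects the coarse space of $\eey_0(2)$ to be $Y$, realized by precisely the morphism built in Lemma \ref{lem:cms-construct}. I will verify the two defining properties of a coarse moduli space for that morphism $\bar c\colon\eey_0(2)\to Y$: (i) it is initial among morphisms from $\eey_0(2)$ to algebraic spaces, and (ii) it induces a bijection $\pi_0(\eey_0(2)(\bar k))\to Y(\bar k)$ for every algebraically closed field $\bar k$ (necessarily of characteristic $\neq2$).

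Write $p\colon\eey(2)\to\eey_0(2)$ for the $C_2$-torsor, $c\colon\eey(2)\to X$ for the coarse space map, and $q\colon X\to Y$ for the quotient, so that $\bar c\circ p=q\circ c$ by Lemma \ref{lem:cms-construct}. For (i), given $f\colon\eey_0(2)\to T$ with $T$ an algebraic space, the composite $f\circ p$ is $C_2$-invariant, hence factors uniquely as $h\circ c$ for a unique $h\colon X\to T$; because $c$ is $C_2$-equivariant and admits the section $\pi\colon X\to\eey(2)$ (one has $c\circ\pi=\id_X$ from the explicit formulas in Proposition \ref{prop:Y(2)-cms}), the map $h$ is itself $C_2$-invariant, so it factors uniquely as $h'\circ q$ using that $q$ realizes $Y$ as the (fppf-sheaf) quotient $X/C_2$ — this is Lemma \ref{lem:C2-inv-comp} together with the standard theory of finite-group quotients of affine schemes. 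Then $f\circ p=h'\circ q\circ c=h'\circ\bar c\circ p$, and $p$ is an epimorphism, so $f=h'\circ\bar c$; uniqueness of $h'$ follows identically, using that $c$ and $q$ are epimorphisms. For (ii), every $C_2$-torsor over $\Spec\bar k$ is trivial, so $\pi_0(\eey_0(2)(\bar k))=\pi_0([\eey(2)/C_2](\bar k))=\pi_0(\eey(2)(\bar k))/C_2$; this equals $X(\bar k)/C_2$ because $X$ is the coarse space of $\eey(2)$, and $X(\bar k)/C_2=Y(\bar k)$ because geometric quotients by finite groups commute with passage to $\bar k$-points. These identifications are compatible with $\bar c$, which gives the bijection.

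The one point requiring genuine care is the interplay between coarse-space formation and the finite quotient: a priori the recipe ``take the coarse space of $\eey(2)$, then quotient by $C_2$'' might compute the coarse space of $[X/C_2]$ rather than that of $[\eey(2)/C_2]$, and indeed the commutative square of Lemma \ref{lem:cms-construct} is \emph{not} cartesian (the left vertical $p$ is \'etale, while the right vertical $q$ is ramified over $s=-1/4$). The universal-property argument above sidesteps this by never passing through $[X/C_2]$: it uses only that $X$ is the coarse space of $\eey(2)$ and that $Y$ is the scheme-theoretic quotient $X/C_2$, at the cost of checking the quotient-sheaf and geometric-point statements by hand, both of which are immediate from Lemma \ref{lem:C2-inv-comp}. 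Finally, the identification of $\bar c$ with $y^2=(x-e_1)(x-e_2)(x-e_3)\mapsto t/(t-1)^2$, where $t=(e_3-e_1)/(e_2-e_1)$, is read off directly from the construction in Lemma \ref{lem:cms-construct}.
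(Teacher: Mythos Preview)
Your proof is correct and follows essentially the same line as the paper's: both factor an arbitrary $f\colon\eey_0(2)\to T$ through the coarse space $X$ of $\eey(2)$, observe the resulting map $X\to T$ is $C_2$-invariant, and then descend to $Y=X/C_2$. The only cosmetic differences are that the paper invokes Keel--Mori to reduce to checking the universal property alone (so your verification of the geometric-points bijection, while correct, is not needed), and the paper passes through the intermediate stack $[X/C_2]$ and its coarse space rather than descending directly along the scheme quotient $q$ as you do.
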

\begin{proof}
    The Keel--Mori theorem, see \cite[Theorem 11.1.2]{olsson}, asserts the existence of a coarse moduli space $\eey_0(2)\to Y_0(2)$ characterized by the fact that it is initial among maps from $\eey_0(2)$ to algebraic spaces. We will show that $\eey_0(2)\to\A^1_{\Z[\frac{1}{2}]}\sm\{0\}$ satisfies this universal property.

    Let $f\colon\eey_0(2)\to Y$ be any map to an algebraic space $Y$; we will show it factors through $\eey_0(2)\to\A^1_{\Z[\frac{1}{2}]}\sm\{0\}$. By Proposition~\ref{prop:Y(2)-cms}, the map $\eey(2)\to\A^1_{\Z[\frac{1}{2}]}\sm\{0,1\}$ is a coarse space map; thus, the composition \smash{$\eey(2)\to\eey_0(2)\xto fY$} factors through $\A^1\sm\{0,1\}$. The induced map $\A^1\sm\{0,1\}\to Y$ is necessarily $C_2$-invariant, so further factors through $[(\A^1\sm\{0,1\})/C_2]=:\mathscr X$. All in all, we have a commutative diagram
    \[\begin{tikzcd}
        \eey(2)\ar[r]\ar[d]&\A^1\sm\{0,1\}\ar[d]\\
        \eey_0(2)\ar[r]\ar[rr, bend right, "f"']&\mathscr X\ar[r]&Y.
    \end{tikzcd}\]
    Finally, the coarse space of $\mathscr X$ is $(\A^1\sm\{0,1\})/{C_2}=\A^1\sm\{0\}$ by Lemma~\ref{lem:C2-inv-comp}, so $f$ must factor as
    \[\eey_0(2)\too\mathscr X\too\A^1\sm\{0\}\too Y,\]
    which shows that $\eey_0(2)\to\A^1\sm\{0\}$ is initial among maps to algebraic spaces, proving the claim.
\end{proof}

\begin{notation}
    Inspired by the above, we set $Y_0(2):=\A^1_{\Z[\frac{1}{2}]}\sm\{0\}=\spec(\Z[\frac{1}{2}][s,\inv s])$.
\end{notation}

\begin{remark}
    An alternate proof of Proposition~\ref{prop:y0(2)-cms} would be to combine the computation  of $(\A^1\sm\{0,1\})/C_2$ in Lemma~\ref{lem:C2-inv-comp} with \cite[Lemma 8.1.5]{katz-mazur}.
\end{remark}

Proposition~\ref{prop:y0(2)-cms} computed the coarse space over $S=\spec(\Z[\frac{1}{2}])$. Below, we extend this computation to arbitrary bases. We will do this by appealing to \cite[Corollary 3.3]{AOV}, which first requires us to show that $\eey_0(2)$ is a tame stack.

\begin{lemma}\label{lem:y0(2)-tame}
  Every geometric point of\, $\eey_0(2)$ has an automorphism group of order dividing $4$.
  In other words, for any algebraically closed field $k$ of characteristic not $2$, any elliptic curve $E/k$, and any point $P\in E[2](k)$ of exact order $2$, the group $\Aut(E,P)$ of automorphisms of\, $E$ which fix $P$ has order dividing $4$. As a consequence, $\eey_0(2)$ is a tame $\Z[\frac{1}{2}]$-stack in the sense of\, \cite[Definition 3.1]{AOV}.
\end{lemma}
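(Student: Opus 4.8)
The plan is to verify the bound $\#\Aut(E,P)\mid 4$ by a direct case analysis on the $j$-invariant of $E/k$, using the classical description of $\Aut(E)$ over an algebraically closed field $k$ with $\Char k\neq 2$. Recall that $\Aut(E)\cong C_2$ unless $j=0$ or $j=1728$, in which case (as $\Char k\neq 2$) we have $\Aut(E)\cong C_6$ (if $j=0$, $\Char k\neq 3$), $\Aut(E)\cong C_4$ (if $j=1728$), or $\Aut(E)\cong C_3\rtimes C_4$ of order $12$ (if $\Char k = 3$, where $j=0=1728$). First I would dispose of the generic case: if $\Aut(E)\cong C_2=\{\pm 1\}$, then $\Aut(E,P)\subseteq\Aut(E)$ has order dividing $2\mid 4$, since $[-1]$ fixes every $2$-torsion point anyway. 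So the content is entirely in the special $j$-invariants.

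Next I would handle $j=1728$, $\Char k\neq 3$, where $\Aut(E)\cong C_4$; then $\Aut(E,P)\subseteq C_4$ automatically has order dividing $4$, with nothing to prove. The remaining cases are exactly those where $\Aut(E)$ has order divisible by $3$, namely $j=0$ with $\Char k\neq 3$ (so $\Aut(E)\cong C_6$) and $\Char k = 3$ (so $j=0=1728$ and $\Aut(E)$ has order $12$); in both I must show that the subgroup fixing a chosen point $P$ of exact order $2$ has index divisible by $3$, i.e. that no automorphism of order $3$ fixes $P$. The key geometric input is that an order-$3$ automorphism $\zeta$ acts on $E[2]\cong(\zmod 2)^2$ as an order-$3$ element of $\GL_2(\F_2)\cong S_3$, hence permutes the three nonzero $2$-torsion points cyclically and in particular fixes none of them. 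Therefore $\Aut(E,P)$ contains no element of order $3$, so by Lagrange (and Cauchy) $3\nmid\#\Aut(E,P)$; combined with $\#\Aut(E,P)\mid\#\Aut(E)\in\{6,12\}$ this forces $\#\Aut(E,P)\mid 4$ (indeed $\#\Aut(E,P)\in\{1,2\}$ when $\#\Aut(E)=6$, and divides $4$ when $\#\Aut(E)=12$). I would make the $\Char k = 3$ case explicit via the model $y^2=x^3-x$ (with $j=0=1728$), whose automorphism group is generated by $(x,y)\mapsto(x+1,y)$ of order $3$ and $(x,y)\mapsto(-x,\sqrt{-1}\,y)$ of order $4$; the order-$3$ generator visibly permutes the roots $0,1,-1$ of $x^3-x$ cyclically, confirming it fixes no $2$-torsion point.

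The main obstacle — really the only subtlety — is bookkeeping the characteristic-$3$ case correctly: there $j=0$ and $j=1728$ coincide, $\Aut(E)$ jumps to order $12$, and one must make sure the abstract argument "$3\nmid\#\Aut(E,P)$ and $\#\Aut(E,P)\mid 12$" genuinely yields divisibility by $4$ rather than merely "dividing $12$". This is fine: the $2$-Sylow of the order-$12$ group is $C_4$, so a $3'$-subgroup has order dividing $4$. Once the bound $\#\Aut(E,P)\mid 4$ is established at every geometric point, the final sentence follows immediately: a separated Deligne–Mumford stack over $\Z[1/2]$ all of whose geometric stabilizers have order invertible on the base (here order dividing $4$, invertible since $2$ is invertible) is tame in the sense of \cite[Definition 3.1]{AOV} — equivalently, the stabilizers are linearly reductive over the residue fields because their orders are prime to the residue characteristics. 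So I would close by invoking that equivalence to conclude $\eey_0(2)$ is a tame $\Z[1/2]$-stack.
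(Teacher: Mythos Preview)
Your proposal is correct. The one step you leave implicit is why an order-$3$ automorphism must act \emph{nontrivially} on $E[2]$ in the $j=0$, $\Char k\neq 3$ case (you verify this explicitly only in characteristic $3$); the cleanest justification is the rigidity fact that any automorphism acting trivially on $E[2]$ is $\pm 1$ (e.g.\ \cite[Corollary 2.7.2]{katz-mazur}), or one can simply write down the model $y^2=x^3+1$ and compute, exactly as you do in characteristic $3$.

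The paper's proof is organized differently: rather than a case analysis on $j$, it argues uniformly that any $\sigma\in\Aut(E,P)$ has $\sigma^2$ acting trivially on $E[2]$ (since the stabilizer of a nonzero vector in $\GL_2(\F_2)\cong S_3$ has order $2$), whence $\sigma^2=\pm 1$ by Katz--Mazur rigidity, so $\Aut(E,P)$ has exponent dividing $4$; it then concludes by citing Silverman for a bound on the $4$-torsion in $\Aut(E)$. Your case analysis and the paper's exponent argument are logically close---both ultimately rest on the same rigidity input---but yours is more explicit about the characteristic-$3$ case, where $\Aut(E)\cong C_3\rtimes C_4$ is non-abelian of order $12$ and the \emph{set} of elements of order dividing $4$ has $8$ elements, so one really must use that $\Aut(E,P)$ is a \emph{subgroup} together with Cauchy and Lagrange (exactly as you do) to conclude $\#\Aut(E,P)\mid 4$. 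In that sense your handling of characteristic $3$ is a bit more careful than the paper's phrasing.
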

\begin{proof}
  Let $k$, $E$, $P$ be as in the statement. Consider any $\sigma\in\Aut(E,P)$. Because $E[2](k)\cong\zmod2\oplus\zmod2$ and~$\sigma$ fixes $P$, one sees that $\sigma^2$ must fix $E[2](k)$ pointwise. Thus, \cite[Corollary 2.7.2]{katz-mazur} implies that $\sigma^2=\pm1$. Thus, $\sigma$ has order dividing $4$. We conclude that $\Aut(E,P)\subset\Aut(E)[4]$. Finally, \cite[Proposition A.1.2(c)]{silverman} shows that, when $\Char k\neq2$, the number
  $\#\Aut(E)[4]$ divides $4$ always.
\end{proof}

\begin{corollary}\label{cor:Y0(2)-cms-general-base}
    Let $\pi\colon \eey_0(2)\to Y_0(2)=\A^1_{\Z[\frac{1}{2}]}\sm\{0\}$ be the coarse space map of Proposition~\ref{prop:y0(2)-cms}. Then, the base change $\pi_S\colon \eey_0(2)_S\to Y_0(2)_S$ is also a coarse space map.
\end{corollary}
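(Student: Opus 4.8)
The plan is to deduce this from the general principle that the formation of the coarse moduli space of a \emph{tame} stack is compatible with arbitrary base change. By Lemma \ref{lem:y0(2)-tame}, $\eey_0(2)$ is a tame $\Z[1/2]$-stack in the sense of \cite{AOV}, and it has finite inertia (being a finite-type separated Deligne--Mumford stack over the noetherian base $\Spec\Z[1/2]$), so it lies within the scope of \cite[Corollary 3.3]{AOV}. That result states that if $\mathscr{X}$ is a tame stack with coarse space $\pi\colon\mathscr{X}\to M$, then for every morphism of algebraic spaces $M'\to M$ the projection $\mathscr{X}\times_M M'\to M'$ is again a coarse space map. Existence of the coarse space of $\eey_0(2)$ itself is not at issue here (it is Proposition \ref{prop:y0(2)-cms}); only the base-change compatibility needs to be invoked.

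To fit into this framework, I would first observe that $\eey_0(2)\to\Spec\Z[1/2]$ factors through $\pi\colon\eey_0(2)\to Y_0(2)$, so the canonical base-change identification reads
\[
\eey_0(2)_S \;=\; \eey_0(2)\times_{\Spec\Z[1/2]} S \;\cong\; \eey_0(2)\times_{Y_0(2)}\bigl(Y_0(2)\times_{\Spec\Z[1/2]}S\bigr) \;=\; \eey_0(2)\times_{Y_0(2)} Y_0(2)_S,
\]
and under this identification $\pi_S$ is precisely the base change of $\pi$ along $Y_0(2)_S\to Y_0(2)$. Applying \cite[Corollary 3.3]{AOV} with $\mathscr{X}=\eey_0(2)$, $M=Y_0(2)$, and $M'=Y_0(2)_S$ then yields that $\pi_S$ is a coarse space map, as claimed.

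I do not expect a genuine obstacle here: the corollary is essentially a black-box application of the tame-stacks machinery of \cite{AOV} together with Lemma \ref{lem:y0(2)-tame}. The one point to be careful about is citing the correct statement from \cite{AOV} --- namely the arbitrary-base-change compatibility available for tame stacks --- rather than the classical flat-base-change compatibility of coarse moduli spaces, since $S$ need not be flat over $\Z[1/2]$.
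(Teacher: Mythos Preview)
Your proposal is correct and follows essentially the same approach as the paper: the paper's proof simply cites Lemma \ref{lem:y0(2)-tame} for tameness and then invokes \cite[Corollary 3.3]{AOV} for base-change compatibility of coarse spaces of tame stacks. Your additional remarks about the fiber-product identification and the need for arbitrary (not just flat) base change are accurate elaborations, but the core argument is identical.
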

\begin{proof}
    This follows from Lemma~\ref{lem:y0(2)-tame} and \cite[Corollary 3.3]{AOV}.
\end{proof}

\begin{lemma}\label{lem:j-inv from s}
    The $j$-invariant map $j\colon Y_0(2)\to\A^1_{\Z[\frac{1}{2}]}$ is $j(s)=256(s+1)^3/s^2$.
\end{lemma}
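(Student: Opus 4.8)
The plan is to trace the $j$-invariant through the chain of coarse-space maps, using that everything is determined on the level of coordinate rings. We know from Proposition~\ref{prop:Y(2)-cms} that the coarse space of $\eey(2)$ is $X = \A^1_{\Z[1/2]}\sm\{0,1\} = \spec\Z[1/2][t^{\pm1},(t-1)^{-1}]$, parametrizing the Legendre curve $E_t: y^2 = x(x-1)(x-t)$, and that the $j$-invariant of the Legendre family is the classical formula
\[
j(t) = 256\,\frac{(t^2 - t + 1)^3}{t^2(t-1)^2}.
\]
On the other hand, Lemma~\ref{lem:C2-inv-comp} and Lemma~\ref{lem:cms-construct} identify $Y_0(2) = \A^1_{\Z[1/2]}\sm\{0\} = \spec\Z[1/2][s^{\pm1}]$ as the quotient $X/C_2$ under $t\mapsto 1/t$, via $s = t/(t-1)^2$. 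Since the $j$-invariant map $\eey_0(2)\to\A^1$ factors through the coarse space $Y_0(2)$, and $j$ on $\eey(2)$ factors through $j$ on $Y_0(2)$ composed with $X\to Y_0(2)$, it suffices to exhibit a rational function $g(s)$ with $g\!\left(t/(t-1)^2\right) = j(t)$ as an identity in $\Z[1/2][t^{\pm1},(t-1)^{-1}]$, and then $j(s) = g(s)$.

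First I would compute $g$. Write $s = t/(t-1)^2$, so $t = s(t-1)^2$, i.e. $st^2 - (2s+1)t + s = 0$; the two roots $t, 1/t$ of this quadratic are the $C_2$-orbit, consistent with $s$ being the invariant. Now I express the $C_2$-symmetric quantity $j(t)$ in terms of $s$. The denominator $t^2(t-1)^2 = t\cdot\big(t(t-1)^2\big) = t\cdot (t/s)\cdot \ldots$ — more cleanly, $t(t-1)^2 = t^2/s$ is wrong; rather $(t-1)^2 = t/s$ so $t^2(t-1)^2 = t^3/s$, which is not symmetric, so I instead symmetrize directly: from $st^2-(2s+1)t+s=0$ one gets $t + 1/t = (2s+1)/s$ and $t\cdot(1/t) = 1$, hence $t^2+1/t^2 = ((2s+1)/s)^2 - 2$. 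Expressing $j(t) = 256(t^2-t+1)^3/(t(t-1)^2)^2\cdot t^2/t^2$ — here I should just write $j(t) = 256\,(t^2-t+1)^3/\big(t^2(t-1)^2\big)$ and note $t^2(t-1)^2 = (t(t-1))^2$, with $t(t-1) = t^2 - t$; both $t^2-t+1$ and $(t^2-t)^2 = t^2(t-1)^2$ need to be rewritten via $s$. Since $(t-1)^2 = t/s$, we get $t(t-1)^2 = t^2/s$, but also $t^2(t-1)^2 = t\cdot t(t-1)^2 = t^3/s$; to kill the asymmetry I use $t\cdot 1/t = 1$ so $t^3/s = t^3/s$ and its conjugate is $1/(t^3 s) = $... at this point the cleanest route is: clear denominators in $g(s) = 256(s+1)^3/s^2$ as a guess and verify $256(s+1)^3 t^2(t-1)^2 = s^2\cdot 256(t^2-t+1)^3$, i.e. $(s+1)^3 (t(t-1))^2 = s^2 (t^2-t+1)^3$, substituting $s = t/(t-1)^2$ and multiplying through by $(t-1)^{12}$; this becomes a polynomial identity in $\Z[t]$ which I would expand and check term by term.

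The main obstacle is purely the bookkeeping of this polynomial verification: confirming that
\[
\left(\frac{t}{(t-1)^2}+1\right)^3 \big(t(t-1)\big)^2 = \left(\frac{t}{(t-1)^2}\right)^2 (t^2-t+1)^3,
\]
equivalently $\big(t + (t-1)^2\big)^3 \cdot t^2(t-1)^2 = t^2 (t-1)^0\cdot\ldots$ — after clearing, $\big(t^2-t+1\big)^3 t^2 = t^2 (t^2-t+1)^3$ once one notices $t + (t-1)^2 = t^2 - t + 1$, which makes the identity transparent: indeed $(s+1) = (t + (t-1)^2)/(t-1)^2 = (t^2-t+1)/(t-1)^2$, so $256(s+1)^3/s^2 = 256\,(t^2-t+1)^3/(t-1)^6 \cdot (t-1)^4/t^2 = 256\,(t^2-t+1)^3/\big(t^2(t-1)^2\big) = j(t)$. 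Thus $g(s) = 256(s+1)^3/s^2$ works, and since $j$ on $\eey_0(2)$ is pulled back from $Y_0(2)$ while $X\to Y_0(2)$ is surjective (dominant) on coarse spaces, the equality $g(s) = j(s)$ of functions on $Y_0(2)$ follows. I would close by remarking that the computation is valid over $\Z[1/2]$ and hence over any $\Z[1/2]$-base by base change.
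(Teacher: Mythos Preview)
Your argument is correct and follows the same approach as the paper: use the commutative triangle $X\to Y_0(2)\to\A^1$ together with the classical formula $j(t)=256(t^2-t+1)^3/[t^2(t-1)^2]$ for the Legendre family, and verify the algebraic identity via the key observation $s+1=(t^2-t+1)/(t-1)^2$. The paper's proof simply cites the commutative diagram and the Legendre formula without spelling out the algebra, whereas you work through it explicitly (with some detours that could be trimmed).
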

\begin{proof}
    This follows from the commutative diagram
    \[\compdiag{\A^1\sm\{0,1\}}{t/(t-1)^2}{Y_0(2)}j{\A^1}{}\]
    and the well-known fact, see \cite[Proposition III.1.7(b)]{silverman}, that the $j$-invariant map $\A^1\sm\{0,1\}\to\A^1$ for the Legendre family is $t\mapsto256(t^2-t+1)^3/(t^2(t-1)^2)$.
\end{proof}

We summarize the results of this section in the diagrams below: 

\[\begin{tikzcd}
	{\eey(2)} & {\eey_0(2)} \\
	& {\eem_{1,1}[1/2]\rlap{.}}
	\arrow["{C_2}", from=1-1, to=1-2]
	\arrow["{\deg 3}", from=1-2, to=2-2]
	\arrow["{S_3}"', from=1-1, to=2-2]
\end{tikzcd}\]

The maps on the coarse spaces are given as follows:
\[\begin{tikzcd}
	{Y(2)\cong X=\A_t^1[1/2]\setminus \{0,1\}} & {Y_0(2)\cong \A_s^1[1/2]\setminus\{0\}} \\
	& {M_{1,1}[1/2]=\A_j^1[1/2]\rlap{,}}
	\arrow["{C_2}", from=1-1, to=1-2]
	\arrow["{\deg 3}", from=1-2, to=2-2]
	\arrow["{S_3}"', from=1-1, to=2-2]
\end{tikzcd}\]
where
\begin{align*}
    t &\longmapsto s=\dfrac{t}{(1-t)^2},\\
    t &\longmapsto j=\dfrac{256(t^2-t+1)^3}{t^2(t-1)^2},\\
    s &\longmapsto j=\dfrac{256(s+1)^3}{s^2}.
\end{align*}

\subsection{$\boldsymbol{BC_n}$ cohomology}\label{sec: BC_n coh}

We recall some results from \cite{AM} to compute the cohomology of $BC_{n,S}$ for a scheme $S$. To compute the cohomology of $\eey(2)$, they use the fact that $\eey(2)\cong BC_{2,X}$ (see Proposition~\ref{prop:Y(2)-cms} above), where $X$ denotes the universal Legendre curve.

More generally, let $S$ be a scheme, and let $C_{n,S}$ be the constant group scheme over $S$. We want to compute the cohomology of $BC_{n,S}$. We have the coarse space and covering maps
\begin{align*}
    c\colon BC_{n,S}\too S,\\
    \pi\colon S\too BC_{n,S}.
\end{align*}
Using the Galois cover $\pi$, one obtains the Hochschild--Serre spectral sequence:  
\[E^{p,q}_2=H^p\p{C_n,H^q(S,\G_m)} \Longrightarrow H^{p+q}(BC_{n,S}, \G_m).\]

Following \cite[Section 4.1]{lieblich2011period} and using the coarse space map $c$, we get another spectral sequence
\[E^{p,q}_2=H_{\fppf}^p(S,R^q_{\fppf}c_{*}\G_m)\Longrightarrow H_{\fppf}^{p+q}(BC_{n,S}, \G_m).\]

Combining the data obtained from these spectral sequences, we get the following proposition.

\begin{proposition}\label{Propn: BC_n Brauer}
Let $S$ be a scheme and $c\colon BC_{n,S}\to S$ be the coarse space map.

There exist an isomorphism
\[\G_m(BC_{n,S})\cong G_m(S)\]
as well as short exact sequences
\[0 \too \Pic(S)\overset{c^*}\too \Pic(BC_{n,S})\too \mu_n(S)\too 0\]
and
\[0\too H^2(S,\G_m)\overset{c^*}\too H^2(BC_{n,S},\G_m)\too H^1_{\pl}(S,\mu_n)\too 0,\]
\[0\too \Br'(S)\overset{c^*}\too \Br'(BC_{n,S})\too H^1_{\pl}(S,\mu_n)\too 0,\]
\[0\too \Br(S)\overset{c^*}\too \Br(BC_{n,S})\too H^1_{\pl}(S,\mu_n)\too 0,\]
all of which are split $($via $\pull\pi)$. The isomorphism and the short exact sequences are functorial in $BC_{n,S}$ $($i.e., endomorphisms of\, $BC_{n,S}$ induce endomorphisms of exact sequences in a functorial manner$)$, but the splittings
are only functorial in $S$.
\end{proposition}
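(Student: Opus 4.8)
The plan is to extract all four displayed exact sequences and the identification $\G_m(BC_{n,S})\cong\G_m(S)$ from the second spectral sequence above, namely the Leray–type spectral sequence $E_2^{p,q}=H^p_{\mathrm{fppf}}(S,R^q_{\mathrm{fppf}}c_*\G_m)\Rightarrow H^{p+q}_{\mathrm{fppf}}(BC_{n,S},\G_m)$ attached to the coarse space map $c$, following \cite[Section 4.1]{lieblich2011period}. The first step is to compute the sheaves $R^qc_*\G_m$ on $S_{\mathrm{fppf}}$. Since $BC_{n,S}\to S$ is the classifying stack of the finite flat group scheme $C_{n,S}$, fppf-locally on $S$ it looks like $BC_n\to\mathrm{pt}$, so $R^qc_*\G_m$ is the fppf-sheafification of $T\mapsto H^q(BC_{n,T},\G_m)=H^q_{\mathrm{grp}}(C_n,\G_m(T))$. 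One computes $c_*\G_m=\G_m$, $R^1c_*\G_m=\underline{\hom}(C_n,\G_m)=\mu_n$ (using $H^1_{\mathrm{grp}}(C_n,A)=A[n]/\cdots$, but sheafified this is $\mu_n$ — more precisely $R^1c_*\G_m\cong\mu_n$ via the standard identification of $BC_n$-line bundles with characters), and, crucially, that $R^2c_*\G_m$ fits in an identification with $H^1_{\mathrm{fppf}}(-,\mu_n)$-type data: in fact Lieblich shows $R^2c_*\G_m\cong R^1c'_*\mu_n$ where the relevant object is captured by $H^1_{\pl}(-,\mu_n)$ in the notation of the statement. This sheaf computation, and pinning down exactly which sheaf $R^2c_*\G_m$ is, is the main obstacle; everything else is bookkeeping with a spectral sequence whose relevant corner is small.

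Granting the sheaf computation, the second step is to run the spectral sequence in low degrees. In total degree $0$ we immediately get $\G_m(BC_{n,S})=H^0(S,c_*\G_m)=\G_m(S)$. In total degree $1$, the sequence gives $0\to E^{1,0}_\infty\to\Pic(BC_{n,S})\to E^{0,1}_\infty\to E^{2,0}$, i.e. $0\to H^1(S,\G_m)\to\Pic(BC_{n,S})\to H^0(S,\mu_n)\xrightarrow{d_2}H^2(S,\G_m)$; the differential $d_2$ is killed because $\pi\colon S\to BC_{n,S}$ provides a section $\pi^*$ of $c^*$, so all the relevant edge maps/differentials out of the bottom row into which a section maps must vanish — this is precisely the "split via $\pi^*$" assertion. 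This yields the split exact sequence $0\to\Pic(S)\to\Pic(BC_{n,S})\to\mu_n(S)\to 0$. In total degree $2$, the five-term exact sequence plus the splitting by $\pi^*$ gives $0\to H^2(S,\G_m)\xrightarrow{c^*}H^2(BC_{n,S},\G_m)\to H^0(S,R^2c_*\G_m)=H^1_{\pl}(S,\mu_n)\to 0$, again split; here one also needs $E^{1,1}_\infty=0$, which follows from $H^1(S,\mu_n)$ contributing only through $R^1c_*\G_m$ in a way cancelled by the section, or more simply because $\pi^*\circ c^*=\id$ forces the associated graded in degree $2$ to have exactly the two listed pieces. The comparison with étale versus fppf cohomology for the schematic terms $H^i(S,\G_m)$ is standard since $\G_m$ is smooth.

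The third step is to deduce the two Brauer-group sequences. For $\Br'$: taking torsion subgroups of the degree-$2$ sequence is exact here because $H^1_{\pl}(S,\mu_n)$ is already $n$-torsion (hence torsion) and the sequence is split, so torsion commutes with it, giving $0\to\Br'(S)\xrightarrow{c^*}\Br'(BC_{n,S})\to H^1_{\pl}(S,\mu_n)\to 0$, split. For $\Br$ rather than $\Br'$: one invokes Lemma \ref{lem:coh-Br=Az-Br}-type reasoning, or directly \cite[Corollary 2.6]{Shin} together with \cite[Proposition 2.5(v)]{AM} applied to a finite étale cover trivializing the $C_n$-gerbe, to identify $\Br=\Br'$ for both $S$ and $BC_{n,S}$ under the ample-line-bundle hypothesis — but since the proposition as stated does not assume ampleness, instead one observes that the class in $H^1_{\pl}(S,\mu_n)$ lifting a given element is represented by an actual $\mu_n$-gerbe, hence by an Azumaya algebra (twisted matrix algebra) on $BC_{n,S}$, so $c^*\Br(S)+\pi^*\text{-section}$ already surjects onto the quotient; combined with injectivity of $c^*$ on $\Br$ (from the split injectivity on $\Br'$ and $\Br\hookrightarrow\Br'$) this gives the last sequence. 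Finally, functoriality in $BC_{n,S}$ of the spectral sequence (it is natural in the stack) gives functoriality of the sequences, while the splitting uses $\pi^*$ which is only natural for morphisms of $S$, not of $BC_{n,S}$ — matching the stated asymmetry. I expect the write-up to spend most of its length justifying $R^1c_*\G_m\cong\mu_n$ and the identification of $H^0(S,R^2c_*\G_m)$ with $H^1_{\pl}(S,\mu_n)$, citing \cite{lieblich2011period} for the latter.
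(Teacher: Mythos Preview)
The paper gives no argument beyond ``See \cite[Proposition 3.2]{AM}'', so your outline is already more detailed than what appears here. Your overall strategy --- run the fppf Leray spectral sequence for $c\colon BC_{n,S}\to S$ and use the section $\pi$ to kill all differentials into the bottom row --- is exactly the right one and matches the approach of \cite{AM}.

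There is, however, a genuine bookkeeping error in your identification of the terms. You claim that the $H^1_{\pl}(S,\mu_n)$ in the degree-$2$ sequence arises as $E_2^{0,2}=H^0(S,R^2c_*\G_m)$, and that $E_\infty^{1,1}=0$. This is backwards. In the fppf topology one has $R^2c_*\G_m=0$: the presheaf $T\mapsto H^2(C_n,\G_m(T))=\G_m(T)/n$ sheafifies to zero because the Kummer sequence $1\to\mu_n\to\G_m\xrightarrow{n}\G_m\to 1$ is exact fppf-locally. The contribution $H^1_{\pl}(S,\mu_n)$ is instead exactly $E_2^{1,1}=H^1_{\mathrm{fppf}}(S,R^1c_*\G_m)=H^1_{\mathrm{fppf}}(S,\mu_n)$, which survives to $E_\infty$ because the section forces $d_2^{1,1}\colon H^1(S,\mu_n)\to H^3(S,\G_m)$ to vanish. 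So the ``main obstacle'' you flag --- pinning down $R^2c_*\G_m$ --- dissolves: that sheaf is zero, and there is nothing to cite Lieblich for. Once you correct this, the low-degree filtration reads $E_\infty^{2,0}=H^2(S,\G_m)$, $E_\infty^{1,1}=H^1_{\pl}(S,\mu_n)$, $E_\infty^{0,2}=0$, and the displayed sequence follows immediately. Your treatment of the splitting, functoriality, and the passage to $\Br'$ and $\Br$ is fine.
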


\begin{proof}
    See \cite[Proposition 3.2]{AM}.
\end{proof}

\subsection{Group cohomology of $\boldsymbol{C_2}$}\label{sect:C2-coh}

Let $\rho=\Ind_{C_2}^{S_3}\Z$ be the $S_3$-representation induced from $\texttt{triv}$, $\Z$ with trivial $C_2$-action. Then $\rho$ is the standard permutation representation of $S_3$ on $\Z^{\oplus 3}$. We have a map of $S_3$-representations $\rho\to \texttt{triv}$ given by the $S_3$-equivariant map $\Z^{\oplus 3}\to \Z$, $(a,b,c)\mapsto a+b+c$. Let $\wt\rho$ be the kernel of the map $\rho\to\texttt{triv}$. 

\begin{lemma}
    For any connected normal noetherian scheme $S$ over $\Z[\frac{1}{2}]$, there is an $S_3$-equivariant exact sequence
    \begin{equation}\label{4.8}
    0\too\G_m(S)\too\G_m(X_S)\too\wt\rho\too0, 
\end{equation}
    where $S_3$ acts trivially on $\G_m(S)$ and $\wt\rho$ is generated by the images of $t$ and $t-1$. The exact sequence is non-equivariantly split.     
\end{lemma}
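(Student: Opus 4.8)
The plan is to compute $\G_m(X_S)$ explicitly and exhibit the claimed exact sequence by hand. Since $X_S = \Spec R$ with $R = \msO_S(S)[t^{\pm 1}, (t-1)^{-1}]$ (at least when $S$ is affine; in general one works Zariski-locally on $S$ and glues, using that $S$ is connected so global units are well-behaved), the units of $R$ are understood by a standard argument: for a connected normal noetherian base, a unit on $\G_m \setminus \{1\}$ over $S$ must be a product of a unit from the base with a monomial in $t$ and $t-1$. Concretely, $\G_m(X_S) \cong \G_m(S) \times t^{\Z} \times (t-1)^{\Z}$, i.e.\ every unit is uniquely $u \cdot t^a (t-1)^b$ with $u \in \G_m(S)$ and $a, b \in \Z$. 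First I would record this description, citing the relevant unit-group computation (this is the kind of fact used in \cite{AM} for exactly this $X$; one can also prove it directly via unique factorization in $\msO_S(S)[t]$ after reducing to the case $S$ is the spectrum of a normal domain, then taking generic fibers).

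Next I would make the $S_3$-action explicit on this description. By Remark~\ref{rem:cms-C2-act} and the discussion preceding it, $S_3$ acts on $X$ through its action on $\{e_1, e_2, e_3\}$, equivalently through the standard action on $t = (e_3 - e_1)/(e_2 - e_1)$ by the anharmonic group: the six fractional-linear substitutions $t, 1/t, 1-t, 1/(1-t), t/(t-1), (t-1)/t$. Each such substitution carries the set $\{t, t-1, 1\}$ (up to units and signs) to itself: e.g.\ $1-t$ sends $t \mapsto 1-t = -(t-1)$ and $t-1 \mapsto -t$; $1/t$ sends $t \mapsto t^{-1}$ and $t-1 \mapsto (1-t)/t = -(t-1)t^{-1}$. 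So on the quotient $\G_m(X_S)/\G_m(S) \cong t^{\Z} \oplus (t-1)^{\Z} \cong \Z^{\oplus 2}$ (writing the group additively, with basis the classes $[t]$ and $[t-1]$), $S_3$ permutes the three classes $[t]$, $[t-1]$, and $[t] + [t-1] = [t(t-1)]$... wait, rather $-[1] = 0$ — I would instead observe that the three ``lines'' $[t], [t-1], -[t]-[t-1]$ are permuted, identifying this $\Z^{\oplus 2}$ with $\wt\rho \subset \rho = \Z^{\oplus 3}$, the sum-zero sublattice of the permutation representation, where $[t] \leftrightarrow (1,0,-1)$ or a similar explicit choice matching $\sigma = (123)$, $\tau = (23)$ to the stated actions. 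I would verify the two generators $\sigma, \tau$ act correctly and conclude the quotient is $S_3$-equivariantly $\wt\rho$, generated by the images of $t$ and $t-1$.

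Then the exact sequence $0 \to \G_m(S) \to \G_m(X_S) \to \wt\rho \to 0$ is immediate from the unit computation: $\G_m(S)$ is the kernel (units pulled back from $S$, on which $S_3$ acts trivially since the action is over $S$), and the quotient is $\wt\rho$ by the previous step; equivariance of the first two terms is clear and of the quotient map follows from the identification just made. Non-equivariant splitting is also immediate: the subgroup $t^{\Z}(t-1)^{\Z} \subset \G_m(X_S)$ maps isomorphically onto the quotient, giving a (non-$S_3$-stable) splitting. The main obstacle is the first step — pinning down $\G_m(X_S)$ for a general connected normal noetherian $\Z[1/2]$-scheme $S$, rather than just a field or $\Z[1/2]$ itself; the cleanest route is to reduce to $S = \Spec A$ with $A$ a normal domain, use that $A[t^{\pm1},(t-1)^{-1}]$ is a localization of the UFD-like ring $A[t]$ (normality of $A$ gives that $A[t]$ is normal with the expected prime factorization of the principal ideals $(t)$ and $(t-1)$), and deduce the unit description; everything after that is bookkeeping with the anharmonic group.
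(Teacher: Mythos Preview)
Your proposal is correct. The paper itself does not prove this lemma at all; it simply cites \cite[Lemma~4.8]{AM}. Your direct argument---computing $\G_m(X_S)\cong\G_m(S)\times t^{\Z}\times(t-1)^{\Z}$ for $S$ connected normal noetherian, then tracking the anharmonic $S_3$-action on the quotient to identify it with $\wt\rho$---is exactly the kind of proof one expects the cited reference to contain, and all steps are sound. One small remark: the paper's chosen basis convention is $t\leftrightarrow(0,1,-1)$ and $t-1\leftrightarrow(1,0,-1)$, so if you want to match the downstream computations (e.g.\ the explicit action of $\tau=(23)$ and $\sigma=(123)$ recorded just after the lemma), you should use that identification rather than $[t]\leftrightarrow(1,0,-1)$; either choice works for the lemma itself, of course.
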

\begin{proof}
 This is  \cite[Lemma 4.8]{AM}.
\end{proof}

As in the proof given by \cite{AM}, we choose the isomorphism $\G_m(X_S)/\G_m(S)\iso\wt\rho$ to send $t\mapsto(0,1,-1)$ and $t-1\mapsto(1,0,-1)$. With these choices, we can easily compute how $\tau=(23)$ and $\sigma=(123)$ act on $t$ and $t-1$ in $\G_m(X_S)$. 

We have $\tau(t)=\frac{1}{t}$ and $\tau(t-1)=\frac{t-1}{t}$, because $\tau(0,1,-1)= (0,-1,1)$ corresponds to $\frac{1}{t}$ and $\tau(1,0,-1)=(1,-1,0)=(1,0,-1)-(0,1,-1)$ corresponds to $\frac{t-1}{t}$. A similar computation shows that $\sigma(t)=\frac{t-1}{t}$ and $\sigma(t-1)=t$. One can check that this $S_3$-action is equivalent to the one defined on the Legendre curves.

Since we are interested in the $C_2$-cover $\eey(2)\to \eey_0(2)$, we will use that the sequence (\ref{4.8}), when restricted to the action of $C_2=\langle(23)\rangle\subseteq S_3$, is a $C_2$-equivariant exact sequence, 
 \begin{equation}\label{4.8'}
    0\too\G_m(S)\too\G_m(X_S)\too\wt\rho|_{C_2}\too0, 
\end{equation}
which is now equivariantly split.

\begin{remark} \label{rem:C2-inv} 
  Concretely, $\wt\rho$ is the $S_3$-rep $\{(a,b,c)\in\Z^3:a+b+c=0\}$, and $C_2\into S_3$ acts by fixing $a$ and swapping $b,c$. An element $(a,b,c)$ of $\wt\rho$ is $C_2$-invariant if and only if $b=c$, so $C_2$-invariants are generated by $(-2,1,1)=(0,1,-1)-2(1,0,-1)$, which is the image of $\frac{t}{(t-1)^2}\in \G_m(X_S)$. In particular, the map $\G_m(X_S)^{C_2}\to H^0(C_2,\wt\rho)$ on invariants is surjective.
\end{remark}

\begin{fact}[{See \cite[Section 8]{coh-of-groups-CF}}]\label{cyclic coh}
    Given a cyclic group $C_n=\angles s$ acting on some abelian group $M$, one has
    \[H^i(C_n,M)\simeq\Threecases{M^{C_n}}{i=0}{\b{m\in M:(1+s+\dots+s^{n-1})\cdot m=0}\big/\b{s\cdot m-m:m\in M}}{i>0\t{ odd}}{\b{m\in M:s\cdot m-m=0}\big/\b{(1+s+\dots+s^{n-1})\cdot m:m\in M}}{i>0\t{ even}.}\]
\end{fact}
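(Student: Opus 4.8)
The plan is to read $H^i(C_n,M)$ off the standard $2$-periodic free resolution of the trivial module $\Z$ over the group ring $R:=\Z[C_n]$. Write $D:=s-1$ and $N:=1+s+\dots+s^{n-1}$ for the two distinguished elements of $R$, with $s$ the chosen generator of $C_n$. First I would set up the complex of free $R$-modules
\[\cdots\xto{D}R\xto{N}R\xto{D}R\xto{\epsilon}\Z\too0,\]
in which $\epsilon$ is the augmentation and the maps $R\to R$ are \emph{right} multiplication by $D$ and by $N$ alternately (so that they are $R$-linear).

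The one genuinely computational step is to verify exactness, which is a direct calculation inside $R$. Writing an arbitrary element as $x=\sum_{i=0}^{n-1}a_is^i$: one has $\epsilon(x)=\sum_ia_i$, so $\ker\epsilon$ is the augmentation ideal $I$, which is generated by $D=s-1$ because $s^k-1=(1+s+\dots+s^{k-1})D$; hence $\ker\epsilon=\im(\cdot D)$. Next, $xs-x=\sum_i(a_{i-1}-a_i)s^i$ (indices mod $n$), so $xD=0$ forces all $a_i$ equal, i.e.\ $x\in\Z\cdot N=\im(\cdot N)$; hence $\ker(\cdot D)=\im(\cdot N)$. Finally $xN=(\sum_ia_i)N$, so $xN=0$ iff $x\in I=\im(\cdot D)$; hence $\ker(\cdot N)=\im(\cdot D)$. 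Together with the trivial inclusions coming from $\epsilon D=0$ and $DN=ND=0$, this shows the displayed complex is a free resolution of $\Z$.

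It then remains to apply $\hom_R(-,M)$ and compute. Under the canonical identification $\hom_R(R,M)\cong M$, $f\mapsto f(1)$, the transpose of ``right multiplication by $a$'' is \emph{left} multiplication by $a$ on $M$, since $R$-linearity gives $f(a\cdot1)=a\cdot f(1)$; in particular no antipode intervenes. So the cochain complex computing $H^*(C_n,M)$ is
\[M\xto{D}M\xto{N}M\xto{D}M\xto{N}\cdots\]
in cohomological degrees $0,1,2,\dots$, and reading off its cohomology yields exactly the three cases in the statement: $H^0=\ker(D\colon M\to M)=M^{C_n}$; for odd $i>0$, $H^i=\ker(N)/\im(D)=\b{m:Nm=0}/\b{sm-m:m\in M}$; and for even $i>0$, $H^i=\ker(D)/\im(N)=\b{m:sm-m=0}/\b{Nm:m\in M}$. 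The $2$-periodicity of the resolution accounts for the dependence on $i$ only through its parity. The step most prone to slips is this dualization — getting the handedness of the $R$-action right so that the differentials on $M$ come out as left multiplication by $D$ and $N$ rather than by their antipodes — but this is a matter of care rather than genuine difficulty; everything else is routine, and one could equally well just invoke \cite[Section 8]{coh-of-groups-CF}.
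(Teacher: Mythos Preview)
Your argument is correct and is the standard textbook computation via the $2$-periodic free resolution of $\Z$ over $\Z[C_n]$. The paper itself offers no proof of this Fact beyond the citation to \cite[Section 8]{coh-of-groups-CF}, and what you have written is precisely the argument one finds there, so there is nothing to compare.
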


\begin{lemma}[\textit{cf.} {\cite[Lemmas 5.2 and~5.3]{AM}}]\label{lem:C2-coh}
    Let $M$ be a trivial $C_2$-module. Let \texttt{sgn} be the $1$-dimensional $C_2$-rep where the generator acts by $-1$. Then,
    \[H^i(C_2,M)=\Threecasesit{M}{i=0,}{M[2]}{i>0\text{ odd,}}{M/2M}{i>0\text{ even}}\tand  H^i(C_2,\texttt{sgn}\otimes M)=\Twocasesit{M[2]}{i\text{ even,}}{M/2M}{i\text{ odd}.}\]
    Even more useful, 
    \[H^i(C_2,\wt\rho\vert_{C_2}\otimes M)=\twocasesit M{i=0,}0\]
\end{lemma}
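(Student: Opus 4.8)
The plan is to compute $H^i(C_2, \wt\rho|_{C_2}\otimes M)$ for a trivial $C_2$-module $M$ by reducing to the two short exact sequences of $C_2$-representations recorded just before the statement, namely
\[0\to \ttt{sgn}\otimes M\to \wt\rho|_{C_2}\otimes M\to \ttt{triv}\otimes M\to 0\]
(obtained by tensoring $0\to\ttt{sgn}\to\wt\rho|_{C_2}\to\ttt{triv}\to 0$ with the flat $\Z$-module structure coming from $M$ — here one should be slightly careful that $M$ need not be $\Z$-flat, so it is cleaner to work directly with the sequence $0\to\ttt{sgn}\to\wt\rho|_{C_2}\to\ttt{triv}\to 0$ and its associated long exact sequence in $C_2$-cohomology with coefficients twisted by $M$, or equivalently to just use Fact \ref{cyclic coh} explicitly). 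The case $i=0$ is immediate from Remark \ref{rem:C2-inv}: the $C_2$-invariants of $\wt\rho|_{C_2}$ are the rank-one subgroup generated by $(-2,1,1)$, so $(\wt\rho|_{C_2}\otimes M)^{C_2}\cong M$.

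For $i>0$ the main step is the long exact sequence in group cohomology associated to $0\to\ttt{sgn}\to\wt\rho|_{C_2}\to\ttt{triv}\to 0$, tensored throughout with $M$. Feeding in the two computations of Lemma \ref{lem:C2-coh} that have already been established — $H^i(C_2,\ttt{triv}\otimes M)$ alternating between $M[2]$ (odd) and $M/2M$ (even) for $i>0$, and $H^i(C_2,\ttt{sgn}\otimes M)$ alternating between $M[2]$ (even) and $M/2M$ (odd) — the long exact sequence in degrees $\geq 1$ has the shape
\[\cdots\to H^i(\ttt{sgn}\otimes M)\to H^i(\wt\rho|_{C_2}\otimes M)\to H^i(\ttt{triv}\otimes M)\xto{\delta} H^{i+1}(\ttt{sgn}\otimes M)\to\cdots\]
where in every degree the two outer terms are one copy of $M[2]$ and one copy of $M/2M$ in the opposite parity. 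So to conclude that the middle term vanishes it suffices to show that each connecting map $\delta\colon H^i(\ttt{triv}\otimes M)\to H^{i+1}(\ttt{sgn}\otimes M)$ is an isomorphism for $i\geq 1$, and separately that the map $H^1(C_2,\wt\rho|_{C_2}\otimes M)\to H^1(C_2,\ttt{triv}\otimes M)$ is zero, which will follow once we know $H^0(C_2,\ttt{triv}\otimes M)=M\to H^1(C_2,\ttt{sgn}\otimes M)=M[2]$ is surjective (it is the obvious reduction $M\onto M[2]$? — no: more carefully, we need the segment $H^0(\wt\rho|_{C_2}\otimes M)\to H^0(\ttt{triv}\otimes M)\xto{\delta_0} H^1(\ttt{sgn}\otimes M)\to H^1(\wt\rho|_{C_2}\otimes M)\to 0$ to force the last term to vanish, i.e. $\delta_0$ surjective).

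I expect the connecting-map computation to be the crux, and the cleanest way to pin down $\delta$ is \emph{not} via abstract diagram chasing but by writing everything out with Fact \ref{cyclic coh}. Concretely, $\wt\rho|_{C_2}$ with generator $\tau$ acting by $(a,b,c)\mapsto(a,c,b)$ has, after tensoring with $M$, norm element $N=1+\tau$ acting as $(a,b,c)\otimes m\mapsto (2a,b+c,b+c)\otimes m$ and $\tau-1$ acting as $(a,b,c)\otimes m\mapsto (0,c-b,b-c)\otimes m$; one then computes $\ker N$, $\im N$, $\ker(\tau-1)$, $\im(\tau-1)$ directly on $\wt\rho|_{C_2}\otimes M$ and checks that $\ker N=\im(\tau-1)$ and $\ker(\tau-1)=\im N$, which gives the vanishing of $H^i$ for all $i>0$ in one stroke without needing the long exact sequence at all. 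This is likely the shortest route: a direct verification that on $\wt\rho|_{C_2}$ the norm and augmentation operators have matching kernels and images once a scalar is tensored in, using that $2$ is the only arithmetic obstruction and the element $(-2,1,1)$ witnesses that $\ker(\tau-1)$ is exactly the image of $N$. The main obstacle is bookkeeping the interaction between the $2$-torsion phenomena in $M$ and the explicit lattice $\wt\rho$; I would organize it by splitting $\wt\rho|_{C_2}\cong \ttt{triv}\oplus\ttt{sgn}$ \emph{over} $\Z[1/2]$ is not available integrally, so instead I would use the integral basis $\{(0,1,-1),(-2,1,1)\}$ in which $\tau$ sends $(0,1,-1)\mapsto(0,-1,1)=-(0,1,-1)$ and fixes $(-2,1,1)$, making the operators triangular and the kernel/image computation transparent.
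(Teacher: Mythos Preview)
Your overall instinct—to compute directly via Fact \ref{cyclic coh} rather than chase connecting maps—is sound, and this is what the paper does. But there is a concrete error in your final step: $\{(0,1,-1),(-2,1,1)\}$ is \emph{not} an integral basis of $\wt\rho|_{C_2}$. In the standard basis $e_1=(0,1,-1)$, $e_2=(1,0,-1)$ one has $(-2,1,1)=e_1-2e_2$, so your two vectors span a sublattice of index $2$. This is exactly the integral obstruction you were worried about: the sequence $0\to\ttt{sgn}\to\wt\rho|_{C_2}\to\ttt{triv}\to0$ is not split over $\Z$, so you cannot diagonalize $\tau$ in an integral basis. Any argument organized around that ``basis'' will either silently invert $2$ or miss the $2$-torsion contributions in $M$.

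The paper sidesteps this entirely by observing that $\wt\rho|_{C_2}$ is the \emph{permutation} (regular) $C_2$-module: in the genuine integral basis $\{(-1,1,0),(-1,0,1)\}$ the generator $\tau$ acts by swapping the two vectors. Hence $\wt\rho|_{C_2}\otimes M\cong M\oplus M$ with $\tau(m_1,m_2)=(m_2,m_1)$, and now Fact \ref{cyclic coh} gives $N(m_1,m_2)=(m_1+m_2,m_1+m_2)$ and $(\tau-1)(m_1,m_2)=(m_2-m_1,m_1-m_2)$, from which $\ker N=\im(\tau-1)$ and $\ker(\tau-1)=\im N$ are immediate for any $M$—this is essentially Shapiro's lemma for $\Ind_1^{C_2}\Z$. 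This also handles $i=0$ cleanly (the diagonal is $M$), whereas your appeal to Remark \ref{rem:C2-inv} only literally computes $(\wt\rho)^{C_2}$, not $(\wt\rho\otimes M)^{C_2}$ for general $M$. So: keep the direct-computation plan, but use the swap basis rather than attempting to diagonalize.
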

\begin{proof}
    Write $C_2=\angles{s}$. In all cases, we compute these cohomology groups using Fact~\ref{cyclic coh}. In the first case,~$M$ has trivial $C_2$-action, so $(1+s)\cdot m=2m$ and $s\cdot m-m=0$ for all $m\in M$. In the second case, $\texttt{sgn}\otimes M$ is isomorphic to $M$, where $s$ acts by negation, so $(1+s)\cdot m=0$ and $s\cdot m-m=-2m$. In the last case, $\wt\rho\vert_{C_2}\otimes M$ is isomorphic to $M^{\oplus2}$, where $s$ acts by swapping the two factors, so $(1+s)\cdot(m,n)=(m+n,m+n)$ and $s\cdot(m,n)-(m,n)=(n-m,m-n)$. The result follows from these descriptions and Fact~\ref{cyclic coh}.
\end{proof}

In addition to Lemma~\ref{lem:C2-coh}, we will also use the following three $S_3$-equivariant, and thus $C_2$-equivariant, short exact sequences from \cite{AM} fairly often: 
\begin{equation}\label{AM 5.4}
    0\too \G_m(S)\too \G_m(\eey(2)_S)\cong \G_m(X_S)\too \wt\rho|_{C_2}\too 0,
\end{equation}
\begin{equation}\label{AM 5.5}
    0\too \Pic(X_S)\cong \Pic(S) \too \Pic(\eey(2)_S)\too \mu_2(S) \too 0,
\end{equation}
\begin{equation}\label{AM 5.6}
    0\too \Br'(X_S)\too \Br'(\eey(2)_S) \too H^1(X_S, \mu_2)\too 0.
\end{equation}
These are sequences 5.4, 5.5, and 5.6 from \cite{AM}, respectively. In sequence (\ref{AM 5.4}), we have replaced $\wt\rho$ with $\wt\rho|_{C_2}$. Sequence (\ref{AM 5.5}) is also $S_3$-equivariantly split.

From the whole above discussion, we can deduce the following.

\begin{lemma}[\textit{cf.} {\cite[Lemma~5.7]{AM}}] 
\label{lem:01-lines-of-ss}
    If $S$ is a connected regular noetherian scheme over $\Z[\frac{1}{2}]$, then there exist a natural extension 
    \[0 \too \G_m(S) \too H^0(C_2, \G_m(\eey(2)_S)) \too H^0(C_2, \wt\rho|_{C_2}) \cong \Z \too 0\] 
    and isomorphisms 
\[
H^p(C_2, \G_m(S))\cong H^p(C_2, \G_m(\eey(2)_S))
\]
for all $p\ge 1$.  We also have natural isomorphisms 
\[
H^p(C_2, H^1(\eey(2)_S, \G_m))\cong H^p(C_2, \Pic(S))\oplus H^p(C_2, \mu_2(S))
\]
for all $p\ge 0$. 
\end{lemma}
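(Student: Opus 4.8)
The plan is to assemble the three claims of Lemma~\ref{lem:01-lines-of-ss} from the short exact sequences (\ref{AM 5.4}), (\ref{AM 5.5}), and (\ref{AM 5.6}) by feeding each into the long exact sequence of $C_2$-group cohomology and then invoking Lemma~\ref{lem:C2-coh}. For the first statement and the $p\ge1$ isomorphisms, I would take the long exact sequence associated to (\ref{AM 5.4}),
\[
0\to \G_m(S)^{C_2}\to \G_m(\eey(2)_S)^{C_2}\to (\wt\rho|_{C_2})^{C_2}\to H^1(C_2,\G_m(S))\to H^1(C_2,\G_m(\eey(2)_S))\to \cdots,
\]
and observe that, since (\ref{4.8'}) is $C_2$-equivariantly split (Remark after (\ref{4.8'}), or directly: the map on invariants $\G_m(X_S)^{C_2}\to(\wt\rho|_{C_2})^{C_2}$ is surjective by Remark~\ref{rem:C2-inv}, witnessed by $t/(t-1)^2$), the connecting map $(\wt\rho|_{C_2})^{C_2}\to H^1(C_2,\G_m(S))$ is zero. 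Concretely I would note that (\ref{4.8'}) is a split exact sequence of $C_2$-modules, hence stays exact after applying $H^p(C_2,-)$ for every $p$, so
\[
H^p(C_2,\G_m(\eey(2)_S))\cong H^p(C_2,\G_m(S))\oplus H^p(C_2,\wt\rho|_{C_2}).
\]
By the last display of Lemma~\ref{lem:C2-coh}, $H^p(C_2,\wt\rho|_{C_2})$ is $\Z$ when $p=0$ (note $\G_m(S)$ is a trivial $C_2$-module, so ``$\otimes M$'' with $M=\Z$ applies) and vanishes for $p\ge1$; this simultaneously gives the extension $0\to\G_m(S)\to H^0(C_2,\G_m(\eey(2)_S))\to\Z\to0$ and the isomorphisms $H^p(C_2,\G_m(S))\cong H^p(C_2,\G_m(\eey(2)_S))$ for $p\ge1$.

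For the last statement, I would similarly exploit that (\ref{AM 5.5}) is $S_3$-equivariantly split over $\Z[1/2]$ --- in particular $C_2$-equivariantly split --- so that applying $H^p(C_2,-)$ to
\[
0\to \Pic(S)\to \Pic(\eey(2)_S)\to \mu_2(S)\to 0
\]
yields $H^p(C_2,H^1(\eey(2)_S,\G_m))\cong H^p(C_2,\Pic(S))\oplus H^p(C_2,\mu_2(S))$ for all $p\ge0$, using $\Pic(X_S)\cong\Pic(S)$ from (\ref{AM 5.5}). Here I should double-check the claim that (\ref{AM 5.5}) splits $C_2$-equivariantly: it is asserted in the text following (\ref{AM 5.6}) that sequence (\ref{AM 5.5}) is $S_3$-equivariantly split, and restriction of a split sequence along $C_2\hookrightarrow S_3$ remains split, so this is immediate.

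The main obstacle --- and really the only subtle point --- is justifying naturality of the extension and isomorphisms, i.e.\ that they are compatible with morphisms in $S$ and with the relevant functorial structure used later in the spectral sequence comparison. This amounts to checking that the splitting of (\ref{4.8'}) can be chosen functorially, which follows because the splitting $\wt\rho|_{C_2}\to\G_m(X_S)$ is given by a fixed formula ($t\mapsto(0,1,-1)$, $t-1\mapsto(1,0,-1)$, independent of $S$) pulled back from the universal case over $\Z[1/2]$; likewise the $S_3$-equivariant splitting of (\ref{AM 5.5}) is natural because it is inherited from \cite{AM}. So I would phrase the argument so that all connecting maps are seen to vanish on the nose via universal splittings, making naturality automatic, and then cite Lemma~\ref{lem:C2-coh} for the explicit identification of the cohomology of $\wt\rho|_{C_2}$.
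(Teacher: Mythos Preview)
Your proposal is correct and follows essentially the same approach as the paper: both arguments feed the sequences (\ref{4.8'}) and (\ref{AM 5.5}) into $C_2$-cohomology and use Lemma~\ref{lem:C2-coh}, with the paper phrasing the first part via the long exact sequence plus Remark~\ref{rem:C2-inv} for surjectivity on invariants, while you invoke the $C_2$-equivariant splitting of (\ref{4.8'}) directly to obtain the direct-sum decomposition. One small slip: in your discussion of naturality you wrote the splitting map $\wt\rho|_{C_2}\to\G_m(X_S)$ with its arguments reversed (it should send $(0,1,-1)\mapsto t$ and $(1,0,-1)\mapsto t-1$, not the other way around), but this does not affect the argument.
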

\begin{proof}
    For the first two statements, one takes $C_2$-cohomology of the $C_2$-equivariant short exact sequence  (\ref{4.8'}), 
    \[0\too\G_m(S)\too\G_m(X_S)\too\wt\rho|_{C_2}\too0,\]
    and applies Lemma~\ref{lem:C2-coh} to get a long exact sequence 
\begin{align*}
0 &\too H^0(C_2, \G_m(S)) \too H^0(C_2, \G_m(X_S) \too H^0(C_2, \wt\rho|_{C_2}) \\ 
& \too H^1(C_2, \G_m(S)) \too H^1(C_2, \G_m(X_S) \too 0 \\ 
&\too H^2(C_2, \G_m(S)) \too H^2(C_2, \G_m(X_S) \too 0 \\ 
&\too \cdots. 
\end{align*} 
That the map $\G_m(\eey(2)_S)^{C_2}=\G_m(X_S)^{C_2}\to H^0(C_2,\wt\rho)$ is surjective follows from Remark~\ref{rem:C2-inv}. For the last statement, one appeals to the split $S_3$-equivariant, and thus split $C_2$-equivariant,
split exact sequence (\ref{AM 5.5})
\[
0\too \Pic(X_S)\cong \Pic(S)\too \Pic(\eey(2)_S)\too \mu_2(S)\too 0.
\]
Due to the splitting of the above sequence, after taking the long exact sequence in cohomology, we get a section $H^p(C_2, \mu_2(S))\to H^p(C_2,\Pic(\eey(2)_S))$ for all $p\geq 0$, which gives the result.
\end{proof}

Recall we have the short exact sequence (\ref{AM 5.6}), $0\to\Br'(X_S)\to\Br'(\eey(2)_S)\to H^1(X_S,\mu_2)\to0$. In the next lemma, we study $H^1(X_S,\mu_2)^{C_2}$.

\begin{lemma}[\textit{cf.} {\cite[Lemma 5.8]{AM}}\label{lem:C2-inv}]
    Let $S$ be a connected, normal noetherian $\Z[\frac{1}{2}]$-scheme. Then, there is a short exact sequence
    \[0\too H^1(S,\mu_2)\too H^1(X_S,\mu_2)^{C_2}\too\zmod2\too0.\]
    Furthermore, this sequence is non-canonically split, because everything above is an $\F_2$-vector space.
\end{lemma}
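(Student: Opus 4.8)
\textbf{Proof proposal for Lemma \ref{lem:C2-inv}.}

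The plan is to compute $H^1(X_S,\mu_2)$ using the Kummer sequence, exhibit it as an extension built from $H^1(S,\mu_2)$ together with contributions coming from the units $t$ and $t-1$, and then extract the $C_2$-invariants. First I would recall that $X_S = S\times_{\Z[1/2]}\spec\Z[1/2][t^{\pm 1},(t-1)^{-1}]$, so by the Kummer sequence $0\to\mu_2\to\G_m\xrightarrow{2}\G_m\to 0$ we get
\[
0\too \G_m(X_S)/2\too H^1(X_S,\mu_2)\too \Pic(X_S)[2]\too 0.
\]
Using the sequence (\ref{4.8'}) and the fact that $S$ is normal and noetherian, $\G_m(X_S)\cong\G_m(S)\oplus\wt\rho|_{C_2}$ as $C_2$-modules (the sequence is equivariantly split over $\Z[1/2]$), where $\wt\rho|_{C_2}$ is free of rank $2$ on the images of $t$ and $t-1$. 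Also $\Pic(X_S)\cong\Pic(S)$ by (\ref{AM 5.5}). So $H^1(X_S,\mu_2)$ sits in an extension of $\Pic(S)[2]$ by $\big(\G_m(S)/2\big)\oplus\big(\wt\rho|_{C_2}/2\big)$, and $\wt\rho|_{C_2}/2\cong(\zmod2)^{\oplus 2}$ spanned by the classes of $t$ and $t-1$.

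Next I would take $C_2$-invariants. The point is that $H^1(S,\mu_2)$ maps into $H^1(X_S,\mu_2)^{C_2}$ via pullback along $X_S\to S$, and I claim the cokernel is $\zmod2$, generated by the class of $t/(t-1)^2$ (equivalently, by $\sqrt{t/(t-1)^2}$). To see this, one checks on the sub/quotient pieces: $\Pic(X_S)[2]\cong\Pic(S)[2]$ already lives in the image of $H^1(S,\mu_2)$ (via the splitting $\Pic(X_S)\cong\Pic(S)$), and on the $\G_m$-part, $H^1(S,\mu_2)$ surjects onto $\big(\G_m(S)/2\big)^{C_2}=\G_m(S)/2$. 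So the only possible new invariants come from $\big(\wt\rho|_{C_2}/2\big)^{C_2}$. By Remark \ref{rem:C2-inv}, the $C_2$-invariants of $\wt\rho|_{C_2}$ are generated by the image of $t/(t-1)^2$; reducing mod $2$ and checking that this class is not killed (i.e.\ $t/(t-1)^2$ is not a square in $\G_m(X_S)$, which follows since $t,t-1$ are part of a basis of the free module $\wt\rho|_{C_2}$ and $t/(t-1)^2$ maps to $(-2,1,1)\not\in 2\wt\rho|_{C_2}$) gives that $\big(\wt\rho|_{C_2}/2\big)^{C_2}\cong\zmod2$. Finally, a short diagram chase (using that $H^2(C_2,-)$ of the relevant pieces does not obstruct lifting invariants, or more simply that everything in sight is an $\F_2$-vector space so the relevant connecting maps can be analyzed directly) assembles these into the claimed short exact sequence
\[
0\too H^1(S,\mu_2)\too H^1(X_S,\mu_2)^{C_2}\too\zmod2\too 0.
\]

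The main obstacle I anticipate is bookkeeping the interaction between the Kummer extension and the process of taking $C_2$-invariants: a priori, taking invariants is only left exact, so I need to control the connecting map $\Pic(X_S)[2]^{C_2}\to H^1(C_2,\G_m(X_S)/2)$ and make sure no invariant class in $\Pic(X_S)[2]$ fails to lift to an invariant class in $H^1(X_S,\mu_2)$; the cleanest route is to use the $S_3$-equivariant (hence $C_2$-equivariant) splitting of (\ref{AM 5.5}) over $\Z[1/2]$ together with the equivariant splitting of (\ref{4.8'}), so that the whole Kummer extension for $X_S$ is, $C_2$-equivariantly, a direct sum of the Kummer data for $S$ with the piece coming from $\wt\rho|_{C_2}$, making the invariants computation a direct sum computation. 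The final splitting assertion is immediate since $H^1(X_S,\mu_2)^{C_2}$ is an $\F_2$-vector space and a surjection of $\F_2$-vector spaces always splits.
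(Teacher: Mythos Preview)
Your proposal is correct and follows essentially the same approach as the paper: both start from the Kummer sequence for $X_S$, compare with the Kummer sequence for $S$ via pullback along $X_S\to S$, and use the structure of $\G_m(X_S)$ coming from (\ref{4.8'}) together with $\Pic(X_S)\cong\Pic(S)$ to identify the extra $\zmod2$ as the class of $t/(t-1)^2$. The one presentational difference is that you invoke the $C_2$-equivariant splitting of (\ref{4.8'}) up front to obtain a $C_2$-equivariant decomposition $H^1(X_S,\mu_2)\cong H^1(S,\mu_2)\oplus(\wt\rho|_{C_2}/2)$ and then read off invariants directly (using Lemma~\ref{lem:C2-coh} to get $(\wt\rho|_{C_2}/2)^{C_2}\cong\zmod2$), whereas the paper first passes to the long exact sequence in $C_2$-cohomology of the Kummer sequence, compares it with the Kummer data for $S$ via a map of complexes, and applies the four lemma and the snake lemma to identify the cokernel of $H^1(S,\mu_2)\hookrightarrow H^1(X_S,\mu_2)^{C_2}$ with $\coker[\G_m(S)/2\to(\G_m(X_S)/2)^{C_2}]$, which it then computes from (\ref{ses:units-mod-2}). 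Your route is a little shorter; the paper's route avoids needing to cite the equivariant splitting of (\ref{4.8'}) and keeps closer track of the boundary map $\Pic(X_S)[2]^{C_2}\to H^1(C_2,\G_m(X_S)/2)$, but both arguments amount to the same computation.
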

\begin{proof}
As in the proof of \cite[Lemma 5.8]{AM}, we use the $S_3$-equivariant, and thus $C_2$-equivariant, short exact sequence
$$0\too \G_m(X_S)/2\too H^1(X_S,\mu_2)\too \Pic(X_S)[2]\too 0$$
to obtain a long exact sequence
$$0\too (\G_m(X_S)/2)^{C_2}\too H^1(X_S,\mu_2)^{C_2}\too \Pic(X_S)[2]^{C_2}\too H^1(C_2, \G_m(X_S)/2)\too \cdots. $$
Since $X_S\to S$ is $C_2$-equivariant, we obtain the following map of complexes: 
\[\begin{tikzcd}
	0 & {\G_m(S)/2} & {H^1(S, \mu_2)} & {\Pic(S)[2]} & 0 \\
	0 & {(\G_m(X_S)/2)^{C_2}} & {H^1(X_S, \mu_2)^{C_2}} & {(\Pic(X_S)[2])^{C_2}} & {H^1(S_3, \G_m(X_S)/2)}
        \arrow[from=1-1, to=2-1]
	\arrow[from=1-1, to=1-2]
	\arrow[from=1-2, to=1-3]
	\arrow[from=1-3, to=1-4]
	\arrow[from=1-4, to=1-5]
	\arrow[hook, from=1-2, to=2-2]
	\arrow[from=2-1, to=2-2]
	\arrow[from=2-2, to=2-3]
	\arrow[from=2-3, to=2-4]
	\arrow[from=2-4, to=2-5]
	\arrow[hook, from=1-5, to=2-5]
	\arrow["\cong", from=1-4, to=2-4]
	\arrow[hook, from=1-3, to=2-3].
\end{tikzcd}\]

From the proof of \cite[Lemma 5.8]{AM}, the leftmost nonzero map factors through  the isomorphism $\G_m(S)/2\xrightarrow{\cong} (\G_m(X_S)/2)^{S_3}$ and is thus injective; furthermore, $\Pic(S)[2]\cong (\Pic(X_S)[2])^{C_2}$ (in fact, the Picard group  $\Pic(S)\cong\Pic(X_S)$ has trivial $C_2$-action).  By the four lemma, the second leftmost nonzero map is an injection. By the snake lemma, we have an isomorphism 
\begin{equation*}
    \coker \left[\G_m(S)/2\rightarrow \G_m(X_S)/2^{C_2}\right]\cong \coker\left[H^1(S,\mu_2)\to H^1(X_S,\mu_2)^{C_2}\right],  
\end{equation*}
and thus a short exact sequence 
\begin{equation}\label{ses:H1(mu2) stuff}
    0\too H^1(S, \mu_2)\too H^1(X_S, \mu_2)^{C_2}\too \coker \left[\G_m(S)/2\rightarrow \G_m(X_S)/2)^{C_2}\right]\too 0. 
\end{equation}

To compute the cokernel appearing above, we can apply the snake lemma to the following homomorphism of short exact sequences:
\[\begin{tikzcd}
    0\ar[r]&\G_m(S)\ar[r]\ar[d, "2"]&\G_m(X_S)\ar[r]\ar[d, "2"]&\wt\rho|_{C_2}\ar[r]\ar[d, "2"]&0\\
    0\ar[r]&\G_m(S)\ar[r]&\G_m(X_S)\ar[r]&\wt\rho|_{C_2}\ar[r]&0.
\end{tikzcd}\]
Using that $\wt\rho\cong\Z^2$ as $\Z$-modules, we know that the multiplication by $2$ map is injective, so the snake lemma gives
\begin{equation}\label{ses:units-mod-2}
    0\too\G_m(S)/2\too\G_m(X_S)/2\too\wt\rho|_{C_2}/2\too0.
\end{equation}
Applying Lemma~\ref{lem:C2-coh} to $\wt\rho\vert_{C_2}/2\simeq\wt\rho\vert_{C_2}\otimes\zmod2$ and taking $C_2$-invariants in (\ref{ses:units-mod-2}) yields
\[
    0\too\G_m(S)/2\too(\G_m(X_S)/2)^{C_2}\too\zmod2\overset{\delta}\too\G_m(S)/2\too H^1(C_2,\G_m(X_S)/2)\too 0.
\]
Note that $t/(t-1)^2$ is an element of $(\G_m(X_S)/2)^{C_2}$ and maps to a generator of $\zmod2$, since it is non-constant (for example because $t/(t-1)^2=t\in\G_m(X_S)/2$) so does not come from $\G_m(S)$. Thus $(\G_m(X_S)/2)^{C_2}\to \Z/2\Z$ is surjective, and we have a short exact sequence
\[
    0\too\G_m(S)/2\too(\G_m(X_S)/2)^{C_2}\too \zmod2\too0,
\]
which shows that $\coker [\G_m(S)/2\rightarrow \p{\G_m(X_S)/2}^{C_2}]=\Z/2\Z.$ Combined with (\ref{ses:H1(mu2) stuff}), this proves the lemma.
\end{proof}

\section{The \texorpdfstring{$\boldsymbol{p}$}{p}-primary torsion in \texorpdfstring{$\boldsymbol{\Br(\eey_0(2)_{\Z[\frac{1}{2p}]})}$}{Br(Y\_0(2)\_Z[1/2p])} for primes \texorpdfstring{$\boldsymbol{p\ge 3}$}{p at least 3}}\label{sect:p>=3 torsion, p invertible}

In this section, we describe $p$-primary torsion in the Brauer group of $\eey_0(2)$ over $\Z[\frac{1}{2p}]$, for $p\geq 3$. Following the strategy of \cite[Section 5 and Theorem 7.1]{AM}, we show that, over a base $S$ on which $2p$ is invertible, $\pprimary{\Br'}(\eey_0(2)_S)$ is an extension of some $H^1$ cohomology group by $\pprimary{\Br'}(S)$. Later, in Section~\ref{sect:p-prim-torsion-over-Z1/2}, when $S=\Z[\frac{1}{2p}]$, we will show that none of the $p$-primary Brauer classes on $\eey_0(2)_{\Z[\frac{1}{2p}]}$ extend to all of $\eey_0(2)$; \textit{i.e.}, we will show that $\pprimary{\Br'}(\eey_0(2))=0$ (see Proposition~\ref{prop:p-tors-over-Z[1/2]-i}).

\begin{lemma}\label{lem:p-tors-first-step}
    Let $S$ be a regular noetherian scheme. Then, $\pprimary{\Br}'(\eey_0(2)_S)\cong\pprimary{\Br}'(X_S)^{C_2}$.
\end{lemma}
\begin{proof}
    The only contribution to $\pprimary{\Br'}(\eey_0(2)_{S})$ in the $p$-local Hochschild--Serre spectral sequence 
    \[E_2^{i,j}=H^i\p{C_2,H^j\p{\eey(2)_S,\G_m}}_{(p)}\implies H^{i+j}\p{\eey_0(2)_S,\G_m}_{(p)}\]
    is $E_2^{0,2}=\pprimary{\Br}'(\eey(2)_S)^{C_2}$.
    Indeed, for $i\ge1$ and $j\ge0$, the group $H^i(C_2,H^j(\eey(2)_S,\G_m))$ is $2$-torsion (because $C_2$ is). Consequently, $E_2^{i,j}=H^i(C_2,H^j(\eey(2)_S,\G_m))_{(p)}=0$ when $i\ge1$. Thus, the spectral sequence collapses on the $E_2$-page and $\pprimary{\Br'}(\eey_0(2)_{S})=\pprimary{\Br'}(\eey(2)_S)^{C_2}=\pprimary{\Br'}(BC_{2,X_S})^{C_2}$.
    Now, consider the split short exact sequence 
    \[0\too \Br'\p{X_S} \overset{c^*}\too \Br'\p{BC_{2,X_S}}\overset{r}\too H^1\p{X_S,\mu_2}\too 0\]
    from Proposition~\ref{Propn: BC_n Brauer}. 
    Taking $p$-primary torsion of this sequence, we obtain
    \[0 \too \pprimary{\Br'}(X_S) \too \pprimary{\Br'}\p{BC_{2,X_S}} \too \pprimary{H^1}(X_S,\mu_2)\too 0.\]
    Then  we see that $\pprimary{\Br'}(X_S)\cong \pprimary{\Br'}(BC_{2,X_S})$ because $\pprimary{H^1}(X_S,\mu_2)=0$ for $p\geq 3$ and thus $\pprimary{\Br'}(\eey_0(2)_{S}) \cong \hphantom{\mkern-2mu}\pprimary{\Br'}(X_S)^{C_2}$.  
\end{proof}

\begin{proposition}[\textit{cf.} {\cite[Theorem 7.1]{AM}}]\label{thm:p-tors-away-from-p}
    Let $p\geq 3$ be prime, and let $S$ be a regular noetherian scheme on which $2p$ is invertible. Then we have a non-canonically split exact sequence 
    \[0\too\pprimary{\Br'}(S)\too \pprimary{\Br'}(\eey_0(2)_S)\too H^1\p{S,\Q_p/\Z_p}\too 0.\]
The map $\pprimary{\Br'}(\eey_0(2)_S)\to H^1(S,\Q_p/\Z_p)$ can be described as the composition of pullback to $X_S$ and taking the ramification at the divisor $0$. 
\end{proposition}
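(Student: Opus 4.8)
The plan is to follow the strategy of \cite[\S5,\,\S7]{AM}, via three reductions.

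\textbf{Step 1 (reduce to $X_S$).} Since $p\geq3$, the group cohomology $H^i(C_2,-)$ is $2$-torsion in every degree $i>0$, so all entries $E_2^{i,j}$ with $i>0$ of the $p$-local Hochschild--Serre spectral sequence (\ref{ss:p-local HS}) for the $C_2$-cover $\eey(2)\to\eey_0(2)$ vanish. Thus it degenerates at $E_2$, and the edge map identifies ${}_p\Br'(\eey_0(2)_S)$ with $\bigl({}_p\Br'(\eey(2)_S)\bigr)^{C_2}$ via restriction along $\eey(2)_S\to\eey_0(2)_S$. Next, applying Proposition~\ref{Propn: BC_n Brauer} to $\eey(2)\simeq BC_{2,X}$ (equivalently, using (\ref{AM 5.6})), the map $c^*\colon\Br'(X_S)\to\Br'(\eey(2)_S)$ is a split injection with $2$-torsion cokernel, so for $p$ odd it is an isomorphism on $p$-primary torsion with inverse the pullback $\pi^*$ along the cover $\pi\colon X_S\to\eey(2)_S$; since $c$, and hence $\pi^*$, is equivariant for the involution $\tau\colon t\mapsto1/t$ of $X$ (Remark~\ref{rem:cms-C2-act}), the composite ``restrict to $\eey(2)_S$, then apply $\pi^*$'', i.e.\ pullback to $X_S$, is an isomorphism ${}_p\Br'(\eey_0(2)_S)\iso\bigl({}_p\Br'(X_S)\bigr)^{C_2}$.

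\textbf{Step 2 ($\Br'(X_S)$ and the $\tau$-action).} View $X_S=\P^1_S\sm\{0,1,\infty\}$. Using absolute purity for the regular divisor $D=\{0\}\sqcup\{1\}\sqcup\{\infty\}\cong S^{\sqcup3}$ in $\P^1_S$, together with $\Br'(\P^1_S)\cong\Br'(S)$ and reciprocity on $\P^1_S$, I would obtain a Faddeev-type short exact sequence
\[0\too\Br'(S)\too\Br'(X_S)\xto{(\partial_a)_a}\wt\rho\otimes H^1(S,\Q/\Z)\too0,\]
where $\partial_a$ is the ramification at the puncture $a$, the image of $(\partial_a)_a$ is the ``sum of residues $=0$'' subgroup, and $\wt\rho$ is identified with the $S_3$-module generated by the divisors of $t$ and $t-1$, matching (\ref{AM 5.4}). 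Since $\tau$ permutes the punctures as the transposition $(0\ \infty)$ (fixing $1$) and acts trivially on $S$, restricted to $C_2=\angles\tau$ the quotient term becomes $\wt\rho|_{C_2}\otimes H^1(S,\Q/\Z)$, with $\wt\rho|_{C_2}$ as in Section~\ref{sect:C2-coh}, while $\Br'(S)$ is fixed. The $\tau$-fixed point $t=-1$ (note $-1$ and $t-1=-2$ are units) gives a $C_2$-fixed section $S\to X_S$, so the sequence splits $C_2$-equivariantly.

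\textbf{Step 3 ($p$-torsion and $C_2$-invariants).} For $p$ odd, the $p$-primary part of $H^1(S,\Q/\Z)$ is $H^1(S,\Q_p/\Z_p)$; taking $p$-primary torsion and then $C_2$-invariants of the split sequence of Step 2 (both operations being exact here, as $H^{>0}(C_2,-)_{(p)}=0$) gives
\[0\too{}_p\Br'(S)\too\bigl({}_p\Br'(X_S)\bigr)^{C_2}\too\bigl(\wt\rho|_{C_2}\otimes H^1(S,\Q_p/\Z_p)\bigr)^{C_2}\too0.\]
By Lemma~\ref{lem:C2-coh}, $\bigl(\wt\rho|_{C_2}\otimes M\bigr)^{C_2}\cong M$; unwinding this via Remark~\ref{rem:C2-inv}, the $C_2$-invariant line of $\wt\rho|_{C_2}$ corresponds to residue vectors with value $1$ at $0$, $1$ at $\infty$ and $-2$ at $1$, so the last arrow becomes ``ramification at $0$''. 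Combining with Step 1 yields the exact sequence of the Proposition with the claimed description of its second map, and transports the splitting.

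\textbf{Main obstacle.} The substantive point is Step 2: establishing the Faddeev/purity exact sequence over a general regular noetherian base---the absolute-purity identification of $H^3_D(\P^1_S,\G_m)$, right exactness via reciprocity on $\P^1_S$, the input $\Br'(\P^1_S)=\Br'(S)$, and especially checking that $\tau$ acts on the residue groups merely by swapping the factors at $0$ and $\infty$, with no unexpected sign or twist in $H^1(\kappa(0),\Q/\Z)\cong H^1(\kappa(\infty),\Q/\Z)$. Each ingredient is available for the $S_3$-cover of $\eem_{1,1}$ from \cite[\S5,\,\S7]{AM}, and restricting to $C_2=\angles\tau\subseteq S_3$ only eases the cohomology; the one genuinely new feature is that $\bigl(\wt\rho|_{C_2}\otimes M\bigr)^{C_2}=M\neq0$ (whereas the $S_3$-invariants vanish for $p\geq5$), which is precisely the source of the extra $H^1(S,\Q_p/\Z_p)$ summand.
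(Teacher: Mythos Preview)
Your proof is correct and follows essentially the same approach as the paper. The only cosmetic differences are that you re-derive the Faddeev-type sequence on $\P^1_S\setminus\{0,1,\infty\}$ directly (the paper instead quotes \cite[Lemmas 5.10, 5.11]{AM}, which package the same purity computation on $\A^1_S$ with divisors $0,1$), and you obtain the splitting from the $\tau$-fixed section $t=-1$ of $X_S$ rather than from a $\Z[1/2]$-point of $\eey_0(2)$; both routes are valid and lead to the same identification of the quotient map with ramification at $0$.
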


\begin{proof} 
By Lemma~\ref{lem:p-tors-first-step}, we know that $\pprimary{\Br}'(\eey_0(2)_S)\cong\pprimary{\Br}'(X_S)^{C_2}$.
Since $p$ is invertible on $S$, Lemmas 5.10 and~5.11 of \cite{AM} give an exact sequence 
\[
0\too \pprimary{\Br'}(S)\too \pprimary{\Br'}(X_S)\too \pprimary{H^3}_{\set{0, 1}}\p{\A^1_S, \G_m}\too 0
\]
such that $\pprimary{H^3}_{\{0,1\}}(\A^1_S,\G_m)$ is $C_2$-equivariantly isomorphic to $\wt\rho\vert_{C_2}\otimes H^1\p{S,\Q_p/\Z_p}$. 
Taking $C_2$-cohomology and using Lemma~\ref{lem:C2-coh}, we obtain a long exact sequence
\[
0\too \pprimary{\Br'}(S) \too \p{\pprimary{\Br'}(X_S)}^{C_2}\too \p{\wt\rho\vert_{C_2}\otimes H^1\p{S, \Q_p/\Z_p}}^{C_2}\cong H^1\p{S, \Q_p/\Z_p}\too  \pprimary{\Br'}(S)[2] = 0.
\]
Noting that $(\pprimary{\Br'}(X_S))^{C_2} \cong \Br'(\eey_0(2)_S)$, we obtain the following short exact sequence: 
\[
0\too \pprimary{\Br'}(S) \too \Br'(\eey_0(2)_S)\too H^1\p{S, \Q_p/\Z_p}\too 0.
\]

Since $\eey_0(2)$ has a $\Z[\frac{1}{2}]$-point (see Remark~\ref{rem:Z[1/2]-point}), it also has an $S$-point, and so the sequence is split.

The isomorphisms $\pprimary{Br'}(\eey_0(2))\cong \pprimary{\Br'}(BC_{2,X_S})^{C_2}\cong \pprimary{\Br'}(X_S)^{C_2}$ are induced by pullback to $X_S$.

The map $\pprimary{\Br'}(X_S)\to \pprimary{H^3}_{0,1}(\A^1_S, \G_m)$ is given by taking the ramification divisors at 0 and 1. The $C_2$-action on $\pprimary{H^3}_{\{0,1\}}(\A^1_S, \G_m)$ is isomorphic to $\wt\rho|_{C_2}\otimes H^1(S,\Q_p/\Z_p)$, which, as shown in the proof of Lemma~\ref{lem:C2-coh}, is isomorphic to $H^1(S,\Q_p/\Z_p)\oplus H^1(S,\Q_p/\Z_p)$ with $C_2$-action the swapping action. Thus, the isomorphism $(\wt\rho|_{C_2}\otimes H^1(S,\Q_p/\Z_p))^{C_2}\cong H^1(S,\Q_p/\Z_p)$ is given by projection onto the first coordinate, so $\hphantom{\mkern-2mu}\pprimary{\Br'}(X_S)\to H^1(S,\Q_p/\Z_p)$ is just given by the ramification at 0. 
\end{proof}

\begin{remark}\label{rem:Z[1/2]-point}
    We remark here that $\eey_0(2)$ has a $\Z[\frac{1}{2}]$-point. For example, there is the curve $E:y^2=x^3-11x-14$ (which has discriminant $2^5$) equipped with the $2$-torsion point $(-2,0)$, defined over $\Z[\frac{1}{2}]$. This is the curve \cite[\href{https://www.lmfdb.org/EllipticCurve/Q/32/a/1}{Elliptic Curve 32.a1}]{lmfdb}.
\end{remark}

\begin{corollary} \label{cor: p-tors-over-Q}
Let $p\ge 3$ be a prime.  Then
    \[\pprimary{\Br'}\p{\eey_0(2)_\Q} = \pprimary{\Br'}(\Q) \oplus H^1\p{\Spec(\Q), \Q_p/\Z_p}.\] 
\end{corollary}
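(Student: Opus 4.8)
The plan is to derive Corollary~\ref{cor: p-tors-over-Q} as a direct application of Proposition~\ref{thm:p-tors-away-from-p} with the base $S = \Spec\Q$. First I would check the hypotheses: $\Q$ is a field, hence a regular noetherian scheme, and for any prime $p \geq 3$ the integer $2p$ is invertible in $\Q$, so the proposition applies verbatim. It yields a short exact sequence
\[
0 \too {}_p\Br'(\Q) \too {}_p\Br'(\eey_0(2)_\Q) \too H^1(\Spec\Q, \Q_p/\Z_p) \too 0
\]
that is (non-canonically) split. The splitting immediately gives the isomorphism
\[
{}_p\Br'(\eey_0(2)_\Q) \cong {}_p\Br'(\Q) \oplus H^1(\Spec\Q, \Q_p/\Z_p),
\]
which is exactly the claimed statement. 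So at the level of ``what theorem to invoke,'' there is essentially nothing to do beyond citing Proposition~\ref{thm:p-tors-away-from-p}.

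The one genuine point to address — and the step I expect to be the only real content of the proof — is the compatibility between $\Br$ and $\Br'$, i.e.\ that writing $\Br'$ in the conclusion is harmless. Over $S = \Spec\Q$, Lemma~\ref{lem:coh-Br=Az-Br} applies since $\Spec\Q$ is quasi-compact and trivially admits an ample line bundle, so $\Br(\eey_0(2)_\Q) = \Br'(\eey_0(2)_\Q)$; and classically $\Br(\Q) = \Br'(\Q)$. Thus one may freely translate between the two notations. I would also note (or silently use) that all groups in sight are torsion, so $H^2(\eey_0(2)_\Q,\G_m)_{\mathrm{tors}}$ decomposes as a direct sum of its $p$-primary parts, making the ``one prime at a time'' phrasing legitimate — though for the corollary as literally stated this is not even needed, since we only assert the $p$-primary piece.

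Concretely, the proof I would write is two sentences: apply Proposition~\ref{thm:p-tors-away-from-p} with $S = \Spec\Q$ (valid because $\Q$ is a field and $2p \in \Q^\times$), obtaining the displayed split short exact sequence; then split it to get the direct sum decomposition. There is no serious obstacle here — the corollary is a specialization, and the substantive work (analyzing the Hochschild--Serre spectral sequence for $\eey(2) \to \eey_0(2)$, the $BC_2$-cohomology, and the ramification description) has already been carried out in the proposition. If anything, the only thing worth a word of care is making sure the reader sees that $H^1(\Spec\Q,\Q_p/\Z_p)$ is being taken in the étale (equivalently, Galois-cohomology) sense, matching the convention in Proposition~\ref{thm:p-tors-away-from-p}; I would simply keep the same notation to avoid any ambiguity.
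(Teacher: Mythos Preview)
Your proposal is correct and matches the paper's approach exactly: the paper states this corollary immediately after Proposition~\ref{thm:p-tors-away-from-p} with no proof, so it is intended as a direct specialization to $S=\spec\Q$. Your additional remarks on $\Br$ versus $\Br'$ are more than the paper bothers with, but they are accurate and harmless.
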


When $p\ge 3$, we can also compute the $p$-primary torsion of 
$\Br'(\eey_0(2)_{\Z[\frac{1}{2p}]})$ more explicitly.

\begin{lemma}\label{lem:H1Qp/Zp}
    Fix a prime $p\ge 3$. Then,
    \[H^1\p{\Z\sq{\frac1{2p}},\frac{\Q_p}{\Z_p}}\simeq\frac{\Q_p}{\Z_p}.\]
\end{lemma}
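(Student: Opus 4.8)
The plan is to compute this $H^1$ via the Kummer sequence and known structure of the unit group and Picard group of $\Z[1/2p]$. First I would note that $\Q_p/\Z_p = \varinjlim_n \Z/p^n\Z$, and since $H^1$ of a noetherian scheme commutes with this filtered colimit, it suffices to compute $H^1(\Z[1/2p], \Z/p^n\Z)$ compatibly in $n$ and pass to the limit. (Alternatively, one can use that $\Q_p/\Z_p$ has no $2$-torsion so that étale cohomology with this coefficient agrees with fppf cohomology, and identify $\Q_p/\Z_p \simeq \mu_{p^\infty}$ is \emph{not} quite right over $\Z[1/2p]$, so I will avoid that and work directly.)

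The key computational input is the Kummer sequence $0 \to \mu_{p^n} \to \G_m \xrightarrow{p^n} \G_m \to 0$ on $\Spec \Z[1/2p]$, which is exact in the fppf (and, since $p$ is invertible, étale) topology. Taking cohomology gives
\[
0 \to \G_m(\Z[\tfrac1{2p}])/p^n \to H^1(\Z[\tfrac1{2p}],\mu_{p^n}) \to \Pic(\Z[\tfrac1{2p}])[p^n] \to 0.
\]
Now $\Pic(\Z[1/2p]) = 0$ since $\Z$ is a PID and localization preserves this, so the last term vanishes; and $\G_m(\Z[1/2p]) = \Z[1/2p]^\times = \{\pm 1\} \times 2^{\Z} \times p^{\Z} \cong \Z/2 \times \Z^2$. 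Since $p \ge 3$ is odd, $(\Z/2 \times \Z^2)/p^n \cong (\Z/p^n)^2$. Hence $H^1(\Z[1/2p], \mu_{p^n}) \cong (\Z/p^n\Z)^2$. However, I want $H^1$ with coefficients in the \emph{constant} sheaf $\Z/p^n\Z$, not $\mu_{p^n}$; these differ by a twist. The relation is that over $\Z[1/2p]$ (which is not simply connected and does not contain the $p$-th roots of unity), $H^1(\Z[1/2p],\Z/p^n\Z) = \Hom_{\mathrm{cts}}(\pi_1^{\text{\'et}}(\Z[1/2p]), \Z/p^n\Z)$, i.e. it classifies $\Z/p^n\Z$-covers, which is genuinely different from $H^1(\mu_{p^n})$.

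So the cleaner route: $H^1(\Z[1/2p], \Z/p^n\Z) = \Hom(\pi_1^{\text{ét}}(\Z[1/2p])^{\mathrm{ab}}, \Z/p^n\Z)$, and by class field theory the pro-$p$ part of $\pi_1^{\text{ét}}(\Spec \Z[1/2p])^{\mathrm{ab}}$ is the Galois group of the maximal abelian pro-$p$ extension of $\Q$ unramified outside $\{2,p,\infty\}$. Since $p$ is odd, ramification at $2$ contributes nothing pro-$p$ (the relevant local unit group $\Z_2^\times$ has trivial pro-$p$ part for odd $p$), ramification at $\infty$ contributes nothing pro-$p$, so only ramification at $p$ matters, and the pro-$p$ quotient is $\Gal(\Q(\mu_{p^\infty})/\Q)^{(p)} \cong \Z_p$ (the cyclotomic $\Z_p$-extension), using that $\Q$ has class number $1$ and Leopoldt-type considerations are trivial here. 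Therefore $H^1(\Z[1/2p],\Z/p^n\Z) = \Hom(\Z_p, \Z/p^n\Z) = \Z/p^n\Z$, and passing to the colimit gives $H^1(\Z[1/2p], \Q_p/\Z_p) = \Q_p/\Z_p$.

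\textbf{Main obstacle.} The delicate point is getting the twist right: one must resist the temptation to conflate $\mu_{p^n}$ with $\Z/p^n\Z$, since $\Z[1/2p]$ does not contain enough roots of unity, and then correctly invoke class field theory (or a Poitou--Tate / Euler characteristic computation) to pin down the pro-$p$ abelianized fundamental group of $\Spec \Z[1/2p]$ as exactly $\Z_p$. An alternative that sidesteps class field theory is to use the global Euler characteristic formula for the finite module $\Z/p^n\Z$ over $\Z[1/2p]$ together with vanishing of $H^0$ (no $p$-torsion units up to sign, and $p$ odd) and a direct identification of $H^2$ via Poitou--Tate duality with $H^0(\mu_{p^n})^\vee = (\mu_{p^n}(\Z[1/2p]))^\vee = 0$ — but in any case the heart of the argument is this arithmetic input about $\Q$, not formal manipulation. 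I would present whichever of these is shortest given what the paper has already set up; likely the class field theory statement that the maximal pro-$p$ extension of $\Q$ unramified outside $p$ (for odd $p$) is the cyclotomic $\Z_p$-extension, cited to a standard reference.
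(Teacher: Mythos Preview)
Your proposal is correct and, after the Kummer-sequence detour that you rightly abandon, lands on essentially the same argument as the paper: pass to the colimit over $H^1(\Z[1/2p],\zmod{p^n})=\ctsHom(\etpi_1(\Z[1/2p]),\zmod{p^n})$ and use class field theory to identify the relevant pro-$p$ quotient of the abelianized fundamental group with $\Z_p$. The paper phrases the class field theory input slightly more explicitly as $G^{\ab}\simeq\units{\Z_2}\oplus\units{\Z_p}$ and then reads off that only the $\Z_p$ summand survives the $\hom(-,\zmod{p^n})$, but this is the same computation.
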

\begin{proof}
    Because cohomology commutes with colimits, we have
    \[H^1\p{\Z\left[\frac{1}{2p}\right],\Q_p/\Z_p}=\dirlim_nH^1\p{\Z\left[\frac{1}{2p}\right],\zmod{p^n}},\]
    so it suffices to show that $H^1(\Z[\frac{1}{2p}],\zmod{p^n})\simeq\zmod{p^n}$. For this, \cite[Example 11.3]{milneLEC} gives the first isomorphism below:
    \[H^1\p{\Z\left[\frac{1}{2p}\right],\zmod{p^n}}\simeq H^1_{\t{Group}}\p{\etpi_1\p{\Z\left[\frac{1}{2p}\right]},\zmod{p^n}}=\ctsHom\p{\etpi_1\p{\Z\left[\frac{1}{2p}\right]},\zmod{p^n}}.\]
    At the same time, $\etpi_1(\Z[\frac{1}{2p}])=:G$ is the Galois group of the maximal extension $K/\Q$ unramified away from~$2$ and $p$. By class field theory, $G^{\t{ab}}\simeq\units\Z_2\oplus\units\Z_p$. Since $\units\Z_2\cong\zmod2\oplus\Z_2$ and $\units\Z_p\cong\units\F_p\oplus\Z_p$, we conclude that
    \[\ctsHom\p{\etpi_1\p{\Z\left[\frac{1}{2p}\right]},\zmod{p^n}}=\ctsHom(\Z_p,\zmod{p^n})\simeq\zmod{p^n},\]
    proving the claim.
\end{proof}

\begin{corollary} 
Let $p\ge 3$ be a prime.  Then
    \[\pprimary{\Br'}\p{\eey_0(2)_{\Z[\frac{1}{2p}]}} \simeq \Q_p/\Z_p \oplus\Q_p/\Z_p.\] 
\end{corollary}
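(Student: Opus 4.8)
The plan is to specialize Proposition~\ref{thm:p-tors-away-from-p} to the base $S=\Spec\Z[1/2p]$ and evaluate the two outer terms of the resulting exact sequence. Since $\Spec\Z[1/2p]$ is a regular noetherian scheme on which $2p$ is invertible, the proposition provides a non-canonically split short exact sequence
\[0\too {}_p\Br'(\Z[1/2p])\too {}_p\Br'(\eey_0(2)_{\Z[1/2p]})\too H^1(\Z[1/2p],\Q_p/\Z_p)\too 0,\]
and Lemma~\ref{lem:H1Qp/Zp} identifies the rightmost term with $\Q_p/\Z_p$. Because the sequence is split, the corollary follows once we know that ${}_p\Br'(\Z[1/2p])\simeq\Q_p/\Z_p$: then ${}_p\Br'(\eey_0(2)_{\Z[1/2p]})$ is a split extension of $\Q_p/\Z_p$ by $\Q_p/\Z_p$, hence isomorphic to $\Q_p/\Z_p\oplus\Q_p/\Z_p$.

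It thus remains to compute ${}_p\Br'(\Z[1/2p])$ by class field theory. Since $\Z[1/2p]$ is a regular integral scheme of dimension $1$, the group $\Br'(\Z[1/2p])=H^2(\Z[1/2p],\G_m)$ is torsion and the restriction map $\Br'(\Z[1/2p])\to\Br(\Q)$ is injective with image the classes whose local invariant vanishes at every prime $\ell\notin\{2,p\}$. The fundamental exact sequence of global class field theory, $0\to\Br(\Q)\to\bigoplus_v\Br(\Q_v)\xrightarrow{\sum_v\operatorname{inv}_v}\Q/\Z\to 0$, therefore identifies $\Br'(\Z[1/2p])$ with the kernel of the map $\Br(\Q_2)\oplus\Br(\Q_p)\oplus\Br(\R)\to\Q/\Z$, $(a_2,a_p,a_\infty)\mapsto a_2+a_p+a_\infty$. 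Passing to $p$-primary parts and using that $p$ is odd (so that ${}_p\Br(\R)=0$), this becomes the kernel of the addition map $\Q_p/\Z_p\oplus\Q_p/\Z_p\to\Q_p/\Z_p$, i.e.\ the antidiagonal subgroup, which is isomorphic to $\Q_p/\Z_p$. Hence ${}_p\Br'(\Z[1/2p])\simeq\Q_p/\Z_p$, completing the argument.

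The one substantive ingredient is the computation of ${}_p\Br(\Z[1/2p])$, and that is entirely classical, so I do not expect any genuine obstacle. The only point worth keeping in mind is that, in contrast with $\Br(\Z)=0$ and ${}_p\Br(\Z[1/2])=0$, the group ${}_p\Br'(\Z[1/2p])$ is \emph{nonzero}, equal to $\Q_p/\Z_p$; this is exactly what turns the answer into $\Q_p/\Z_p\oplus\Q_p/\Z_p$ rather than $\Q_p/\Z_p$.
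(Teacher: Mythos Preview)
Your proof is correct and follows essentially the same route as the paper: both apply Proposition~\ref{thm:p-tors-away-from-p} with $S=\spec\Z[1/2p]$, invoke Lemma~\ref{lem:H1Qp/Zp} for the rightmost term, and identify ${}_p\Br'(\Z[1/2p])\simeq\Q_p/\Z_p$. The only difference is that the paper cites \cite[Example~2.19(3)]{AM} for $\Br(\Z[1/2p])\cong\zmod2\oplus\Q/\Z$, whereas you spell out the underlying class field theory computation directly.
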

\begin{proof}
    By \cite[Example 2.19(3)]{AM}, we have $\Br(\Z[\frac{1}{2p}])=\zmod2\oplus\Q/\Z$. The claim follows from this, Lemma~\ref{lem:H1Qp/Zp}, and Theorem~\ref{thm:p-tors-away-from-p}.
\end{proof}

\section{The \texorpdfstring{$\boldsymbol{p}$}{p}-primary torsion in \texorpdfstring{$\boldsymbol{\Br(\eey_0(2))}$}{Br(Y\_0(2))} for primes \texorpdfstring{$\boldsymbol{p\ge3}$}{p at least 3}}\label{sect:p-prim-torsion-over-Z1/2}

In Section~\ref{sect:p>=3 torsion, p invertible}, we were able to compute $\pprimary{\Br'}(\eey_0(2)_S)$ for regular, noetherian schemes $S/\Z[\frac{1}{2p}]$. Since we are ultimately interested in computing the Brauer group of $\eey_0(2)$, which is defined over $\Z[\frac{1}{2}]$, we would now like to indicate how to extend this computation to regular, noetherian schemes $S/\Z[\frac{1}{2}]$. After some preliminaries, we will specialize to the case $S=\spec\Z[\frac{1}{2}]$.

\begin{notation}
    For $S/\Z[\frac{1}{2}]$ and $p\ge3$ prime, we set $S[1/p]:=S_{\Z[\frac{1}{2p}]}=S\by_{\Z[\frac{1}{2}]}\Z[\frac{1}{2p}]$. Note that $\eey_0(2)_S[1/p]\simeq\eey_0(2)_{S[1/p]}$. We will sometimes write this instead as $\eey_0(2)_S[1/2p]=\eey_0(2)_S[1/p]$ to emphasize that both $2$ and $p$ are inverted on the both.
\end{notation}

The key to computing $\Br'(\eey_0(2)_S)$ will be to use the fact \cite[Proposition 2.5(iv)]{AM} that, for $S$ a regular, noetherian $\Z[\frac{1}{2}]$-scheme and $p\ge3$ a prime, $\Br'(\eey_0(2)_S)\into\Br'(\eey_0(2)_S[1/p])$ is injective. Thus, the problem of computing $\Br'(\eey_0(2)_S)$ becomes one of checking which Brauer classes over $\eey_0(2)_S[1/p]$ extend to $\eey_0(2)_S$. In order to show that certain classes of $\eey_0(2)_S[1/p]$ do \textit{not} extend over all of $\eey_0(2)_S$, we use a strategy due to Antieau and Meier---described below as well as in \cite[Discussion under Theorem~1.3]{AM}---which ultimately rests on the observation that $\Br'(\msO_K)=0$ for any nonarchimedean local field $K$; see \cite[Corollary 6.9.3]{poonen-rat-pts}.

\begin{remark}[Strategy for showing Brauer classes do not extend]\label{rem:class-extend-strat}
    Fix a $\Z[\frac{1}{2}]$-scheme $S$, a prime $p\ge3$, and a Brauer class $\alpha\in\Br'(\eey_0(2)_S[1/p])$. In this remark, we describe a criterion for showing $\alpha$ does not extend over all of $\eey_0(2)_S$. 

    Let $K$ be a nonarchimedean local field of residue characteristic p. Suppose that we are given some $f\colon \spec(\msO_K)\to\eey_0(2)_S$, \textit{i.e.}, a tuple $(E,P,g\in S(\msO_K))$, where $E/\msO_K$ is an elliptic scheme, $P\in E[2](\msO_K)$ is a point of exact order $2$ in both fibers, and $g$ is an $\msO_K$-point of $S$ (note that $g$ is superfluous if $S=\Z[\frac{1}{2}]$).
    Let $f_K$ be the restriction of $f$ to the generic fiber over $\msO_K$ (so $f_K$ lands in $\eey_0(2)_S[1/p]$), and consider the commutative square
    \[\commsquare{\spec(K)}{f_K}{\eey_0(2)_S[1/p]}{}{}{\spec(\msO_K)}f{\eey_0(2)_S\rlap{.}}\]
    By \cite[Corollary 6.9.3]{poonen-rat-pts}, we have $\Br'(\msO_K)=0$; thus if $\alpha\in\Br'(\eey_0(2)_S[1/p])$ came from $\Br'(\eey_0(2)_S)$, then we would have $\pull f_K(\alpha)=0\in\Br'(K)$, as can be checked by moving the other way around the square. Thus, taking the contrapositive, if one can construct an $f$ such that $\pull f_K(\alpha)\neq0$, then one can conclude that $\alpha\not\in\im\p{\Br'(\eey_0(2)_S)\into\Br'(\eey_0(2)_S[1/p])}$.
\end{remark}

Remark~\ref{rem:class-extend-strat} reduces the problem of showing that certain Brauer classes do not extend to the problem of constructing suitable families of elliptic curves. Following the strategy of \cite[Sections 7 and~8]{AM}, we will in turn reduce this to computing certain Hilbert symbols. In the end, we will be able to prove the following.

\begin{proposition}\label{prop:p-tors-over-Z[1/2]-i}
    For all primes $p\geq 3$, we have $\pprimary{\Br'}(\eey_0(2))=0$.
\end{proposition}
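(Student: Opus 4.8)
The plan is to combine Proposition~\ref{thm:p-tors-away-from-p} with the strategy of Remark~\ref{rem:class-extend-strat}, realizing the ``do not extend'' step via explicit Hilbert-symbol computations as in \cite[Sections 7 and 8]{AM}. First, apply Proposition~\ref{thm:p-tors-away-from-p} with $S = \spec\Z[1/2p]$, together with the computation $\Br(\Z[1/2p]) = \zmod2 \oplus \Q/\Z$ and Lemma~\ref{lem:H1Qp/Zp}, to get
\[
{}_p\Br'(\eey_0(2)_{\Z[1/2p]}) \simeq \Q_p/\Z_p \oplus \Q_p/\Z_p,
\]
where the first summand is the image of ${}_p\Br'(\Z[1/2p])$ under pullback and the second maps isomorphically onto $H^1(\Z[1/2p], \Q_p/\Z_p)$ via pullback to $X_{\Z[1/2p]}$ followed by taking ramification at the divisor $\{0\}$. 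By \cite[Proposition 2.5(iv)]{AM}, $\Br'(\eey_0(2)) \hookrightarrow \Br'(\eey_0(2)_{\Z[1/2p]})$, so it suffices to show that no nonzero class in this $\Q_p/\Z_p \oplus \Q_p/\Z_p$ extends over all of $\eey_0(2)$ (equivalently, over $\eey_0(2)_{\Z[1/2]}$).

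Next I would dispose of the ``constant'' summand: a nonzero class $\alpha$ coming from ${}_p\Br'(\Z[1/2p])$ pulls back from the base, and since ${}_p\Br'(\Z[1/2]) = 0$ (indeed $\Br(\Z[1/2]) = \zmod2$ has no odd torsion) and $\eey_0(2)$ has a $\Z[1/2]$-point by Remark~\ref{rem:Z[1/2]-point}, the structure map $\Br'(\Z[1/2]) \to \Br'(\eey_0(2))$ followed by base change to $\Z[1/2p]$ cannot hit $\alpha$; more carefully, pulling back along the $\Z[1/2]$-point detects that $\alpha$ would have to come from $\Br'(\Z[1/2p])$ lying in the image of $\Br'(\Z[1/2])$, which is zero in the $p$-part. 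So the real content is the second summand. Here I would follow Remark~\ref{rem:class-extend-strat}: given a nonzero class $\alpha$ mapping to a nonzero element of $H^1(\Z[1/2p], \Q_p/\Z_p) \simeq \ctsHom(\Z_p, \Q_p/\Z_p)$, I want to construct a nonarchimedean local field $K$ of residue characteristic $p$ and a point $f\colon \spec\msO_K \to \eey_0(2)$ — i.e. an elliptic curve $E/\msO_K$ together with $P \in E[2](\msO_K)$ of exact order $2$ in both fibers — such that $f_K^*(\alpha) \neq 0 \in \Br'(K)$. Since $\alpha$ restricted to $X_{\Z[1/2p]}$ is (up to the constant part) the ramification-at-$0$ class, and the coarse-space parameter on $Y_0(2)$ is $s = t/(t-1)^2$ with the divisor $\{0\}$ of $X$ lying over $\{0\}$ of $Y_0(2)$, pulling $\alpha$ back along $f$ amounts to evaluating a cyclic-algebra / Hilbert-symbol pairing in the variable $s$ evaluated at $f^*s \in \msO_K^\times$ against a character of $\Gal$; choosing $f$ so that $f^*s$ is a uniformizer-adjusted unit forces the tame symbol to be nonzero in $\Br'(K)[p]$. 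Concretely I would pick $E/\msO_K$ with multiplicative reduction (Tate curve) at the prime of residue characteristic $p$, equipped with its canonical order-$2$ subgroup, so that the parameter $s$ acquires the desired valuation, exactly paralleling \cite[Sections 7 and 8]{AM}.

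The main obstacle I expect is the last step: making the Hilbert-symbol computation precise enough to see that every nonzero element of $\Q_p/\Z_p$ in the second summand is detected, not just the $p$-torsion. This requires (i) identifying the class $\alpha$ explicitly as a cyclic algebra over $\eey_0(2)_{\Z[1/2p]}$ pulled back from, or expressed in terms of, the coordinate $s$ and a generator of $H^1(\Z[1/2p], \zmod{p^n})$ coming from class field theory (the ramified-at-$p$ character), and (ii) producing, for each $n$, a local field $K/\Q_p$ and an $\msO_K$-point of $\eey_0(2)$ on which the resulting element of $\Br'(K)$ has order exactly $p^n$ — e.g. by taking $K$ a suitable ramified extension of $\Q_p$ and $E$ a Tate curve whose Tate parameter has valuation coprime to $p^n$. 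One must also check the compatibility of the ramification map here with the one in \cite{AM}, which should follow from the morphism of Hochschild--Serre spectral sequences induced by $\eey(2) \to \eey_0(2) \to \eem_{1,1}$ together with the factorization of $Y_0(2) \to M_{1,1}$ through $s \mapsto j$. Granting these, the contrapositive in Remark~\ref{rem:class-extend-strat} shows no nonzero class extends, so ${}_p\Br'(\eey_0(2)) = 0$ for all primes $p \geq 3$.
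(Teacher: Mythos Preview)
Your global strategy matches the paper's: split ${}_p\Br'(\eey_0(2)_{\Z[1/2p]})$ via Proposition~\ref{thm:p-tors-away-from-p}, kill the constant summand using a $\Z[1/2]$-point, and then show the non-constant classes do not extend via Remark~\ref{rem:class-extend-strat}. The treatment of the constant summand is fine. The gap is in the last step.

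The concrete plan---take $E/\msO_K$ with multiplicative reduction (a Tate curve) so that the parameter $s$ has nontrivial valuation and then invoke a tame symbol---does not work. An $\msO_K$-point of $\eey_0(2)$ is an \emph{elliptic scheme} over $\msO_K$, i.e.\ a curve with good reduction; a Tate curve only defines a $K$-point, and the argument of Remark~\ref{rem:class-extend-strat} collapses without the integral point. Conversely, once $E/\msO_K$ has good reduction, the induced map $\spec\msO_K\to Y_0(2)=\A^1\sm\{0\}$ forces $s\in\msO_K^\times$, so there is no ``uniformizer-adjusted'' valuation to exploit and the tame symbol vanishes. This is exactly the obstacle the paper has to get around.

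The paper's resolution is to work over $K=\Q_p(\zeta_p)$, where the generator of $H^1(\Z[1/2p],C_p)$ restricts to the Kummer class of $\zeta_p$, and to compute the \emph{wild} Hilbert symbol $\bigl(\zeta_p,\,t/(t-1)^2\bigr)_\mfp$ via Neukirch's explicit formula $\zeta_p^{\Tr(\log a)/p}$. One then solves $t/(t-1)^2=m^p+ap$ in $\Z_p[\zeta_p]$ to produce Legendre curves with \emph{good} reduction and nonvanishing symbol (Proposition~\ref{prop:exists-t-hilbert-nonzero}). Two further points you omit: (i) by Remark~\ref{rem:extend-argument-simplification} it suffices to detect the single $p$-torsion generator, so there is no need to handle each $p^n$ separately; (ii) to rule out mixed classes $\sigma(\alpha)+\beta$ with $\beta$ constant, one also needs a good-reduction Legendre curve on which the symbol \emph{does} vanish (Proposition~\ref{prop:exists-t-hilbert-zero}), allowing one to separate the summands after pullback.
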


\begin{setup}
    Throughout the remainder of this section, fix a prime $p\ge3$, and for now, fix a regular, connected noetherian scheme $S/\Z[\frac{1}{2p}]$.
\end{setup}

Recall (Theorem~\ref{thm:p-tors-away-from-p})
    there is a split exact sequence
    \begin{equation}\label{ses:p-tors-away-from-p}
        0\too\pprimary{\Br'}(S)\too\pprimary{\Br'}(\eey_0(2)_S)\too H^1\p{S,\Q_p/\Z_p}\too0,
    \end{equation}
    where the map $\pprimary{\Br'}(\eey_0(2)_S)\to H^1(S,\Q_p/\Z_p)$ is computed by pulling a Brauer class $\alpha\in\pprimary{\Br'}(\eey_0(2)_S)$ back to a class $\alpha_X\in\Br'(X_S)$, and then taking its ramification $\opname{ram}_0(\alpha_X)\in H^1(S,\Q_p/\Z_p)$ at the divisor $t=0$.
    
Our first goal is to determine an explicit splitting of the exact sequence (\ref{ses:p-tors-away-from-p}) and so have an explicit description of all classes in $\pprimary{\Br'}(\eey_0(2)_S)$. We will do this by following the strategy of \cite[Lemma 7.2]{AM}. 
\begin{notation}
    Let $\pi_S\colon\eey_0(2)_S\to Y_0(2)_S=\A^1_S\sm\{0\}$ be the coarse space map of Corollary~\ref{cor:Y0(2)-cms-general-base}, and let~$s$ denote the coordinate on $Y_0(2)_S=\rSpec_S(\msO_S[s,\inv s])$. Let $T_n\to\eey_0(2)_S$ be a $\mu_{p^n}$-torsor representing the image of $s\in\G_m(Y_0(2)_S)$ under the Kummer map
    \[\kappa_{p^n}\colon\G_m(Y_0(2)_S)=\G_m(\eey_0(2)_S)\too H^1(\eey_0(2),\mu_{p^n});\]
    \textit{e.g.}, $T_n$ could be the torsor $\eey_0(2)(\sqrt[p^n]s)$.
\end{notation}
\begin{remark}
    The above definition of $T_n$ makes sense even when $p=2$.
\end{remark}

\begin{theorem}\label{thm:sect-construct}
    The colimit $\sigma$ of the compositions
    \[\sigma_n\colon H^1(S,C_{p^n})\too H^1(\eey_0(2)_S,C_{p^n})\xto{\smile[T_n]}H^2(\eey_0(2)_S,\mu_{p^n})\too\pprimary{\Br'}(\eey_0(2)_S),\]
    for all $n\ge1$, gives a section $\sigma\colon H^1(S,\Q_p/\Z_p)\to\pprimary{\Br'}(\eey_0(2)_S)$ of the exact sequence~\eqref{ses:p-tors-away-from-p}.
\end{theorem}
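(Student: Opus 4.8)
The plan is to verify that $\sigma$ defined as the colimit of the maps $\sigma_n$ is a well-defined homomorphism and that it splits the surjection in (\ref{ses:p-tors-away-from-p}), i.e., that the composite $H^1(S,\Q_p/\Z_p)\xto{\sigma}{}_p\Br'(\eey_0(2)_S)\to H^1(S,\Q_p/\Z_p)$ is the identity, where the second map is ``pull back to $X_S$ and take $\opname{ram}_0$.'' First I would check compatibility of the $\sigma_n$ with the maps $C_{p^n}\into C_{p^{n+1}}$ (equivalently $\Z/p^n\to\Z/p^{n+1}$) so that the colimit makes sense; this is formal once one knows the cup product is compatible with the reduction/inclusion maps and that $[T_{n+1}]$ maps to $[T_n]$ under $H^1(\eey_0(2)_S,\mu_{p^{n+1}})\to H^1(\eey_0(2)_S,\mu_{p^n})$, which holds because $T_n=\eey_0(2)(\sqrt[p^n]{s})$ is literally the pushforward of $T_{n+1}$ along $\mu_{p^{n+1}}\onto\mu_{p^n}$. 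Additivity of each $\sigma_n$ is immediate from bilinearity of the cup product and additivity of the Kummer-type boundary maps, so $\sigma$ is a homomorphism.

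The heart of the argument is the computation that $\opname{ram}_0$ applied to the pullback of $\sigma_n(\chi)$ to $X_S$ recovers $\chi\in H^1(S,C_{p^n})$. I would carry this out at finite level $n$ and then pass to the colimit. The key inputs are: (i) the pullback of the $\mu_{p^n}$-torsor $T_n$ along $X_S\to\eey_0(2)_S\to Y_0(2)_S$ corresponds, under Kummer theory, to the image of $s=t/(t-1)^2$ in $\G_m(X_S)/p^n$; (ii) by the purity/residue description of ramification for Brauer classes along the divisor $t=0$ on $\A^1_S$, the residue of a cup product $\res_S^*(\chi)\smile\kappa_{p^n}(u)$ at a divisor $D$ is $v_D(u)\cdot\chi$ (the local symbol computation, e.g.\ as in \cite[Section 7]{AM} or the tame-symbol formula); and (iii) $v_{t=0}(t/(t-1)^2)=1$. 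Combining these gives $\opname{ram}_0\big((\sigma_n(\chi))_X\big)=1\cdot\chi=\chi$, as desired. I would also note that $T_n$ and hence $\sigma_n(\chi)$ are genuinely defined over $\eey_0(2)_S$ (not just $X_S$), since $s$ is a global unit on $Y_0(2)_S$, so there is no issue with the class being in the image of $\Br'(\eey_0(2)_S)\to\Br'(X_S)$.

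The main obstacle I expect is step (ii): getting the local ramification symbol exactly right, in particular the sign and the identification of the various incarnations of $H^1(S,\Q_p/\Z_p)$ — namely the one appearing as the cokernel in (\ref{ses:p-tors-away-from-p}) (built from $\wt\rho|_{C_2}\otimes H^1(S,\Q_p/\Z_p)$ via projection to the first coordinate, as spelled out at the end of the proof of Theorem \ref{thm:p-tors-away-from-p}) versus the one appearing in the target of $\sigma_n$. One must check that the divisor $t=0$ on $X_S$ is the one corresponding to the ``first coordinate'' projection used to identify $(\wt\rho|_{C_2}\otimes H^1(S,\Q_p/\Z_p))^{C_2}\cong H^1(S,\Q_p/\Z_p)$; this is where I would lean on the explicit basis $\{(0,1,-1),(1,0,-1)\}$ for $\wt\rho$ and the fact that $t$ (resp.\ $t-1$) is the image of the first (resp.\ second) basis vector, so that the ramification of $\alpha_X$ at $t=0$ and at $t=1$ are precisely the two $\wt\rho|_{C_2}$-coordinates. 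Once the bookkeeping is aligned, the rest is the routine symbol calculation, and functoriality of cup products in $S$ gives that $\sigma$ is a section of sheaves/functors in $S$, not merely for the fixed $S$. This mirrors \cite[Lemma 7.2]{AM} closely, so I would present it by pointing out the parallel and only highlighting where the $t/(t-1)^2$ (rather than $t$) and the $C_2$- rather than $S_3$-action change the computation — namely nowhere essential, since $v_{t=0}(t/(t-1)^2)=1$ just as $v_{t=0}(t)=1$.
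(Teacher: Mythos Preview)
Your proposal is correct and follows essentially the same approach as the paper: both reduce to checking that $\ram_0$ of the pullback to $X_S$ of $\sigma_n(\chi)$ returns $\chi$, using that $s$ pulls back to $t/(t-1)^2$ and that the residue of $\chi\smile\kappa_{p^n}(u)$ along $t=0$ is $v_{t=0}(u)\cdot\chi$. The only presentational difference is that the paper justifies your step (ii) by first passing to the function field $k$ of $S$ (using injectivity of $H^1(S,C_{p^n})\to H^1(k,C_{p^n})$ via surjectivity of $\etpi_1(\spec k)\to\etpi_1(S)$) and then invoking \cite[Proposition 2.16]{AM} at the DVR $k[t]_{(t)}$, rather than appealing to a general tame-symbol formula; your worry about coordinate bookkeeping is already absorbed by the statement of Theorem \ref{thm:p-tors-away-from-p}, which identifies the surjection directly as $\ram_0$.
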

\begin{proof}
    These maps are compatible as $n$ varies, so it suffices to separately check that each composition
    \[H^1(S,C_{p^n})\overset{\sigma_n}\too\Br'(\eey_0(2)_S)[p^n]\too H^1(S,\Q_p/\Z_p)[p^n]=H^1(S,C_{p^n})\]
    is the identity. By Theorem~\ref{thm:p-tors-away-from-p}, the map $\Br'(\eey_0(2)_S)[p^n]\to H^1(S,C_{p^n})$ is computed by pulling back to $X_S$ and then taking the ramification along $t=0$.
     By construction (see Lemma~\ref{lem:C2-inv-comp}), the invertible function $s\in\G_m(Y_0(2)_S)$ pulls back to $t/(t-1)^2\in\G_m(X_S)$, so the torsor $T_n\to\eey_0(2)_S$ pulls back to a torsor $T_n'\to X_S$ representing the image of $t/(t-1)^2$ in the Kummer map on $X_S$. Thus, the composition $H^1(S, C_{p^n})\to H^1(S, C_{p^n})$ is equivalently given by first pulling back $\chi\in H^1(S,C_{p^n})$ to $X_S$, then taking the cup product with $[T_n']$, and finally taking the ramification along $t=0$. 

    Let $k$ be the function field of $S$. We remark that the restriction map $H^1(S,C_{p^n})\to H^1(k,C_{p^n})$ is injective. Indeed, $H^1(S,C_{p^n})\simeq\ctsHom(\etpi_1(S),C_{p^n})$ (and similarly for $H^1(k,C_{p^n})$), and the natural map $\etpi_1(\spec(k))\to\etpi_1(S)$ is a surjection by \cite[Proposition 3.3.6]{fu-etale}; these together show that $H^1(S,C_{p^n})\to H^1(k,C_{p^n})$ is injective. Thus, to show that the composition $H^1(S,C_{p^n})\to H^1(S,C_{p^n})$ is the identity map, it suffices to show that it is the identity after base change to $k$. 

    Let $K:=k(t)$ be the function field of $X_S$, and let $R:=k[t]_{(t)}$ be the local ring at $t=0$. Then $t/(t-1)^2$ is a uniformizer at $t=0$, and the base change of the composition map is given by sending a class $\chi\in H^1(k, C_{p^n})$ to $\ram_0(\chi', \frac{t}{(t-1)^2})$, where $\chi'$ is the pullback of $\chi$ to $K$. 

    Finally, consider the Galois extension of fields $L/K$ defined by $\chi'$. Let $S$ be the integral closure of $R$ in $L$. Since the extension $L/K$ came from $\chi$ over $k$, the class of the extension $S/R$ also comes from $\chi$, and thus by \cite[Proposition 2.16]{AM}, we conclude that $\ram_0(\chi', \frac{t}{(t-1)^2})=\chi$, and thus that the composition is the identity map. 
\end{proof}

\subsection{Connection to Hilbert symbols}
For the remainder of this section, we consider $\eey_0(2)$ over the base $S=\spec(\Z[\frac{1}{2}])$.
By Theorem~\ref{thm:sect-construct}, every $\alpha\in\Br'(\eey_0(2)[1/p])$ is a sum of a constant class (one coming from $\Br'(\Z[\frac{1}{2p}])$) and the class of a cyclic algebra, \textit{i.e.}, the cup product of a $C_{p^n}$-torsor and the $\mu_{p^n}$-torsor $T_n$. In Remark~\ref{rem:class-extend-strat}, we saw that determining whether a class $\alpha$ in $\Br'(\eey_0(2)[1/p])$ extends over all of $\eey_0(2)$ is related to the problem of evaluating $\alpha\in\Br'(\eey_0(2)[1/p])$ at points of $\eey_0(2)$ valued in local fields. By the below result, when pulled back to a suitable local field, the triviality of cyclic algebras can be understood in terms of classical Hilbert symbols.

 \begin{notation}
    Let $K$ be a local field containing a primitive $\supth{n}$ root of unity $\zeta$. The choice of $\zeta$ defines an isomorphism $\ul{C_n}_K\iso\mu_{n,K}$. Combined with the usual Kummer isomorphism, we have $H^1(K,C_n)\iso H^1(K,\mu_n)\simeq\units K/(\units K)^n$. Given $a,b\in\units K$, let $(a,b)_\zeta\in\Br(K)$ denote the image of $a\otimes b$ under the composition
    \[\frac{\units K}{(\units K)^n}\otimes\frac{\units K}{(\units K)^n}\isoo H^1(K,C_n)\otimes H^1(K,\mu_n)\xtoo\smile H^2(K,\mu_n)\too H^2(K,\G_m)=\Br(K).\]
\end{notation}

\begin{proposition}[\textit{cf.} {\cite[Proposition 2.17]{AM}}]\label{prop:cyclic-alg-hilbert}
     Let $K$ be a local field containing a primitive $\supth{n}$ root of unity $\zeta$. Let $\mfp$ denote the prime ideal in the valuation ring of $K$, and let
     \[\pfrac{-,-}\mfp\colon\frac{\units K}{(\units K)^n}\otimes\frac{\units K}{(\units K)^n}\too\mu_n(K)\]
     denote the usual Hilbert symbol; see \cite[Section V.3]{neukirch}. Then, for any $a,b\in\units K$, 
     \[(a,b)_\zeta=1\in\Br(K)\iff\pfrac{a,b}\mfp=1\in\mu_n(K).\]
\end{proposition}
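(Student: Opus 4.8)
The plan is to avoid computing Brauer invariants directly and instead show that both sides of the claimed equivalence are controlled by the same norm condition. First I would reduce the statement to one inside $H^2(K,\mu_n)$: since $K$ is a local field, local class field theory identifies $H^2(K,\mu_n)$ with $\Br(K)[n]$, so in particular the natural map $H^2(K,\mu_n)\to H^2(K,\G_m)=\Br(K)$ is injective, and hence $(a,b)_\zeta=1$ in $\Br(K)$ if and only if its image in $H^2(K,\mu_n)$ already vanishes. Next I would unwind the definition of $(a,b)_\zeta$. Writing $\chi_a\in H^1(K,C_n)$ for the class of $a$ --- which, via $\zeta$ and Kummer theory, is the character of $\Gal(\bar K/K)$ cutting out $L:=K(\sqrt[n]{a})$, given by $g\mapsto g(\sqrt[n]a)/\sqrt[n]a$ --- the class $(a,b)_\zeta=\chi_a\smile\kappa_n(b)$ is exactly the Brauer class of the cyclic (crossed-product) algebra attached to the cyclic extension $L/K$, its distinguished generator, and the element $b$. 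This identification of the cup product with a crossed product is standard.

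With this in hand, the proposition reduces to matching two norm criteria. On the one hand, by the classical criterion for splitting of cyclic algebras the algebra just described is split --- equivalently $(a,b)_\zeta=1$ --- if and only if $b\in N_{L/K}\big(\units L\big)$. On the other hand, it is one of the defining properties of the norm-residue (Hilbert) symbol in local class field theory \cite[Section V.3]{neukirch} that $\pfrac{a,b}\mfp=1$ if and only if $b\in N_{L/K}\big(\units L\big)$, for the very same field $L=K(\sqrt[n]a)$. Both criteria hold as stated even when $a$ is an $n$th power, or more generally has order $d<n$ in $\units K/(\units K)^n$ so that $L/K$ has degree $d$; one simply carries $\mu_d\hookrightarrow\mu_n$ through the argument. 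Comparing, both $(a,b)_\zeta=1$ and $\pfrac{a,b}\mfp=1$ are equivalent to the membership $b\in N_{K(\sqrt[n]a)/K}\big(\units{K(\sqrt[n]a)}\big)$, which is the claim. (The same argument in fact upgrades to an equality of $(a,b)_\zeta$ with $\pfrac{a,b}\mfp$ under $H^2(K,\mu_n)\cong\mu_n(K)$, but only the triviality equivalence is needed here.)

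The only real work is keeping the conventions straight, and this is where I expect the subtlety to sit: one must verify that the cup product $\chi_a\smile\kappa_n(b)$ produces the crossed product built from $L/K$ and $b$ (rather than its opposite, or the one built from $b$ and $a$), that the distinguished generator of $\Gal(L/K)$ appearing in the cohomological and the crossed-product pictures is the one singled out by $\zeta$, and that the chosen normalization of the Hilbert symbol makes its triviality match the \emph{same} norm subgroup. None of these is a genuine obstruction --- for the triviality assertion the relevant symmetries only swap a class with its inverse --- but they are precisely the points one has to get right, and all the necessary local class field theory is already packaged in \cite{neukirch}.
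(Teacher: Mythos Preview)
Your argument is correct and is the standard one: both $(a,b)_\zeta=1$ and $\pfrac{a,b}\mfp=1$ are equivalent to $b\in N_{K(\sqrt[n]a)/K}\bigl(\units{K(\sqrt[n]a)}\bigr)$, the first by the splitting criterion for cyclic algebras and the second by the defining norm-residue property of the Hilbert symbol. Note, however, that the paper does not supply its own proof of this proposition---it simply cites \cite[Proposition~2.17]{AM}---so there is no in-paper argument to compare against; your sketch is a perfectly good justification of the cited result.
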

We have a commutative diagram
\[\begin{tikzcd}
     &\pprimary{\Br'}(\Z[\frac{1}{2}])\ar[r, hook]\ar[d]&\pprimary{\Br'}(\eey_0(2)[1/2])\ar[d]\\
        0\ar[r]&\pprimary{\Br'}(\Z[\frac{1}{2p}])\ar[r]&\pprimary{\Br'}(\eey_0(2)[\frac{1}{2p}])\ar[r]&H^1(\Z[\frac{1}{2p}],\Q_p/\Z_p)\ar[r]\ar[l, "\sigma"', bend right]&0,
    \end{tikzcd}\]
with $\sigma$ the section computed in Theorem~\ref{thm:sect-construct}. We are interested in figuring out which Brauer classes in the bottom row come from ones in the top row. We start by determining when the elements of $\pprimary{\Br'}(\eey_0(2)[1/p])$ coming from $H^1(\Z[\frac{1}{2p}],\Q_p/\Z_p)$ do not extend to $\pprimary{\Br'}(\eey_0(2))$.

\begin{remark} \label{rem:alphan}
    Our earlier computation (Lemma~\ref{lem:H1Qp/Zp}) that $H^1(\Z[\frac{1}{2p}],\Q_p/\Z_p)\simeq\Q_p/\Z_p$ shows that, for every~$n$, there is a unique Galois extension $F_n/\Q$ which is unramified outside $2p$ and whose Galois group is isomorphic to $\zmod{p^n}$. In fact, because $\Q(\zeta_{p^{n+1}})/\Q$ has (cyclic) Galois group $\units{(\zmod{p^{n+1}})}$ and is unramified outside $p$, we see that $F_n$ is the unique subfield of $\Q(\zeta_{p^{n+1}})$ with Galois group $\zmod{p^n}$ over $\Q$. Thus, $H^1(\Z[\frac{1}{2p}],C_{p^n})=H^1(\Z[\frac{1}{2p}],\Q_p/\Z_p)[p^n]$ is generated by the class of the ring of integers of $F_n/\Q$.
\end{remark}

\begin{definition}\label{defn:alpha n}
    We define $\alpha_n:=[\msO_{F_n}]$ to be the class of the ring of integers of $F_n/\Q$ in $H^1(\Z[\frac{1}{2p}],C_{p^n})$.
\end{definition}

\begin{proposition} \label{prop:not extend}
    Let $K=K_n=\Q_p(\zeta_{p^n})$, $\mfp$ be the prime ideal in the valuation ring of $K$, $\alpha_n\in H^1(\Z[\frac{1}{2p}],C_{p^n})$ be as in Definition~\ref{defn:alpha n}, and $t_0\in \msO_{K}$ be such that $E:y^2=x(x-1)(x-t_0)$ has good reduction at $\mfp$ $($i.e., is a Legendre curve over $\msO_K)$ and \[\pfrac{\zeta_p,t_0/(t_0-1)^2}\mfp\neq1.\]
    Let $f\colon \Spec (\msO_K) \to \eey_0(2)$ be the map given by $E$, and $f_K\colon\Spec (K)\to \eey_0(2)[1/p]$ the pullback of the map to the generic fiber. Then $f_K^*(\sigma(\alpha_n))\neq 0$, where $\sigma$ is the section computed in Theorem~\ref{thm:sect-construct}, and thus $\sigma(\alpha_n)$ does not extend to $\pprimary{\Br'}(\eey_0(2))$.
\end{proposition}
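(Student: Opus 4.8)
The plan is to use the strategy of Remark~\ref{rem:class-extend-strat} directly: we exhibit a nonarchimedean local field $K$ of residue characteristic $p$ and a map $f\colon\Spec\msO_K\to\eey_0(2)$ such that $\pull{(f_K)}(\sigma(\alpha_n))\neq 0\in\Br'(K)$. Once this is shown, the commutative square in Remark~\ref{rem:class-extend-strat}, together with $\Br'(\msO_K)=0$ \cite[Corollary 6.9.3]{poonen-rat-pts}, immediately forces $\sigma(\alpha_n)\notin\im\bigl(\Br'(\eey_0(2))\hookrightarrow\Br'(\eey_0(2)[1/p])\bigr)$, which is the desired conclusion. So the whole content is the nonvanishing of $\pull{(f_K)}(\sigma(\alpha_n))$, and this is where the hypothesis on $t_0$ enters.

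The key computation is to identify $\pull{(f_K)}(\sigma(\alpha_n))$ explicitly as a cyclic algebra over $K$ and then invoke Proposition~\ref{prop:cyclic-alg-hilbert}. By construction in Theorem~\ref{thm:sect-construct}, $\sigma(\alpha_n)$ is the cup product of the image of $\alpha_n\in H^1(\Z[1/2p],C_{p^n})$ in $H^1(\eey_0(2)[1/p],C_{p^n})$ with the class $[T_n]\in H^1(\eey_0(2)[1/p],\mu_{p^n})$ of the $\mu_{p^n}$-torsor $\eey_0(2)(\sqrt[p^n]{s})$. Pulling back along $f_K$ is compatible with cup products, so $\pull{(f_K)}(\sigma(\alpha_n))=\pull{(f_K)}(\alpha_n)\smile\pull{(f_K)}[T_n]$. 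Here $\pull{(f_K)}(\alpha_n)\in H^1(K,C_{p^n})$ is the restriction to $K$ of the global character cutting out $F_n/\Q$; because $K=\Q_p(\zeta_{p^n})$ and $F_n\subseteq\Q(\zeta_{p^{n+1}})$, this restriction is a nontrivial character of $\Gal(\overline{K}/K)$ of order dividing $p^n$ — concretely, under the Kummer identification $H^1(K,C_{p^n})\cong\units K/(\units K)^{p^n}$ afforded by the choice of $\zeta=\zeta_p$, it corresponds to a specific unit (a $p$-power-free generator coming from the cyclotomic character), and one should pin this down to the extent needed. Meanwhile $\pull{(f_K)}[T_n]$ is the class of $\sqrt[p^n]{\pull{f}s}$; since $s$ pulls back to $t/(t-1)^2$ along $X\to Y_0(2)$ (Lemma~\ref{lem:C2-inv-comp}), and $f$ is the Legendre curve with parameter $t_0$, we get $\pull{(f_K)}[T_n]=\kappa_{p^n}\bigl(t_0/(t_0-1)^2\bigr)\in\units K/(\units K)^{p^n}$.

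Putting these together, $\pull{(f_K)}(\sigma(\alpha_n))$ is the cyclic algebra $(\,u_n,\ t_0/(t_0-1)^2\,)_{\zeta}$ over $K$ for the appropriate unit $u_n$ representing $\pull{(f_K)}(\alpha_n)$, which by Proposition~\ref{prop:cyclic-alg-hilbert} is trivial in $\Br(K)$ if and only if the Hilbert symbol $\bigl(u_n,\, t_0/(t_0-1)^2\bigr)_{\mfp}=1$. The hypothesis of the proposition is exactly $\bigl(\zeta_p,\, t_0/(t_0-1)^2\bigr)_{\mfp}\neq 1$, so the remaining point is to check that, after the identifications above, $u_n$ can be taken to be $\zeta_p$ (or at least that the relevant symbol with $u_n$ is nontrivial whenever the symbol with $\zeta_p$ is). This should follow because $\alpha_n$ generates $H^1(\Z[1/2p],C_{p^n})$ and the $n=1$ case identifies the generator with the class of $\Q(\zeta_p)/\Q$, whose restriction to $K_1=\Q_p(\zeta_p)$ is, via the Kummer map normalized by $\zeta_p$, the class of $\zeta_p$ itself (Kummer theory for $\Q_p(\zeta_p)(\zeta_{p^2})/\Q_p(\zeta_p)$); for general $n$ one reduces mod $p$, i.e. composes $\alpha_n$ down to $\alpha_1$, using that the cup product and ramification behave well under the inclusions $C_p\hookrightarrow C_{p^n}$, so nontriviality at level $1$ suffices. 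The main obstacle is this last bookkeeping step: carefully tracking how the global class $\alpha_n$, the choice of root of unity $\zeta=\zeta_p$, and the Kummer/Hilbert-symbol normalizations interact, so that the abstract cup product $\pull{(f_K)}(\alpha_n)\smile\pull{(f_K)}[T_n]$ is literally the Hilbert symbol appearing in the hypothesis. Everything else — compatibility of cup products with pullback, the identification of $\pull{f}s$, and the invocation of Propositions~\ref{prop:cyclic-alg-hilbert} and Remark~\ref{rem:class-extend-strat} — is formal.
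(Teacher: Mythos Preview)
Your overall approach matches the paper's: pull back $\sigma(\alpha_n)$ along $f_K$, identify it as the cyclic algebra $\bigl(\alpha_n|_K,\, t_0/(t_0-1)^2\bigr)$, and invoke Proposition~\ref{prop:cyclic-alg-hilbert}. The step you flag as the main obstacle --- pinning down $\alpha_n|_K$ as $\zeta_p$ under the Kummer identification --- is exactly where the paper's argument differs from your sketch, and your suggested reduction to $n=1$ does not work as stated. First, a minor slip: $\alpha_1$ is not the class of $\Q(\zeta_p)/\Q$ (that extension has Galois group $(\zmod p)^\times$, not $C_p$) but rather the class of $F_1$, the degree-$p$ subfield of $\Q(\zeta_{p^2})$; your Kummer computation over $K_1$ is nonetheless correct once this is fixed. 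More seriously, for $n\ge 2$ you propose to ``compose $\alpha_n$ down to $\alpha_1$'' and use nontriviality at level $1$. But you are working over $K=K_n=\Q_p(\zeta_{p^n})$, and for $n\ge 2$ the restriction $\alpha_1|_{K_n}$ is \emph{trivial}, since $K_n\supseteq\Q_p(\zeta_{p^2})\supseteq F_1\Q_p$. So the image of $\alpha_n$ under $C_{p^n}\to C_p$ vanishes over $K_n$ and carries no information, while the inclusion $C_p\hookrightarrow C_{p^n}$ you mention runs in the wrong direction to recover $\alpha_n$ from $\alpha_1$.

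The paper avoids this by computing $\alpha_n|_{K_n}$ directly for every $n$: since $F_n\subset\Q(\zeta_{p^{n+1}})$ one has $K_nF_n\subset\Q_p(\zeta_{p^{n+1}})$, and a degree count ($[K_nF_n:\Q_p]$ is divisible by both $p^n$ and $p-1$) forces $K_nF_n=\Q_p(\zeta_{p^{n+1}})$. Because $\zeta_{p^{n+1}}=\sqrt[p^n]{\zeta_p}$, Kummer theory gives $\alpha_n|_{K_n}=\zeta_p\in\units{K_n}/(\units{K_n})^{p^n}$ on the nose, uniformly in $n$, and the rest proceeds exactly as you outline.
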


\begin{proof}
    We first explicitly compute the class of $\alpha_n$ restricted to $K$. Since $K$ contains a primitive $\supth{(p^n)}$ root of unity,
    \[H^1(K,C_{p^n})=H^1(K,\mu_{p^n})=\G_m(K)/p^n=\frac{\units K}{(\units K)^{p^n}},\]
    where $x\in\units K$ corresponds to the class of the extension $K\p{\sqrt[p^n]x}/K$ in $H^1(K,C_{p^n})$.

    The class of $\alpha_n$, the extension $F_n$ over $\Q$, restricted to $K$ is given by the class of the extension $KF_n$ over~$K$. The compositum $KF_n$ is equal to $\Q_p(\zeta_{p^{n+1}})$, because we have $KF_n\subset\Q_p(\zeta_{p^{n+1}})$ by construction and $[KF_n:\Q_p]$ is divisible by both $p^n$ and $(p-1)$, again by construction. Since $\zeta_{p^{n+1}}=\sqrt[p^n]{\zeta_p}$, we conclude that $\alpha_n=[F_n]\in H^1(\Z[\frac{1}{2p}],C_{p^n})$ pulls back to $\zeta_p\in\units K/(\units K)^{p^n}\cong H^1(K, C_{p^n})$.

     We have the following commutative diagram: 
     \[\begin{tikzcd}
        {}\pprimary{\Br'}(K)&\pprimary{\Br'}(\eey_0(2)_{K})\ar[l,"t=t_0"] \ar[r] & H^1\p{K,C_{p^n}} \ar[l, "\sigma"', bend right]\\ &
        \pprimary{\Br'}\p{\eey_0(2)[\frac{1}{2p}]}\ar[r]\ar[u]& H^1\p{\Z[\frac{1}{2p}],C_{p^n}}\ar[u]\ar[l, "\sigma"', bend right],
    \end{tikzcd}\]
    in which $\sigma$ is the section in Theorem~\ref{thm:sect-construct} and the vertical arrows up are restrictions to $K$. The vertical map on the right is given by $\alpha_n\mapsto \zeta_p$, and $f_{K_n}^*$ is given by the composition $\pprimary{\Br'}(\eey_0(2)[\frac{1}{2p}])\to{}\pprimary{\Br'}(K_n)$. Thus $f_{K_n}^*(\sigma(\alpha_n))$ is given by the cup product $(\zeta_p, t_0/(t_0-1)^2)$, which is nontrivial since   \[\pfrac{\zeta_p,t_0/(t_0-1)^2}\mfp \neq 1.\]

    Therefore, using the strategy in Remark~\ref{rem:class-extend-strat}, we conclude that $\sigma(\alpha_n)$ does not extend to $\pprimary{\Br'}(\eey_0(2))$. 
\end{proof}

\begin{remark}\label{rem:extend-argument-simplification}
    If $\alpha_1$, which generates $H^1(\Z[\frac{1}{2p}],\Q_p/\Z_p)[p]$, does \textit{not} extend over $\eey_0(2)$---\textit{e.g.}, we find a suitable~$t$ making
    \[\pfrac{\zeta_p,t/(t-1)^2}\mfp\neq1,\]
    where $\mfp$ is the prime of $K_1=\Q_p(\zeta_p)$---then in fact no nonzero $\alpha\in H^1(\Z[\frac{1}{2p}],\Q_p/\Z_p)$ will extend over $\eey_0(2)$. Indeed, suppose some such $\alpha$ does. Then all of its multiples extend over $\eey_0(2)$ as well. However, $\alpha$ will be of order $p^n$ for some $n\ge1$, so $p^{n-1}\alpha$ will be a generator for the $p$-torsion in $H^1(\Z[\frac{1}{2p}],\Q_p/\Z_p)\cong\Q_p/\Z_p$. Thus, $\alpha_1=up^{n-1}\alpha$ for some $u$, from which we would deduce that $\alpha_1$ extends over $\eey_0(2)$, which contradicts the assumption on $\alpha_1$.
\end{remark}

Thus, we arrive at the task of computing some Hilbert symbols over $K_1=\Q_p(\zeta_p)$. Let $\mfp=(\zeta_p-1)\subset\Z_p[\zeta_p]=\msO_{K_1}$ be its maximal ideal.
\begin{remark}\label{rem:hilb-comp}
    Consider any $a\in\msO_{\Q_p(\zeta_p)}=\Z_p[\zeta_p]$ such that $a\equiv1\pmod\mfp$. Then, \cite[Proposition V.3.8]{neukirch} tells us that
    \[\pfrac{\zeta_p,a}\mfp=\zeta_p^{\Tr(\log a)/p}\in\mu_p\p{\Q_p\p{\zeta_p}},\]
    where $\Tr=\Tr_{\Q_p(\zeta_p)/\Q_p}$ and $\log$ is the usual $p$-adic logarithm:
    \[\log(1-x)=-\sum_{n\ge1}\frac{x^n}n.\]
\end{remark}

\begin{proposition}\label{prop:exists-t-hilbert-nonzero}
    There exists some $t\in\units{\Z_p[\zeta_p]}=\units\msO_{K_1}$ such that $\Delta(t):=16t^2(t-1)^2\in\units{\Z_p[\zeta_p]}$ $($i.e., $E_t:y^2=x(x-1)(x-t)$ is a Legendre curve over $\Z_p[\zeta_p])$ and
    \[\pfrac{\zeta_p,t/(t-1)^2}\mfp\neq1\in\mu_p.\]
\end{proposition}
\begin{proof}
    Consider some $a,m\in\Z$ with $p\nmid am$, and observe that
    \[\pfrac{\zeta_p,m^p+ap}\mfp=\pfrac{\zeta_p,1+a\inv[p]mp}\mfp\pfrac{\zeta_p,m^p}\mfp=\pfrac{\zeta_p,1+a\inv[p]mp}\mfp=\inv[{a\inv[p]m}]\zeta_p=\zeta_p^{-a\inv m}\neq1,\]
    with the second-to-last equality following from Remark~\ref{rem:hilb-comp}, because $\Tr_{\Q_p(\zeta_p)/\Q_p}$ is multiplication by $p-1$ on $\Q_p$. Thus, we may construct our desired $t\in\Z_p[\zeta_p]$ by solving $t(t-1)^{-2}=m^p+ap$. 

    In general, the solution of $t(t-1)^{-2}=c$ is
    \begin{equation}\label{eqn:t quadratic formula}
        t = \frac{(2c+1)\pm \sqrt{4c+1}}{2c}.
    \end{equation}
    In our case, $c = m^p+ap$, which is a unit$\pmod p$, so $2c$ is invertible in $\Z_p[\zeta_p]$.
    
    Thus this is solvable exactly when $4(m^p+ap)+1$ is a square in $\Z_p[\zeta_p]$. By Hensel's lemma, this will hold when $4(m^p+ap)+1\equiv 4m+1\pmod p$ is a nonzero square mod $p$, and, in fact, working mod $p$ will imply there is a solution in $\Z_p$.
    \begin{itemize}
        \item If $p>3$, we may choose an $m\in\Z$ so that $m\equiv3\cdot\inv4\pmod p$ (and choose $a$ arbitrarily as long as $p\nmid a$), and then a $t$ satisfying $t/(t-1)^2=m^p+ap$ is guaranteed to exist.

        We also need to check that $\Delta(t)=16t^2(t-1)^2\in\units{\Z_p[\zeta_p]}$. Because $t/(t-1)^2=m^p+ap=:c$ is a unit, this is equivalent to $t$ being a unit. By Equation (\ref{eqn:t quadratic formula}), $t$ is a unit if and only if $(2c+1)\pm\sqrt{4c+1}$ is nonzero modulo $\mfp$. Since $4c+1$ is a nonzero square mod $\mfp$, the expression $(2c+1)\pm\sqrt{4c+1}$ takes on two distinct values mod $\mfp$, and so at least one of them will be nonzero mod $\mfp$.
        \item If $p=3$, we may take $(a,m)=(1,-1)$,  so $4(m^p+ap)+1=3^2$ is a square in $\Z_3[\zeta_3]$. Hence, there will again exist some $t$ satisfying $t/(t-1)^2=m^p+ap$.

        In this case, $t=(5\pm3)/4$; this is a unit, which makes $\Delta(t)$ also a unit, for either possible value.
        \qedhere
    \end{itemize}
\end{proof}
Proposition~\ref{prop:exists-t-hilbert-nonzero} will suffice for showing that classes in the image of
$$\sigma\colon H^1\p{\Z\left[\frac{1}{2p}\right],\Q_p/\Z_p}\too\pprimary{\Br'}(\eey_0(2)[1/p])$$
do not extend over $\eey_0(2)$. In general, however, an element of $\pprimary{\Br'}(\eey_0(2)[1/p])$ is a sum of such a class along with one pulled back from $\pprimary{\Br'}(\Z[\frac{1}{2p}])$. Because of this, it will be useful to also have access to Legendre curves over $\Z_p[\zeta_p]$ for which the relevant Hilbert symbol \textit{does} vanish.

\begin{proposition}\label{prop:exists-t-hilbert-zero}
    There exists some $t\in \units{\Z_p[\zeta_p]}=\units\msO_{K_1}$ such that $\Delta(t):=16t^2(t-1)^2\in\units{\Z_p[\zeta_p]}$ $($i.e., $E_t:y^2=x(x-1)(x-t)$ is a Legendre curve over $\Z_p[\zeta_p])$ and
    \[\pfrac{\zeta_p,t/(t-1)^2}\mfp=1\in\mu_p.\]
\end{proposition}
\begin{proof}
    In this case, the main observation is that $\pfrac{\zeta_p,m^p}\mfp=1$ for all integers $m$, so we are interested in solving $t/(t-1)^2=m^p$ in $\Z_p[\zeta_p]$. Suppose $p\nmid m$. Then, $t/(t-1)^2=m^p$ has a solution if and only if $4m^p+1$ is a square in $\Z_p[\zeta_p]$. Hensel's lemma tells us that this will happen if $4m^p+1\equiv4m+1\pmod p$ is a nonzero square mod $p$. For $p>3$, we may take $m=3\cdot\inv4\pmod p$ and then argue that we can find a resulting $t$ such that $\Delta(t)$ is a unit exactly as in the proof of Proposition~\ref{prop:exists-t-hilbert-nonzero}. 

    If $p=3$, we instead use \cite[Proof of Lemma 8.3]{AM}, which computes that
    \[\pfrac{\zeta_3,t}\mfp=\zeta_3^{-2-b-\frac{b^2}2}\tand\pfrac{\zeta_3,t-1}\mfp=\zeta_3^{b-\frac{b^2}2}\]
    if $t=2+b\pi$, where $\pi:=1-\zeta_3\in\Z_3[\zeta_3]$. We may take $t=2+2\pi$, so $b=2$, and observe that
    \[\pfrac{\zeta_3,t/(t-1)^2}\mfp=\zeta_3^{-6}\left/\p{\zeta_3^0}^2\right.=\inv[6]\zeta_3=1.\]
    At the same time, both $t$ and $t-1$ are units in $\Z_3[\zeta_3]$, so $\Delta(t)$ is as well.
\end{proof}

\subsection{Showing no class in $\boldsymbol{\pprimary{\Br'(\eey_0(2)[1/p])}}$ extends over $\boldsymbol{\eey_0(2)}$}
Let $\sigma$ denote the so-named section constructed in Theorem~\ref{thm:sect-construct}. We first give two proofs that no class in the image of $\sigma$ extends over $\eey_0(2)$; the second of these proofs was suggested to us by the anonymous referee.

\begin{proposition}\label{prop:section-no-extend}
    Let $p\geq 3$ and consider any nonzero $\alpha\in H^1(\Z[\frac{1}{2p}],\Q_p/\Z_p)$. Then, $\sigma(\alpha)\in\Br'(\eey_0(2)[1/p]) =\Br'(\eey_0(2)[\frac{1}{2p}])$ does not extend over $\eey_0(2)$; i.e., it is not in the image of $\pprimary{\Br'}(\eey_0(2))\to\pprimary{\Br'}(\eey_0(2)[1/p])$.
\end{proposition}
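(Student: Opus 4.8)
The plan is to combine Remark \ref{rem:extend-argument-simplification}, Proposition \ref{prop:exists-t-hilbert-nonzero}, and Proposition \ref{prop:not extend}, so that essentially no new work is required beyond assembling these pieces.

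First I would reduce to the generator of the $p$-torsion. By Remark \ref{rem:extend-argument-simplification}, it suffices to show that $\sigma(\alpha_1)$ does not extend over $\eey_0(2)$, where $\alpha_1$ generates $H^1(\Z[1/2p],\Q_p/\Z_p)[p]$: if a nonzero $\alpha$ of order $p^n$ had $\sigma(\alpha)$ extending, then so would $\sigma(m\alpha)$ for every $m$, in particular $\sigma(p^{n-1}\alpha)$, and $p^{n-1}\alpha$ is a unit multiple of $\alpha_1$. So we are reduced to the case $\alpha=\alpha_1\in H^1(\Z[1/2p],C_p)$.

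Next I would produce the required local test object. Proposition \ref{prop:exists-t-hilbert-nonzero} supplies a unit $t_0\in\units{\Z_p[\zeta_p]}=\units{\msO_{K_1}}$ with $\Delta(t_0)=16t_0^2(t_0-1)^2$ a unit in $\Z_p[\zeta_p]$ — so $E_{t_0}:y^2=x(x-1)(x-t_0)$ is a genuine Legendre curve over $\msO_{K_1}$, i.e.\ an elliptic scheme of good reduction at $\mfp$ — and with $\pfrac{\zeta_p,t_0/(t_0-1)^2}\mfp\neq1$ in $\mu_p$. Since $\Delta(t_0)$ is a unit, the point $(0,0)\in E_{t_0}[2]$ stays distinct from $(1,0)$ and $(t_0,0)$ in both the special and generic fibers, hence has exact order $2$ throughout, so the pair $(E_{t_0},(0,0))$ defines a morphism $f:\spec\msO_{K_1}\to\eey_0(2)$ with generic fiber $f_{K_1}:\spec K_1\to\eey_0(2)[1/p]$. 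These are precisely the running hypotheses of Proposition \ref{prop:not extend} with $n=1$ and $K=K_1=\Q_p(\zeta_p)$, so that proposition yields $f_{K_1}^*(\sigma(\alpha_1))\neq0$ in $\Br'(K_1)$. Because $\Br'(\msO_{K_1})=0$ by \cite[Corollary 6.9.3]{poonen-rat-pts}, the commuting square of Remark \ref{rem:class-extend-strat} then forces $\sigma(\alpha_1)\notin\im\p{\Br'(\eey_0(2))\to\Br'(\eey_0(2)[1/p])}$; by the first paragraph the same holds for every nonzero $\alpha$, which is the claim.

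I do not expect a genuine obstacle: the substantive content — the Hilbert-symbol computation over $\Q_p(\zeta_p)$ — has already been isolated in Proposition \ref{prop:exists-t-hilbert-nonzero}, and the descent from an arbitrary $\alpha$ to $\alpha_1$ has already been isolated in Remark \ref{rem:extend-argument-simplification}. The only point requiring care is verifying literally that $(E_{t_0},(0,0))$ meets the standing hypotheses of Proposition \ref{prop:not extend} (good reduction at $\mfp$; marked $2$-torsion point of exact order $2$ in both fibers), and both of these follow at once from $\Delta(t_0)$ being a unit in $\Z_p[\zeta_p]$.
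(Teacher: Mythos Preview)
Your proposal is correct and follows essentially the same approach as the paper: reduce to $\alpha=\alpha_1$ via Remark \ref{rem:extend-argument-simplification}, invoke Proposition \ref{prop:exists-t-hilbert-nonzero} to obtain a suitable $t_0$, and then apply Proposition \ref{prop:not extend}. The only difference is that you spell out a little more carefully why $(E_{t_0},(0,0))$ defines a valid $\msO_{K_1}$-point of $\eey_0(2)$, which the paper leaves implicit.
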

\begin{proof}[First proof of Proposition~\ref{prop:section-no-extend}]
    By Remark~\ref{rem:extend-argument-simplification}, we may assume without loss of generality that $\alpha=\alpha_1=[\msO_{F_1}]$. By Proposition~\ref{prop:exists-t-hilbert-nonzero}, we may choose some $t\in\Z_p[\zeta_p]$ so that $E:y^2=x(x-1)(x-t)$ is a Legendre curve over $\Z_p[\zeta_p]$ and so that
    \begin{equation}\label{eqn:hilb-nonvanish-1}
        \pfrac{\zeta_p,t/(t-1)^2}{\mathfrak p}\neq1\in\mu_p(\Q_p(\zeta_p)).
    \end{equation}
    The result follows from Proposition~\ref{prop:not extend}.
\end{proof}

\begin{proof}[Second proof of Proposition~\ref{prop:section-no-extend}]
    The below proof was suggested to us by the anonymous referee, inspired by the work of Voorde's master's thesis \cite{voorde}. It avoids the use of Propositions~\ref{prop:not extend} and~\ref{prop:exists-t-hilbert-nonzero} as well as Remark~\ref{rem:extend-argument-simplification}.

    For $t\in\units\Z_p$ (with $t-1\in\units\Z_p$ as well),  consider the Legendre curve $E_t\colon y^2=x(x-1)(x-t)$ with associated map $f_t\colon\spec(\Z_p)\to\eey_0(2)$. 
    By Remark~\ref{rem:class-extend-strat}, it suffices to construct a $t$ such that $\pull f_t(\sigma(\alpha))\neq0\in\Br(\Q_p)$. 
    It follows from the definition of $\sigma$ that $\pull f_t\sigma(\alpha)\in\Br(\Q_p)$ is the class of the cyclic algebra $(\alpha,s)$, where $s\coloneqq t/(t-1)^2$.
    Let $L/\Q_p$ be the cyclic extension, say of degree $p^n$, represented by the class $\alpha\vert_{\Q_p}\in H^1(\Q_p,\Q_p/\Z_p)$. Then, $\pull f_t\sigma(\alpha)=[(\alpha,s)]\in\Br(\Q_p)$ is trivial if and only if $s\in\Nm(\units L)$. 
    Thus, it suffices to construct some $t$ such that
    \[t\in\units\Z_p, \quad t-1\in\units\Z_p,\tand s=t/(t-1)^2\not\in\Nm(\units L).\]
    Remark~\ref{rem:alphan} shows that $L/\Q_p$ is a nontrivial subextension of $\Q_p(\zeta_{p^{n+1}})/\Q_p$ and so is (totally) ramified.  Local class field theory gives an isomorphism $\units\Q_p/\Nm(\units L)\iso\Gal(L/\Q_p)$ identifying the image of $\units\Z_p$ with the inertia subgroup of $\Gal(L/\Q_p)$, so we conclude that $\Nm(\units L)\subset\units\Q_p$ is a subgroup of index $p^n$ whose image under $\units\Q_p\simeq p^\Z\by\units\Z_p\onto\units\Z_p$ is a proper subgroup, necessarily of index divisible by $p$. Since $\units\Z_p\cong\mu_{p-1}(\Z_p)\by\Z_p$ has a unique subgroup of index $p$, we conclude that it suffices to construct some $t\in\units\Z_p$ such that $t-1\in\units\Z_p$ and $t/(t-1)^2\not\in\mu_{p-1}(\Z_p)\by p\Z_p$. 

    Note that, since $-1\in\mu_{p-1}(\Z/p^2)$, some element among $1,2,\dots,p-1\in\punits{\Z/p^2}$ must \textit{not} be a $\supst{(p-1)}$ root of unity. Let $r$ be the smallest such element (so $r-1\in\mu_{p-1}(\Z/p^2)$), let $\zeta\in\mu_{p-1}(\Z_p)$ be the unique lift of $r-1$, and set $t\coloneqq\zeta+1$. Then, $t\equiv r\bmod p^2$ and $t^{p-1}\neq1$. Furthermore, $(t-1)^{p-1}=\zeta^{p-1}=1$, so $s^{p-1}=t^{p-1}\equiv r^{p-1}\not\equiv1\bmod p^2$. Finally, we note that the image of the subgroup $\mu_{p-1}(\Z_p)\by p\Z_p$ under the map $\units\Z_p\onto\punits{\Z/p^2}$ is $\mu_{p-1}(\Z/p^2)$, and so conclude that $s\not\in\mu_{p-1}(\Z_p)\by p\Z_p$, completing the proof.
\end{proof}

\begin{proposition}\label{prop:constant-class-no-extend}
    Consider any nonzero $\beta\in\pprimary{\Br'}(\Z[\frac{1}{2p}])\subset\pprimary{\Br'}(\eey_0(2)[1/p])$. Then, $\beta$ does not extend over $\eey_0(2)$.
\end{proposition}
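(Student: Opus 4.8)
The plan is to reuse the strategy of Remark~\ref{rem:class-extend-strat}, with the cleanest choice of local field being $K=\Q_p$ itself (residue characteristic $p$). Since $\beta$ is pulled back from $\Br'(\Z[1/2p])$ along $\eey_0(2)[1/p]\to\Spec\Z[1/2p]$, and since there is only one $\Z[1/2p]$-algebra structure on $\Q_p$, the pullback of $\beta$ along \emph{any} map $\Spec\Q_p\to\eey_0(2)[1/p]$ is just the restriction $\beta|_{\Q_p}\in\Br(\Q_p)$. So the proof splits into two independent tasks: (a) produce a map $f\colon\Spec\Z_p\to\eey_0(2)$ over $\Z[1/2]$, so that its generic fibre $f_{\Q_p}\colon\Spec\Q_p\to\eey_0(2)[1/p]$ is available to feed into Remark~\ref{rem:class-extend-strat}; and (b) show that $\beta|_{\Q_p}\neq 0$.

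For (a) I would take the Legendre curve $E\colon y^2=x(x-1)(x-2)$ over $\Z_p$. Its discriminant is $\Delta=16\cdot 2^2\cdot 1^2=64$, a unit in $\Z_p$ because $p\ge 3$, so $E/\Z_p$ is an elliptic curve; equipping it with the $2$-torsion section $(0,0)$, which has exact order $2$ in every fibre, gives the desired $f\colon\Spec\Z_p\to\eey_0(2)$ (any Legendre curve over $\Z_p$ would do equally well).

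Step (b) is the only substantive point, and I would settle it via class field theory. By \cite[Example~2.19(3)]{AM} we have $\Br(\Z[1/2p])\cong\zmod2\oplus\Q/\Z$; more precisely the fundamental exact sequence identifies $\Br(\Z[1/2p])$ with the kernel of the sum-of-invariants map $\Br(\Q_2)\oplus\Br(\Q_p)\oplus\Br(\R)\to\Q/\Z$. Now if $\beta$ is $p$-primary, then its invariant at every finite place $v\nmid 2p$ vanishes (as $\Br(\Z_v)=0$) and its invariant at $\infty$ vanishes (being simultaneously $p$-primary and $2$-torsion), so $\mathrm{inv}_2(\beta)+\mathrm{inv}_p(\beta)=0$; hence $\beta|_{\Q_p}=0$ would force $\mathrm{inv}_2(\beta)=0$ as well, and then $\beta=0$ by injectivity of the global-to-local map. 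Thus for our nonzero $\beta$ we get $f_{\Q_p}^*\beta=\beta|_{\Q_p}\neq 0\in\Br(\Q_p)$.

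Finally, to assemble: $\Br'(\Z_p)=\Br'(\msO_{\Q_p})=0$ by \cite[Corollary~6.9.3]{poonen-rat-pts}, so if $\beta$ were the restriction of some $\tilde\beta\in{}_p\Br'(\eey_0(2))$, then chasing the square of Remark~\ref{rem:class-extend-strat} (with $\alpha=\beta$, $\msO_K=\Z_p$) would force $f_{\Q_p}^*\beta=0$, contradicting (b). Hence $\beta$ does not extend over $\eey_0(2)$. The main obstacle is just organizing step (b) cleanly — i.e.\ recording that a nonzero $p$-primary class over $\Z[1/2p]$ is detected at the prime $p$ precisely because such a class cannot be ramified at $\infty$ — and everything else is formal.
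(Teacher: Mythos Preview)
Your argument is correct, but the paper takes a different, slightly more direct route. Rather than working locally at $p$ via Remark~\ref{rem:class-extend-strat}, the paper uses the global $\Z[1/2]$-point $x\colon\spec\Z[1/2]\to\eey_0(2)$ of Remark~\ref{rem:Z[1/2]-point}: if $\beta$ extended to some $\tilde\beta\in{}_p\Br'(\eey_0(2))$, then $\pull x\tilde\beta\in{}_p\Br'(\Z[1/2])$, and restricting back to $\Z[1/2p]$ along the section recovers $\beta$; hence $\beta\in\im\p{{}_p\Br'(\Z[1/2])\to{}_p\Br'(\Z[1/2p])}$, which vanishes because $\Br'(\Z[1/2])=\zmod2$ has no odd torsion. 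Your approach instead constructs only a $\Z_p$-point and then invokes the Albert--Brauer--Hasse--Noether sequence to see that a nonzero $p$-primary class over $\Z[1/2p]$ must have nonzero invariant at $p$ (since its invariant at $\infty$ vanishes). Both are short; the paper's version avoids the class field theory bookkeeping at the cost of needing the specific $\Z[1/2]$-point, while your version stays entirely within the local framework of Remark~\ref{rem:class-extend-strat} used for the non-constant classes, which is aesthetically uniform with Propositions~\ref{prop:section-no-extend} and~\ref{prop:nothing-extends}.
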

\begin{proof}
    Remark~\ref{rem:Z[1/2]-point} shows that $\eey_0(2)$ has a $\Z[\frac{1}{2}]$-point, say $x\colon\spec(\Z[\frac{1}{2}])\to\eey_0(2)$. Let $x_p\colon\spec(\Z[\frac{1}{2p}])\to\eey_0(2)[1/p]$ be the corresponding restricted section, and consider the diagram
    \[\begin{tikzcd}
        \spec\p{\Z[\frac{1}{2p}]}\ar[d]\ar[r, "x_p"]\ar[rr, bend left, equals]&\eey_0(2)[1/p]\ar[r]\ar[d]&\spec\p{\Z[\frac{1}{2p}]}\ar[d]\\
        \spec\p{\Z[\frac{1}{2}]}\ar[r, "x"]&\eey_0(2)\ar[r]&\spec\p{\Z[\frac{1}{2}]}.
    \end{tikzcd}\]
    The commutativity of this diagram shows that
    \[
    \pprimary{\Br'}\p{\Z\left[\frac{1}{2p}\right]}\cap\im\p{\pprimary{\Br'}(\eey_0(2))\to\pprimary{\Br'}\p{\eey_0(2)\left[1/p\right]}}\subset\im\p{\pprimary{\Br'}\p{\Z\left[\frac{1}{2}\right]}\to\pprimary{\Br'}\p{\Z\left[\frac{1}{2p}\right]}}.
      \]
      Since $\Br'(\Z[\frac{1}{2}])=\zmod2$, see \cite[Example 2.19(2)]{AM}, has no $p$-power torsion, any such $\beta$ that extends must be~0.
\end{proof}

\begin{proposition}\label{prop:nothing-extends}
    Choose $\alpha\in H^1(\Z[\frac{1}{2p}],\Q_p/\Z_p)$ and $\beta\in\pprimary{\Br'}(\Z[\frac{1}{2p}])\subset\Br'(\eey_0(2)[1/p])$ such that $\sigma(\alpha)+\beta\in\hphantom{\mkern-2mu}\pprimary{\Br'}(\eey_0(2)[1/p])$ is nonzero. Then, $\sigma(\alpha)+\beta$ does not extend over $\eey_0(2)$.
\end{proposition}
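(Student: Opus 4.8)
The plan is to bootstrap from the two preceding non-extension results, after reducing to the case where both $\alpha$ and $\beta$ have order dividing $p$. Since $\sigma$ is a section of (\ref{ses:p-tors-away-from-p}), it is injective and exhibits ${}_p\Br'(\eey_0(2)[1/p])$ as the internal direct sum ${}_p\Br'(\Z[1/2p])\oplus\sigma\p{H^1(\Z[1/2p],\Q_p/\Z_p)}$; hence $\sigma(\alpha)+\beta$ has order $p^N$ where $p^N=\operatorname{lcm}\p{\operatorname{ord}\alpha,\operatorname{ord}\beta}$, and $N\geq1$ by nontriviality. First I would assume for contradiction that $\sigma(\alpha)+\beta$ extends to some $\gamma\in{}_p\Br'(\eey_0(2))$. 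Then $p^{N-1}\gamma$ extends as well, and its restriction to $\eey_0(2)[1/p]$ equals $\sigma(\alpha')+\beta'$ where $\alpha':=p^{N-1}\alpha\in H^1(\Z[1/2p],C_p)$ and $\beta':=p^{N-1}\beta\in{}_p\Br'(\Z[1/2p])[p]$; because $\sigma(\alpha)+\beta$ has order exactly $p^N$, the pair $(\alpha',\beta')$ is not $(0,0)$. So it suffices to show that if $\alpha'$ and $\beta'$ are not both zero then $\sigma(\alpha')+\beta'$ does not extend over $\eey_0(2)$.

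If $\beta'=0$ then $\alpha'\neq0$ and this is exactly Proposition \ref{prop:section-no-extend}, so assume $\beta'\neq0$. Here I would invoke the local-point criterion of Remark \ref{rem:class-extend-strat} over $K:=\Q_p(\zeta_p)$, with ring of integers $\msO_K=\Z_p[\zeta_p]$ and maximal ideal $\mfp=(1-\zeta_p)$. Using Proposition \ref{prop:exists-t-hilbert-zero}, choose $t\in\units{\msO_K}$ with $\Delta(t)\in\units{\msO_K}$ and $\pfrac{\zeta_p,t/(t-1)^2}\mfp=1$, and let $f\colon\Spec\msO_K\to\eey_0(2)$ be the map classifying the Legendre curve $E_t\colon y^2=x(x-1)(x-t)$ together with its $2$-torsion point $(0,0)$, with generic-fibre restriction $f_K\colon\Spec K\to\eey_0(2)[1/p]$. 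Exactly as in the proof of Proposition \ref{prop:not extend}, $f_K^*(\sigma(\alpha_1))$ is the cyclic algebra $(\zeta_p,t/(t-1)^2)_{\zeta_p}\in\Br(K)$, which is trivial by Proposition \ref{prop:cyclic-alg-hilbert} and our choice of $t$; since $H^1(\Z[1/2p],C_p)$ is cyclic on $\alpha_1$ (Remark \ref{rem:alphan}) and $f_K^*\circ\sigma$ is a homomorphism, this forces $f_K^*(\sigma(\alpha'))=0$. This is the whole point of Proposition \ref{prop:exists-t-hilbert-zero}: it annihilates the $\sigma$-part while keeping the test curve defined over all of $\msO_K$.

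Next I would check $f_K^*(\beta')\neq0$. The class $f_K^*(\beta')$ is the image of $\beta'$ under $\Br'(\Z[1/2p])\to\Br(\Q_p)\to\Br(K)$; on $p$-primary torsion this composite is the $p$-adic-invariant isomorphism ${}_p\Br'(\Z[1/2p])\iso\Q_p/\Z_p$ followed by multiplication by $[K:\Q_p]=p-1$, an integer coprime to $p$ and hence injective. So $\beta'\neq0$ gives $f_K^*(\beta')\neq0$, and therefore $f_K^*(\sigma(\alpha')+\beta')=f_K^*(\beta')\neq0$ in $\Br(K)$. Since $\Br'(\msO_K)=0$ by \cite[Corollary 6.9.3]{poonen-rat-pts}, Remark \ref{rem:class-extend-strat} shows $\sigma(\alpha')+\beta'$ is not in the image of ${}_p\Br'(\eey_0(2))\to{}_p\Br'(\eey_0(2)[1/p])$, contradicting the extension $p^{N-1}\gamma$; this finishes the argument.

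The real content is not a single hard computation but the bookkeeping that makes the two kinds of classes cooperate: one must pick one Legendre curve over $\msO_K$ along which $\sigma(\alpha')$ becomes trivial while $\beta'$ does not, and the reduction to order-$p$ classes via multiplication by $p^{N-1}$ (in the spirit of Remark \ref{rem:extend-argument-simplification}) is what lets us do this over $\Q_p(\zeta_p)$ — where Proposition \ref{prop:exists-t-hilbert-zero} lives — rather than over $\Q_p(\zeta_{p^n})$. The one delicate point is the nonvanishing of $f_K^*(\beta')$, which comes down to $p-1$ and $p$ being coprime.
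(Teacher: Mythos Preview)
Your proof is correct and follows the paper's approach: use a Legendre curve over $\Z_p[\zeta_p]$ with vanishing Hilbert symbol (Proposition~\ref{prop:exists-t-hilbert-zero}) to kill the $\sigma(\alpha)$-contribution, then invoke injectivity of ${}_p\Br(\Z[1/2p])\hookrightarrow\Br(\Q_p(\zeta_p))$ to handle $\beta$. Your preliminary reduction to classes of order $p$ (via multiplication by $p^{N-1}$) is in fact a useful tightening of the paper's argument: the paper invokes Proposition~\ref{prop:cyclic-alg-hilbert} over $\Q_p(\zeta_p)$ for $\sigma(\alpha)$ with $\alpha$ of arbitrary order $p^n$, but that proposition literally requires $\zeta_{p^n}\in\Q_p(\zeta_p)$, so your reduction (in the spirit of Remark~\ref{rem:extend-argument-simplification}) is what makes that step go through cleanly.
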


\begin{proof}
    By Proposition~\ref{prop:exists-t-hilbert-zero}, we may fix a $t\in\Z_p[\zeta_p]$ such that $E:y^2=x(x-1)(x-t)$ is a Legendre curve over $\Z_p[\zeta_p]$ and so that
    \begin{equation}\label{eqn:hilb-vanish-1}
        \pfrac{\zeta_p,t/(t-1)^2}{\mathfrak p}=1\in\mu_p(\Q_p(\zeta_p)).
    \end{equation}
    Let $f\colon\spec(\Q_p(\zeta_p))\to\eey_0(2)$ be the point corresponding to $E$ with the usual level 2 structure on Legendre curves. Suppose that $\sigma(\alpha)+\beta$ \textit{does} extend over $\eey_0(2)$, so then $\pull f\p{\sigma(\alpha)+\beta}=0\in\Br(\Q_p(\zeta_p))$. Because of Equation~(\ref{eqn:hilb-vanish-1}), Proposition~\ref{prop:cyclic-alg-hilbert} shows that $\pull f\sigma(\alpha)=0\in\Br(\Q_p(\zeta_p))$. We conclude that $\pull f\beta=0$ as well. Since $\pprimary{\Br}(\Z[\frac{1}{2p}])\iso\pprimary{\Br}(\Q_p)\into\Br(\Q_p(\zeta_p))$ is injective (with the first isomorphism following from \cite[Example~2.19(3) and the discussion above it]{AM}), we conclude that $\beta=0$. Thus, $\sigma(\alpha)$ extends over $\eey_0(2)$, but now Proposition~\ref{prop:section-no-extend} shows this forces $\sigma(\alpha)=0$.
\end{proof}

We have the following corollary, which is Proposition~\ref{prop:p-tors-over-Z[1/2]-i}, restated for the convenience of the reader. 

\begin{corollary}\label{cor:p-tors-over-Z[1/2]-ii}
    For all primes $p\geq 3$, we have $\pprimary{\Br'}(\eey_0(2))=0$. 
\end{corollary}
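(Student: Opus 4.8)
The statement is a restatement of Proposition \ref{prop:p-tors-over-Z[1/2]-i}, so the plan is simply to package the results of this section. The key input is the injectivity ${}_p\Br'(\eey_0(2))\into{}_p\Br'(\eey_0(2)[1/p])$ furnished by \cite[Proposition 2.5(iv)]{AM}; thus it suffices to show that no nonzero $p$-primary Brauer class on $\eey_0(2)[1/p]$ lies in the image of this restriction map, for then ${}_p\Br'(\eey_0(2))$ itself must vanish.

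First I would recall the explicit description of the target group. Applying Theorem \ref{thm:p-tors-away-from-p} with $S=\spec\Z[1/2p]$ gives a split short exact sequence
\[0\too{}_p\Br'(\Z[1/2p])\too{}_p\Br'(\eey_0(2)[1/p])\too H^1(\Z[1/2p],\Q_p/\Z_p)\too0,\]
and Theorem \ref{thm:sect-construct} provides an explicit section $\sigma$ of the right-hand map. Hence every element of ${}_p\Br'(\eey_0(2)[1/p])$ is of the form $\sigma(\alpha)+\beta$ for some $\alpha\in H^1(\Z[1/2p],\Q_p/\Z_p)$ and some $\beta\in{}_p\Br'(\Z[1/2p])$; exactness of the sequence guarantees that this description really does exhaust the group.

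Next I would quote Proposition \ref{prop:nothing-extends}: if such a class $\sigma(\alpha)+\beta$ is nonzero, then it does not extend over $\eey_0(2)$, i.e. it is not in the image of ${}_p\Br'(\eey_0(2))\to{}_p\Br'(\eey_0(2)[1/p])$. Since every nonzero $p$-primary class on $\eey_0(2)[1/p]$ has this form, the restriction map is zero, and combining with the injectivity above yields ${}_p\Br'(\eey_0(2))=0$.

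There is essentially no remaining obstacle at this stage: the real difficulty was already resolved in Proposition \ref{prop:nothing-extends}, whose proof rests on the local constructions of Propositions \ref{prop:exists-t-hilbert-nonzero} and \ref{prop:exists-t-hilbert-zero} (producing Legendre curves over $\Z_p[\zeta_p]$ with prescribed Hilbert-symbol behavior), the translation of cyclic-algebra triviality into Hilbert symbols (Proposition \ref{prop:cyclic-alg-hilbert}), and the extension criterion of Remark \ref{rem:class-extend-strat}. The only point I would be careful to check is that the section $\sigma$ of Theorem \ref{thm:sect-construct} together with the constant classes from ${}_p\Br'(\Z[1/2p])$ genuinely sweep out all of ${}_p\Br'(\eey_0(2)[1/p])$ — which is exactly the exactness of the sequence from Theorem \ref{thm:p-tors-away-from-p}.
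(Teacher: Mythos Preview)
Your proposal is correct and matches the paper's approach exactly: the paper presents the corollary as an immediate consequence of Proposition \ref{prop:nothing-extends} (combined with the injectivity of restriction from \cite[Proposition 2.5(iv)]{AM} and the description of ${}_p\Br'(\eey_0(2)[1/p])$ via the split sequence of Theorem \ref{thm:p-tors-away-from-p}), without spelling out further details. Your write-up simply makes this reasoning explicit.
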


\section{The 2-primary torsion}\label{sect:2-prim-torsion}
\subsection{The 2-local spectral sequence}

\begin{notation}
    Throughout this section, $S$ will denote a connected, regular noetherian $\Z[\frac{1}{2}]$-scheme. Furthermore, given an abelian group $M$, we write $M_{(2)}:=M\otimes_\Z\Z_{(2)}$, where $\Z_{(2)}$ denotes the integers localized at the prime $(2)$.
\end{notation}

The goal of this section is to compute $\leftindex_2{\Br}(\eey_0(2)_S)$, the $2$-primary torsion in the Brauer group of $\eey_0(2)$ over $S$. As in Section~\ref{sect:p>=3 torsion, p invertible}, our main tool for doing so will be the Hochschild--Serre spectral sequence associated to the $C_2$-cover $\eey(2)\to\eey_0(2)$. Localized at the prime $(2)$, this is the spectral sequence
\[E_2^{i,j}=H^i(C_2,H^j(\eey(2)_S,\G_m))_{(2)}\implies H^{i+j}(\eey_0(2)_S,\G_m)_{(2)}.\]
The relevant part of its $E_2$-page is pictured in Figure~\ref{fig:2-SS take i}.
\begin{figure}[ht]
    \centering
    \[\begin{tikzcd}
    	{\leftindex_2{\Br}'(\eey(2)_S)^{C_2}} \\
    	{\Pic(S)_{(2)}\oplus \mu_2(S)} & {\Pic(S)[2]\oplus \mu_2(S)} & {\Pic(S)/2\oplus \mu_2(S)} \\
    	{\G_m(S)_{(2)} \oplus \Z_{(2)}} & {\mu_2(S)} & {\G_m(S)/2} & {\mu_2(S)}
    	\arrow[from=2-2, to=3-4, "d_2^{1,1}"]
    	\arrow[from=1-1, to=2-3, "d_2^{0,2}"]
    	\arrow[from=2-1, to=3-3, "d_2^{0,1}"]
    \end{tikzcd}\] 
    \caption{The $E_2$-page of the 2-local Hochschild--Serre spectral sequence associated to the $C_2$-cover $\eey(2)\to\eey_0(2)$.}
    \label{fig:2-SS take i}
\end{figure}
\begin{remark}
    The values indicated in Figure~\ref{fig:2-SS take i} can be justified by a combination of Lemmas~\ref{lem:01-lines-of-ss} and~\ref{lem:C2-coh}. 
    \begin{itemize}
        \item For the bottom row ($j=0$), Lemma~\ref{lem:01-lines-of-ss} shows that $H^0(C_2,\G_m(\eey(2)_S))\cong\G_m(S)\oplus\Z$ and that $H^i(C_2,\G_m(\eey(2)_S))\cong H^i(C_2,\G_m(S))$ for $i\ge1$. Since $C_2$ acts trivially on $S$ (and so also on $\G_m(S)$), the claimed values for $E_2^{i0}$ ($i\ge1$) now follow from Lemma~\ref{lem:C2-coh}.
        \item For the middle row ($j=1$), Lemma~\ref{lem:01-lines-of-ss} shows that \[H^i(C_2,H^1(\eey(2)_S,\G_m))\cong H^i(C_2,\Pic(S))\oplus H^i(C_2,\mu_2(S)).\]
          As before, $C_2$ acts trivially on $\Pic(S)$ and $\mu_2(S)$, so the claimed values for $E_2^{i, 1}$ now follow from Lemma~\ref{lem:C2-coh}.
        \item For the top row ($j=2$), we have $E_2^{0, 2}=H^0(C_2,H^2(\eey(2)_S,\G_m))_{(2)}=H^2(\eey(2)_S,\G_m)^{C_2}\otimes\Z_{(2)}$ by definition. Since $S$ is regular and noetherian and $\eey_0(2)_S\to S$ is smooth, $\eey_0(2)_S$ is a regular, noetherian DM stack, so \cite[Proposition 2.5(iii)]{AM} shows that $H^2(\eey(2)_S,\G_m)=\Br'(\eey(2)_S)$. Hence, $E_2^{0, 2}=\Br'(\eey(2)_S)^{C_2}\otimes\Z_{(2)}\cong\leftindex_2{\Br}'(\eey(2)_S)^{C_2}$.
    \end{itemize}
\end{remark}

The only object in Figure~\ref{fig:2-SS take i} which we would like to better understand is $E_2^{02}=\leftindex_2{\Br}'(\eey(2)_S)^{C_2}$. In order to compute this, we will take $C_2$-invariants of the exact sequence \cite[Equation~(5.6)]{AM} (which ultimately comes from \cite[Proposition 3.2]{AM}).

\begin{proposition}
\label{prop:2-torsion C2 sequence}
    There is an exact sequence
    \begin{equation}\label{es:2-torsion C2 sequence}       
        0\too\leftindex_2{\Br}'(S)\oplus\leftindex_2{H^1}(S,\Q/\Z)\too\leftindex_2{\Br}'(\eey(2)_S)^{C_2}\too H^1(S,\mu_2)\oplus\Z/2\xtoo\partial\Br'(S)[2].
    \end{equation}
    Additionally, $H^1(X_S,\mu_2)^{C_2}\cong H^1(S,\mu_2)\oplus\zmod2$, where the $\zmod2$ is generated by the image of\, $t/(t-1)^2$ under the Kummer map $\G_m(X_S)\to H^1(X_S,\mu_2)$. 
\end{proposition}

\begin{proof}
    From \cite[Lemmas 5.10 and 5.11]{AM}, we have a $C_2$-equivariant short exact sequence
    \[0\too\leftindex_2{\Br}'(S)\too\leftindex_2{\Br}'(X_S)\too\wt\rho\vert_{C_2}\otimes H^1(S,\Q_2/\Z_2)\too0,\]
    where $X=\A^1\sm\{0,1\}$ and $\wt\rho$ is the so-named representation appearing in Section~\ref{sect:C2-coh}. This sequence is $C_2$-equivariantly split by pullback along the $C_2$-invariant section $-1\in X(S)$ (recall that $C_2\actson X(S)$ via $t\mapsto1/t$). Thus,
    \begin{equation}\label{eqn:Hi(C2, Br(XS))}
        H^i(C_2,\leftindex_2{\Br}'(X_S))\cong H^i(C_2,\leftindex_2{\Br}'(S))\oplus H^i\p{C_2,\wt\rho\vert_{C_2}\otimes H^1(S,\Q_2/\Z_2)}
    \end{equation}
    for all $i\ge0$. Now, \cite[Equation~(5.6)]{AM} produces an exact sequence $0\to\Br'(X_S)\to\Br'(\eey(2)_S)\to H^1(X_S,\mu_2)\to0$ which is $C_2$-equivariant. Localizing at $2$ and taking $C_2$-invariants (applying the isomorphism (\ref{eqn:Hi(C2, Br(XS))}) and/or Lemma~\ref{lem:C2-coh} where relevant), we obtain the exact sequence
    \[0\too\leftindex_2{\Br}'(S)\oplus H^1(S,\Q_2/\Z_2)\too\leftindex_2{\Br}'(\eey(2)_S)^{C_2}\too H^1(X_S,\mu_2)^{C_2}\xtoo\del\Br'(S)[2].\]

    The last sentence of the proposition follows from Lemma~\ref{lem:C2-inv}.
\end{proof}

\begin{proposition}\label{prop:del-map-description}
    In the exact sequence \eqref{es:2-torsion C2 sequence}, the boundary map $\del\colon H^1(S,\mu_2)\oplus\zmod2\to\Br'(S)[2]$ is computed as follows. On the first factor, it sends $u\in H^1(S,\mu_2)$ to the class of the quaternion algebra $[(-1,u)]\in\Br'(S)$. On the second factor, it sends the generator of\, $\zmod2$ to the class of the quaternion algebra $[(-1,-1)]\in\Br'(S)$.
\end{proposition}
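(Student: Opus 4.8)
The plan is to compute $\del$ straight from the cocycle formula for the connecting homomorphism that defines it, after pinning down an explicit lift to $\Br'(\eey(2)_S)$ of each of the two summands.

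Recall from the proof of Proposition \ref{prop:2-torsion C2 sequence} that $\del$ is the connecting map of the long exact $C_2$-cohomology sequence attached to the $2$-localized sequence
\[0\too{}_2\Br'(X_S)\xto{c^*}{}_2\Br'(\eey(2)_S)\xto{r}H^1(X_S,\mu_2)\too0,\]
($c\colon\eey(2)_S\to X_S$ the coarse-space map), composed with the identification $H^1(C_2,{}_2\Br'(X_S))\cong\Br'(S)[2]$ coming from the $C_2$-equivariant splitting ${}_2\Br'(X_S)\cong{}_2\Br'(S)\oplus(\wt\rho\vert_{C_2}\otimes H^1(S,\Q_2/\Z_2))$ of \cite[Lemmas 5.10 and 5.11]{AM} together with the vanishing $H^1(C_2,\wt\rho\vert_{C_2}\otimes M)=0$ from Lemma \ref{lem:C2-coh}. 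Unwinding this: if $\xi\in H^1(X_S,\mu_2)^{C_2}$ and $\tilde\xi\in{}_2\Br'(\eey(2)_S)$ is any class with $r(\tilde\xi)=\xi$, then (by $C_2$-equivariance of $r$) $\tau^*\tilde\xi-\tilde\xi$ has trivial $r$-image, hence lies in $c^*{}_2\Br'(X_S)$, where $\tau$ is the nontrivial deck transformation of $\eey(2)_S\to\eey_0(2)_S$; and $\del(\xi)$ is the ${}_2\Br'(S)$-component of $\tau^*\tilde\xi-\tilde\xi$. Since the splitting of \cite[Lemma 5.11]{AM} is realized by restriction along the $C_2$-fixed section $t=-1$ of $X_S\to S$, that component equals $(\tau^*\tilde\xi-\tilde\xi)\vert_{t=-1}\in\Br'(S)[2]$. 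It therefore remains to produce convenient lifts $\tilde\xi$ for the generator $u$ of $H^1(S,\mu_2)$ and for the generator of $\zmod2$, and to evaluate this restriction.

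For the lifts, let $\chi\in H^1(\eey(2)_S,C_2)=H^1(\eey(2)_S,\mu_2)$ be the tautological class of the gerbe $\eey(2)_S\cong BC_{2,X_S}$ (the class of the universal torsor $\pi\colon X_S\to\eey(2)_S$, so $\pi^*\chi=0$), and write $\kappa(f)$ for the Kummer class of a unit $f$. Unwinding the construction of $r$ from Proposition \ref{Propn: BC_n Brauer} (via \cite[Section 4.1]{lieblich2011period}), the classes $\chi\smile u$ (for $u\in H^1(S,\mu_2)$ pulled back to $\eey(2)_S$) and $\chi\smile\kappa(f)$ (for $f\in\G_m(\eey(2)_S)=\G_m(X_S)$) lie in $\ker(\pi^*)$ and are carried by $r$ to $u$ and $\kappa(f)$ in $H^1(X_S,\mu_2)$; so we may take $\tilde\xi_u:=\chi\smile u$, and — since the generator of $\zmod2$ is, by the proof of Lemma \ref{lem:C2-inv}, the Kummer class of $s=t/(t-1)^2$, equivalently of $t$ (squares die) — $\tilde\xi_{\mathrm{gen}}:=\chi\smile\kappa(t)$. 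The one geometric input now needed is the behaviour of $\chi$ under $\tau$: since $\tau$ covers $t\mapsto 1/t$ (Remark \ref{rem:cms-C2-act}) and $\tau\circ\pi\colon X_S\to\eey(2)_S\cong BC_{2,X_S}$ classifies, over $t\mapsto 1/t$, the $\mu_2$-torsor $\mathrm{Isom}\big((E_{1/t},\mathrm{std}),(E_t,\overline{\mathrm{std}})\big)$ of isomorphisms of Legendre curves matching level structures, which the change of variables $(x,y)\mapsto(x/t,y/t^{3/2})$ exhibits as the $\mu_2$-torsor "$\sqrt t$", we get $\pi^*(\tau^*\chi)=\kappa(t)$; i.e.\ $\tau^*\chi-\chi$ agrees with $c^*\kappa(t)$ modulo $\ker\pi^*$.

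Feeding this in, $\tau^*\tilde\xi_u-\tilde\xi_u=(\tau^*\chi-\chi)\smile u$ equals $c^*(\kappa(t)\smile u)$ up to a term $\epsilon\smile u$ with $\epsilon\in\ker\pi^*$; but $r(\tau^*\tilde\xi_u-\tilde\xi_u)=\tau^*\xi-\xi=0$, so $\epsilon\smile u$ is both $r$-trivial and $\pi^*$-trivial, hence $0$, and thus $\tau^*\tilde\xi_u-\tilde\xi_u=c^*(\kappa(t)\smile u)$; restricting along $t=-1$ gives $\kappa(-1)\smile u=(-1,u)$, so $\del(u)=[(-1,u)]$. Identically, $\tau^*\tilde\xi_{\mathrm{gen}}-\tilde\xi_{\mathrm{gen}}=c^*\big(\kappa(t)\smile\kappa(t)\big)=c^*(t,t)_{X_S}$, and, using $(a,a)=(a,-1)$ and restricting along $t=-1$, this equals $(-1,-1)$, so $\del$ sends the generator of $\zmod2$ to $[(-1,-1)]$. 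The main obstacle is the geometric input in the previous paragraph — carefully unwinding the moduli descriptions of $\eey(2)_S\cong BC_{2,X_S}$, of $\tau$, and of the edge map $r$, so as to identify $\tau\circ\pi$ with precisely the $\mu_2$-torsor $\sqrt t$; once that is in hand the rest is bookkeeping. (Alternatively, $\del(u)=[(-1,u)]$ can be obtained without this by functoriality of the connecting maps along $\eey_0(2)_S\to\eem_{1,1,S}$, which identifies the $u$-part with the corresponding $S_3$-boundary map of \cite[Section 9]{AM}; only the $\zmod2$-part, whose generator is not $S_3$-invariant, genuinely requires the computation above.)
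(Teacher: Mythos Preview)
Your proof is correct and follows essentially the same approach as the paper's: both compute $\partial$ via the cocycle formula for the connecting homomorphism, lift $\xi$ to the cyclic-algebra class $(\chi,c^*\xi)$ in $\Br'(\eey(2)_S)$, and then evaluate the $C_2$-coboundary at the fixed section $t=-1$. The paper leans directly on \cite[Lemma 5.13]{AM} for the first factor and, for the second, plugs $u=\kappa(t/(t-1)^2)$ into the formula $\partial(u)=(-1,z^*\pi^*c^*u)$ obtained there (getting $(-1,-1/4)=(-1,-1)$); you instead re-derive that formula by making explicit the key geometric input $\pi^*\tau^*\chi=\kappa(t)$ and then compute $(t,t)\vert_{t=-1}=(-1,-1)$, which is the same computation in slightly different packaging.
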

\begin{proof}
    By \cite[Lemma 5.13]{AM}, for $u\in H^1(S, \mu_2)$, we have that $\partial(u)$ equals the Brauer class of the cyclic quaternion algebra $(-1, u)$.  We can use the same method as in the proof of that lemma to compute $\partial$ applied to the $\Z/2$ term.  Indeed, the proof there shows that the boundary map 
    \[
\partial\colon H^1(X_S, \mu_2)^{C_2} \too \Br'(S)[2] 
    \]
    can be computed as follows.  Let $X_S\xrightarrow{\pi} \eey(2)_S\xrightarrow{c} X_S$ be the projection composed with the coarse space map. Let $\chi\in H^1(\eey(2)_S, C_2)$ be the class of the cocycle classifying $\pi$.  Define $s\colon H^1(X_S, \mu_2)\rightarrow \Br'(\eey(2)_S)$ by \[s(u) \coloneqq [(\chi, u)] = [(\chi, c^*u)].\]  
    
    Given $u\in H^1(X_S,\mu_2)$ and $g\in C_2$, $\delta(u)\in H^1(C_2, \leftindex_2{\Br}'(X_S))$ is given by the crossed homomorphism
 \begin{equation}\label{eqn: cocycle evaluate in Br'(X_S)}
     g\longmapsto \pi^*(g(s(u))-s(u))\in \Br'(X_S). 
 \end{equation}

    Let $z\colon S\to X_S$ denote the section $t=-1$. 
     The element in the homomorphism  \eqref{eqn: cocycle evaluate in Br'(X_S)} can be considered an element of $\Br'(S)[2]$ via the isomorphisms
    \[
H^1(C_2, \leftindex_2{\Br}'(X_S))\overset{z^*}\too H^1(C_2, \leftindex_2{\Br}'(S))\xrightarrow{(23)} \Br'(S)[2],
    \] where the last map is evaluation of the crossed homomorphism at the nontrivial element $(23)\in C_2$. We remark $C_2$ acts trivially on $u$ since $u\in  H^1(X_S, \mu_2)^{C_2} $

    Thus, we have that $\partial\colon H^1(X_S, \mu_2)^{C_2}\rightarrow \Br(S)[2]$ is given by $z^*\pi^*((23)(s(u))-s(u))$.  
    
    For $u\in H^1(S, \mu_2)\subset H^1(X_S, \mu_2)^{C_2}$, Lemma 5.13 of \cite{AM} shows this gives $(-1, u)$.
    
    Now we consider the image of $\delta$ on the $\Z/2\Z$ factor, namely on the element $u$ given by the image of $\langle\frac{t}{(t-1)^2}\rangle \in \G_m(X_S)$ in $H^1(X_S, \mu_2)^{C_2}$.  Here it is given by the pullback $z^*\pi^*(23)[(\chi, c^*u)]$.   This is equal to $(-1, z^*\pi^*c^* u)$.
    \[
S\overset{z}\too X_S\overset{\pi}\too\eey(2)_S\overset{c}\too X_S.
    \] 
    By going in the other direction in the commutative diagram 
\[\begin{tikzcd}
	{\G_m(X_S)} & {\G_m(S)} \\
	{H^1(X_S, \mu_2)} & {H^1(S, \mu_2)}
	\arrow[from=1-1, to=1-2]
	\arrow[from=1-2, to=2-2]
	\arrow[from=1-1, to=2-1]
	\arrow[from=2-1, to=2-2]
\end{tikzcd}\]
and using the fact that $u$ is obtained through the image of $\langle\frac{t}{(t-1)^2}\rangle \in \G_m(X_S)$, we see that $z^*\pi^*c^* u$ is the image of $-\frac{1}{4}\in \G_m(S)$ (by plugging in $t=-1$ in $\frac{t}{(t-1)^2})$ in $H^1(S, \mu_2)$.   Thus the generator of $\Z/2\Z$ is sent to the class of $(-1, -\frac{1}{4}) = (-1, -1)$, as desired.      
\end{proof}

\begin{proposition}\label{prop:Br(Y2)-2-tors-splitting}
    The exact sequence 
    \begin{equation}\label{ses:2Br-split}
        0\too\leftindex_2{\Br}'(S)\oplus\leftindex_2{H^1}(S,\Q/\Z)\too\leftindex_2{\Br}'(\eey(2)_S)^{C_2}\too\ker\del\too0
    \end{equation}
    obtained from the exact sequence \eqref{es:2-torsion C2 sequence} is split.
\end{proposition}
\begin{proof}
    We will construct an explicit splitting map $\phi=\phi_1\oplus \phi_2\colon\leftindex_2{\Br}'(\eey(2)_S)^{C_2}\to\leftindex_2{\Br}'(S)\oplus\leftindex_2{H^1}(S,\Q/\Z)$. We first consider the point $f\colon S\to\eey(2)$ given by the Legendre curve $E\colon y^2=x^3-x$ (with discriminant $32\in\Gamma(S,\msO_S)^\times$) and then set $\phi_1=\pull f\colon\leftindex_2{\Br}'(\eey(2)_S)^{C_2}\to\leftindex_2{\Br}'(S)$. Since $f$ is a section of the structure map $\eey(2)_S\to S$, its pullback map $\phi_1$ is a section to $\leftindex_2{\Br}'(S)\into\leftindex_2{\Br}'(\eey(2)_S)^{C_2}$. Next, we define $\phi_2\colon\leftindex_2{\Br}'(\eey(2)_S)^{C_2}\to\leftindex_2{H^1}(S,\Q/\Z)=H^1(S,\Q_2/\Z_2)$ as the composition
    \[\leftindex_2{\Br}'(\eey(2)_S)^{C_2}\xtoo{\pull\pi}\leftindex_2{\Br}'(X_S)^{C_2}\xto{\ram_0}H^1(S,\Q_2/\Z_2),\]
    where the first arrow is pullback along the Legendre family $\pi\colon X_S\to\eey(2)_S$ and where $\ram_0$ above computes the ramification along the $0$-divisor $S\into\A^1_S\supset X_S$, as in \cite[Proposition 2.14 or Lemmas 5.10 and 5.11]{AM}. We claim that $\phi_2$ is a section to the injection $\iota\colon\leftindex_2{H^1}(S,\Q/\Z)\into\leftindex_2{\Br}'(\eey(2)_S)^{C_2}$. From the proof of Proposition~\ref{prop:2-torsion C2 sequence}, we see that this injection is computed as follows:  
    \begin{enumerate}[label=($\star$),ref=$\star$]
        \item\label{star} Given $\alpha\in\leftindex_2{H^1}(S,\Q/\Z)$, one lets $\beta\in\leftindex_2{\Br}'(X_S)$ be the unique $C_2$-invariant Brauer class whose ramification along each of $0$, $1$ is equal to $\alpha$ and whose pullback along $-1\in X(S)^{C_2}$ is trivial; then, one lets $\iota(\alpha)\in\leftindex_2{\Br}'(\eey(2)_S)^{C_2}$ be the pullback of $\beta$ along the coarse space map $c\colon \eey(2)_S\to X_S$.
    \end{enumerate}
     From this description, one now sees that $\phi_2$ is a section to $\iota$, as claimed. Indeed, given $\alpha\in\leftindex_2{H^1}(S,\Q/\Z)$, if $\beta\in\leftindex_2{\Br}'(X_S)^{C_2}$ is as in~\eqref{star}, then $\phi_2(\iota(\alpha))=\ram_0(\pull\pi\pull c\beta)=\ram_0(\beta)=\alpha$ because $c\circ\pi=\id_{X_S}$ and because~$\beta$ was chosen to have ramification $\alpha$ along $0$.
\end{proof}

\begin{notation}\label{notn:G-G'}
    \hfill\begin{itemize}
        \item We let $G'=G'(S):=\ker\del$, with $\del$ the so-named map appearing in Proposition~\ref{prop:2-torsion C2 sequence}. By Proposition~\ref{prop:del-map-description}, we can describe $G'$ as the subgroup $G'\subset H^1(S,\mu_2)\oplus\{\pm1\}$ of pairs $(u,\pm1)$ such that $[(-1,u)]=[(-1,\pm1)]\in\Br'(S)$.
        \item We let $G=G(S)$ denote the subgroup $G\subset H^1(S,\mu_2)$ consisting of the elements $u$ such that $[(-1,u)]=0\in\Br'(S)$ as in \cite[Section 9]{AM}.
    \end{itemize}
    We remark that $G'$ (resp.\ $G$) above is functorial; a morphism $S\to T$ of (connected, regular, noetherian) $\Z[\frac{1}{2}]$-schemes induces a morphism $G'(T)\to G'(S)$ (resp.\ $G(T)\to G(S)$).
\end{notation}

\begin{remark}\label{rem:G'-splits}
    The natural inclusion $G\into G'$, $u\mapsto(u,+1)$ induces a split exact sequence
    \[0\too G\too G'\too\{\pm1\}\too0,\]
    with splitting map $\{\pm1\}\to G'$ sending $\pm1\mapsto(\kappa_2(\pm1),\pm1)$, where $\kappa_2:\colon \G_m(S)\to H^1(S,\mu_2)$ is the Kummer map.
\end{remark}

\subsection{Determining the differentials of the spectral sequence}\label{sec6.2}
We wish to compute the differentials on the $E_2$-page of the 2-local Hochschild--Serre spectral sequence associated to the $C_2$-cover $\eey(2)\to\eey_0(2)$. Recall that this is the spectral sequence
\begin{equation}\label{ss:2-local HS C2}
    E_2^{i,j}=E_2^{i,j}(S)=H^i(C_2,H^j(\eey(2)_S,\G_m))_{(2)}\implies H^{i+j}(\eey_0(2)_S,\G_m)_{(2)}
\end{equation}
pictured in Figure~\ref{fig:2-SS take i}. We write
\[d_r^{i,j}=d_r^{i,j}(S)\colon E_r^{i,j}(S)\too E_r^{i+r,j+(r-1)}(S)\]
for the differential on the $\supth{r}$ page, emphasizing the base scheme $S$ if need be. By comparing this spectral sequence with the analogous spectral sequence
\[{}'E_2^{i,j}=H^i(S_3,H^j(\eey(2)_S,\G_m))_{(2)}\implies H^{i+j}(\eem_{1,1,S},\G_m)_{(2)}\]
(with differentials denoted by ${}'d_r^{i,j}$) coming from the $S_3$-cover $\eey(2)_S\to\eem_{1,1,S}$ (see \cite[Section 9]{AM}), we may understand most of the differentials $d_r^{i,j}$ in our spectral sequence by leveraging the computations of the differentials ${}'d_r^{i,j}$ in \cite[Section 9]{AM}. Indeed, the natural transformation $(-)^{S_3}\to(-)^{C_2}$ of functors induces a morphism $({}'E,{}'d)\to(E,d)$ of spectral sequences.

\begin{lemma}\label{lem:d201 and d211}
    Recall that $S$ denotes a connected, regular noetherian $\Z[\frac{1}{2}]$-scheme. The differential $d_2^{0,1}$ in the spectral sequence \eqref{ss:2-local HS C2} vanishes. The differential $d_2^{1,1}$ is always surjective; it is furthermore an isomorphism if $\Pic(S)[2]=0$.
\end{lemma}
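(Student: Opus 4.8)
The plan is to read both differentials off their counterparts ${}'d_2^{0,1}$ and ${}'d_2^{1,1}$ in the $S_3$-spectral sequence $({}'E,{}'d)$ for $\eey(2)_S\to\eem_{1,1,S}$ computed in \cite[Section~9]{AM}. There is a morphism of spectral sequences $({}'E,{}'d)\to(E,d)$ induced on abutments by the degree-$3$ map $\eey_0(2)_S\to\eem_{1,1,S}$ and on $E_2$-pages by restriction $H^\bullet(S_3,-)\to H^\bullet(C_2,-)$ along $C_2\hookrightarrow S_3$; the key point is that the comparison maps ${}'E_2^{0,1}\to E_2^{0,1}$, ${}'E_2^{2,0}\to E_2^{2,0}$, ${}'E_2^{1,1}\to E_2^{1,1}$ and ${}'E_2^{3,0}\to E_2^{3,0}$ are all isomorphisms, so that $d_2^{0,1}$ and $d_2^{1,1}$ are literally identified with ${}'d_2^{0,1}$ and ${}'d_2^{1,1}$, which \cite[Section~9]{AM} shows to be zero and surjective (an isomorphism once $\Pic(S)[2]=0$), respectively.

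The first step is to verify those comparison maps are isomorphisms. By (\ref{AM 5.5}), $H^1(\eey(2)_S,\G_m)\cong\Pic(S)\oplus\mu_2(S)$ is an $S_3$-equivariantly split extension of trivial $S_3$-modules, hence carries the trivial $S_3$-action; thus $\Pic(\eey(2)_S)^{S_3}=\Pic(\eey(2)_S)=\Pic(\eey(2)_S)^{C_2}$ (giving the isomorphism on $E_2^{0,1}$), while restriction $H^1(S_3,\Pic(\eey(2)_S))\to H^1(C_2,\Pic(\eey(2)_S))$ is an isomorphism because the image of the transposition $(23)$ generates $S_3^{\ab}\cong\zmod2$, so this restriction is precomposition with the isomorphism $C_2\xrightarrow{\sim}S_3^{\ab}$, between groups both identified with $(\Pic(S)\oplus\mu_2(S))[2]$ (giving the isomorphism on $E_2^{1,1}$). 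For the bottom-row entries, the isomorphisms $H^i(C_2,\G_m(\eey(2)_S))\cong H^i(C_2,\G_m(S))$ of Lemma~\ref{lem:01-lines-of-ss} ($i\ge1$) and their $S_3$-analogue \cite[Lemma~5.7]{AM} are natural in the group, so it suffices that restriction $H^i(S_3,\G_m(S))_{(2)}\to H^i(C_2,\G_m(S))_{(2)}$ is an isomorphism for $i=2,3$; this holds because $C_2$ is a Sylow $2$-subgroup of $S_3$ and $N_{S_3}(C_2)=C_2$ acts trivially on $H^\bullet(C_2,-)$, so every class is stable.

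Granting this, $d_2^{0,1}=0$ and $d_2^{1,1}$ surjective follow from \cite[Section~9]{AM}; for the final clause, when $\Pic(S)[2]=0$ the source $E_2^{1,1}=\Pic(S)[2]\oplus\mu_2(S)$ and the target $E_2^{3,0}=\mu_2(S)$ (as read off from Lemmas~\ref{lem:01-lines-of-ss} and \ref{lem:C2-coh}) are both $\mu_2(S)\cong\zmod2$, since $S$ is connected, and a surjective endomorphism of $\zmod2$ is an isomorphism. I would also record the self-contained proof of $d_2^{0,1}=0$: being the connecting map in the five-term exact sequence of (\ref{ss:2-local HS C2}), it vanishes iff the edge map $\Pic(\eey_0(2)_S)_{(2)}\to E_2^{0,1}=\Pic(\eey(2)_S)_{(2)}$ --- pullback along $\eey(2)_S\to\eey_0(2)_S$, using that $C_2$ acts trivially on $\Pic(\eey(2)_S)$ --- is surjective; and its image contains $\Pic(S)$ together with $\lambda\vert_{\eey(2)_S}$, which generates the weight summand $\mu_2(S)$ of $\Pic(\eey(2)_S)\cong\Pic(S)\oplus\mu_2(S)$ (since $-1\in\Aut$ of an elliptic curve acts by $-1$ on the Hodge bundle), so the image is all of $\Pic(\eey(2)_S)$.

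The step I expect to require the most care is making the comparison with \cite[Section~9]{AM} airtight --- both checking that the natural isomorphisms used to rewrite the $E_2$-entries commute with restriction (so the square relating $d_2^{i,j}$ to ${}'d_2^{i,j}$ really commutes), and locating the computation of ${}'d_2^{1,1}$ in \cite[Section~9]{AM} at the level of generality of an arbitrary connected regular noetherian $\Z[1/2]$-scheme $S$. If the $S_3$-side statement is not available in that generality, the fallback for $d_2^{1,1}$ is to base change along a geometric point $\bar s=\Spec\bar k\to S$: the map $E_2^{1,1}(S)\to E_2^{1,1}(\bar s)\cong\zmod2$ is an isomorphism on the $\mu_2(S)$-summand and $E_2^{3,0}(S)\to E_2^{3,0}(\bar s)\cong\zmod2$ is an isomorphism, so it is enough to show $d_2^{1,1}(\bar s)\neq0$; over $\bar k$ the bottom-row and $\Pic$-row terms collapse enough that the known values $\Br(\eey_0(2)_{\bar k})=\zmod2$ (Section~\ref{sect:coarse-space}) and $\Br(\eem_{1,1,\bar k})_{(2)}=0$ (\cite{AM}) force $d_2^{1,1}(\bar s)$ to be an isomorphism.
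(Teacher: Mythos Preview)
Your approach is essentially the same as the paper's: you compare with the $S_3$-spectral sequence $({}'E,{}'d)$ via restriction along $C_2\hookrightarrow S_3$, observe that the relevant comparison maps on $E_2$-terms are isomorphisms, and then invoke \cite[Section~9]{AM} (specifically Proposition~9.2 and Corollary~9.5) to conclude. The paper's proof is terser --- it simply writes the two commutative squares with ``$=$'' on the vertical arrows and cites those two results --- whereas you spell out why the vertical maps are isomorphisms (trivial $S_3$-action on $\Pic(\eey(2)_S)$, $C_2$ being a Sylow $2$-subgroup with trivial normalizer action) and add a self-contained argument for $d_2^{0,1}=0$ via surjectivity of the Picard pullback and a geometric-point fallback for $d_2^{1,1}$; these extras are correct but not needed, since the cited results in \cite{AM} already cover the required generality.
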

\begin{proof}
    By comparing Figure~\ref{fig:2-SS take i} with \cite[Figure 3]{AM}, we see that the morphism $({}'E,{}'d)\to(E,d)$ of spectral sequences induces commutative diagrams
    \[\commsquare{\Pic(S)_{(2)}\oplus\mu_2(S)}{{}'d_2^{0,1}}{\G_m(S)/2}=={\Pic(S)_{(2)}\oplus\mu_2(S)}{d_2^{0,1}}{\G_m(S)/2}\tand\commsquare{\Pic(S)[2]\oplus\mu_2(S)}{{}'d_2^{1,1}}{\mu_2(S)}=={\Pic(S)[2]\oplus\mu_2(S)}{d_2^{1,1}}{\mu_2(S).}\]
    The result follows from \cite[Proposition 9.2 and Corollary 9.5]{AM}.
\end{proof}
\begin{corollary}
    The differential $d_3^{0,2}$ vanishes, so $E_3^{0,2}=E_\infty^{0,2}$.
\end{corollary}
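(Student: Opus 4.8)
The plan is to show that the target of $d_3^{0,2}$ is already zero, so the differential has no choice but to vanish, and then to check by index bookkeeping that nothing on later pages can disturb $E_3^{0,2}$. First I would identify $d_3^{0,2}$ as the map $E_3^{0,2}\to E_3^{3,0}$ in the spectral sequence (\ref{ss:2-local HS C2}), whose target sits in the bottom row of Figure \ref{fig:2-SS take i}, where $E_2^{3,0}=\mu_2(S)$. Since $E_2^{5,-1}=0$ for degree reasons, the $E_3$-entry there is just the cokernel $E_3^{3,0}=E_2^{3,0}/\im\bigl(d_2^{1,1}\bigr)$. By Lemma \ref{lem:d201 and d211}, the differential $d_2^{1,1}\colon \Pic(S)[2]\oplus\mu_2(S)\to\mu_2(S)$ is (always) surjective, so this cokernel vanishes. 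Hence $E_3^{3,0}=0$ and $d_3^{0,2}$ is necessarily the zero map.

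Next I would verify that no other differential affects position $(0,2)$ after page $3$. The only $d_3$ touching $(0,2)$ besides the one just killed is the incoming $d_3^{-3,4}$, which is zero because its source lies in negative horizontal degree; hence $E_4^{0,2}=E_3^{0,2}$. For $r\ge 4$, the outgoing differential $d_r^{0,2}\colon E_r^{0,2}\to E_r^{r,3-r}$ has target in vertical degree $3-r\le -1$, so it vanishes, and the incoming $d_r^{-r,r+1}$ again has source in negative horizontal degree and vanishes. Therefore $E_3^{0,2}=E_4^{0,2}=\cdots=E_\infty^{0,2}$, which is exactly the assertion.

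I do not expect any real obstacle here: the statement is a formal consequence of the surjectivity of $d_2^{1,1}$ established in Lemma \ref{lem:d201 and d211} together with the fact that the Hochschild--Serre spectral sequence is concentrated in the first quadrant $i,j\ge 0$. The only point requiring care is tracking the bidegrees of the higher differentials to confirm that they all either emanate from or land in the zero groups outside the first quadrant.
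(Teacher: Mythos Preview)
Your proposal is correct and follows essentially the same approach as the paper: use the surjectivity of $d_2^{1,1}$ from Lemma \ref{lem:d201 and d211} to conclude $E_3^{3,0}=0$, forcing $d_3^{0,2}=0$, and then observe that all further differentials touching $(0,2)$ land outside the first quadrant. Your version is slightly more explicit about the higher-page bookkeeping, but the argument is the same.
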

\begin{proof}
    By Lemma~\ref{lem:d201 and d211}, the differential $d_2^{1,1}$ is surjective, so $E_3^{3,0}=0$. Thus, $d_3^{0,2}\colon E_3^{0,2}\to E_3^{3,0}$ must be the zero map. This is the last possibly nonzero differential the $(0,2)$ object participates in, so we conclude that $E_3^{0,2}=E_\infty^{0,2}$.
\end{proof}

To compute the differential $d_2^{0,2}$, we use that the spectral sequence (\ref{ss:2-local HS C2}) is functorial in $S$.

We will also use Theorem~\ref{thm:alg-closed-comp}, which implies that $\Br'(\eey_0(2)_{\bar{k}})\cong \Z/2$, where $\bar{k}$ is an algebraically closed field of characteristic not 2.  

\begin{lemma}\label{lem:d202-diff-comp}
   If\, $\Pic(S)/2=0$, the differential 
   \[
d_2^{0,2}\colon\leftindex_2{\Br}(\eey(2)_{S})^{C_2}\too \Pic(S)/2\oplus \mu_2(S) = \mu_2(S) 
   \]
   vanishes. 
\end{lemma}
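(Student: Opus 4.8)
The plan is to reduce to the case of an algebraically closed base field by functoriality of the spectral sequence~(\ref{ss:2-local HS C2}) in $S$, and there to play off the independent computation of $\Br'(\eey_0(2)_{\bar k})$ provided by Theorem~\ref{thm:alg-closed-comp}.

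First I would treat $S=\Spec\bar k$ with $\bar k$ algebraically closed of characteristic $\neq 2$ (where $\Pic(\bar k)/2=0$ automatically, so $d_2^{0,2}$ has target $\mu_2(\bar k)\cong\Z/2$). Over $\bar k$ one has $\Pic(\bar k)=0$, $\G_m(\bar k)/2=0$, $H^1(\bar k,\mu_2)=0$, $H^1(\bar k,\Q/\Z)=0$ and $\Br'(\bar k)=0$, so Proposition~\ref{prop:Br(Y2)-2-tors-splitting} (or Proposition~\ref{prop:2-torsion C2 sequence}) gives $E_2^{0,2}(\bar k)={}_2\Br'(\eey(2)_{\bar k})^{C_2}\cong\Z/2$, Figure~\ref{fig:2-SS take i} gives $E_2^{2,0}(\bar k)=\G_m(\bar k)/2=0$ and $E_2^{1,1}(\bar k)=\Pic(\bar k)[2]\oplus\mu_2(\bar k)\cong\Z/2$, and Lemma~\ref{lem:d201 and d211} (applicable since $\Pic(\bar k)[2]=0$) shows $d_2^{1,1}(\bar k)$ is an isomorphism onto $E_2^{3,0}(\bar k)=\mu_2(\bar k)$, whence $E_\infty^{1,1}(\bar k)=0$; also $E_\infty^{2,0}(\bar k)=0$ as a subquotient of $0$. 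Thus the filtration on the total-degree-$2$ abutment collapses to its top graded piece: $E_\infty^{0,2}(\bar k)\cong H^2(\eey_0(2)_{\bar k},\G_m)_{(2)}={}_2\Br'(\eey_0(2)_{\bar k})\cong\Z/2$ by Theorem~\ref{thm:alg-closed-comp} (with \cite[Proposition~2.5(iii)]{AM} identifying $H^2(\G_m)$ with $\Br'$). On the other hand, the corollary following Lemma~\ref{lem:d201 and d211} gives $d_3^{0,2}(\bar k)=0$, so $E_\infty^{0,2}(\bar k)=E_3^{0,2}(\bar k)=\ker\big(d_2^{0,2}(\bar k)\big)$; comparing orders with $E_2^{0,2}(\bar k)\cong\Z/2$ forces $d_2^{0,2}(\bar k)=0$.

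Then I would descend to a general $S$ with $\Pic(S)/2=0$. Pick a geometric point $\bar s\colon\Spec\bar k\hookrightarrow S$ (so $\charstic\bar k\neq 2$, as $S$ is a $\Z[1/2]$-scheme); pullback along $\eey(2)_{\bar s}\to\eey(2)_S$ (and $\eey_0(2)_{\bar s}\to\eey_0(2)_S$) induces a morphism of spectral sequences $(E(S),d(S))\to(E(\bar s),d(\bar s))$ and hence a commuting square relating $d_2^{0,2}(S)$ to $d_2^{0,2}(\bar s)$. Under the identification $E_2^{2,1}(S)=\Pic(S)/2\oplus\mu_2(S)=\mu_2(S)$ of Lemma~\ref{lem:01-lines-of-ss} (using $\Pic(S)/2=0$), the right-hand vertical arrow of that square is induced by the functorial surjection $H^1(\eey(2)_S,\G_m)\onto\mu_2(S)$ of~(\ref{AM 5.5}), hence is the restriction $\mu_2(S)\to\mu_2(\bar s)$, which is an isomorphism $\Z/2\to\Z/2$ (both source and target are nonempty connected $\Z[1/2]$-schemes and $-1\neq 1$). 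Since $d_2^{0,2}(\bar s)=0$ by the previous step, commutativity of the square forces $\big(\mathrm{res}_{\mu_2}\big)\circ d_2^{0,2}(S)=0$, and as $\mathrm{res}_{\mu_2}$ is injective we conclude $d_2^{0,2}(S)=0$.

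I expect the main obstacle to be the first step: one must verify that over $\bar k$ the total-degree-$2$ abutment receives \emph{nothing} from $E_\infty^{1,1}$ or $E_\infty^{2,0}$ — this is where Lemma~\ref{lem:d201 and d211} (making $d_2^{1,1}$ an isomorphism since $\Pic(\bar k)[2]=0$) and the divisibility of $\bar k^\times$ are essential — so that the known value $\Br'(\eey_0(2)_{\bar k})\cong\Z/2$ pins down $E_\infty^{0,2}(\bar k)$ and, via $E_\infty^{0,2}(\bar k)=\ker d_2^{0,2}(\bar k)$, forces $d_2^{0,2}(\bar k)$ to vanish. The argument is non-circular precisely because Theorem~\ref{thm:alg-closed-comp} is established independently of this spectral sequence (via Shin's coarse-space method), and the functoriality bookkeeping in the descent step — identifying the induced map on the $\mu_2$-factor of $E_2^{2,1}$ with the honest restriction — is routine given that~(\ref{AM 5.5}) is functorial in $S$.
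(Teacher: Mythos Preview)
Your proof is correct and follows essentially the same approach as the paper: both reduce to the algebraically closed case via functoriality of the spectral sequence in $S$, then use the independent computation $\Br'(\eey_0(2)_{\bar k})\cong\zmod2$ from Theorem~\ref{thm:alg-closed-comp} together with the vanishing of $E_\infty^{1,1}(\bar k)$ and $E_\infty^{2,0}(\bar k)$ to force $d_2^{0,2}(\bar k)=0$, and finally descend via the isomorphism $\mu_2(S)\iso\mu_2(\bar k)$. Your write-up is somewhat more detailed (e.g.\ explicitly invoking the corollary giving $d_3^{0,2}=0$ and justifying that the restriction on $\mu_2$ is an isomorphism), but the underlying argument is the same.
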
 
\begin{proof}
    We can pull back the whole sequence along a geometric point $\spec(\bar k)\to S$ in order to get a commutative square
\[\commsquare{\leftindex_2{\Br}(Y(2)_S)^{C_2}}{d_2^{0,2}}{\mu_2(S)}{}={\leftindex_2{\Br}(Y(2)_{\bar k})^{C_2}}{}{\mu_2(\bar k)\rlap{.}}\]

On the $E_2$-page of the 2-local spectral sequence for $\eey_0(2)_{\bar k}$, we are taking $S=\bar{k}$, and the $(1, 1)$-term is killed by the differential and the $(2, 0)$-term vanishes.  Thus the morphism $\Br(\eey_0(2)_{\bar k})\rightarrow E_{\infty}^{0, 2}$ is an isomorphism.     

Using $S=\bar{k}$ in the exact sequence of Proposition~\ref{prop:2-torsion C2 sequence} shows that $\leftindex_2{\Br}(Y(2)_{\bar k})^{C_2}\iso\zmod2$, so the above square looks like
\[\commsquare{\leftindex_2{\Br}(Y(2))^{C_2}}{d_2^{0,2}}{\mu_2(S)}{}={\zmod2}{}{\mu_2(\bar k)\rlap{,}}\]
and we are interested in understanding the map $\zmod2\to\mu_2(\bar k)$.

    By Theorem~\ref{thm:alg-closed-comp}, since $\ch \overline{k}\neq 2$, we have $\Br(Y_0(2)_{\bar{k}})\cong\zmod2$.  Then the 2-local spectral sequence for $\eey_0(2)_{\bar k}$ implies that the bottom map $\Z/2\Z\rightarrow \mu_2(\bar k)$ must be zero, or otherwise the only nonzero term in the spectral sequence would be killed after the second page, in which case $\Br(Y_0(2)_{\bar k}) = 0$, which is impossible.  Then by the commutativity of the square above, this implies $d_2^{0, 2}=0$, as desired. 
\end{proof}

In conclusion, if $\Pic(S)=0$, then the $E_2$-page of the 2-local Hochschild--Serre spectral sequence we are considering is as indicated in Figure~\ref{fig:2-SS} (see Proposition~\ref{prop:Br(Y2)-2-tors-splitting} for the $E_2^{0,2}$-term).

\begin{figure}[ht]
    \centering
    \[\begin{tikzcd}
    	{\leftindex_2{\Br}'(S)\oplus \leftindex_2{H^1}(S, \Q/Z)\oplus G'} \\
    	{\mu_2(S)} & { \mu_2(S)} & {\mu_2(S)} \\
    	{\G_m(S)_{(2)} \oplus \Z_{(2)}} & {\mu_2(S)} & {\G_m(S)/2} & {\mu_2(S)}
    	\arrow[from=2-2, to=3-4, "\cong "]
    	\arrow[from=1-1, to=2-3, "0"]
    	\arrow[from=2-1, to=3-3, "0"]
    \end{tikzcd}\] 
    \caption{The $E_2$-page of the 2-local Hochschild--Serre spectral sequence associated to the $C_2$-cover $\eey(2)_S\to\eey_0(2)_S$, if $\Pic(S)=0$.}
    \label{fig:2-SS}
\end{figure}

\subsection{Computing the 2-primary torsion} 

\begin{assumption}
    Assume, for the rest of the section, that $\Pic(S)=0$.
\end{assumption}

By our computation of the spectral sequence (\ref{ss:2-local HS C2}) in Section~\ref{sec6.2}, the terms on the second diagonal on the $E_\infty$-page are 
\begin{align*}
    E_\infty^{0,2}=\leftindex_2{\Br}'(\eey(2)_S)^{C_2}, \quad E_\infty^{1,1}=0, \quad E_\infty^{2,0}=\G_m(S)/2.
\end{align*}
Thus we have a short exact sequence
\begin{equation}\label{ses:2-SS-output}
    0\too \G_m(S)/2 \too \leftindex_2{\Br}(\eey_0(2)_S)\too \leftindex_2{\Br}(\eey(2)_S)^{C_2}\too 0. 
\end{equation}

Theorem 9.1 of \cite{AM} produces an analogous short exact sequence for $\leftindex_2{\Br}((\eem_{1,1})_S)$. The covering map $\eey_{0}(2)_S\rightarrow (\eem_{1, 1})_S$ induces the following map of short exact sequences: 
\[\begin{tikzcd}
	0 & {\G_m(S)/2} & {\leftindex_2{\Br}((\eem_{1,1})_S)} & {\leftindex_2{\Br}(\eey(2)_S)^{S_3}} & 0 \\
	0 & {\G_m(S)/2} & {\leftindex_2{\Br}(\eey_{0}(2)_S)} & {\leftindex_2{\Br}(\eey(2)_S)^{C_2}} & 0
	\arrow[from=1-1, to=1-2]
	\arrow[from=1-2, to=1-3]
	\arrow[from=2-1, to=2-2]
	\arrow[from=2-2, to=2-3]
	\arrow[from=1-3, to=1-4]
	\arrow[from=1-4, to=1-5]
	\arrow[from=2-3, to=2-4]
	\arrow[from=2-4, to=2-5]
	\arrow[from=1-2, to=2-2]
	\arrow[from=1-3, to=2-3]
	\arrow[from=1-4, to=2-4].
\end{tikzcd}\]

The snake lemma identifies the cokernels of the two rightmost vertical arrows, so we obtain
\begin{equation}
    \label{eqn: m11 to y02}
0\too\leftindex_2{\Br}((\eem_{1,1})_S)\too\leftindex_2{\Br}(\eey_0(2)_S)\too \coker[\leftindex_2{\Br}(\eey(2)_S)^{S_3}\too \leftindex_2{\Br}(\eey(2)_S)^{C_2}]\too0\rlap{.} 
\end{equation}

\begin{lemma}
\label{lem: m11 y02 split}
The short exact sequence \eqref{eqn: m11 to y02} is split.
\end{lemma}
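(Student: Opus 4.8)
The plan is to split (\ref{eqn: m11 to y02}) by exhibiting a retraction of its first map. By construction this first map ${}_2\Br((\eem_{1,1})_S)\to{}_2\Br(\eey_0(2)_S)$ is the pullback $\pi^*$ along the forgetful morphism $\pi\colon\eey_0(2)_S\to(\eem_{1,1})_S$, $(E,C)\mapsto E$. This $\pi$ is representable, finite, and \'etale of degree $3$: over $\Z[1/2]$ the sheaf of order-$2$ subgroups of an elliptic curve $E$ is $E[2]$ with the zero section removed, which is finite \'etale of rank $3$. A retraction of $\pi^*$ will be supplied by a transfer map for $\pi$, once one observes that $3$ acts invertibly on $2$-primary torsion.

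Concretely, since $\pi$ is finite \'etale there is a norm $N_\pi\colon\pi_*\G_m\to\G_m$, and since $\pi$ is finite (so $R^{>0}\pi_*\G_m=0$) this induces a transfer $\operatorname{tr}_\pi\colon H^2(\eey_0(2)_S,\G_m)\to H^2((\eem_{1,1})_S,\G_m)$ for which $\operatorname{tr}_\pi\circ\pi^*$ is multiplication by $\deg\pi=3$; this last fact is checked \'etale-locally on the base, where $\pi$ becomes a disjoint union of $3$ copies of the base, $\pi^*$ is the diagonal, and $N_\pi$ is the product. Equivalently, using the presentations $\eey_0(2)_S=[\eey(2)_S/C_2]$ and $(\eem_{1,1})_S=[\eey(2)_S/S_3]$ with $C_2\subseteq S_3$ of index $3$, the corestriction maps $\operatorname{cor}\colon H^i(C_2,-)\to H^i(S_3,-)$ assemble into a morphism of Hochschild--Serre spectral sequences $(E,d)\to({}'E,{}'d)$ (in the direction opposite to the restriction morphism $({}'E,{}'d)\to(E,d)$ invoked throughout this section), satisfying $\operatorname{cor}\circ\operatorname{res}=[S_3:C_2]=3$, and hence inducing $\operatorname{tr}_\pi$ on the abutments with the same property.

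Restricting to $2$-primary torsion, $\operatorname{tr}_\pi$ carries ${}_2\Br(\eey_0(2)_S)$ into ${}_2\Br((\eem_{1,1})_S)$, and there $\operatorname{tr}_\pi\circ\pi^*$ is multiplication by $3$, hence an automorphism since ${}_2\Br((\eem_{1,1})_S)$ has exponent a power of $2$. Therefore $(\operatorname{tr}_\pi\circ\pi^*)^{-1}\circ\operatorname{tr}_\pi$ is a retraction of the first map $\pi^*$ of (\ref{eqn: m11 to y02}), so the sequence splits. The only step requiring real care is setting up the transfer $\operatorname{tr}_\pi$ for a finite \'etale morphism of Deligne--Mumford stacks together with the identity $\operatorname{tr}_\pi\circ\pi^*=\deg\pi$; I expect this to be routine (it reduces \'etale-locally to the case of schemes), and it can in any case be replaced throughout by the group-cohomology corestriction argument just sketched, which uses only $\operatorname{cor}\circ\operatorname{res}=[S_3:C_2]$.
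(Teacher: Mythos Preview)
Your proof is correct and follows essentially the same approach as the paper: construct a norm/transfer map along the degree $3$ cover $\eey_0(2)_S\to(\eem_{1,1})_S$, observe that its composition with $\pi^*$ is multiplication by $3$, and conclude that on $2$-primary torsion this yields a retraction. Your account is somewhat more detailed (you verify that $\pi$ is finite \'etale and give the alternative via $\operatorname{cor}\circ\operatorname{res}=[S_3:C_2]$ on the Hochschild--Serre spectral sequences), but the idea is identical.
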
 
\begin{proof}
Recall that the morphism $f\colon \eey_0(2)_S\rightarrow (\eem_{1,1})_S$ is a degree 3 finite cover.  Thus we can define a norm map $f_*\G_m\rightarrow \G_m$ by defining it to be the determinant function over any affine open on $(\eem_{1,1})_S$ for which $f_*\mathcal{O}_{\eey_0(2)_S}$ is finite and locally free.  Precomposing with the natural map $\G_m\rightarrow f_*\G_m$, we see that a function $g\in \G_m((\eem_{1,1})_S)$ is sent to $g^3$.  Thus the norm map induces a morphism 
\[
H^2((\eey_0(2))_S, \G_m)\cong H^2((\eem_{1,1})_S, f_*\G_m)\too H^2((\eem_{1,1})_S, \G_m)
\]
such that precomposition with the pullback $H^2((\eem_{1,1})_S, \G_m)\rightarrow H^2((\eey_0(2))_S, \G_m)$ is multiplication by 3 on $H^2((\eem_{1,1})_S, \G_m)$.  Therefore, the 2-torsion exact sequence of Brauer groups is split, as desired.
\end{proof}

It remains to compute $\coker[\leftindex_2{\Br}(\eey(2)_S)^{S_3}\to \leftindex_2{\Br}(\eey(2)_S)^{C_2}]$.  We recall that the split exact sequence of Proposition~\ref{prop:Br(Y2)-2-tors-splitting} along with Remark~\ref{rem:G'-splits} and the analogous split exact sequence for $(\eem_{1, 1})_S$, given in \cite[Proposition 5.14]{AM}, give compatible isomorphisms $\leftindex_2{\Br}(\eey(2)_S)^{S_3} \cong \leftindex_2{\Br}(S)\oplus G$ and $\leftindex_2{\Br}(\eey(2)_S)^{C_2} \cong$ $\hphantom{\mkern-2mu}\leftindex_{2}{\Br}(S) \oplus \leftindex_2{H^1}(S, \Q/\Z) \oplus G \oplus \Z/2$.    Thus we have 
\begin{equation}
\label{eqn: coker}
\coker\left[\leftindex_2{\Br}(\eey(2)_S)^{S_3}\too \leftindex_2{\Br}(\eey(2)_S)^{C_2}\right] \cong \leftindex_2{H^1}(S, \Q/\Z) \oplus \Z/2. 
\end{equation}

Thus we can compute the 2-torsion of $\Br_{\eey_0(2)_S}$ in terms of the 2-torsion of $\Br_{(\eem_{1,1})_S}$.  We recall the latter computation. 

\begin{proposition}[\textit{cf.} {\cite[Proposition 9.7]{AM}}] 
    Let $P$ be a set of prime numbers including $2$, and denote by $\Z_P \subset \Q$ the subset of all fractions where the denominator is only divisible by primes in $P$. Then 
\[
\Br\p{(\eem_{1,1})_{\Z_P}} \cong \Br(\Z_P)~~\oplus \bigoplus_{\substack{p\in P\cup \{-1\}, \\ p\equiv 3\pmod 4}}\Z/2 ~~\oplus \bigoplus_{\substack{p\in P,\\ p\not\equiv 3\pmod 4}}\Z/4.
\]
\end{proposition}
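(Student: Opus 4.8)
The plan is to invoke \cite[Proposition~9.7]{AM} directly; the statement above is that result verbatim, so no new proof is needed. For orientation it is worth recording the shape of the argument, since it is the template for the analysis of $\eey_0(2)$ carried out in Sections~\ref{sect:p>=3 torsion, p invertible}--\ref{sect:2-prim-torsion}. One works one prime at a time, and the first task is to show that, apart from $\Br(\Z_P)$ itself (which splits off via a $\Z_P$-point of $\eem_{1,1}$, e.g.\ $y^2=x^3-x$), the group $\Br((\eem_{1,1})_{\Z_P})$ is $2$-primary.

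For each odd prime $p$ one runs the no-extension argument. Over $\Z_P[1/p]$ the $p$-primary part of $\Br'(\eem_{1,1})$ is, by the $\eem_{1,1}$-analogue of Theorem~\ref{thm:p-tors-away-from-p}, an extension of an $H^1(-,\Q_p/\Z_p)$ by ${}_p\Br'(\Z_P[1/p])$; one then constructs Legendre curves over the local rings $\msO_{\Q_p(\zeta_p)}$ with prescribed values of $\left(\zeta_p,-\right)_{\mfp}$ and invokes the strategy of Remark~\ref{rem:class-extend-strat} to see that no nonzero non-constant class extends across the punctures. For $\eem_{1,1}$ this is \cite[Sections~5, 7 and 8]{AM}, exactly parallel to Sections~\ref{sect:p>=3 torsion, p invertible} and~\ref{sect:p-prim-torsion-over-Z1/2} here. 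Hence ${}_p\Br((\eem_{1,1})_{\Z_P})={}_p\Br(\Z_P)$ for all $p\geq 3$.

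The $2$-primary part is extracted from the Hochschild--Serre spectral sequence of the $S_3$-cover $\eey(2)\to\eem_{1,1}$. Using $\eey(2)\cong BC_{2,X}$ and the Gysin sequence for $X=\A^1\setminus\{0,1\}\subset\A^1$ one computes $\Br'(\eey(2)_{\Z_P})$ and, after taking $S_3$-invariants (the $S_3$-version of Proposition~\ref{prop:2-torsion C2 sequence}, i.e.\ \cite[Proposition~5.14]{AM}), one gets ${}_2\Br(\eey(2)_{\Z_P})^{S_3}\cong{}_2\Br(\Z_P)\oplus G(\Z_P)$, where $G(\Z_P)=\{u\in H^1(\Z_P,\mu_2)=\Z_P^{\times}/(\Z_P^{\times})^2 : [(-1,u)]=0\in\Br(\Z_P)\}$. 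Since $\Pic(\Z_P)=0$, the differentials behave as in Lemma~\ref{lem:d201 and d211} and its corollaries (again transported from \cite[Section~9]{AM}), and one is left with the extension
\[ 0\too\G_m(\Z_P)/2\too{}_2\Br(\eem_{1,1,\Z_P})\too{}_2\Br(\Z_P)\oplus G(\Z_P)\too 0. \]

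The main obstacle --- and the only genuinely delicate point --- is resolving this extension. One has $\G_m(\Z_P)/2=\langle-1\rangle\oplus\bigoplus_{p\in P}\langle p\rangle$, and a short Hilbert-symbol computation shows $p\in G(\Z_P)$ precisely when $p\not\equiv 3\pmod 4$ (while $-1\notin G(\Z_P)$ since $[(-1,-1)]\neq 0\in\Br(\Z_P)$ once $2\in P$). For each such $p$ one must then decide whether the corresponding generator of $G(\Z_P)$ lifts to a class of order $2$ or of order $4$ in ${}_2\Br(\eem_{1,1,\Z_P})$; as in Remark~\ref{rem:class-extend-strat} this is tested by evaluation at elliptic curves over local fields, i.e.\ by yet another Hilbert symbol, and the lift turns out to have order $4$ exactly when $p\not\equiv 3\pmod 4$. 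Combining this with the $\Z/2$ summands coming from $-1$ and from the primes $p\equiv 3\pmod 4$ in $P$, and re-assembling the $p$-primary pieces into $\Br(\Z_P)$, yields the stated formula. For the purposes of this paper we do not carry any of this out and simply cite \cite[Proposition~9.7]{AM}.
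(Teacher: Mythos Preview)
Your proposal is correct and matches the paper's approach: the paper simply states this as \cite[Proposition~9.7]{AM} without proof, and you do the same while additionally supplying a helpful orientation sketch. No changes needed.
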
 

Combining this result with Lemma~\ref{lem: m11 y02 split} and the isomorphism (\ref{eqn: coker}), we arrive at the following conclusion. 

\begin{proposition}
\label{prop: 2-torsion of Y02}
    Let $P$ be a set of prime numbers including $2$, and denote by $\Z_P \subset \Q$ the
subset of all fractions where the denominator is only divisible by primes in $P$. Then 
\[
\leftindex_2{\Br}\p{\eey_0(2)_{\Z_P}} \cong \leftindex_2{H^1}(\Z_P, \Q/\Z) \oplus \Z/2 \oplus \leftindex_2{\Br}(\Z_P)~~\oplus \bigoplus_{\substack{p\in P\cup \{-1\}, \\ p\equiv 3\pmod 4}}\Z/2 ~~\oplus \bigoplus_{\substack{p\in P,\\ p\not\equiv 3\pmod 4}}\Z/4.
\]
\end{proposition}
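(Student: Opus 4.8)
The plan is to specialize the split short exact sequence \ref{eqn: m11 to y02} to the base $S = \Spec\Z_P$ and simply read off the answer. First I would verify that $\Z_P$ meets all the standing hypotheses: since $2 \in P$, the ring $\Z_P$ is the localization of $\Z$ away from the primes not in $P$, hence a principal ideal domain, and in particular a connected, regular, noetherian $\Z[1/2]$-scheme with $\Pic(\Z_P) = 0$. Triviality of the Picard group is what makes the $E_2$-page of the $2$-local Hochschild--Serre spectral sequence collapse to the simplified form of Figure \ref{fig:2-SS}, and hence what makes the exact sequence \ref{eqn: m11 to y02}, together with its splitting (Lemma \ref{lem: m11 y02 split}) and the cokernel identification \ref{eqn: coker}, available over $\Z_P$. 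I would also invoke Lemma \ref{lem:coh-Br=Az-Br} to identify $\Br$ with $\Br'$ throughout: $\Spec\Z_P$ is affine, so it admits an ample line bundle.

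Granting this, Lemma \ref{lem: m11 y02 split} and \ref{eqn: coker} give a split isomorphism
\[
{}_2\Br(\eey_0(2)_{\Z_P}) \;\cong\; {}_2\Br((\eem_{1,1})_{\Z_P}) \;\oplus\; {}_2H^1(\Z_P, \Q/\Z) \;\oplus\; \Z/2,
\]
so it only remains to compute ${}_2\Br((\eem_{1,1})_{\Z_P})$. For this I would take $2$-primary torsion in Proposition 9.7 of \cite{AM}, which asserts $\Br((\eem_{1,1})_{\Z_P}) \cong \Br(\Z_P) \oplus \bigoplus_{p\in P\cup\{-1\},\, p\equiv 3\,(4)}\Z/2 \oplus \bigoplus_{p\in P,\, p\not\equiv 3\,(4)}\Z/4$; as the last two summands are already $2$-primary and $\Br((\eem_{1,1})_{\Z_P})$ is a torsion group, taking $2$-primary parts gives ${}_2\Br((\eem_{1,1})_{\Z_P}) \cong {}_2\Br(\Z_P) \oplus \bigoplus_{p\in P\cup\{-1\},\, p\equiv 3\,(4)}\Z/2 \oplus \bigoplus_{p\in P,\, p\not\equiv 3\,(4)}\Z/4$. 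Substituting into the displayed isomorphism produces exactly the claimed formula.

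There is no real obstacle here --- the proposition is essentially a bookkeeping corollary of the structural results already in place, namely the split sequence \ref{eqn: m11 to y02}, the cokernel computation \ref{eqn: coker}, and the Antieau--Meier computation of $\Br(\eem_{1,1})$ over $\Z_P$. The only points worth a moment's attention are (i) confirming $\Pic(\Z_P) = 0$, so that the simplified spectral sequence of Figure \ref{fig:2-SS} and hence \ref{eqn: m11 to y02} genuinely apply over $\Z_P$, and (ii) being disciplined about passing to $2$-primary torsion everywhere --- in particular keeping ${}_2\Br(\Z_P)$ rather than $\Br(\Z_P)$, since the latter need not be $2$-primary (for instance $\Br(\Z[1/6]) \cong \Q/\Z \oplus \Z/2$).
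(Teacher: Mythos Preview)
Your proposal is correct and follows essentially the same approach as the paper: the paper simply states that the proposition follows by combining Lemma \ref{lem: m11 y02 split}, Equation \ref{eqn: coker}, and Proposition 9.7 of \cite{AM}. Your write-up adds only the routine verifications (that $\Z_P$ satisfies the standing hypotheses with $\Pic(\Z_P)=0$, and that one must pass to $2$-primary torsion in the Antieau--Meier formula), which are exactly the details the paper leaves implicit.
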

Finally, we make the above proposition more explicit in the case that $\Z_P=\Z[\frac{1}{2}]$.

\begin{lemma}\label{lem:H1Q2/Z2}
  We have 
    \[H^1\p{\Z\sq{\frac12},\frac\Q\Z}\simeq\frac\Z{2\Z}\oplus\frac{\Q_2}{\Z_2}.\]
\end{lemma}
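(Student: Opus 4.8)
The plan is to follow the same strategy as the proof of Lemma~\ref{lem:H1Qp/Zp}, now keeping track of the contribution of every rational prime. First I would write $\Q/\Z \cong \bigoplus_\ell \Q_\ell/\Z_\ell$, a direct sum over all primes~$\ell$; since \'etale cohomology commutes with filtered colimits and with finite direct sums of coefficient groups, this reduces the problem to computing $H^1(\Z[1/2],\Q_\ell/\Z_\ell) = \dirlim_n H^1(\Z[1/2],\zmod{\ell^n})$ for each $\ell$. Exactly as in the cited lemma, \cite[Example~11.3]{milneLEC} identifies $H^1(\Z[1/2],\zmod{\ell^n})$ with $\ctsHom(\etpi_1(\Z[1/2]),\zmod{\ell^n})$, and because the target is abelian this factors through the abelianization $\etpi_1(\Z[1/2])^{\ab}$.

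Next I would invoke class field theory to identify $\etpi_1(\Z[1/2])^{\ab}$ as the Galois group of the maximal abelian extension of $\Q$ unramified away from~$2$. By Kronecker--Weber this extension is $\Q(\zeta_{2^\infty})$, so $\etpi_1(\Z[1/2])^{\ab}\cong\units{\Z_2}\cong\zmod2\oplus\Z_2$ (equivalently, one reads this off the idele class group by killing inertia at every odd prime). From here everything is formal: for odd $\ell$ there are no nonzero continuous homomorphisms from the pro-$2$ group $\units{\Z_2}$ to the finite $\ell$-group $\zmod{\ell^n}$, so $H^1(\Z[1/2],\Q_\ell/\Z_\ell)=0$; and for $\ell=2$ one has $H^1(\Z[1/2],\zmod{2^n})\cong\hom(\zmod2\oplus\Z_2,\zmod{2^n})\cong\zmod2\oplus\zmod{2^n}$, whence $H^1(\Z[1/2],\Q_2/\Z_2)\cong\dirlim_n(\zmod2\oplus\zmod{2^n})\cong\zmod2\oplus\Q_2/\Z_2$. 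Summing the two cases over $\ell$ gives the claimed isomorphism.

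I do not anticipate any serious obstacle; the only step meriting a sentence of justification is the identification of the maximal abelian extension of $\Q$ unramified outside~$2$ with $\Q(\zeta_{2^\infty})$ --- in particular the observation that ramification at the archimedean place places no constraint, since $\Spec\Z[1/2]$ has no archimedean point (this is what lets $\Q(i)\subset\Q(\zeta_8)$ occur and accounts for the $\zmod2$ summand). But this is precisely the class-field-theoretic input already used, for the base $\Z[1/2p]$, in Lemma~\ref{lem:H1Qp/Zp}.
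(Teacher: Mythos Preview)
Your argument is correct and essentially the same as the paper's: both pass to $\ctsHom(\etpi_1(\Z[1/2])^{\ab},-)$, identify $\etpi_1(\Z[1/2])^{\ab}\cong\units{\Z_2}\cong\zmod2\oplus\Z_2$ via class field theory, and then read off the answer. The only cosmetic difference is that the paper writes $\Q/\Z=\dirlim_n\zmod n$ and separates into $n$ odd versus $n=2^k$, whereas you first split $\Q/\Z=\bigoplus_\ell\Q_\ell/\Z_\ell$ and then take the colimit over prime powers; these amount to the same computation.
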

\begin{proof}
    We follow a similar strategy to the one employed in Lemma~\ref{lem:H1Qp/Zp}.
    Because cohomology commutes with colimits, we have
    \[H^1\p{\Z\left[\frac{1}{2}\right],\Q/\Z}=\dirlim_nH^1\p{\Z\left[\frac{1}{2}\right],\zmod n}.\]
    We will show that $H^1(\Z[\frac{1}{2}],\zmod n)=0$ if $n$ is odd, while $H^1(\Z[\frac{1}{2}],\zmod{2^k})\cong\zmod2\oplus\zmod{2^k}$ if $k\ge1$. After showing this, taking the above direct limit will give the claim.

    For $n$ any positive integer, \cite[Example 11.3]{milneLEC} gives the first isomorphism below:
    \[H^1\p{\Z\left[\frac{1}{2}\right],\zmod n}\simeq H^1_{\t{Group}}\p{\etpi_1\p{\Z\left[\frac{1}{2}\right]},\zmod n}=\ctsHom\p{\etpi_1\p{\Z\left[\frac{1}{2}\right]},\zmod n}.\]
    At the same time, $\etpi_1(\Z[\frac{1}{2}])=:G$ is the Galois group of the maximal extension $K/\Q$ unramified away from~$2$. By class field theory, $G^{\t{ab}}\simeq\units\Z_2\cong\zmod2\oplus\Z_2$. Thus,
    \[\ctsHom\p{\etpi_1\p{\Z\left[\frac{1}{2p}\right]},\zmod n}=\ctsHom(\zmod2\oplus\Z_2,\zmod n).\]
    Hence, if $n$ is odd, then $H^1(\Z[\frac{1}{2}],\zmod n)\simeq\ctsHom(\zmod2\oplus\Z_2,\zmod n)=0$. On the other hand, if $n=2^k$ for some $k\ge1$, then $H^1(\Z[\frac{1}{2}],\zmod{2^k})\simeq\ctsHom(\zmod2\oplus\Z_2,\zmod{2^k})\simeq\zmod2\oplus\zmod{2^k}$. The claim follows.
\end{proof}

\begin{corollary}\label{cor:2-tors-over-Z[1/2]}
  We have
  \[\leftindex_2{\Br}(\eey_0(2))\cong\Q_2/\Z_2\oplus(\zmod2)^{\oplus4}\oplus\zmod4.\]
\end{corollary}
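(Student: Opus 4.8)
The plan is to specialize Proposition~\ref{prop: 2-torsion of Y02} to the ring $\Z_P = \Z[1/2]$, i.e.\ take $P = \{2\}$, and then evaluate each summand appearing in its statement; since $\eey_0(2)$ is by definition the stack $\eey_0(2)_{\Z[1/2]}$, this directly computes ${}_2\Br(\eey_0(2))$.

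First I would assemble the two external inputs. By Lemma~\ref{lem:H1Q2/Z2} we have $H^1(\Z[1/2],\Q/\Z) \cong \zmod2 \oplus \Q_2/\Z_2$; both summands are already $2$-primary, so passing to $2$-primary torsion changes nothing and ${}_2 H^1(\Z[1/2],\Q/\Z) \cong \zmod2 \oplus \Q_2/\Z_2$. Also, $\Br(\Z[1/2]) \cong \zmod2$ by \cite[Example~2.19(2)]{AM}, hence ${}_2\Br(\Z[1/2]) \cong \zmod2$.

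Next I would evaluate the two indexed direct sums in Proposition~\ref{prop: 2-torsion of Y02} for $P = \{2\}$. In the first sum, $p$ ranges over $P \cup \{-1\} = \{2,-1\}$ subject to $p \equiv 3 \pmod 4$; only $p = -1$ qualifies, contributing a single $\zmod2$. In the second sum, $p$ ranges over $P = \{2\}$ subject to $p \not\equiv 3 \pmod 4$; the prime $2$ qualifies, contributing a single $\zmod4$. Feeding all of this into Proposition~\ref{prop: 2-torsion of Y02} gives
\[
{}_2\Br(\eey_0(2)) \cong (\zmod2 \oplus \Q_2/\Z_2) \oplus \zmod2 \oplus \zmod2 \oplus \zmod2 \oplus \zmod4,
\]
where the parenthesized term is ${}_2H^1(\Z[1/2],\Q/\Z)$, the second $\zmod2$ is the isolated $\Z/2$ in the proposition, the third is ${}_2\Br(\Z[1/2])$, the fourth is the $p=-1$ contribution, and $\zmod4$ is the $p=2$ contribution. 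Collecting the four copies of $\zmod2$ yields $\Q_2/\Z_2 \oplus (\zmod2)^{\oplus4} \oplus \zmod4$, as claimed.

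There is no real obstacle here: this corollary is pure bookkeeping on top of Proposition~\ref{prop: 2-torsion of Y02} and Lemma~\ref{lem:H1Q2/Z2}. The only points requiring a moment's attention are the indexing conventions of the two direct sums --- that $-1$ is a permitted index in the first one, and that $2 \not\equiv 3 \pmod 4$ so that $2$ feeds the second sum (producing a $\zmod4$) rather than the first --- together with the observation that $\zmod2$ and $\Q_2/\Z_2$ are already $2$-primary, so that ${}_2H^1(\Z[1/2],\Q/\Z) = H^1(\Z[1/2],\Q/\Z)$.
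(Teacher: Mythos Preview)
Your proposal is correct and follows essentially the same approach as the paper's own proof, which likewise applies Proposition~\ref{prop: 2-torsion of Y02} with $P=\{2\}$, invokes Lemma~\ref{lem:H1Q2/Z2} for ${}_2H^1(\Z[1/2],\Q/\Z)$, and cites \cite[Example~2.19(2)]{AM} for $\Br(\Z[1/2])\cong\zmod2$. Your version simply spells out the bookkeeping for the two indexed direct sums in more detail than the paper does.
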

\begin{proof}
    Apply Proposition~\ref{prop: 2-torsion of Y02} to $P=\{2\}$, using Lemma~\ref{lem:H1Q2/Z2} to compute $\leftindex_2{H^1}(\Z[\frac{1}{2}],\Q/\Z)$ and \cite[Example 2.19(2)]{AM} to compute $\Br(\Z[\frac{1}{2}])\cong\zmod2$.
\end{proof}

\medskip

\section{The Brauer group of \texorpdfstring{$\boldsymbol{\eey_0(2)}$}{Y\_0(2)} over an algebraically closed field}\label{sect:coarse-space}

Following \cite{Shin}, we will compute the Brauer group of $\eey_0(2)$ over an algebraically closed field $k$ whose characteristic is not 2.  

\subsection{Preliminaries}
Recall (Proposition~\ref{prop:y0(2)-cms}) that the coarse space of $\eey_0(2)$ is $Y_0(2)=\A^1\sm\{0\}=\spec(\Z[\frac{1}{2}][s,\inv s])$. Our first task in the current section will be to show that, away from the point $s=-1/4$, $\eey_0(2)$ is a trivial $C_2$-gerbe over its coarse space.

\begin{lemma}\label{lem:taut-family-y0(2)}
    Let $\pi\colon\eey_0(2)\to Y_0(2)$ denote the coarse space map of Proposition~\ref{prop:y0(2)-cms}. Consider the family of elliptic curves
    \begin{equation}\label{eqn:tautological s family}
        E:y^2=x\p{x^2-(8s+2)x+4s(1+4s)}
    \end{equation}
    over $Y_0(2)\sm\{-1/4\}$, equipped with $2$-torsion point $P=(0,0)$. The pair $(E,P)$ defines a section $Y_0(2)\sm\{-1/4\}\to\eey_0(2)$ of\, $\pi$.
\end{lemma}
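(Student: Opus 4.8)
The plan is to verify three things in turn: that the Weierstrass equation~\eqref{eqn:tautological s family} cuts out an elliptic scheme over $U:=Y_0(2)\sm\{-1/4\}$, that $P=(0,0)$ is a section of exact order $2$ in every geometric fibre, and that the resulting morphism $\phi\colon U\to\eey_0(2)$ satisfies $\pi\circ\phi=\iota$, where $\iota\colon U\hookrightarrow Y_0(2)$ is the open immersion. Observe first that $U=\spec\Z[1/2][s,s^{-1},(4s+1)^{-1}]$, since $s=-1/4$ is the vanishing locus of $4s+1$.

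For the first two points I would compute the discriminant. Writing the cubic as $x(x-e_2)(x-e_3)$, so that $e_2+e_3=8s+2$ and $e_2e_3=4s(4s+1)$, one gets $(e_2-e_3)^2=(e_2+e_3)^2-4e_2e_3=(8s+2)^2-16s(4s+1)=4(4s+1)$, whence
\[
\Delta=16\,(0-e_2)^2(0-e_3)^2(e_2-e_3)^2=16\,(e_2e_3)^2(e_2-e_3)^2=2^{10}\,s^2(4s+1)^3.
\]
Since $2$, $s$ and $4s+1$ are units on $U$, $\Delta$ is a unit, so~\eqref{eqn:tautological s family} defines an elliptic curve over $U$ (and this is exactly why $s=-1/4$ must be excluded). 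The point $P=(0,0)$ visibly lies on $E$ and has $y=0$, hence is $2$-torsion, and it is nonzero in each geometric fibre because there $0,e_2,e_3$ are three distinct roots (again as $\Delta$ is a unit); so $P$ has exact order $2$ everywhere. Thus $(E,P)$ defines $\phi\colon U\to\eey_0(2)$.

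For the last point, since $Y_0(2)$ is a scheme the equality $\pi\circ\phi=\iota$ may be checked after an étale surjection $q\colon V\to U$. I would take $V$ so that $E_V$ acquires a full level $2$ structure $\psi$ with $\psi(1,0)=P$ (such covers exist as in the proof of Proposition~\ref{prop:C2-Gal}); over such $V$ the quadratic factor of the cubic splits, say $\psi(0,1)=(e_2,0)$ and $\psi(1,1)=(e_3,0)$, with $e_2,e_3$ units on $V$. The map $(E_V,\psi)\colon V\to\eey(2)$ lifts $\phi\circ q$ along $\eey(2)\to\eey_0(2)$, so by the commutative square of Lemma~\ref{lem:cms-construct}, $\pi\circ\phi\circ q$ equals the composite $V\xrightarrow{(E_V,\psi)}\eey(2)\xrightarrow{c}X\xrightarrow{t\mapsto t/(t-1)^2}Y_0(2)$, which sends $(E_V,\psi)$ first to $t=e_3/e_2$ and then to $e_2e_3/(e_2-e_3)^2$. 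This last expression is symmetric in $e_2,e_3$ (so independent of the labelling in $\psi$), and substituting $e_2e_3=4s(4s+1)$ and $(e_2-e_3)^2=4(4s+1)$ gives it value $s$. Hence $\pi\circ\phi\circ q=\iota\circ q$, and cancelling the étale cover $q$ yields $\pi\circ\phi=\iota$, so $\phi$ is a section of $\pi$.

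The argument is essentially a finite computation, so I do not expect a genuine obstacle. The one step needing care is the bookkeeping in the final paragraph: passing to an étale cover that trivializes the level $2$ structure in order to bring the \emph{abstract} coarse-space map $\pi$ into contact with the explicit formula of Lemma~\ref{lem:cms-construct}, and checking that the rational function $e_2e_3/(e_2-e_3)^2$ of the two roots (which need not individually be $U$-rational) is symmetric and hence descends to a well-defined function of $s$ on $U$.
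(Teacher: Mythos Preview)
Your proposal is correct and follows essentially the same approach as the paper: compute the discriminant to verify $E$ is an elliptic scheme over $U$, check that $P=(0,0)$ is a point of exact order $2$, and then verify the $s$-invariant by passing to a cover on which the quadratic factor splits and applying the explicit formula $t\mapsto t/(t-1)^2$ from Lemma~\ref{lem:cms-construct}. The only cosmetic difference is that the paper names the cover explicitly as $Y_0(2)[\sqrt{1+4s}]$ and writes out $\alpha=1+4s-\sqrt{1+4s}$ and $t=(1+2s-\sqrt{1+4s})/(2s)$ before computing $t/(t-1)^2=s$, whereas you work with the roots $e_2,e_3$ abstractly and evaluate the symmetric expression $e_2e_3/(e_2-e_3)^2$ directly from $e_2e_3=4s(4s+1)$ and $(e_2-e_3)^2=4(4s+1)$; the two computations are the same.
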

\begin{proof}
    One can compute that the family $E\to Y_0(2)\sm\{-1/4\}$ has discriminant $1024s^2(1+4s)^3$, which is an invertible function on $Y_0(2)\sm\{-1/4\}$. Thus, $E$ really defines a family of elliptic curves, and it is visibly clear that $P=(0,0)$ is a nonzero 2-torsion section of this family, so $(E,P)$ defines a map $f\colon Y_0(2)\sm\{-1/4\}\to\eey_0(2)$. It remains to show that this is a section of $\pi$, \textit{i.e.}, that its $s$-invariant is the inclusion $Y_0(2)\sm\{-1/4\}\into Y_0(2)$. In order to compute its $s$-invariant, we first remark that over $Y_0(2)[\sqrt{1+4s}]:=\spec(\Z[\frac{1}{2}][s,\inv s,\sqrt{1+4s}])$, the family $E$ becomes
    \[y^2=x(x-\alpha)(x-\bar\alpha),\quad\twhere\alpha=1+4s-\sqrt{1+4s},\]
    which, by Proposition~\ref{prop:Y(2)-cms}, gives a point of $Y(2)$ with $t$-invariant
    \[t\coloneqq\frac{\alpha-0}{\bar\alpha-0}=\frac{1+2s-\sqrt{1+4s}}{2s}.\]
    Thus, $(E,P)$ has $s$-invariant
    \[\frac t{(t-1)^2}=s,\]
    as desired.
\end{proof}

\begin{notation}
    Let $Y_0(2)^\circ:=Y_0(2)\sm\{-1/4\}$ and $\eey_0(2)^\circ:=\eey_0(2)\by_{Y_0(2)}Y_0(2)^\circ$. Let $\pi^\circ\colon \eey_0(2)^\circ\to Y_0(2)^\circ$ be the restriction of the coarse space map.
\end{notation}

\begin{lemma}\label{lem:same s-inv => locally iso}
    Let $S$ be an arbitrary $\Z[\frac{1}{2}]$-scheme. Suppose that $(E,P),(E',P')\in\eey_0(2)^\circ(S)$ are two families of elliptic curves, each equipped with a point of order $2$, whose $s$-invariants agree, as morphisms $S\to Y_0(2)^\circ$. Then, there exists an \'etale cover $S'\to S$ supporting an isomorphism $(E_{S'},P)\cong(E'_{S'},P')$.
\end{lemma}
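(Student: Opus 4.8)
The plan is to trivialize the situation étale-locally by presenting both $(E,P)$ and $(E',P')$, after a suitable étale base change, as one and the same Legendre curve. Write $f\colon S\to Y_0(2)^\circ$ for the common $s$-invariant of $(E,P)$ and $(E',P')$. As in the proof of Proposition~\ref{prop:C2-Gal} (which uses \cite[Proof of Lemma 4.3]{AM}), there is an étale cover $S_1\to S$ over which $E$ acquires a full level $2$ structure; after modifying this structure by an automorphism of $(\Z/2\Z)^2$ we may assume it carries $(1,0)$ to $P$. Doing the same for $E'$ and passing to a common étale refinement $S'\to S$, I would write $E_{S'}$ and $E'_{S'}$ in the form $y^2=(x-e_1)(x-e_2)(x-e_3)$ with $(e_1,0)$ the marked point in each case, giving $t$-invariants $t,t'\colon S'\to\A^1_{\Z[1/2]}\sm\{0,1\}$ in the sense of Proposition~\ref{prop:Y(2)-cms}. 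Since $(E,P)$ and $(E',P')$ have $s$-invariant $f$, Lemma~\ref{lem:cms-construct} shows that $t$ and $t'$ both satisfy $T/(T-1)^2=f$ over $S'$; as $f$ is a unit, this is the equation $fT^2-(2f+1)T+f=0$, whose two roots have product $1$ and so are interchanged by $T\mapsto 1/T$.

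Next I would compare $t$ and $t'$. From the identity $(t-t')\bigl(f(t+t')-(2f+1)\bigr)=0$ together with Vieta's formula, the closed subschemes $V(t-t')$ and $V(tt'-1)$ of $S'$ cover $S'$, and they are disjoint because their intersection lies in $V(t^2-1)$, which is empty here: $t=1$ is excluded since $t$ lands in $\A^1\sm\{0,1\}$, and $t=-1$ would force $f=-1/4\notin Y_0(2)^\circ$. This is the one place where the hypothesis $s\neq-1/4$ is used. Hence $S'$ decomposes into two open-closed pieces; on the piece where $t=t'$ I do nothing, and on the piece where $tt'=1$ I replace the level $2$ structure on $E'$ by the one obtained from it via $\tau=(23)\in S_3$, which fixes $(e_1,0)$ and transposes $(e_2,0)$ and $(e_3,0)$, thereby replacing $t'$ by $1/t'=t$ (Remark~\ref{rem:cms-C2-act}). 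After this adjustment we may assume $t=t'$ on all of $S'$.

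Finally, passing to the further étale (degree $2$) cover $S''\to S'$ obtained by adjoining a square root of $e_2-e_1$ for each of $E_{S'}$ and $E'_{S'}$ (these differences are units, since the discriminants are), the explicit change of coordinates in the Remark following Proposition~\ref{prop:Y(2)-cms} identifies $E_{S''}$ with the Legendre curve $y^2=x(x-1)(x-t)$ compatibly with the chosen level $2$ structure, and identically for $E'_{S''}$. Composing these two identifications yields an isomorphism $E_{S''}\cong E'_{S''}$ of elliptic curves which, by tracking where the basis vector $(1,0)$ of the two level $2$ structures goes, carries $P$ to $P'$; this is the required isomorphism $(E_{S''},P)\cong(E'_{S''},P')$. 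I expect the only real subtlety to be the bookkeeping around the two auxiliary étale covers — one to resolve the ``$t$ versus $1/t$'' ambiguity, one to extract the square root $\sqrt{e_2-e_1}$ — and in particular verifying that the former genuinely yields a clopen decomposition of $S'$ rather than merely a closed cover, which, as noted, is exactly where $s\neq-1/4$ enters.
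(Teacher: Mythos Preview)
Your proof is correct, but it takes a different route than the paper's. The paper reduces immediately to comparing an arbitrary $(E,P)$ against the specific reference family of Lemma~\ref{lem:taut-family-y0(2)}: it puts $(E,P)$ in Legendre form $y^2=x(x-1)(x-t)$ with $P=(0,0)$ and then writes down an explicit scaling isomorphism $\phi(x,y)=(u^2x,u^3y)$ with $u=\sqrt{2t+2}/(t-1)$, which is defined after the single \'etale extension $R[\sqrt{2t+2}]$. Your approach is instead symmetric in the two families: you normalize each of $(E,P)$ and $(E',P')$ separately to Legendre form and then argue that the resulting Legendre parameters agree. The conceptual content is the same, but the bookkeeping differs: where the paper hides the ``$t$ versus $1/t$'' ambiguity inside the scaling $u$ (the two choices of $\sqrt{2t+2}$ correspond to the two possible Legendre normalizations), you make it explicit via the clopen decomposition $S'=V(t-t')\sqcup V(tt'-1)$ and a swap by $\tau$. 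Both arguments invoke $s\neq-1/4$ at exactly the same place, namely to ensure $t+1$ is a unit --- in the paper this makes $R[\sqrt{2t+2}]$ \'etale, in yours it forces $V(t-t')\cap V(tt'-1)=\emptyset$. Your argument has the mild advantage of not needing the reference family at all, at the cost of an extra \'etale extension (you extract two square roots rather than one).
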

\begin{proof}
    Let $s\in H^0(S,\msO_S)$ be the coarse space map $S\to Y_0(2)^\circ\subset\A^1$ induced by $(E,P)$. It suffices to consider a single $(E,P)\in\eey_0(2)^\circ(S)$ and show that, after passage to an \'etale cover, it becomes isomorphic to the family
    \[E':y^2=x\p{x^2-(8s+2)x+4s(1+4s)}\quad\twith P'=(0,0)\]
    of Lemma~\ref{lem:taut-family-y0(2)}. For this, we may assume that $S=\spec(R)$ is affine. By \cite[Proof of Proposition 4.5]{AM}, there exists an (affine) \'etale cover $\spec(R')\to\spec(R)$ such that $E_{R'}$ can be put in Legendre form. Once it is in this form, we can perform a linear change of variables to move $P$ to the origin. Thus, we may assume without loss of generality that, over $S=\spec(R)$, we have 
    \[E:y^2=x(x-1)(x-t)\tand P=(0,0).\]
    We seek an isomorphism $\phi\colon E\to E'$ with $\phi(0,0)=(0,0)$. We first remark that, by assumption, the quantities
    \[
        t-1, \quad s=\frac t{(t-1)^2}, \quad 4s+1
    \]
    are all units in $R=H^0(S,\msO_S)$. Thus, we see that $(t+1)^2=(4s+1)(t-1)^2$ is also a unit. Since $R$ is a $\Z[\frac{1}{2}]$-algebra, this means that $2(t+1)\in\units R$ as well. Hence, $R':=R[\sqrt{2t+2}]$ is an \'etale $R$-algebra. Now, over $R'$, we can define our desired isomorphism $\phi\colon E_{R'}\iso E'_{R'}$ via 
    \[\phi(x,y):=\p{u^2x,u^3y},\quad\twhere u:=\frac{\sqrt{2t+2}}{t-1}\in\punits{R'}.\qedhere\]
\end{proof}

\begin{lemma}\label{lem:Aut(E,P)=C_2}
    Let $S$ be an arbitrary $\Z[\frac{1}{2}]$-scheme, and fix some $(E,P)\in\eey_0(2)^\circ(S)$. Then, $\ul\Aut(E,P)\simeq\ul{C_2}$ as sheaves on $S_{\mathrm{\acute{e}t}}$.
\end{lemma}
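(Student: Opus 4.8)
The plan is to exhibit an explicit central inclusion $\ul{C_2}\into\ul\Aut(E,P)$ and then check that it is an isomorphism after an \'etale base change. The inclusion itself comes from $[-1]_E$: since $2\in\Gamma(S,\msO_S)^\times$, the automorphism $[-1]_E$ is nontrivial over every nonempty base, it fixes every point of $E[2]$ and in particular $P$, and it is compatible with arbitrary base change, so sending the nontrivial section of $\ul{C_2}$ to $[-1]_E$ defines a monomorphism of \'etale sheaves $\ul{C_2}\into\ul\Aut(E,P)$ on $S$. (Here $\ul\Aut(E,P)$ denotes the \'etale sheaf $T\mapsto\Aut(E_T,P_T)$, which is in fact representable by a closed subgroup scheme of $\ul\Aut(E/S)$, though we will not need this.) Whether this map is an isomorphism may be checked \'etale-locally on $S$, so it suffices to produce a single \'etale cover $S'\to S$ over which it is an isomorphism.

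To do that, I would first reduce to an explicit Weierstrass model. Writing $\bar s\colon S\to Y_0(2)^\circ$ for the coarse-space morphism induced by $(E,P)$ --- it lands in $Y_0(2)^\circ$ precisely because $(E,P)\in\eey_0(2)^\circ(S)$ --- the pair $(E,P)$ and the pullback along $\bar s$ of the tautological family of Lemma~\ref{lem:taut-family-y0(2)} are two objects of $\eey_0(2)^\circ(S)$ with the same $s$-invariant, namely $\bar s$. Hence by Lemma~\ref{lem:same s-inv => locally iso} there is an \'etale cover $S'\to S$ over which $(E,P)$ becomes
\[E\colon y^2=x\bigl(x^2-(8s+2)x+4s(1+4s)\bigr),\qquad P=(0,0),\]
for some $s\in\Gamma(S',\msO_{S'})$ with $4s+1\in\Gamma(S',\msO_{S'})^\times$, the last invertibility being exactly the condition cutting out $\eey_0(2)^\circ\subset\eey_0(2)$.

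On this model the computation is routine. For any $S'$-algebra $R$, every automorphism of $E_R$ fixing the origin and $P=(0,0)$ has --- after completing the square (legitimate as $2\in R^\times$) and using that $(0,0)$ is fixed --- the form $(x,y)\mapsto(u^2x,u^3y)$ for a unique $u\in R^\times$, and such a map carries $E_R$ to itself iff $a_2u^{-2}=a_2$ and $a_4u^{-4}=a_4$, where $a_2=-2(4s+1)$ and $a_4=4s(1+4s)$. Since $a_2\in R^\times$, the first equation forces $u^2=1$; thus $\ul\Aut(E,P)|_{S'}=\mu_{2,S'}$, and (as $2$ is invertible) the inclusion $\ul{C_2}\into\ul\Aut(E,P)$ restricts over $S'$ to the standard identification $\ul{C_2}_{S'}\iso\mu_{2,S'}$. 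Therefore the inclusion is an isomorphism over $S'$, hence over $S$. I do not anticipate a genuine obstacle; the only point demanding care is the reduction --- verifying that membership in $\eey_0(2)^\circ$ becomes, on the chosen model, precisely the invertibility of $4s+1$ (equivalently of the coefficient $a_2$), which is what rules out the extra order-$4$ automorphism appearing over the excluded point $s=-1/4$ (the curve with $j=1728$, where $\Aut(E,P)\cong C_4$). The rest is the same Weierstrass-automorphism bookkeeping already used in the proof of Lemma~\ref{lem:y0(2)-tame} and in \cite{AM}.
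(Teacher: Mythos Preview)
Your argument is correct and in fact cleaner than the paper's. The paper proceeds differently: it first reduces to an integral base (using that $\ul\Aut(E)$ is unramified over $S$), then argues that any $\eta\in\Aut(E/S,P)$ has $\eta^2$ acting trivially on $E[2]$, so $\eta^2=\pm1$ by \cite[Corollary 2.7.2]{katz-mazur}; hence either $\eta=\pm1$ or $E$ has an automorphism of order $4$, forcing $j\equiv1728$. Solving $256(s+1)^3/s^2=1728$ over a domain gives $s\in\{-1/4,2\}$, and since $s\neq-1/4$ one is left with the single case $s=2$, which the paper then handles by an explicit check on $y^2=x^3-36x$ with $P=(-6,0)$.

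Your route bypasses all of this case analysis: by invoking Lemma~\ref{lem:same s-inv => locally iso} at the outset (rather than only for the residual $s=2$ case), you reduce uniformly to the tautological family and then observe directly that its $a_2$-coefficient $-2(4s+1)$ is a unit on $\eey_0(2)^\circ$, killing any $u$ with $u^2\neq1$. This is shorter, avoids the reduction to integral $S$, and makes transparent exactly why removing $s=-1/4$ is the right thing to do (it is precisely where $a_2$ vanishes). The paper's approach, on the other hand, makes the connection to the classical fact that extra automorphisms occur only at $j=1728$ more visible. One small quibble: the reference to Lemma~\ref{lem:y0(2)-tame} for ``Weierstrass-automorphism bookkeeping'' is a bit off, as that lemma does not actually carry out such a computation; you might instead cite a standard source such as \cite[Section III.1]{silverman} for the shape of automorphisms of curves in the form $y^2=x^3+a_2x^2+a_4x$.
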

\begin{proof}
  Because $E$ is of finite presentation over $S$, we may reduce to the case that $S$ is noetherian. We may furthermore assume that $S$ is connected and then prove that $\Aut(E/S,P)=\{\pm1\}$. In fact, \cite[Section~13.1.11]{olsson} shows that the automorphism scheme $\ul\Aut(E)$ (so also the subscheme $\ul\Aut(E,P)\into\ul\Aut(E)$) is (formally) unramified over $S$. Because $S_{\mathrm{red}}\into S$ is a finite-order thickening (since $S$ is noetherian), we deduce from this that the natural map $\Aut(E/S,P)\to\Aut(E/S_{\t{red}},P)$ is injective, so we may assume that~$S$ is reduced. Because $S$ is assumed connected, by passage to its irreducible components, we may assume that~$S$ is integral. That is, it suffices to prove that $\Aut(E/S,P)=\{\pm1\}$ whenever $S$ is integral.
    
    Because $P$ is 2-torsion, it is clear that $\{\pm1\}\subset\Aut(E/S,P)$. If $\Aut(E/S)=\{\pm1\}$, we are done, so suppose this is not the case. Let $\eta\in\Aut(E/S)$ be an automorphism fixing $P$. Then, at worse, $\eta$ swaps the other two 2-torsion sections of $E$, so $\eta^2$ acts trivially on $E[2]$. By \cite[Corollary 2.7.2]{katz-mazur}, this forces $\eta^2=\pm1$. Thus, either $\eta=\pm1$, in which case we win, or $E/S$ supports an automorphism of order $4$. Suppose the latter. Then, $E$ must have constant $j$-invariant $1728\in\Gamma(S,\msO_S)$. By Lemma~\ref{lem:j-inv from s}, the $s$-invariant of $(E,P)$ then satisfies $256(s+1)^3/s^2=1728$. Since $\Gamma(S,\msO_S)$ is a domain by assumption, one can check that this forces $s=-1/4$ or $s=2$; since $(E,P)$ has $s$-invariant landing in $Y_0(2)^\circ=Y_0(2)\sm\{-1/4\}$ by assumption, we conclude that $(E,P)$ must have constant $s$-invariant $2$ and that $3\in\units{\Gamma(S,\msO_S)}$. Indeed, since $2=s\neq -1/4$, we have $8\neq -1$, so $9\neq 0$ at every point on $S$, and thus 3 is a unit on $S$. 
    Now, Lemma~\ref{lem:same s-inv => locally iso} shows that families with the same $s$-invariant are always locally isomorphic. Combined with Lemma~\ref{lem:taut-family-y0(2)}, this shows that (after \'etale base change), we may assume that $E/S$ is of the form
    \[y^2=x(x^2-18x+72)=x(x-6)(x-12)\]
    with $P=(0,0)$. Making the linear change of variables $x\squigto x+6$ shows that $E$ is isomorphic to the curve
    \[y^2=(x+6)x(x-6)=x^3-36x\]
    with $P=(-6,0)$. Because $3$ is invertible on $S$, one can check that the automorphism group of this curve is $\mu_4(S)$ with any $\zeta\in\mu_4(S)$ acting via $[\zeta](x,y)=(\zeta^2x,\zeta y)$. From this description, it is finally clear that $\Aut(E/S,P)=\{\pm1\}$.
\end{proof}

\begin{lemma}[\textit{cf.} {\cite[Lemma 3.2]{Shin}}]\label{lem:trivial gerbe} 
    The coarse space map $\pi^\circ\colon \eey_0(2)^\circ\to Y_0(2)^\circ$ is a trivial $C_2$-gerbe. Hence, $\eey_0(2)^\circ\simeq BC_{2,Y_0(2)^\circ}$.
\end{lemma}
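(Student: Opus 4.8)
The plan is to verify the two defining properties of a gerbe for $\pi^\circ$ directly, and then to use the global section produced in Lemma~\ref{lem:taut-family-y0(2)} both to identify the band and to deduce triviality. Recall that a morphism of stacks $\mathcal X\to T_0$ is a gerbe if it is locally non-empty (there is a cover $\{U_i\to T_0\}$ with $\mathcal X(U_i)\neq\emptyset$) and locally connected (for every $T_0$-scheme $T$ and any two objects $x,y\in\mathcal X(T)$, there is a cover $\{V_j\to T\}$ with $x|_{V_j}\cong y|_{V_j}$), and that a $C_2$-gerbe is a gerbe whose band is the constant group $\ul{C_2}$.

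First, for local non-emptiness I would simply invoke Lemma~\ref{lem:taut-family-y0(2)}, which exhibits an honest section $Y_0(2)^\circ\to\eey_0(2)^\circ$ of $\pi^\circ$; in particular $\eey_0(2)^\circ(Y_0(2)^\circ)\neq\emptyset$, so one may take the trivial cover. For local connectedness, the point is that, by definition, $\eey_0(2)^\circ=\eey_0(2)\times_{Y_0(2)}Y_0(2)^\circ$, so an object of $\eey_0(2)^\circ$ over a $Y_0(2)^\circ$-scheme $T$ is a pair $(E,P)$ over $T$ whose $s$-invariant morphism $T\to Y_0(2)$ is the given structure map (in particular it factors through $Y_0(2)^\circ$). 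Hence any two objects $(E,P),(E',P')\in\eey_0(2)^\circ(T)$ have the \emph{same} $s$-invariant, and Lemma~\ref{lem:same s-inv => locally iso} produces an \'etale cover $T'\to T$ over which $(E_{T'},P)\cong(E'_{T'},P')$. This shows $\pi^\circ$ is a gerbe. To see that its band is $\ul{C_2}$, I would apply Lemma~\ref{lem:Aut(E,P)=C_2}: for any object $(E,P)\in\eey_0(2)^\circ(T)$ one has $\ul\Aut(E,P)\simeq\ul{C_2}$ on $T_{\t{\'et}}$, with the nontrivial automorphism always equal to $[-1]$; this identification is therefore canonical and compatible with isomorphisms of objects, so the band is the constant group $\ul{C_2}$ and $\pi^\circ$ is a $C_2$-gerbe.

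Finally, for triviality I would use the standard fact that a gerbe admitting a global object $x$ is canonically equivalent over its base to the classifying stack $B\ul\Aut(x)$. Taking $x$ to be the section of Lemma~\ref{lem:taut-family-y0(2)} and using that $\ul\Aut(x)\simeq\ul{C_2}$ by Lemma~\ref{lem:Aut(E,P)=C_2}, this yields $\eey_0(2)^\circ\simeq BC_{2,Y_0(2)^\circ}$, and in particular the $C_2$-gerbe is trivial. There is no serious obstacle here: the substantive content lives entirely in Lemmas~\ref{lem:taut-family-y0(2)}, \ref{lem:same s-inv => locally iso}, and \ref{lem:Aut(E,P)=C_2}. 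The only point that wants a little care is confirming that the identification of the band with $\ul{C_2}$ is genuinely canonical, so that the local equivalences $\eey_0(2)^\circ|_U\simeq BC_{2,U}$ glue without obstruction; this is precisely guaranteed by the explicit description of the automorphism group as $\{\pm1\}$ with nontrivial element $[-1]$.
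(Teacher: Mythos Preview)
Your proposal is correct and follows essentially the same approach as the paper: both arguments use Lemma~\ref{lem:taut-family-y0(2)} for the section (hence local non-emptiness and triviality), Lemma~\ref{lem:same s-inv => locally iso} for local connectedness, and Lemma~\ref{lem:Aut(E,P)=C_2} to identify the band with $\ul{C_2}$. Your version spells out the gerbe axioms more explicitly, but the substance is identical.
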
 
\begin{proof}
    By Lemma~\ref{lem:taut-family-y0(2)}, the restriction $\pi^\circ\colon \eey_0(2)^\circ\to Y_0(2)^\circ$ of the coarse space map has a section. Thus, it suffices to show that $\eey_0(2)^\circ$ is a $C_2$-gerbe over $Y_0(2)^\circ$, \textit{i.e.}, that families with the same $s$-invariant are (\'etale-)locally isomorphic and each have automorphism sheaf $\ul{C_2}$. The first of these is Lemma~\ref{lem:same s-inv => locally iso}, while the second is Lemma~\ref{lem:Aut(E,P)=C_2}.
\end{proof}

Our goal is now to compute the Brauer group of $\eey_0(2)$ over an algebraically closed fields by exploiting this gerbe structure along with knowledge of the coarse space. 

To get started with our computation, we begin with the following immediate consequence of \cite[Lemma 3.3]{Shin}.

\begin{lemma}\label{lem:brauer is subgroup}
    Let $k$ be an algebraically closed field with $\ch k\neq2$. Then, $\Br'(\eey_0(2)_k)$ is a subgroup of $\zmod2\oplus\zmod2$.
\end{lemma}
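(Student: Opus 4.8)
The plan is to cut down to the open substack $\eey_0(2)^\circ_k\subseteq\eey_0(2)_k$ --- the preimage of $Y_0(2)^\circ=Y_0(2)\sm\{-1/4\}$ under the coarse space map --- and to exploit its explicit description from Lemma \ref{lem:trivial gerbe}. Since $\eey_0(2)$ is smooth over $\Z[1/2]$, the base change $\eey_0(2)_k$ is a regular, integral, noetherian Deligne--Mumford stack, and $\eey_0(2)^\circ_k$ is a dense open substack of it (its complement being the residual gerbe over the point $s=-1/4$). Thus \cite[Lemma 3.3]{Shin} --- the stacky analogue of the classical fact that $\Br'(X)\into\Br'(U)$ for $U$ a nonempty open of a regular integral scheme $X$ --- yields an injection $\Br'(\eey_0(2)_k)\into\Br'(\eey_0(2)^\circ_k)$. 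It therefore suffices to show $\Br'(\eey_0(2)^\circ_k)$ embeds into $\zmod2\oplus\zmod2$; in fact I expect it to be isomorphic to $(\zmod2)^{\oplus2}$.

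To compute $\Br'(\eey_0(2)^\circ_k)$, I would invoke Lemma \ref{lem:trivial gerbe}, which identifies $\eey_0(2)^\circ_k$ with $BC_{2,Y_0(2)^\circ_k}$, where $Y_0(2)^\circ_k=\A^1_k\sm\{0,-1/4\}=\Spec k[s^{\pm1},(4s+1)^{-1}]$. Proposition \ref{Propn: BC_n Brauer} then gives a short exact sequence
\[0\too\Br'(Y_0(2)^\circ_k)\too\Br'(BC_{2,Y_0(2)^\circ_k})\too H^1_{\pl}(Y_0(2)^\circ_k,\mu_2)\too0.\]
Because $k$ is algebraically closed and $Y_0(2)^\circ_k$ is a smooth affine curve over $k$, Tsen's theorem gives $\Br'(Y_0(2)^\circ_k)=0$. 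Since $\charstic k\neq2$, the sheaf $\mu_2$ is finite étale and its fppf and étale $H^1$ agree, so the Kummer sequence yields an exact sequence $0\to\G_m(Y_0(2)^\circ_k)/2\to H^1_{\pl}(Y_0(2)^\circ_k,\mu_2)\to\Pic(Y_0(2)^\circ_k)[2]\to0$. As $k[s^{\pm1},(4s+1)^{-1}]$ is a localization of the PID $k[s]$, its Picard group vanishes and its unit group is $k^\times\cdot s^{\Z}\cdot(4s+1)^{\Z}$; since $k^\times$ is divisible this forces $\G_m(Y_0(2)^\circ_k)/2\cong(\zmod2)^{\oplus2}$. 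Hence $\Br'(\eey_0(2)^\circ_k)\cong(\zmod2)^{\oplus2}$, and combining with the first paragraph proves the lemma.

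The only delicate point is the injectivity asserted in the first paragraph: one must know that restriction to a dense open substack of a regular stack does not kill cohomological Brauer classes. This is precisely \cite[Lemma 3.3]{Shin}, and it is the step that genuinely uses regularity of $\eey_0(2)_k$ (equivalently, smoothness of $\eey_0(2)$ over $\Z[1/2]$). Everything after that is a routine computation of the units and Picard group of a localization of $k[s]$, together with Tsen's theorem and Proposition \ref{Propn: BC_n Brauer}.
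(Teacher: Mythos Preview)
Your argument is correct and follows essentially the same route as the paper: restrict to the open substack $\eey_0(2)^\circ_k$, identify it as $BC_{2,Y_0(2)^\circ_k}$ via Lemma \ref{lem:trivial gerbe}, and compute its Brauer group. The only slip is in the citations: the injectivity $\Br'(\eey_0(2)_k)\into\Br'(\eey_0(2)^\circ_k)$ is \cite[Proposition 2.5(iv)]{AM}, not \cite[Lemma 3.3]{Shin}; the latter is instead the result that packages your second-paragraph computation (Tsen, Kummer, $\Pic=0$) into the single identity $\Br'(BC_{2,Y_0(2)^\circ_k})=\G_m(Y_0(2)^\circ_k)/2$.
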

\begin{proof}
    By \cite[Proposition 2.5(iv)]{AM}, the restriction map $\Br'(\eey_0(2)_k)\to\Br'(\eey_0(2)_k^\circ)$ is an injection. Since $\eey_0(2)_k^\circ\simeq BC_{2,Y_0(2)^\circ}$ by Lemma~\ref{lem:trivial gerbe} and $\Pic(Y_0(2)^\circ_k)=0$, Lemma~3.3 of \cite{Shin} gives the second equality in
    \[\Br'\p{\eey_0(2)_k^\circ}=\Br'\p{BC_{2,Y_0(2)^\circ_k}}=\G_m\p{Y_0(2)^\circ_k}/2=\zmod2\oplus\zmod2.\]

    The last equality holds because $\G_m(Y_0(2)^\circ_k) = (k[s, s^{-1}, (s+1/4)^{-1}])^{\times}=\units k\by s^\Z\by(s+1/4)^\Z.$
\end{proof}

\subsection{The computation} 

\begin{setup}
    Fix an algebraically closed field $k$ with $\Char k\neq2$.
\end{setup}

We compute $\Br(\eey_0(2)_k)$ following the strategy of \cite[Section 4]{Shin}. We first observe that $\Br(\eey_0(2)_k)$ is a $2$-torsion group by Lemma~\ref{lem:brauer is subgroup}. For $n\ge1$, we have $\Br(\eey_0(2)_k)=H^2(\eey_0(2)_k,\G_m)[2^n]$, so the Kummer sequence $0\to\mu_{2^n}\to\G_m\to\G_m\to0$ gives rise to the short exact sequence
\begin{equation}\label{ses:Kummer-2n}
    0\too\Pic\p{\eey_0(2)_k}/2^n\too H^2\p{\eey_0(2)_k,\mu_{2^n}}\too\Br\p{\eey_0(2)_k}\too0.
\end{equation}
\begin{remark}
    For computing $\Br(\eey_0(2)_k)$, although the exact sequence (\ref{ses:Kummer-2n}) shows that it would suffice to compute $H^2(\eey_0(2)_k,\mu_{2^n})$ for $n=1$ alone, we will instead find it easier to take a colimit of the exact sequence (\ref{ses:Kummer-2n}) as $n$ varies, computing the short exact sequence
    \[\begin{tikzcd}
        0\ar[r]&\dirlim_n\Pic\p{\eey_0(2)_k}/2^n\ar[d, equals]\ar[r]&\dirlim_nH^2(\eey_0(2)_k,\mu_{2^n})\ar[d, equals]\ar[r]&\Br\p{\eey_0(2)_k}\ar[d, equals]\ar[r]&0\\
        &\zmod4&\zmod4\oplus\zmod2&\zmod2
    \end{tikzcd}\]
    in the proof of Theorem~\ref{thm:alg-closed-comp}. In fact, Remark~\ref{rem:Pic Y0(2)} and Theorem~\ref{thm:alg-closed-comp} will show that the two colimits above stabilize already at the $n=2$ terms.
\end{remark}

\begin{remark}\label{rem:Pic Y0(2)}
    By \cite[Theorem 1.1]{Niles} (or Theorem~\ref{thm:app-main} if $\Char k=3$), $\Pic(\eey_0(2)_k)=\zmod4$ if $\Char k\neq2$.
\end{remark}

In light of the above, we are interested in understanding $H^2(\eey_0(2)_k,\mu_{2^n})$. To ease notation, set $U:=Y_0(2)_k^\circ$. Following \cite{Shin}, let $Z\into Y_0(2)_k=\A^1_k\sm\{0\}$ denote the complement of $U$, with its induced reduced structure (so $Z\simeq\spec(k)$). With this notation setup, we have the following commutative diagram of Cartesian squares whose vertical morphisms are coarse space maps: 

\[\begin{tikzcd}
	{\yzt^\circ_k} & \yzt_k & {\yzt_Z} \\
	U & Y_0(2)_k & Z\rlap{.}
	\arrow[from=1-1, to=1-2]
	\arrow[from=1-3, to=1-2]
	\arrow["{\pi_Z}", from=1-3, to=2-3]
	\arrow["i", from=2-3, to=2-2]
	\arrow["\pi", from=1-2, to=2-2]
	\arrow["{\pi^\circ}"', from=1-1, to=2-1]
	\arrow["j"', from=2-1, to=2-2]
\end{tikzcd}\]  

As in \cite[Equation~(4.1.4)]{Shin}, the distinguished triangle 
\[
j_!j^*R\pi_*\mu_{2^n}\too R\pi_*\mu_{2^n} \too i_*i^* R\pi_*\mu_{2^n} \overset{+1}\too 
\]
combined with the fact that $j^*R\pi_*\mu_{2^n}\cong R\pi^\circ_*\mu_{2^n}$ gives the following long exact sequence:  
    \begin{align}
        \label{coarse les} 
        \begin{split} 
    0 &\too  H^0\p{\A^1_k\backslash\{0\}, j_!R\pi_*^\circ \mu_{2^n}} \too H^0\p{\yzt_k, \mu_{2^n}}\too H^0\p{Z, i^*R\pi_*\mu_{2^n}} \\ 
    & \too  H^1\p{\A^1_k\backslash\{0\}, j_!R\pi_*^\circ \mu_{2^n}} \too H^1\p{\yzt_k, \mu_{2^n}}\too H^1\p{Z, i^*R\pi_*\mu_{2^n}} \\ 
    & \too H^2\p{\A^1_k\backslash\{0\}, j_!R\pi_*^\circ \mu_{2^n}} \too H^2\p{\yzt_k, \mu_{2^n}}\too H^2\p{Z, i^*R\pi_*\mu_{2^n}} \\ 
    & \too \cdots.
\end{split}
\end{align}

    To compute the terms $H^s(\A^1_k\backslash\{0\}, j_!R\pi_*^\circ \mu_{2^n})$, we will use the following result, in which $j$ is allowed to be slightly more general.
    
\begin{lemma}
\label{lem:mu coh of az}
    Fix an integer $r\ge1$. Let $j\colon U\hookrightarrow \A^1_k\backslash\{0\}$ be the inclusion of the complement of\, $r$ distinct $k$-points, and let $\l$ be a positive integer relatively prime to the characteristic of\, $k$. Then, we have a Galois-equivariant exact sequence
    \[0\too\mu_\l(k)^{r-1}\too H^1\p{\A^1_k\sm\{0\},j_!\mu_\l}\too \zmod\l\too0,\]
    while $H^s(\A^1_k\sm\{0\},j_!\mu_\l)=0$ for $s\neq1$. In particular, $H^1(\A^1_k\sm\{0\},j_!\mu_\l)\cong\mu_\l(k)^{r-1}\oplus\zmod\l$ as abelian groups.
\end{lemma}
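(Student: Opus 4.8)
The plan is to run the open--closed localization sequence on $\A^1_k\setminus\{0\}=\G_{m,k}$ directly. Write $Z=\{p_1,\dots,p_r\}\hookrightarrow\G_{m,k}$ for the reduced closed subscheme being deleted, with closed immersion $i\colon Z\hookrightarrow\G_{m,k}$ and open complement $j\colon U\hookrightarrow\G_{m,k}$. Since $\ell$ is invertible on $k$, the sheaf $\mu_\ell$ on $\G_{m,k}$ is locally constant constructible and annihilated by $\ell$, so the usual short exact sequence of \'etale sheaves
\[0\to j_!\mu_\ell\to\mu_\ell\to i_*\mu_\ell\to 0\]
(with $j_!\mu_\ell$ short for $j_!j^*\mu_\ell$ and $i_*\mu_\ell$ for $i_*i^*\mu_\ell$) is available. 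Taking cohomology and using $H^s(\G_{m,k},i_*\mu_\ell)=H^s(Z,\mu_\ell)$ gives a long exact sequence relating $H^\bullet(\G_{m,k},j_!\mu_\ell)$ to the cohomology of $\G_{m,k}$ and of $Z$.

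Next I would feed in the cohomology of the two right-hand terms. Since $k$ is algebraically closed, $Z\cong\coprod_{i=1}^{r}\spec k$ gives $H^0(Z,\mu_\ell)\cong\mu_\ell(k)^{r}$ and $H^s(Z,\mu_\ell)=0$ for $s>0$. For $\G_{m,k}$ we have $H^0(\G_{m,k},\mu_\ell)=\mu_\ell(k)$; the Kummer sequence together with $\Pic(\G_{m,k})=0$ and the divisibility of $k^\times$ identifies $H^1(\G_{m,k},\mu_\ell)$ with $\msO(\G_{m,k})^\times/\ell\cong\zmod\ell$ (generated by the class of the coordinate function); and $H^s(\G_{m,k},\mu_\ell)=0$ for $s\ge2$ by Artin's bound on the cohomological dimension of an affine curve over a separably closed field. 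Plugging these into the long exact sequence: the map $H^0(\G_{m,k},\mu_\ell)\to H^0(Z,\mu_\ell)$ is the diagonal $\mu_\ell(k)\to\mu_\ell(k)^{r}$, which is injective because $r\ge1$, so $H^0(\G_{m,k},j_!\mu_\ell)=0$; the vanishing of $H^s(Z,\mu_\ell)$ and $H^s(\G_{m,k},\mu_\ell)$ for $s\ge2$ forces $H^s(\G_{m,k},j_!\mu_\ell)=0$ for $s\ge2$; and the remaining piece of the sequence is
\[0\to\coker\!\big(\mu_\ell(k)\xrightarrow{\ \Delta\ }\mu_\ell(k)^{r}\big)\to H^1(\G_{m,k},j_!\mu_\ell)\to\zmod\ell\to0.\]
Because $\Delta$ is split injective (project onto any coordinate) and $\mu_\ell(k)$ is cyclic of order dividing $\ell$, its cokernel is (non-canonically) isomorphic to $\mu_\ell(k)^{r-1}$, which recovers the asserted exact sequence.

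It remains to see that this sequence splits. Here I would invoke the elementary fact that a short exact sequence $0\to A\to B\to C\to0$ of abelian groups in which $B$ is annihilated by $\ell$ and $C\cong\zmod\ell$ is automatically split: lifting a generator of $C$ to some $x\in B$, we have $\ell x=0$ so $x$ has order dividing $\ell$, while $x$ maps onto a generator of $\zmod\ell$ so its order is exactly $\ell$; thus $x\mapsto$ generator is an isomorphism $\zmod\ell\xrightarrow{\sim}\langle x\rangle\subset B$ splitting the sequence. In our case $B=H^1(\G_{m,k},j_!\mu_\ell)$ is annihilated by $\ell$ since $j_!\mu_\ell$ is, and $C=H^1(\G_{m,k},\mu_\ell)\cong\zmod\ell$, so the hypothesis applies. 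The final sentence of the lemma (the group-theoretic isomorphism) is then immediate from the splitting.

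The argument is essentially all bookkeeping; the only two points needing any care are invoking the right vanishing inputs (the cohomological dimension of the affine curve $\G_{m,k}$ and the vanishing of $H^{>0}$ of the finite $k$-scheme $Z$) and producing the splitting in the absence of a canonical one, for which the ``$B$ killed by $\ell$'' observation above is the cleanest tool. I expect this last point to be the only mild subtlety.
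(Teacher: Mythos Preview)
Your proof is correct and follows essentially the same approach as the paper: both run the open--closed localization sequence for $j_!\mu_\ell$ on $\G_{m,k}$, identify the diagonal map on $H^0$, and read off the result. The only minor differences are that you compute $H^1(\G_{m,k},\mu_\ell)\cong\zmod\ell$ via the Kummer sequence while the paper does it via Poincar\'e duality, and that you supply an explicit argument for the (non-canonical) splitting---namely that any extension of $\zmod\ell$ by an $\ell$-torsion group splits---which the paper simply asserts.
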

\begin{proof} 
    Let $i\colon Z\rightarrow \A^1_k\backslash\{0\}$ be the complement of $j\colon U \rightarrow \A^1_k\backslash\{0\}$, with its reduced induced scheme structure. The long exact sequence associated to $0\rightarrow j_!j^*\mu_\l\rightarrow \mu_\l\rightarrow i_*i^*\mu_\l\rightarrow 0$ gives 
    \[
        0\too H^0\p{\az, j_!\mu_\l} \too \mu_\l(k)\too \mu_\l(k)^{\oplus r} \too H^1\p{\az, j_!\mu_\l} \too H^1\p{\az, \mu_\l} \too 0. 
    \]
    The last term is 0 because the dimension of $Z$ is 0.
    The morphism $\mu_\l(k)\rightarrow \mu_\l(k)^{\oplus r}$ is the diagonal, so we have $H^0(\az, j_!\mu_\l)=0$.  Using the Kummer sequence, we have $H^1(\az, \mu_\l)=\G_m(\az)/\ell = \zmod\l$.  This gives the claimed exact sequence for $H^1$ (which is automatically split as a sequence of $\F_\l$-vector spaces). In higher degrees, $H^s(\az, j_!\mu_\l)= 0$ 
    for $s\ge 2$, because the affine \'etale cohomological dimension is the dimension of the scheme.
\end{proof}  

We can now compute the terms in the leftmost column of our long exact sequence (\ref{coarse les}).

\begin{lemma}[\textit{cf.} {\cite[Lemma 4.3]{Shin}}]\label{lem:derived coh of az}
    Let $m$ be any positive integer. In the setup of Lemma~\ref{lem:mu coh of az}, let ${\pi^\circ\colon BC_{m,U}\to U}$ be the trivial $C_m$-gerbe over $U$.  We have 
    \[
H^n\p{\az, j_!R\pi_*^\circ \mu_\l} \cong \begin{cases}
    \hfill 0,\hfill  & n = 0, \\ 
    \hfill \mu_\l^{\oplus r-1} \oplus \Z/\l,\hfill & n = 1, \\ 
    \hfill\mu_{\gcd(m, \l)}^{\oplus r-1} \oplus \Z/\gcd(m, \l),\hfill & n \ge 2,
\end{cases}
    \]
    where the above isomorphisms are not necessarily Galois equivariant.
\end{lemma}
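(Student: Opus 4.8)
The plan is to reduce the computation to Lemma \ref{lem:mu coh of az} by first identifying the complex $R\pi^\circ_*\mu_\l$ on $\az$ and then running the hypercohomology spectral sequence of $j_!R\pi^\circ_*\mu_\l$. First I would compute the cohomology sheaves $R^t\pi^\circ_*\mu_\l$. Since $\pi^\circ\colon BC_{m,U}\to U$ is a \emph{trivial} $C_m$-gerbe, one may identify $BC_{m,U}\simeq[U/C_m]$ with $C_m$ acting trivially, so that $\mu_\l$ on $BC_{m,U}$ is the pullback of $\mu_{\l,U}$ and $\pi^\circ$ is the base change to $U$ of $BC_{m,k}\to\spec k$. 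A Cartan--Leray (descent) spectral sequence computation for the atlas $U\to BC_{m,U}$, with $C_m$ acting trivially on $\mu_\l$, then shows $R^t\pi^\circ_*\mu_\l\cong\underline{H^t(C_m,\mu_\l)}$ as a locally constant sheaf. By Fact \ref{cyclic coh} (or Lemma \ref{lem:C2-coh} when $m=2$) this sheaf is $\mu_\l$ for $t=0$ and $\mu_{\gcd(m,\l)}$ for every $t\ge1$: indeed $\mu_\l[m]\cong\mu_{\gcd(m,\l)}$, and $\mu_\l/m\mu_\l\cong\mu_{\gcd(m,\l)}$ via $\zeta\mapsto\zeta^{\l/\gcd(m,\l)}$.

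Next, since $j_!$ is exact, $j_!R\pi^\circ_*\mu_\l$ has cohomology sheaves $j_!R^t\pi^\circ_*\mu_\l$, namely $j_!\mu_\l$ in degree $0$ and $j_!\mu_{\gcd(m,\l)}$ in each degree $t\ge1$. Hence there is a spectral sequence
\[
E_2^{s,t}=H^s\bigl(\az,\,j_!R^t\pi^\circ_*\mu_\l\bigr)\ \Longrightarrow\ H^{s+t}\bigl(\az,\,j_!R\pi^\circ_*\mu_\l\bigr),
\]
and Lemma \ref{lem:mu coh of az} gives $E_2^{s,t}=0$ unless $s=1$, with $E_2^{1,0}\cong\mu_\l^{\oplus r-1}\oplus\zmod\l$ and $E_2^{1,t}\cong\mu_{\gcd(m,\l)}^{\oplus r-1}\oplus\zmod{\gcd(m,\l)}$ for $t\ge1$. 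A spectral sequence supported in the single column $s=1$ degenerates at $E_2$ with no extension problems, so $H^n(\az,j_!R\pi^\circ_*\mu_\l)\cong E_2^{1,n-1}$; reading this off for $n=0$, $n=1$, and $n\ge2$ yields the three cases in the statement.

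The hard part will be the first step above --- identifying $R^t\pi^\circ_*\mu_\l$ \emph{globally} with $\underline{H^t(C_m,\mu_\l)}$ rather than merely stalkwise. This is precisely where the global triviality of the gerbe (Lemma \ref{lem:trivial gerbe}, applied to the $j$ at hand) enters, making the Cartan--Leray computation uniform over $U$; an alternative is to invoke $BC_{m,U}\simeq BC_{m,k}\times_k U$ directly and apply a base-change/K\"unneth argument. The failure of Galois-equivariance flagged in the statement is not itself an obstacle: we assert only an isomorphism of abelian groups, and it merely records that the $\zmod\l$ (resp.\ $\zmod{\gcd(m,\l)}$) summand comes from $H^1_c$ and Poincar\'e duality as in the proof of Lemma \ref{lem:mu coh of az}, while the $\mu$-summands come from the fibers of $j_!$, so the splitting need not respect any twisting.
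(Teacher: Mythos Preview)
Your proposal is correct and follows essentially the same route as the paper: both identify the cohomology sheaves $R^t\pi^\circ_*\mu_\l$ (the paper by citing \cite[Lemma B.1]{Shin}, you via a direct Cartan--Leray computation), use exactness of $j_!$ to pass to $j_!R^t\pi^\circ_*\mu_\l$, feed these into the hypercohomology spectral sequence, and observe via Lemma \ref{lem:mu coh of az} that the $E_2$-page is concentrated in the column $s=1$, forcing degeneration. Your discussion of why the global triviality of the gerbe is needed for the first step is a nice addition.
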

\begin{proof}
  Given an object $\mc C$ in the derived category of sheaves on some scheme, let $h^t(\mc C)$ denote its $\supth{t}$ cohomology sheaf. Since the functor $j_!$ is exact, we observe that $h^t(j_!R\push\pi^\circ\mu_\l)\simeq j_!h^t(R\push\pi^\circ\mu_\l)=j_!R^t\push\pi^\circ\mu_\l$. Thus, the hypercohomology spectral sequence \cite[Application 5.7.10]{weibel}
  $$E_2^{st}=H^s\p{\A^1_k\sm\{0\},h^t(\mc C)}\implies H^{s+t}\p{\A^1_k,\mc C}$$
  applied to $\mc C=j_!R\push\pi^\circ\mu_\l$ becomes
    \[E^{st}_2=H^s\p{\A^1_k\sm\{0\},j_!R^t\push\pi^\circ\mu_\l}\implies H^{s+t}\p{\A^1_k\sm\{0\},j_!R\push\pi^\circ\mu_\l}.\]
   We claim that the $E_2$-page of this sequence has only one nonzero column, the $s=1$ column. Indeed, \cite[Lemma B.1]{Shin} shows that
    \[R^t\push\pi^\circ\mu_\l\simeq\Threecases{\mu_\l}{t=0,}{\mu_\l[m]}{t>0\t{ odd,}}{\mu_\l/m}{t>0\t{ even.}}\]
    Combining this with Lemma~\ref{lem:mu coh of az} and the observations that $\mu_\l[m]=\mu_{\gcd(m,\l)}\cong\mu_\l/m$, we conclude that $E^{st}_2=0$ if $s\neq1$, while
    \begin{align}\label{eqn:hypcoh-ss-conclusion}
        E^{1,t-1}_2
        =H^1\p{\A^1_k\sm\{0\},j_!R^{t-1}\push\pi^\circ\mu_\l}
        &\simeq\Threecases{H^1\p{\A^1_k\sm\{0\},j_!\mu_\l}}{t=1,}{H^1\p{\A^1_k\sm\{0\},j_!(\mu_\l[m])}}{t>1\text{ even,}}{H^1\p{\A^1_k\sm\{0\},j_!(\mu_\l/m)}}{t>1\t{ odd,}} \nonumber\\[.4ex]
        &\cong\Twocases{\mu_\l(k)^{\oplus(r-1)}\oplus\zmod\l}{t=1,}{\mu_{\gcd(m,\l)}^{\oplus(r-1)}(k)\oplus\zmod{\gcd(m,\l)}}{t>1.}
    \end{align}
    Since this $E_2$-page is a concentrated in a single column, we must have $H^n(\A^1_k\sm\{0\},j_!R\push\pi^\circ\mu_\l)\simeq E_2^{1,n-1}$, from which the claim follows.
\end{proof}

Taking $m=2$, $r=1$, and $\l=2^n$ in Lemma~\ref{lem:derived coh of az}, we obtain the terms in the left column of the long exact sequence (\ref{coarse les}).  Next we wish to compute the terms in the right column, namely the groups $H^s(Z, i^*R\pi_*\mu_{2^n})$.  Recall that in our situation, $i\colon Z\rightarrow \az$ is the inclusion of the point $-\frac{1}{4}$ (so $Z\simeq\spec(k)$).  Consider the stacky point of $\eey_0(2)$ that gets mapped to $-\frac{1}{4}$.  It comes from the elliptic curve corresponding to $t=-1$ in the Legendre family, \textit{i.e.}, to $E:y^2=x(x-1)(x+1)=x^3-x$ equipped with the order 2 point $P=(0,0)$. If $\ch(k)>3$, then this curve has automorphism group $\zmod4$ generated by $(x,y)\mapsto(-x,iy)$; if $\ch(k)=3$, then it has an automorphism group of order $12$ generated by the previous automorphism along with the order 3 automorphism $(x,y)\mapsto(x+1,y)$. Thus, in any case, the group of automorphisms preserving $P$ is simply $\zmod4$. \\

As in \cite[Section 4.4]{Shin}, for any $n\ge0$, we have the following sequence of isomorphisms, explained below: 
\begin{equation}\label{eqn:Z-coh}
    H^s\p{Z, i^*R\pi_*\mu_{2^n}}\overset{(a)}\cong i^*R^s\pi_*\mu_{2^n}\overset{(b)}\cong H^s\p{\eey_0(2)_Z, \mu_{2^n}}  \overset{(c)}\cong H^s((B \Z/4)_k, \mu_{2^n}) \overset{(d)}\cong H^s(\Z/4, \mu_{2^n}(k)). 
\end{equation}
Above, $(a)$ holds because $Z\cong\spec(k)$; $(b)$ holds by proper base change, see \cite[Theorem 1.3]{OLSSON200593}; $(c)$ holds because $(\eey_0(2)_Z)_{\mathrm{red}}\cong(B\zmod4)_k$, and so one can apply topological invariance of \'etale cohomology, see \cite[\href{https://stacks.math.columbia.edu/tag/03SI}{Tag 03SI}]{stacks-project} (note that $\mu_{2^n,\eey_0(2)_Z}\vert_{B\zmod4}\simeq\mu_{2^n,B\zmod4}$ as \'etale sheaves because $\mu_{2^n}$ is \'etale over $\eey_0(2)_Z$); and $(d)$ follows from the Hochschild--Serre spectral sequence applied to the $\zmod4$-cover $\spec(k)\to(B\zmod4)_k$. In Equation~(\ref{eqn:Z-coh}), $\Z/4$ acts trivially on $\mu_{2^n}(k)$, so 
\begin{equation}\label{eqn:Z-coh-ii}
     H^s(Z,\pull iR\push\pi\mu_{2^n}) \cong H^s(\zmod4,\mu_{2^n}(k))\cong\Threecases{\mu_{2^n}(k)}{s=0,}{\mu_{2^n}(k)[4]}{s>0\t{ odd,}}{\mu_{2^n}(k)/4}{s>0\t{ even}}
\end{equation} 
by Fact~\ref{cyclic coh}.

The final ingredient we will use is the computation of the terms in the middle column of the sequence (\ref{coarse les}) in degrees~0 and 1.  First, we have $H^0(\yzt_k, \mu_{2^n}) \cong H^0(\az, \mu_{2^n})\cong \mu_{2^n}$.  By \cite[Proposition 2.9]{AM}, we have a split short exact sequence 
\[
0\too \G_m(\yzt_k)/\G_m(\yzt_k)^{2^n}\too H^1(\yzt_k, \mu_{2^n}) \too \Pic(\yzt_k)[2^n]\too 0.
\]
Because $k$ is algebraically closed, using Proposition~\ref{prop:y0(2)-cms}, we have $\G_m(\yzt_k)=\G_m(\A^1_k\sm\{0\})=s^\Z$, where $\A^1_k\sm\{0\}=\spec(k[s,\inv s])$. By Remark~\ref{rem:Pic Y0(2)}, we have $\Pic(\yzt_k)\cong\Z/(4)$. Thus, for $n\ge 2$, this gives an isomorphism $H^1(\yzt_k, \mu_{2^n})\cong \Z/(2^n) \oplus \Z/(4)$.  

Thus for $n\ge 2$, the long exact sequence (\ref{coarse les}) becomes the following:  
\begin{align*}
\label{coarse les filled in}
\begin{split}
    0 &\too  0 \too \mu_{2^n}(k)\too \mu_{2^n}(k) \\ 
    & \too \Z/2^n \too \Z/2^n \oplus \Z/4 \too \mu_4(k) \\ 
    & \too \Z/2 \too H^2(\yzt_k, \mu_{2^n})\too \mu_{2^n}(k)/4 \\ 
    & \too \cdots. 
\end{split}
\end{align*}
Looking at these terms, we conclude that we must have an exact sequence of the form 
\[
0\too \Z/2\too H^2(\yzt_k, \mu_{2^n})\too \mu_{2^n}(k)/4.
\]
We will need to understand how this sequence behaves functorially with $n$.  We recall that the $\Z/2$ here is computed in Equation~(\ref{eqn:hypcoh-ss-conclusion}) in the proof of Lemma~\ref{lem:derived coh of az} as 
\[
H^2(\az, j_!R\pi_*^\circ \mu_{2^n})  \cong H^1(\az,j_!R^1\push\pi^\circ\mu_{2^n}) \cong H^1(\az, j_!\mu_2) \cong \Z/2, 
\]
where the penultimate congruence uses the fact that
\[
R^1\pi_*^\circ \mu_{2^n}\cong \mu_{2^n}[2]\cong \mu_2. 
\]
This is significant because this implies that as $n$ varies, the corresponding morphism between these cohomology groups is an isomorphism. 
 On the other hand, we see that as we go from $n$ to $n+2$, the corresponding morphism $\mu_{2^n}/(4)\rightarrow \mu_{2^{n+2}}/(4)$ is 0. These observations allow us to prove the main theorem of this section.

\begin{theorem}[\textit{cf.} {\cite[Theorem 1.1]{Shin}}]\label{thm:alg-closed-comp}
    Let $k$ be an algebraically closed field with $\ch k\neq 2$. Then, we have $\Br'(\eey_0(2)_k)\cong \Z/2$. 
\end{theorem}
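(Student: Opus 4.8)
\section*{Proof proposal}

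The plan is to read off $\Br'\eey_0(2)_k$ by passing to the colimit over $n$ in the two exact sequences already assembled around the middle term $H^2(\eey_0(2)_k,\mu_{2^n})$: the Kummer sequence (\ref{ses:Kummer-2n}) and the left-exact fragment $0\to\Z/2\to H^2(\eey_0(2)_k,\mu_{2^n})\to\mu_{2^n}/4$ extracted from the long exact sequence (\ref{coarse les}) for $n\ge 2$. Both are functorial in $n$ along the \emph{same} direct system, namely the one induced by the inclusions $\mu_{2^n}\into\mu_{2^{n+1}}$ — for (\ref{coarse les}) because it comes from a distinguished triangle functorial in the coefficient sheaf, and for (\ref{ses:Kummer-2n}) because the inclusion of Kummer sequences is the identity on one copy of $\G_m$ and the squaring map on the other. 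Since filtered colimits of abelian groups are exact, it then suffices to identify the colimits of the outer terms of each sequence.

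On the Kummer side, applying $\varinjlim_n$ to (\ref{ses:Kummer-2n}) gives a short exact sequence with outer terms $\varinjlim_n\bigl(\Pic\eey_0(2)_k/2^n\bigr)$ and $\varinjlim_n\bigl(\Br'\eey_0(2)_k[2^n]\bigr)$. By Remark \ref{rem:Pic Y0(2)} we have $\Pic\eey_0(2)_k\cong\Z/4$, and the transition map $\Pic/2^n\to\Pic/2^{n+1}$ produced by $\mu_{2^n}\into\mu_{2^{n+1}}$ is multiplication by $2$ (coming from the squaring map on the relevant copy of $\G_m$); since $\cdot 4$ kills $\Z/4$, the colimit $\varinjlim_n\Pic/2^n$ vanishes. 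By Lemma \ref{lem:brauer is subgroup}, $\Br'\eey_0(2)_k$ is killed by $2$, so the system $\{\Br'\eey_0(2)_k[2^n]\}$ is constant. Hence $\varinjlim_n H^2(\eey_0(2)_k,\mu_{2^n})\cong\Br'\eey_0(2)_k$.

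On the coarse-space side, applying $\varinjlim_n$ to $0\to\Z/2\to H^2(\eey_0(2)_k,\mu_{2^n})\to\mu_{2^n}/4$ gives a left-exact sequence with outer terms $\varinjlim_n\Z/2$ and $\varinjlim_n(\mu_{2^n}/4)$. As recalled above (and using the identification $\Z/2\cong H^2(\A^1_k\sm\{0\},j_!R\pi^\circ_*\mu_{2^n})$ of Lemma \ref{lem:derived coh of az}), the transition maps on these copies of $\Z/2$ are isomorphisms, so $\varinjlim_n\Z/2\cong\Z/2$; and the transition $\mu_{2^n}/4\to\mu_{2^{n+2}}/4$ is zero, so every element of $\mu_{2^n}/4$ dies two steps later and $\varinjlim_n(\mu_{2^n}/4)=0$. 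Therefore $\varinjlim_n H^2(\eey_0(2)_k,\mu_{2^n})\cong\Z/2$, and comparing with the previous paragraph yields $\Br'\eey_0(2)_k\cong\Z/2$. (As a cross-check, at a fixed level $n\ge 2$ the Kummer sequence gives $|H^2(\eey_0(2)_k,\mu_{2^n})|=4\,|\Br'\eey_0(2)_k|$ while the fragment gives $|H^2(\eey_0(2)_k,\mu_{2^n})|\le 8$, so $|\Br'\eey_0(2)_k|\le 2$; the colimit argument is what upgrades this to an equality.)

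The one genuinely delicate point — and the place to be careful — is the bookkeeping of transition maps: one must verify that the two exact sequences are being compared along the direct system attached to $\mu_{2^n}\into\mu_{2^{n+1}}$, and in particular that this system induces \emph{multiplication by $2$}, not the identity, on $\Pic\eey_0(2)_k/2^n$, since it is exactly the resulting vanishing of $\varinjlim_n\Pic\eey_0(2)_k/2^n$ that makes the Picard contribution drop out of the colimit. Everything else is a formal colimit computation built on facts already established above.
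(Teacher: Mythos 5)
Your argument is correct, and although it is assembled from the same two exact sequences and the same transition-map facts as the paper's proof, the way you extract the conclusion is genuinely different. The paper works at a single fixed level $n$: it uses the vanishing of $\mu_{2^n}/4\to\mu_{2^{n+2}}/4$ together with the identity on the $\Z/2$ to split off a $\Z/2$ summand of $H^2(\eey_0(2)_k,\mu_{2^n})$, then invokes the order-$4$ class coming from $\Pic(\eey_0(2)_k)\cong\zmod4$ via (\ref{ses:Kummer-2n}) to pin down $H^2(\eey_0(2)_k,\mu_{2^n})\cong\Z/2\oplus\Z/4$, and finally reads off $\Br'(\eey_0(2)_k)[2^n]$ by counting. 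You instead pass to the colimit over $n$, where the $\Pic/2^n$ and $\mu_{2^n}/4$ contributions both die, so that $\varinjlim_nH^2(\eey_0(2)_k,\mu_{2^n})$ is computed two ways as $\Br'(\eey_0(2)_k)$ and as $\Z/2$. Your route avoids the splitting argument and the appeal to an order-$4$ element of $H^2(\eey_0(2)_k,\mu_{2^n})$ entirely; the price is the extra bookkeeping of the Kummer-side transition maps, which the fixed-$n$ argument never needs. You correctly identify and verify the crucial one (multiplication by $2$ on $\Pic/2^n$, coming from squaring on the target copy of $\G_m$), but one companion verification should be made explicit rather than absorbed into the word ``constant'': the transition map on $\Br'(\eey_0(2)_k)[2^n]=\ker\bigl(H^2(\G_m)\xto{2^n}H^2(\G_m)\bigr)$ is induced by the \emph{identity} on the source copy of $\G_m$, hence is the natural inclusion $\Br'[2^n]\subseteq\Br'[2^{n+1}]$, and only then does $2$-torsionness (Lemma \ref{lem:brauer is subgroup}) make the system constant with identity transition maps. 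Had that map also been multiplication by $2$, its colimit would vanish and the whole argument would collapse, so this is exactly the same kind of diagram chase you already flag as the delicate point, applied to the other outer term. With that spelled out, the proof is complete.
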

\begin{proof}
    By the discussion preceding this theorem statement, we have a commutative diagram
    \[\begin{tikzcd}
    	0 & {\Z/2} & {H^2(\eey_0(2)_k, \mu_{2^n})} & {\Z/4} \\
    	0 & {\Z/2} & {H^2(\eey_0(2)_k, \mu_{2^{n+2}})} & {\Z/4}
    	\arrow["f_n", from=1-3, to=2-3]
    	\arrow[from=1-3, to=1-4]
    	\arrow["0", from=1-4, to=2-4]
    	\arrow[from=2-3, to=2-4]
    	\arrow[from=1-1, to=1-2]
    	\arrow[from=1-2, to=1-3]
    	\arrow[from=2-1, to=2-2]
    	\arrow[from=2-2, to=2-3]
    	\arrow["\id", from=1-2, to=2-2]
    \end{tikzcd}\]
    with exact rows. Observe that the morphism $f_n$ indicated above both lands in $\zmod2\subset H^2(\eey_0(2)_k,\mu_{2^{n+2}})$ and restricts to the identity map on $\zmod2$. Thus, it gives a splitting of the top left exact sequence, so
    \[H^2(\eey_0(2)_k,\mu_{2^n})\cong\zmod2\oplus G,\quad\t{where }G:=\im\p{H^2(\eey_0(2)_k,\mu_{2^n})\to\zmod4}.\]
    Since $\Pic(\eey_0(2)_k)=\zmod4$, and so $\Pic(\eey_0(2)_k)/2^n=\Pic(\eey_0(2)_k)$, the embedding $\Pic(\eey_0(2)_k)\into H^2(\eey_0(2)_k,\mu_{2^n})$ (see the exact sequence (\ref{ses:Kummer-2n})) shows that $H^2(\eey_0(2)_k,\mu_{2^n})$ must contain an element of order $4$. Since $H^2(\eey_0(2)_k,\mu_{2^n})\cong\zmod2\oplus G$, we must have $G=\zmod4$ and so $H^2(\eey_0(2)_k,\mu_{2^n})\cong\zmod2\oplus\zmod4$. From the exact sequence (\ref{ses:Kummer-2n}), we conclude that $\Br'(\eey_0(2)_k)=\zmod2$. 
\end{proof}

To finish this section, we would like to say a bit more about the generator of $\Br'(\eey_0(2)_k)\cong\Z/2$. In particular, we will prove that it is not represented by a quaternion algebra over $\eey_0(2)_k$.

\begin{proposition}\label{prop:Z/2Z-not-quat}
    Let $\alpha\in\Br(\eey_0(2)_k)$ denote the nontrivial element. Then, $\alpha$ is a sum of classes of quaternion algebras. Equivalently, the image of
    \begin{equation}\label{composition:quat-alg}
        H^1(\eey_0(2)_k,C_2)\otimes H^1(\eey_0(2)_k,\mu_2)\xtoo\smile H^2(\eey_0(2)_k,\mu_2)\too H^2(\eey_0(2)_k,\G_m)=\Br\p{\eey_0(2)_k}
    \end{equation}
    is $0$.
\end{proposition}
\begin{proof}
    Let $f\colon\eey(2)_k\to\eey_0(2)_k$ denote the natural map, and recall from Proposition~\ref{prop:Y(2)-cms} that $\eey(2)_k$ has coarse space $c\colon\eey(2)_k\to X_k=\A^1_k\sm\{0,1\}$. The exact sequence (\ref{ses:2-SS-output}) from Section~\ref{sect:2-prim-torsion} shows that $\pull f\colon\Br(\eey_0(2)_k)\to\Br(\eey(2)_k)$ is injective. Thus, it suffices to show that any element in the image of the composition (\ref{composition:quat-alg}) pulls back to $0\in\Br(\eey(2)_k)$. Since $\ul{C_2}\simeq\mu_2$ over $k$, it suffices to show that
    \begin{equation}\label{containment:blah}
      \im\p{\pull f\colon H^1(\eey_0(2)_k,\mu_2)\to H^1(\eey(2)_k,\mu_2)}\subset\im\p{\pull c\colon H^1(X_k,\mu_2)\to H^1(\eey(2)_k,\mu_2)}, 
    \end{equation}
    since then any quaternion algebra over $\eey_0(2)_k$ pulls back to an element of $\im\p{\pull c\colon\Br(X_k)\to\Br(\eey(2)_k)}=0$ (note that $\Br(X_k)=0$ by Tsen's theorem).

    With this in mind, consider the commutative diagram
    \[\begin{tikzcd}
        0\ar[r]&\G_m\p{\eey_0(2)_{\bar k}}/2\ar[d]\ar[r]&H^1\p{\eey_0(2)_{\bar k},\mu_2}\ar[d, "\pull f"]\ar[r]&\Pic\p{\eey_0(2)_{\bar k}}[2]\ar[d, "\pull f"]\ar[r]&0\\
        0\ar[r]&\G_m\p{\eey(2)_{\bar k}}/2\ar[d, equals]\ar[r]&H^1\p{\eey(2)_{\bar k},\mu_2}\ar[r]&\Pic\p{\eey(2)_{\bar k}}[2]\ar[r]&0\\
        &\G_m\p{X_{\bar k}}/2\ar[r, "\sim"]&H^1\p{X_{\bar k},\mu_2}\rlap{,}\ar[u, "\pull c"]       
    \end{tikzcd}\]
    whose rows are exact. We claim that the map on the Picard groups is trivial; this suffices to show the inclusion (\ref{containment:blah}). By Theorem~\ref{thm:app-main}, we have $\Pic(\eey_0(2)_{\bar k})\cong\zmod4$. At the same time, Propositions~\ref{prop:Y(2)-cms} and~\ref{Propn: BC_n Brauer} show that $\Pic(\eey(2)_{\bar k})\cong\zmod2$. Thus, $\pull f\colon\Pic(\eey_0(2)_{\bar k})\to\Pic(\eey(2)_{\bar k})$ is a map $\zmod4\to\zmod2$ and so necessarily becomes trivial when restricted to $2$-torsion. This completes the proof.
\end{proof}

\section{\texorpdfstring{$\boldsymbol{\Br(\eey_0(2)_k)}$}{Br(Y\_0(2)\_k)} over other fields \texorpdfstring{$\boldsymbol{k}$}{k}}\label{sect:Br-field}

In this section, we wish to compute $\Br(\eey_0(2)_k)$ for fairly general fields $k$. We first remark that using the Hochschild--Serre spectral sequences associated to the covers $X\to\eey(2)\to\eey_0(2)$, as we did when computing \textit{e.g.}~$\Br(\eey_0(2)_\Q)$, would present at least two challenges:
\begin{itemize}
    \item If $\Char k=p\ge3$, our general result Proposition~\ref{thm:p-tors-away-from-p} for computing $p$-primary torsion would not apply.
    \item In any case, computing $2$-primary torsion would require solving the extension problem presented by the short exact sequence (\ref{ses:2-SS-output}).
\end{itemize}
For these reasons, we adopt a new approach in this section. Since we will be working over a field $k$, we can consider the Galois cover $\eey_0(2)_{k^s}\to\eey_0(2)_k$, where $k^s/k$ is a separable closure of $k$, and apply the Hochschild--Serre spectral sequence to it. Because we have computed $\Br(\eey_0(2)_{\bar k})$ in Section~\ref{sect:coarse-space}, this spectral sequence will allow us to compute $\Br(\eey_0(2)_k)$ for many perfect fields $k$.

\begin{remark}
    Despite the warning just given about attempting to do this computation using the covers $X\to\eey(2)\to\eey_0(2)$, the argument in the current section still relies nontrivially on the work in Section~\ref{sect:2-prim-torsion}.
\end{remark}

\begin{lemma}\label{lem:Br-over-field-low-degree}
    Let $k$ be a field of characteristic not $2$. Then, there is an exact sequence
    \[0\too\Br(k)\oplus H^1(k,\Q/\Z)\too\ker\p{\Br\p{\eey_0(2)_k}\too\Br\p{\eey_0(2)_{k^s}}}\too H^1(k,\zmod4)\too H^3(k,\G_m)\oplus H^3(k,\Z).\]
\end{lemma}
\begin{proof}
    This follows from writing down the exact sequence of low-degree terms (see, \textit{e.g.}, \cite[Proposition~6.7.1]{poonen-rat-pts}) associated to the spectral sequence 
    \[E_2^{p,q}=H^p(k,H^q(\eey_0(2)_{k^s},\G_m))\implies H^{p+q}(\eey_0(2)_k,\G_m).\]
    In general, this exact sequence takes the form
    \[0\too E_2^{1,0}\too H^1\too E_2^{0,1}\too E_2^{2,0}\too\ker\p{H^2\too E_2^{0,2}}\too E_2^{1,1}\too E_2^{3,0}.\]
    In the present context, we compute the individual terms as follows.
    Below, note that $H^0(\eey_0(2)_{k^s}, \G_m)=H^0(Y_0(2)_{k^s}, \G_m)=\G_m(k^s)\oplus \Z$. 
    \begin{itemize}\setlength\itemsep{.4em}
        \item $E_2^{1,0}=H^1(k, \G_m(k^s)\oplus \Z)=\Pic(k)\oplus H^1(k,\Z)=0$.
        \item $H^1(\eey_0(2)_k,\G_m)=\Pic(\eey_0(2)_k)\cong\zmod4$ by Theorem~\ref{thm:app-main}.
        \item $E_2^{0,1}=H^0(k,H^1(\eey_0(2)_{k^s},\G_m))\cong\zmod4$ by Theorem~\ref{thm:app-main}. 
        \item $E_2^{2,0}=H^2(k, H^0(\eey_0(2)_{k^s},\G_m))=H^2(k, \G_m(k^s)\oplus\Z)=\Br(k)\oplus H^2(k,\Z)\cong\Br(k)\oplus H^1(k,\Q/\Z)$ with the last isomorphism following from taking cohomology of the exact sequence $0\to\Z\to\Q\to\Q/\Z\to0$.
        \item $H^2(\eey_0(2)_k,\G_m)=\Br(\eey_0(2)_k)$ by Lemma~\ref{lem:coh-Br=Az-Br}.
        \item $E_2^{0,2}=H^0(k, H^2(\eey_0(2)_{k^s}, \G_m))=H^0(k,\Br(\eey_0(2)_{k^s}))$.
        \item $E^{1,1}_2=H^1(k,H^1(\eey_0(2)_{k^s},\G_m))=H^1(k,\Pic(\eey_0(2)_{k^s}))=H^1(k,\zmod4)$. Here, the Galois action on $\Pic(\eey_0(2)_{k^s})\cong\zmod4$ is trivial since Theorem~\ref{thm:app-main} shows that $\Pic(\eey_0(2)_k)\to\Pic(\eey_0(2)_{k^s})$ is an isomorphism (with both groups generated by the Hodge bundle).
        \item $E^{3,0}_2=H^3(k,H^0(\eey_0(2)_{k^s},\G_m))=H^3(k,\G_m)\oplus H^3(k,\Z)$.
    \end{itemize}
    Also note that
    \[
    \ker\p{\Br\p{\eey_0(2)_k}\to H^0\p{k, \Br\p{\eey_0(2)_{k^s}}}}= \ker\p{\Br\p{\eey_0(2)_k}\to\Br\p{\eey_0(2)_{k^s}}}
    \]
    because $H^0(k, \Br(\eey_0(2)_{k^s}))\subseteq \Br(\eey_0(2)_{k^s})$.
\end{proof}

\begin{lemma}\label{lem:Brk-ks-split-surjection}
    Let $k$ be a perfect field of characteristic not $2$. Then, the natural map
    \[\Br\p{\eey_0(2)_k}\too\Br\p{\eey_0(2)_{k^s}}\cong\zmod2\]
    is a split surjection.
\end{lemma}
\begin{proof}
    First note that $\Br(\eey_0(2)_{k^s})\cong\twoprimary{\Br}(\eey_0(2)_{k^s})\cong\zmod2$ by Theorem~\ref{thm:alg-closed-comp}, since $k$ is perfect. By the work in Section~\ref{sect:2-prim-torsion} (see in particular the exact sequence (\ref{ses:2-SS-output})), the map $\Br(\eey_0(2)_k)\to\Br(\eey_0(2)_{k^s})$ extends to the following homomorphism of short exact sequences:
    \begin{equation}\label{seshom-k-ksep}
        \begin{tikzcd}
            0\ar[r]&\G_m(k)/2\ar[r]\ar[d]&\twoprimary{\Br}\p{\eey_0(2)_k}\ar[r]\ar[d]&\twoprimary{\p{\Br\p{\eey(2)_k}}}^{C_2}\ar[d, "f"]\ar[r]&0\\
            0\ar[r]&\G_m(k^s)/2\ar[r]&\twoprimary{\Br}\p{\eey_0(2)_{k^s}}\ar[r, "\sim"]\ar[d, equals]&\twoprimary{\p{\p{\Br\eey(2)_{k^s}}}}^{C_2}\ar[r]&0\\
            &&\Br\p{\eey_0(2)_{k^s}}\rlap{,}
        \end{tikzcd}
    \end{equation}
    where the above-indicated horizontal map is an isomorphism because $\Char k^s\neq2$, so $\G_m(k^s)/2=0$.
    
    We claim that the map $f$ indicated above is surjective. Using the functoriality of exact sequence (\ref{ses:2Br-split}) along with Notation~\ref{notn:G-G'} and Remark~\ref{rem:G'-splits} to compute $\ker\del$, we obtain the following commutative diagram with exact rows: 
    \begin{equation}\label{seshom-k-ksep2}
        \begin{tikzcd}
            0\ar[r]&\twoprimary{\Br}(k)\oplus\twoprimary{H}(k,\Q/\Z)\ar[r]\ar[d]&\twoprimary{\Br}(\eey(2)_k)^{C_2}\ar[r]\ar[d, "f"]&G(k)\oplus\{\pm1\}\ar[r]\ar[d]&0\\
            0\ar[r]&0\ar[r]&\twoprimary{\Br}(\eey(2)_{k^s})^{C_2}\ar[r, "\sim"]&\{\pm1\}\ar[r]&0\rlap{.}
        \end{tikzcd}
    \end{equation}
    Note that $f$ is surjective if and only if the rightmost vertical map above is. Remark~\ref{rem:G'-splits}, applied once to $S=\spec(k^s)$ and then to $S=\spec(k)$, shows the rightmost vertical map is a surjection, with kernel $G(k)$.
    
    This is enough to conclude that $\Br(\eey_0(2)_k)\to\Br(\eey_0(2)_{k^s})=\twoprimary{\Br}(\eey_0(2)_{k^s})\twoprimary{(\Br(\eey(2)_{k^s}))}^{C_2}$ is surjective. In order to show that it is a split surjection, we appeal to Lemma~\ref{lem: m11 y02 split}, or rather, to its proof. The proof of this lemma (in particular, the isomorphism (\ref{eqn: coker})) shows that $\coker\sq{\twoprimary{\Br}(\eey(2)_k)^{S_3}\to\twoprimary{\Br}(\eey(2)_k)^{C_2}}\cong\hphantom{\mkern-3mu}\twoprimary{H}(k,\Q/\Z)\oplus\zmod2$, where this $\zmod2$ is, more canonically, $G'(k)/G(k)$. The above discussion of the diagram (\ref{seshom-k-ksep2}) identifies $G'(k)/G(k)\simeq\twoprimary{\Br}(\eey(2)_{k^s})^{C_2}$, and the first paragraph showed that $\twoprimary{\Br}(\eey(2)_{k^s})^{C_2}\simeq\Br(\eey_0(2)_{k^s})$, so we conclude by observing that Lemma~\ref{lem: m11 y02 split} says that 
    \[\twoprimary{\Br}(\eey_0(2)_k)\to\coker\sq{\twoprimary{\Br}\p{\eey(2)_k}^{S_3}\to\twoprimary{\Br}(\eey(2)_k)^{C_2}}\cong\twoprimary{H}(k,\Q/\Z)\oplus\twoprimary{\Br}\p{\eey_0(2)_{k^s}}\] 
    is split surjective and that $\twoprimary{\Br}(\eey_0(2)_k)$ is a direct summand of $\Br(\eey_0(2)_k)$.
\end{proof}

\begin{notation}\label{notn:Br1}
    For a field $k$ and a scheme or stack $\eex/k$, we set
    \[\Br_1(\eex)\coloneqq\ker\p{\Br(\eex)\to\Br\p{\eex_{k^s}}}.\]
\end{notation}

As a consequence of Lemma~\ref{lem:Brk-ks-split-surjection}, if $k$ is a perfect field of characteristic not $2$, we have $\Br(\eey_0(2)_k)\cong\Br_1(\eey_0(2)_k)\oplus\zmod2$. By Lemma~\ref{lem:Br-over-field-low-degree}, for any field $k$, we also have an exact sequence
\begin{equation}\label{es:Br-over-field}
    0\too\Br(k)\oplus H^1(k,\Q/\Z)\too\Br_1(\eey_0(2)_k)\too H^1(k,\zmod4)\overset{d}\too H^3(k,\G_m)\oplus H^3(k,\Z).
\end{equation}
In what follows, we further analyze this exact sequence in order to completely determine $\Br_1(\eey_0(2)_k)$.

\begin{theorem}\label{thm:Br-over-field}
    Let $k$ be any field of characteristic not $2$. The homomorphism $d$ in the exact sequence \eqref{es:Br-over-field} is zero, and there exists a retraction $\Br_1(\eey_0(2)_k)\to\Br(k)\oplus H^1(k,\Q/\Z)$. Thus,
    \[\Br_1(\eey_0(2)_k)\simeq\Br(k)\oplus H^1(k,\Q/\Z)\oplus H^1(k,\zmod4).\]
    If\, $k$ is perfect, then we further have
    \[\Br(\eey_0(2)_k)\simeq\Br_1(\eey_0(2)_k)\oplus\zmod2\simeq\Br(k)\oplus H^1(k,\Q/\Z)\oplus H^1(k,\zmod4)\oplus\zmod2.\]
\end{theorem}
\begin{proof}
    Recall $X_k=\A^1_k\sm\{0,1\}$ and $Y_0(2)_k=\A^1_k\sm\{0\}$. Let $X_k\to\eey_0(2)_k$ be the morphism corresponding to the Legendre family (with $(0,0)$ as the chosen 2-torsion point), and let $\eey_0(2)_k\to Y_0(2)_k$ be the coarse space map of Corollary~\ref{cor:Y0(2)-cms-general-base}. To ease notation in this argument, write $\bar k\coloneqq k^s$ for the separable closure, and let $G=\Gal(\bar k/k)$. The maps $X_k\to\eey_0(2)_k\to Y_0(2)_k$ induce morphisms between their corresponding Hochschild--Serre spectral sequences, and so induce the following homomorphisms between the induced exact sequences of low-degree terms:
    \begin{equation}\label{cd:big-one-at-end}
        \begin{tikzcd}[column sep=small]
            0\ar[r]&H^2\p{G,\units{\bar k\left[t,\inv t,1/(t-1)\right]}}\ar[r,"\sim"]&\Br_1(X_k)\ar[r]&0\ar[r]&H^3\p{G,\bar k\left[t,\inv t,1/(t-1)\right]^\by}\\
            0\ar[r]&\Br(k)\oplus H^1(k,\Q/\Z)\ar[r]\ar[u]&\Br_1(\eey_0(2)_k)\ar[u]\ar[r]&H^1(k,\zmod4)\ar[r, "d"]\ar[u]&H^3(k,\G_m)\oplus H^3(k,\Z)\ar[u, "\phi"]\\
            0\ar[r]&H^2\p{G,\units{\bar k\left[s,\inv s\right]}}\ar[r, "\sim"]\ar[u, "\sim" sloped]&\Br_1(Y_0(2)_k)\ar[u]\ar[r]&0\ar[u]\ar[r]&H^3\p{G,\bar k\left[s,\inv s\right]^\times}\rlap{.}\ar[u, "\sim" sloped]    
        \end{tikzcd}
    \end{equation}
    By construction, the maps $H^i(G,\bar k[s,\inv s]^\by)\to H^i(G,\bar k[t,\inv t,1/(t-1)]^\times)$, for $i=2,3$, going from the bottom row to the top row above are simply the maps on $H^i(G,-)$ induced by $s\mapsto t/(t-1)^2$. Identifying $\units{\bar k[s,\inv s]}\simeq\units{\bar k}\oplus s^\Z\simeq\units{\bar k}\oplus\Z$ and $\units{\bar k[t,\inv t,1/(t-1)]}\simeq\units{\bar k}\oplus t^\Z\oplus(t-1)^\Z\simeq\units{\bar k}\oplus\Z^2$, the map $s\mapsto t/(t-1)^2$ is identified with the map
    \[\mapdesc{f}{\units{\bar k}\oplus\Z}{\units{\bar k}\oplus\Z^2}{(\alpha,n)}{(\alpha,n,-2n).}\]
    This map is visibly a section of the projection $g\colon(\beta,a,b)\mapsto(\beta,a)$. Write $g\colon\bar k[t,\inv t,1/(t-1)]^\times\to\bar k[s,\inv s]$ for the corresponding map on unit groups. The commutativity of the diagram now gives us the following two conclusions:
    \begin{itemize}
        \item $d\colon H^1(k,\zmod4)\to H^3(k,\G_m)\oplus H^3(k,\Z)$ is the zero morphism.

        Indeed, $\phi\colon H^3(k,\G_m)\oplus H^3(k,\Z)\simeq H^3(G,\bar k[s,\inv s]^\times)\xto{\push f}H^3(G,\bar k[t,\inv t,1/(t-1)]^\times)$ is split by $\push g\colon H^3(G,\bar k[t,\inv t,1/(t-1)]^\times)\to H^3(G,\bar k[s,\inv s])$ and so injective. Since $\phi$ is injective, the commutativity of the top right square of the diagram (\ref{cd:big-one-at-end}) shows that $d=0$.
        \item The map $\Br(k)\oplus H^1(k,\Q/\Z)\to\Br_1(\eey_0(2)_k)$ appearing in the middle row of the diagram (\ref{cd:big-one-at-end}) is a split injection.

        Indeed, the commutativity of the diagram (\ref{cd:big-one-at-end}), along with the previous discussion, shows that it is split by the composition
        \[\Br_1(\eey_0(2)_k)\too\Br(X_k)\simeq H^2\p{G,\units{\bar k\left[t,\inv t,(t-1)\right]}}\overset{\push g}\too H^2\p{G,\units{\bar k\left[s,\inv s\right]}}\simeq\Br(k)\oplus H^1(k,\Q/\Z).\]
    \end{itemize}
    Taken together, these show that
    \[\Br_1(\eey_0(2)_k)\simeq\Br(k)\oplus H^1(k,\Q/\Z)\oplus H^1(k,\zmod4).\]
    Finally, if $k$ is perfect, then Lemma~\ref{lem:Brk-ks-split-surjection} shows that $\Br(\eey_0(2)_k)\simeq\Br_1(\eey_0(2)_k)\oplus\zmod2$.
\end{proof}

\begin{remark}\label{rem:Br-coarse-field}
    In the course of proving Theorem~\ref{thm:Br-over-field}, we showed that $\Br_1(Y_0(2)_k)\simeq H^2(G,\bar k[s,\inv s]^\by)\simeq\Br(k)\oplus H^1(k,\Q/\Z)$. Thus, the conclusion of Theorem~\ref{thm:Br-over-field} can equivalently be written as
    \[\Br_1(\eey_0(2)_k)\simeq\Br_1(Y_0(2)_k)\oplus H^1(k,\zmod4)\]
    for any field $k$ of characteristic not $2$. If $k$ is furthermore perfect, then Tsen's theorem shows that $\Br_1(Y_0(2)_k)\simeq\Br(Y_0(2)_k)$ and we have
    \[\Br(\eey_0(2)_k)\simeq\Br_1(\eey_0(2)_k)\oplus\zmod2\simeq\Br(Y_0(2)_k)\oplus H^1(k,\zmod4)\oplus H^1(k,\zmod2).\]
\end{remark}

\renewcommand{\appendixname}{Appendix. Computing \texorpdfstring{$\boldsymbol{\Pic(\eey_0(2)_S)}$}{Pic(Y\_0(2)\_S)}}

\appendix
\addappendix

In Section~\ref{sect:coarse-space}, we need to know $\Pic(\eey_0(2)_S)$ when $S=\spec(k)$ is an algebraically closed field of characteristic not $2$. In \cite{Niles}, this Picard group was computed for any $S/\Z[1/6]$, but it was not computed in characteristic~$3$. In contrast, the main results of \cite{lopez2023picard} apply over any field, and so could be used to compute $\Pic(\eey_0(2)_S)$ in the cases we need. Still, in the interest of plugging a gap in the literature, we show in this appendix that the ideas going into \cite{Niles,lopez2023picard} apply just as well to compute $\Pic(\eey_0(2)_S)$ for any $S/\Z[\frac{1}{2}]$.

\begin{recall}
    Corollary~\ref{cor:Y0(2)-cms-general-base} constructed a coarse space map $c=c_S\colon \eey_0(2)_S\to Y_0(2)_S:=\A^1_S\sm\{0\}$.
\end{recall}

\begin{notation}\label{notn:hodge-bund}
    Let $S$ be a $\Z[\frac{1}{2}]$-scheme, and let $\pi=\pi_S\colon \eee\to\eey_0(2)_S$ denote the universal family over $\eey_0(2)$. We write $\lambda=\lambda_S:=\push\pi\Omega^1_{\eee/\eey_0(2)}$ to denote the Hodge bundle on $\eey_0(2)$. Note that, if $f_S\colon\eey_0(2)_S\to\eey_0(2)$ denotes the natural projection, then $\lambda_S\simeq\pull f_S\lambda_{\Z[\frac{1}{2}]}$.
\end{notation}
The main theorem of this appendix is the below extension of \cite[Theorem 2.4]{Niles} from $\Z[1/6]$-schemes to $\Z[\frac{1}{2}]$-schemes.

\begin{theorem}\label{thm:app-main}
    Let $S$ be a $\Z[\frac{1}{2}]$-scheme. Then, the map
    \[\mapdesc{}{\zmod4\by\Pic(Y_0(2)_S)}{\Pic(\eey_0(2)_S)}{(n,\msL)}{\lambda^{\otimes n}\otimes\pull c\msL}\]
    is an isomorphism.
\end{theorem}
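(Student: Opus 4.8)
The plan is to present $\eey_0(2)$ as a global quotient stack and compute its Picard group as a $\G_m$-equivariant Picard group, following Mumford's method for $\eem_{1,1}$ as adapted by Niles \cite{Niles} (an alternative, via root stacks over the coarse space, is the method of \cite{lopez2023picard}). \textbf{Step 1 (quotient presentation).} Over a $\Z[1/2]$-scheme, an elliptic curve $E$ with a nonzero $2$-torsion section $P$ can be written fppf-locally on the base in the partial Weierstrass form $E_{a,b}\colon y^2=x^3+ax^2+bx$ with $P=(0,0)$, subject only to $\Delta(a,b)=16b^2(a^2-4b)\in\msO^\times$; the only isomorphisms between such forms preserving $(0,0)$ are the rescalings $\phi_u\colon(x,y)\mapsto(u^2x,u^3y)$, $u\in\G_m$, which send $(a,b)\mapsto(u^2a,u^4b)$. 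Hence $\eey_0(2)\cong[U/\G_m]$ with $U:=\Spec\Z[\tfrac12][a,b,(b(a^2-4b))^{-1}]$ and $\G_m$ acting with weights $(2,4)$, and this identification is stable under base change, so $\eey_0(2)_S\cong[U_S/\G_m]$. I would record the three translations needed below: the coarse space is $Y_0(2)_S=\Spec(\Gamma(U_S,\msO)^{\G_m})$ with coordinate $s=b/(a^2-4b)$, so $c$ is the quotient-to-GIT-quotient map (matching Proposition \ref{prop:y0(2)-cms}); the Hodge bundle $\lambda$ is the line bundle $L_{-1}$ on $[U_S/\G_m]$ associated to the character $u\mapsto u^{-1}$, since $\phi_u^*(dx/y)=u^{-1}(dx/y)$; and the locus where the stabilizer jumps from $\mu_2$ to $\mu_4$ is $\{a=0\}$, mapping onto $s=-1/4$, consistent with the automorphism analysis of Section \ref{sect:coarse-space}.

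\textbf{Step 2 (equivariant Picard sequence over regular Noetherian $S$).} For such $S$, $\eey_0(2)_S$ is a regular Noetherian DM stack, and the $\G_m$-torsor $q\colon U_S\to\eey_0(2)_S$ yields an exact sequence
\[
0\too\Z\big/\!\big\{\text{weights of semi-invariant units of }\Gamma(U_S,\msO)\big\}\too\Pic(\eey_0(2)_S)\xrightarrow{\,q^*\,}\Pic(U_S)\too0,
\]
whose left term is the subgroup generated by $\lambda$ (the character $u\mapsto u^w$ gives the trivial bundle on $[U_S/\G_m]$ precisely when $\Gamma(U_S,\msO)$ carries a unit that is $\G_m$-semi-invariant of weight $w$), and whose $q^*$ is surjective because $U_S$ is the complement of the zero section in the total space of $L_1$. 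Now $\Gamma(U_S,\msO)^\times=\Gamma(S,\msO)^\times\cdot b^{\Z}\cdot(a^2-4b)^{\Z}$, and both $b$ and $a^2-4b$ are semi-invariant of weight $4$; hence the weights that occur are exactly $4\Z$, so $\lambda=L_{-1}$ has order exactly $4$. Likewise, $U_S$ is obtained from $\A^2_S$ by deleting the two principal divisors $V(b)$ and $V(a^2-4b)$, so $\Pic(U_S)\cong\Pic(\A^2_S)\cong\Pic(S)\cong\Pic(Y_0(2)_S)$ via the obvious pullbacks; and since the composite $\Pic(Y_0(2)_S)\xrightarrow{c^*}\Pic(\eey_0(2)_S)\xrightarrow{q^*}\Pic(U_S)$ is this same isomorphism, $c^*$ splits $q^*$. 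Assembling these facts gives $\Pic(\eey_0(2)_S)=\Z/4\langle\lambda\rangle\oplus c^*\Pic(Y_0(2)_S)$, which is exactly the claimed isomorphism, for every regular Noetherian $S$ — in particular for $S=\Spec\Z[\tfrac12]$ and for $S$ a field, which is all that Section \ref{sect:coarse-space} requires.

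\textbf{Step 3 (arbitrary $\Z[1/2]$-schemes, and the main obstacle).} Both sides of the map commute with filtered colimits of base rings, reducing to $S$ affine Noetherian, and one checks the statement is insensitive to nilpotents, reducing to $S$ reduced; the quotient presentation of Step 1 holds for all $S$, so the only input of Step 2 that must be re-examined for non-regular $S$ is the identification $\Pic(U_S)\cong\Pic(Y_0(2)_S)$ together with the unit computation (the weights-in-$4\Z$ claim survives, e.g.\ by specializing a semi-invariant unit to a geometric point). Carrying this out is precisely the kind of descent that Fulton and Olsson perform for $\eem_{1,1}$ in \cite{fulton-olsson}, and I would import their technique — cohomological flatness of $\eey_0(2)\to\Spec\Z[1/2]$, compatibility of $\lambda$ and of the coarse space with base change, and seesaw-type bookkeeping — to deduce the general case from the regular Noetherian one. \textbf{This passage from nice bases to arbitrary $S$ is the main technical obstacle}; by contrast, once the quotient presentation is in hand the equivariant Picard computation of Steps 1–2 is routine, and characteristic $3$ poses no extra difficulty, since the only new feature there is an extra order-$3$ automorphism at $s=-1/4$ which does not fix $P$, so $\Aut(E,P)$ remains $\Z/4$, $\eey_0(2)$ remains tame, $[U_S/\G_m]$ still presents it, and nothing in the Picard computation changes.
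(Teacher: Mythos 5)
Your argument takes a genuinely different route from the paper's. You present $\eey_0(2)_S$ as the quotient $[U_S/\G_m]$ of the space of Weierstrass forms $y^2=x^3+ax^2+bx$ with $P=(0,0)$ by the weight-$(2,4)$ rescaling action, and compute $\Pic$ via the equivariant exact sequence relating characters of $\G_m$, semi-invariant units, and $\Pic(U_S)$; your dictionary ($s=b/(a^2-4b)$, $\lambda=L_{-1}$, $\mu_4$-locus at $a=0\leftrightarrow s=-1/4$) is correct and consistent with Proposition \ref{prop:y0(2)-cms}. The paper never leaves the coarse-space picture: it defines a surjection $\Pic(\eey_0(2)_S)\to\zmod4$ by restricting to the residual gerbe $B\mu_{4,S}$ at $s=-1/4$ (the curve $y^2=x^3-x$ with $P=(0,0)$) and recording the character of the $\mu_4$-action, shows $\lambda$ splits it, and identifies the kernel with $c^*\Pic(Y_0(2)_S)$ by combining a rigidity argument (trivial stabilizer action at that one point forces trivial action at every geometric point) with Olsson's descent result for tame stacks (Proposition \ref{prop:tame-line-bundles}). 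Your route is more computational and self-contained, recovering the exact order of $\lambda$ directly from the weights of the units $b$ and $a^2-4b$ (replacing Lemma \ref{lem:hodge-bund-4-tors}); the paper's route buys uniformity in the base, since Olsson's proposition carries no regularity or noetherian hypotheses.

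That uniformity is precisely where your Step 3 leaves a genuine gap. The surjectivity of $q^*$ and the chain $\Pic(U_S)\cong\Pic(\A^2_S)\cong\Pic(S)\cong\Pic(Y_0(2)_S)$ rest on excision for divisors and on homotopy invariance of $\Pic$, both of which fail over non-regular bases (homotopy invariance already fails for non-seminormal rings, and then $\Pic(\A^1_S\sm\{0\})$, $\Pic(U_S)$ and $\Pic(S)$ genuinely differ). Reducing to reduced noetherian $S$ does not dispose of this, and ``importing the Fulton--Olsson technique'' is not a one-line matter: their treatment of $\eem_{1,1}$ over arbitrary reduced bases is a nontrivial d\'evissage built exactly around these failures. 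As written, your proof establishes the theorem for regular noetherian $S$ --- which is all that Section \ref{sect:coarse-space} actually needs --- but not the statement for an arbitrary $\Z[1/2]$-scheme. To close the gap you would either have to carry out that d\'evissage, or substitute the paper's tame-stack descent argument for your computation of $\Pic(U_S)$ and of the cokernel of the character map, since that argument applies verbatim over any base.
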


\begin{remark}
    In the setting of this paper, when $S$ is noetherian and regular, $\Pic(Y_0(2)_S)=\Pic(S)$, so we have an isomorphism $\Z/4\Z \times \Pic(S)\cong \Pic(\eey_0(2)_S)$ in this case.
\end{remark}

We first show that the map in Theorem~\ref{thm:app-main} is well defined. For this, we will make use of the following fact.

\begin{proposition}[\textit{cf.} {\cite[Proposition 6.2]{olsson-G-torsors}}]\label{prop:tame-line-bundles}
    Let $S$ be a scheme and $\mathscr{X}$ a tame stack over $S$, with coarse moduli space $\pi\colon\mathscr{X}\to X$. The pullback functor
    \[\pi^*\colon(\text{line bundles on } X)\too (\text{line bundles on } \mathscr{X})\]
    induces an equivalence of categories between line bundles on $X$ and line bundles $\mathscr{L}$ on $\mathscr{X}$ such that for all geometric points $x\colon\Spec (k) \to \mathscr{X}$, the representation of\, $G_{x}$ corresponding to $x^*\mathscr{L}$ is trivial.
\end{proposition}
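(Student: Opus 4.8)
The plan is to prove separately that $\pi^*$ is fully faithful and that its essential image is exactly the stated subcategory of line bundles with trivial stabilizer representations. Full faithfulness is formal: for line bundles $M,N$ on $X$ one has $\hom_{\mathscr{X}}(\pi^* M,\pi^* N)=H^0(\mathscr{X},\pi^*(M^\vee\otimes N))$, and since $\pi$ is the coarse space map of a tame stack, the projection formula together with $\pi_*\msO_{\mathscr{X}}=\msO_X$ gives $\pi_*\pi^*(M^\vee\otimes N)\simeq M^\vee\otimes N$, whence $H^0(\mathscr{X},\pi^*(M^\vee\otimes N))\simeq H^0(X,M^\vee\otimes N)=\hom_X(M,N)$. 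Thus $\pi^*$ is fully faithful and hence an equivalence onto its essential image, and the remaining content is to identify that image.

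That $\pi^*$ lands in the subcategory is easy: if $\mathscr{L}=\pi^* M$, then for any geometric point $x\colon\Spec k\to\mathscr{X}$ the fiber $x^*\mathscr{L}=(\pi\circ x)^* M$ is pulled back along a map factoring through the algebraic space $X$, so the automorphism group $G_x$ acts trivially on it. For the reverse inclusion, given $\mathscr{L}$ with all stabilizer representations trivial, the natural candidate for a preimage is $M:=\pi_*\mathscr{L}$, and I would prove that $M$ is a line bundle and that the adjunction counit $\epsilon\colon\pi^* M\to\mathscr{L}$ is an isomorphism.

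The heart of the argument is a fiberwise analysis powered by tameness. Because $\mathscr{X}$ is tame, the pushforward $\pi_*$ is exact and its formation commutes with arbitrary base change. Fixing a geometric point $\bar x\colon\Spec k\to X$, base change identifies the fiber $M\otimes_{\msO_X}k$ with $H^0(\mathscr{X}_{\bar x},\mathscr{L}|_{\mathscr{X}_{\bar x}})$, where $\mathscr{X}_{\bar x}\simeq BG_x$ is the residual gerbe with band the (linearly reductive) automorphism group $G_x$. A line bundle on $BG_x$ corresponds to a character $\chi$ of $G_x$, and $H^0(BG_x,\mathscr{L}_\chi)=(x^*\mathscr{L})^{G_x}$ is one-dimensional when $\chi$ is trivial and zero otherwise. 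The hypothesis that every stabilizer representation is trivial therefore forces $\dim_k(M\otimes_{\msO_X}k)=1$ at every geometric point; since $M$ is coherent and the formation of $\pi_*\mathscr{L}$ commutes with base change, constancy of this fiber dimension upgrades $M$ to a line bundle. Finally, checking $\epsilon$ on the fiber at $x$ recovers the inclusion $(x^*\mathscr{L})^{G_x}\hookrightarrow x^*\mathscr{L}$ of invariants, which is an isomorphism under our hypothesis; as $\epsilon$ is a map of line bundles that is nonzero on every fiber, it is an isomorphism.

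I expect the main obstacle to be the step showing $\pi_*\mathscr{L}$ is genuinely a line bundle, since this is exactly where tameness is indispensable: it is exactness of the invariants functor (linear reductivity of $G_x$) and base-change compatibility of $\pi_*$ that convert pointwise one-dimensionality into local freeness. Concretely, I would run this by invoking the local structure theorem for tame stacks, which presents $\mathscr{X}$ \'etale-locally on $X$ as a quotient $[\Spec A/G]$ with $G$ a finite flat linearly reductive group scheme and $X=\Spec A^G$; in this model $\pi_*\mathscr{L}=L^G$ for the corresponding $G$-equivariant invertible $A$-module $L$, and exactness of $(-)^G$ combined with triviality of the fiber characters makes the counit $A\otimes_{A^G}L^G\to L$ an isomorphism of invertible modules. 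The only delicate bookkeeping is tracking equivariant structures and verifying that base change really computes cohomology on the residual gerbe $BG_x$.
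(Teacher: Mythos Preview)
The paper does not prove this proposition; it merely quotes it from Olsson. So there is no ``paper's own proof'' to compare against, and your task is really to give a self-contained argument. Your overall architecture is correct: full faithfulness via the projection formula and $\pi_*\msO_{\mathscr X}=\msO_X$, and essential surjectivity by showing the counit $\pi^*\pi_*\mathscr L\to\mathscr L$ is an isomorphism. The local-structure approach you outline at the end is the right engine.

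There is, however, a genuine gap in your fiberwise step. You assert that the scheme-theoretic fiber $\mathscr X_{\bar x}:=\mathscr X\times_X\Spec k$ is the residual gerbe $BG_x$. This is false in general, even for tame Deligne--Mumford stacks: take $\mathscr X=[\A^1_k/\mu_2]$ (char $k\neq2$, $\mu_2$ acting by $x\mapsto-x$) with coarse space $\A^1_k$ via $x\mapsto x^2$; the fiber over $0$ is $[\Spec(k[x]/x^2)/\mu_2]$, a nonreduced thickening of $B\mu_2$. Consequently your computation $H^0(\mathscr X_{\bar x},\mathscr L)=(x^*\mathscr L)^{G_x}$ is not justified as written, and the sentence ``verifying that base change really computes cohomology on the residual gerbe $BG_x$'' is precisely where the argument breaks.

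The fix is to work entirely in the local model $[\Spec A/G]\to\Spec A^G$ from the start (as you begin to do) and argue directly that the $G$-equivariant invertible $A$-module $L$ is $G$-equivariantly trivial once its fiber characters are. After \'etale-localizing so that $A^G$ is strictly henselian, $A$ is semilocal and $L\cong A$ as an $A$-module; the equivariant structure is then a class in $H^1(G,A^\times)$, and the exact sequence $1\to 1+\mathfrak m\to A^\times\to (A/\mathfrak m)^\times\to1$ together with linear reductivity of $G$ (which kills $H^{>0}(G,-)$ on the unipotent piece $1+\mathfrak m$) shows that $H^1(G,A^\times)\hookrightarrow\widehat{G_x}$. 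Triviality of the fiber character then forces $L$ to be $G$-equivariantly trivial, whence $L^G=A^G$ and the counit $A\otimes_{A^G}L^G\to L$ is the identity. Your phrase ``exactness of $(-)^G$ combined with triviality of the fiber characters makes the counit an isomorphism'' is pointing at this, but the actual mechanism is the vanishing of $H^1(G,1+\mathfrak m)$, which you should make explicit.
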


\begin{lemma}\label{lem:hodge-bund-4-tors}
    Let $S$ be a $\Z[\frac{1}{2}]$-scheme. Then, the Hodge bundle $\lambda=\lambda_S$ is $4$-torsion; i.e., $\lambda^{\otimes4}\simeq\msO$ on $\eey_0(2)_S$.
\end{lemma}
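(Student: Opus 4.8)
The plan is to reduce to the universal base and then apply tame descent along the coarse space map. Since $\lambda_S\simeq\pull{f_S}\lambda_{\Z[1/2]}$ by Notation \ref{notn:hodge-bund}, it suffices to treat $S=\spec\Z[1/2]$: once $\lambda_{\Z[1/2]}^{\otimes 4}\simeq\msO$ is known on $\eey_0(2)=\eey_0(2)_{\Z[1/2]}$, pulling back along $f_S$ gives $\lambda_S^{\otimes 4}\simeq\pull{f_S}(\lambda_{\Z[1/2]}^{\otimes 4})\simeq\msO$ for arbitrary $S/\Z[1/2]$. So for the rest I would work over $\Z[1/2]$.

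Next I would identify the isotropy representations of $\lambda^{\otimes 4}$. At a geometric point $x=(E,P)\colon\spec k\to\eey_0(2)$, the fiber of $\lambda$ is the line $\omega_E=H^0(E,\Omega^1_{E/k})$, on which the automorphism group $G_x=\Aut(E,P)$ acts through a character $\chi_x\colon G_x\to\G_m(k)=k^\times$. This character is injective: an automorphism acting trivially on $\omega_E$ acts trivially on the tangent space at the origin, hence --- since $\#G_x$ is prime to $\charstic k$ --- trivially on the formal group of $E$, hence equals the identity. By Lemma \ref{lem:y0(2)-tame}, $\#G_x$ divides $4$, so $\chi_x$ takes values in $\mu_4(k)$, and therefore $\chi_x^{\otimes 4}$ is the trivial character. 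Thus $\lambda^{\otimes 4}$ has trivial isotropy representation at every geometric point of $\eey_0(2)$.

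I would then invoke tame descent. As $\eey_0(2)$ is a tame $\Z[1/2]$-stack (Lemma \ref{lem:y0(2)-tame}) with coarse space map $c\colon\eey_0(2)\to Y_0(2)$, Proposition \ref{prop:tame-line-bundles} produces a line bundle $\msM$ on $Y_0(2)_{\Z[1/2]}=\spec\Z[1/2][s,\inv s]$ with $\lambda^{\otimes 4}\simeq\pull c\msM$. Finally, $\Z[1/2][s,\inv s]$ is a localization of a polynomial ring over the principal ideal domain $\Z[1/2]$, hence a unique factorization domain, so $\Pic(Y_0(2)_{\Z[1/2]})=0$; therefore $\msM\simeq\msO$ and $\lambda^{\otimes 4}\simeq\msO$, as desired.

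The part requiring the most care is the middle step, namely verifying that the isotropy character of $\lambda$ at each geometric point has order dividing $4$: this is exactly where the bound on automorphism groups from Lemma \ref{lem:y0(2)-tame} and the faithfulness of the action of $\Aut(E,P)$ on the invariant differential (valid because these automorphism groups have order prime to the residue characteristic) are both used. Everything else --- functoriality of the Hodge bundle, the descent statement, and the Picard group computation --- is formal.
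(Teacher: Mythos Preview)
Your proof is correct and follows essentially the same route as the paper: reduce to $S=\spec\Z[1/2]$, use Lemma \ref{lem:y0(2)-tame} to bound the automorphism groups, conclude that $\lambda^{\otimes4}$ has trivial isotropy at every geometric point, descend via Proposition \ref{prop:tame-line-bundles}, and finish with $\Pic(Y_0(2))=0$. The only difference is that you interpose an argument that the isotropy character $\chi_x$ is \emph{faithful}; this is true but unnecessary, since the order bound $\#G_x\mid4$ alone already forces $\chi_x^{\otimes4}$ to be trivial regardless of the kernel of $\chi_x$.
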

\begin{proof}
    As remarked in Notation~\ref{notn:hodge-bund}, the Hodge bundle $\lambda_S$ is the pullback of the Hodge bundle on $\eey_0(2)$, so we may assume that $S=\spec(\Z[\frac{1}{2}])$. Lemma~\ref{lem:y0(2)-tame} shows that $\eey_0(2)$ is a tame $\Z[\frac{1}{2}]$-stack, and furthermore it shows that all its geometric points have automorphism groups of order dividing $4$. Thus, $\lambda^{\otimes4}$ induces the trivial representation at any geometric point of $\eey_0(2)$, so Proposition~\ref{prop:tame-line-bundles} shows that $\lambda^{\otimes4}\simeq\pull c\msM$ for some line bundle $\msM$ on $Y_0(2)$. Finally, $Y_0(2)=\spec(\Z[\frac{1}{2}][s,\inv s])$ has trivial Picard group, so $\msM$ (and hence $\lambda^{\otimes4}$) is trivial.
\end{proof}

We can now prove the main result of this appendix.

\begin{proof}[Proof of Theorem~\ref{thm:app-main}]
    In brief, once one has Lemma~\ref{lem:hodge-bund-4-tors} showing that the map in Theorem~\ref{thm:app-main} is well defined, the rest of the proof goes through exactly as in \cite{Niles}. For the sake of completeness, we include details below.

    Let $x\colon B\mu_{4,S}\to\eey_0(2)_S$ be the morphism corresponding to the elliptic curve $E:y^2=x^3-x$ equipped with the point $P=(0,0)$ of exact order $2$ (this has automorphism group scheme $\mu_4$, where $\zeta\in\mu_4$ acts via $(x,y)\mapsto(\zeta^2x,\zeta y)$). Let $\wt x\colon S\to\eey_0(2)$ be the restriction of $x$ along $S\to B\mu_{4,S}$. If $\msL$ is a line bundle on $\eey_0(2)_S$, then $\pull{\wt x}\msL=:\msM$ is a line bundle on $S$ with an action by $\mu_4$. Because $\ul\Aut(\msM)=\G_m$, this action corresponds to some representation $\chi\colon \mu_4\to\G_m$. Identifying $\ul{\zmod4}\iso\ul\hom(\mu_4,\G_m)$ in the natural way, we see that pullback along $x$ produces a map $f\colon \Pic(\eey_0(2)_S)\to\zmod4$. 
    One can check that $f(\lambda)=1\in\zmod4$ by arguing as in \cite[Lemma 2.5]{fulton-olsson}, so $f$ is surjective and split by the Hodge bundle; \textit{i.e.}, $\Pic(\eey_0(2)_S)\simeq\zmod4\oplus\ker f$. It remains to identify $\ker f\simeq\pull c\Pic(Y_0(2)_S)$. Say $\msL\in\ker f$. Then, $\msL$ induces the trivial representation when restricted to $x$. However, by Lemma~\ref{lem:Aut(E,P)=C_2}, away from $x$, every other geometric point of $\eey_0(2)$ has $\{\pm1\}$ as its automorphism group. Since $\msL\in \ker f$, the automorphism $-1$ acts trivially on the fiber of $\msL$ above any point with automorphism group $\mu_4$, but one can argue as in \cite[Corollary 2.4]{fulton-olsson} to show that this means that $-1$ acts trivially on the fiber of $\msL$ over any geometric point. Thus, $\msL\in\pull c\Pic(Y_0(2)_S)$ by Proposition~\ref{prop:tame-line-bundles}, finishing the proof.
\end{proof}


\phantomsection 

\end{document}